\documentclass[a4paper]{article}


\usepackage{graphicx}
\usepackage{amsmath}
\usepackage{amssymb}
\usepackage{url}
\usepackage{amscd}
\usepackage{color}

\usepackage[a4paper]{geometry}
\geometry{hmargin={20mm}}
\geometry{vmargin={25mm}}

\usepackage{amsthm}
\theoremstyle{plain}
\newtheorem{Thm}{Theorem}[section]
\newtheorem{Prop}[Thm]{Proposition}
\newtheorem{Lem}[Thm]{Lemma}
\newtheorem{Cor}[Thm]{Corollary}
\theoremstyle{definition}
\newtheorem{Def}[Thm]{Definition}
\theoremstyle{remark}
\newtheorem{Rmk}[Thm]{Remark}

\newtheorem*{proof*}{proof}


%
\newcommand{\Gr}[1]{R_1 ^{(#1)}}
\newcommand{\Grdel}[1]{R_{1, \delta} ^{(#1)}}
\newcommand{\sumDiall}
{
 \sum_{
 \substack{
  D_i \subset \{1, \cdots, k\},
  \\ 
  \# D_i \leq \eta k
  \text{ for all } i 
  }
  \hspace{1mm}
 }
}
\newcommand{\sumDisome}
{
 \sum_{
 \substack{
  D_i \subset \{1, \cdots, k\},
  \\ 
  \# D_i > \eta k
  \text{ for some } i 
  }
 }
}
\newcommand{\sumEiall}
{
 \sum_
 {
  \hspace{1mm}
  \substack
  {
   E_i \subset D_i ^c
  ,
  \\
   \# (D_i ^c \setminus E_i ) \leq \gamma k 
   \text{ for all } i
  }
  \hspace{1mm}
 }
}

\newcommand{\sumNiDE}
{
 \sum_
 {
  \hspace{1mm}
  \substack
  {
   \mathcal{N}^{(i)}
  \in
   \left(\mathcal{R}^c\right)^{D_i ^c \setminus E_i} 
  ,
  \\
   i=1,\cdots,p
  }
  \hspace{1mm}
 }
}
\newcommand{\sumNiE}
{
 \sum_
 {
  \hspace{1mm}
  \substack
  {
   \mathcal{N}^{(i)} |_{E_i}
  \in  
   \mathcal{R}^{E_i}
  ,
  \\
   i=1,\cdots, p
  }
  \hspace{1mm}
 }
}
\newcommand{\sumNiEJ}
{
 \sum_
 {
  \hspace{1mm}
  \substack
  {
   \mathcal{N}^{(i)} 
  \in 
   \mathcal{R}^{E_i\setminus J_i}
  ,
  \\
   i=1,\cdots, p
  }
  \hspace{1mm}
 }
}
\newcommand{\sumNiJ}
{
 \sum_
 {
  \hspace{1mm}
  \substack
  {
   \mathcal{N}^{(i)} 
  \in 
   \mathcal{R}^{J_i}
  , 
  \\
   i=1,\cdots, p
  }
  \hspace{1mm}
 }
}
\newcommand{\sumFipr}
{
 \sum_
 {
  \hspace{1mm}
  \substack
  {
   F_i ' \subset F_i
  ,
  \\
   \# F_i ' = \# S^*
   \text{ for all }i
  }
  \hspace{1mm}
 }
}
\newcommand{\sumsigmaS}
{
 \sum_
 {
  \hspace{1mm}
  \substack
  {
   {\sigma_i |}_{{S^*}}
  \in
   \mathrm{Bij}({S^*}, {F_i '})
  ,
  \\
   i=1,\cdots, p
  }
  \hspace{1mm}
 }
}
\newcommand{\sumsigmaSc}
{
 \sum_
 {
  \hspace{1mm}
  \substack
  {
   {\sigma_i |}_{{S^*} ^c}
  \in
   \mathrm{Bij}({S^*} ^c, {F_i '}^c)
  ,
  \\
   i=1,\cdots, p
  }
  \hspace{1mm}
 }
}

\makeatletter
 
 \@addtoreset{equation}{section}
\makeatother

\usepackage{hyperref}


\begin{document}

\allowdisplaybreaks[4]
\abovedisplayskip=4pt
\belowdisplayskip=4pt
\large

\author
{%
 Takahiro Mori%
 \thanks
 {
  Research Institute for Mathematical Sciences, Kyoto University, Kyoto, 606-8502, JAPAN.
  \url{tmori@kurims.kyoto-u.ac.jp}
 }
}
\title{Large deviations for intersection measures of some Markov processes}
\date{}
\maketitle

\begin{abstract}
Consider 
an intersection measure $\ell_t ^{\mathrm{IS}}$ of 
$p$ independent (possibly different) 
$m$-symmetric Hunt processes up to time $t$ 
in a metric measure space $E$ with a Radon measure $m$.

We 
derive a Donsker-Varadhan type large deviation principle 
for the normalized intersection measure 
$t^{-p}\ell_t ^{\mathrm{IS}}$ on the set of finite measures 
on $E$ as $t \rightarrow \infty$,
under the condition that $t$ is smaller than life times
of all processes.

This 
extends earlier work by W. K\"onig and C. Mukherjee \cite{MR2999298}, 
in which the large deviation principle was established 
for the intersection measure of $p$ independent $N$-dimensional 
Brownian motions before exiting some bounded open set 
$D \subset \mathbb{R}^N$.

We
also obtain the asymptotic behaviour of logarithmic moment 
generating function, which is related to the 
results of X. Chen and J. Rosen \cite{MR2165257}
on the intersection measure of independent Brownian motions
or stable processes.

Our 
results rely on assumptions about the heat kernels and 
the 1-order resolvents of the processes, 
hence include rich examples. 
For 
example, the assumptions hold for $p\in \mathbb{Z}$ with 
$2\leq p < p_*$ when the processes enjoy (sub-)Gaussian 
type or jump type heat kernel estimates,
where 
$p_*$ is determined by the Hausdorff dimension of 
$E$ and the so-called walk dimensions of the processes. 

\medskip

\noindent\textbf{Keywords:} 
intersection measure; large deviations; heat kernel

\end{abstract}




\section{Introduction and main results}
\label{Sec_intro}

\subsection{Introduction}

Let 
$p$ be an integer grater than or equal to 2,
$E$ be a locally compact, separable metric space and
$m$ be a Radon measure on $E$ with $\text{supp}[m]=E$.
Let $X^{(1)}, \cdots, X^{(p)}$ 
be $p$ independent irreducible Hunt processes 
on $E$, with life times
$\zeta^{(1)}, \cdots, \zeta^{(p)}$, respectively.
We do not require that all $X^{(1)}, \cdots, X^{(p)}$ 
have the same law.
For each $t>0$, under the condition that all life times
$\zeta^{(1)}, \cdots, \zeta^{(p)}$ are less than $t$, 
{\itshape the intersection measure}
$
 \ell_t ^{\mathrm{IS}}
$
is formally written as
\begin{align}
\label{eq_ISformally}
 \ell_t ^{\mathrm{IS}}
 (A)
``
=
" 
 \int_A
 \biggl[
 \int_{[0, t]^p}
  \prod_{i=1} ^p
  \delta_{x}(X^{(i)}(s_i))
 ds_1\cdots ds_p
 \biggr]
 m(dx)
\quad
 \text{ for }
 A \subset E \text{ Borel}
.
\end{align}
Here and in the following, the superscript ``IS" means ``InterSection".

The 
intersection measure is firstly introduced by
Le Gall \cite{MR1229519}, 
when the processes are independent Brownian motions.
The 
large deviation result for the intersection measures are obtained
by K\"onig and Mukherjee \cite{MR2999298}, for the case of independent 
Brownian motions before exiting a bounded domain $D\subset \mathbb{R}^N$ 
with a smooth boundary with $N-p(N-2)>0$. 
This is roughly written as
an asymptotic behavior of the conditional probability
\begin{equation}
\label{eq_LDPformally}
 \mathbb{P}
 \biggl(
  \Bigl(
   \frac{1}{t^p} \ell_{t} ^{\mathrm{IS}}
  ;
   \frac{1}{t} \ell_{t} ^{(1)}
  ,
   \cdots
  ,
   \frac{1}{t} \ell_{t} ^{(p)}
  \Bigr)
 \approx
  \boldsymbol{\mu}
 \hspace{1mm}
 \bigg|
 \hspace{1mm}
  t < \tau^{(1)}\wedge \cdots \wedge \tau^{(p)} 
 \biggr)
\approx
 \exp
 \biggl\{
  -t 
  \sum_{i=1} ^p  
  \|
   \nabla \psi_i
  \|_{L^2} ^2
 \biggr\}
\end{equation}
as $t\rightarrow \infty$.
(%
 See Definition \ref{Def_LDP} 
 for a precise definition of 
 the large deviation principle.%
)
Here
$\ell_t ^{(i)}$ and $\tau^{(i)}$
are the occupation measure and the exit time from $D$ 
of independent Brownian motion $X^{(i)}$ respectively,
and
$
 \text{\mathversion{bold}$\mu$}
=
 (\mu ; \mu_1, \cdots, \mu_p) 
\in
 \mathcal{M}_f(D) \times (\mathcal{M}_{1}(D))^p 
$,
a tuple of a finite measure and 
$p$ probability measures on $D$,
is of the form
$
 \psi_i 
= 
 \sqrt{\frac{d\mu_i}{dm}}
\in
 H_0 ^1 (D)
$
and
$
 \frac{d\mu}{dm}
=
 \prod_{i=1} ^p
 \frac{d\mu_i}{dm}
$,
where $H_0 ^1(D)$ is the Sobolev space 
with zero boundary condition in $D$.

The aim of this paper is
to prove Theorem \ref{Thm_LDP},
in which we extend such large deviation results 
\eqref{eq_LDPformally} for intersection measures
to general Markov processes on metric measure spaces,
replacing
$H_0 ^1(D)$ and $\|\nabla \cdot \|_{L^2} ^2$
by Dirichlet forms 
$(\mathcal{F}^{(i)}, \mathcal{E}^{(i)})$ 
and replacing $\tau^{(i)}$ 
by the life times $\zeta^{(i)}$ 
of the corresponding Hunt processes $X^{(i)}$.
Main tools of such generalization are 
Dirichlet form techniques
(see \cite{MR2778606} for instance).

The 
asymptotics of the logarithmic moment generating function
\begin{equation}
\label{eq_logMGF}
 \lim_{t\rightarrow \infty}
 \frac{1}{t}
 \log
 \mathbb{E}
 [
  \exp
  \{
   \theta \ell_t ^{\mathrm{IS}}(E) ^{1/p}
  \}
 ]
\quad
 \text{ for }
 \theta > 0
\end{equation}
is calculated in \cite{MR2094445} 
for the case of independent Brownian motions,
and in \cite{MR2165257} 
for the case of independent stable processes.
In these papers, they point out that the limit 
in \eqref{eq_logMGF} is related to the best constants 
of the Gagliardo-Nirenberg type interpolation inequalities.
Our second result is to obtain 
the logarithmic moment generating functions 
of intersection measures for more general processes.
In Proposition \ref{Prop_mgf}, 
we calculate a limit similar to \eqref{eq_logMGF} 
and represent it as a variational formula.

We emphasise that, so far, 
the analysis of the intersection measure
is limited to the cases that the processes are 
independent Brownian motions or stable processes.

In the next Section \ref{Sec_assumptions}, 
we introduce our assumptions and give some examples 
in Section \ref{Sec_Examples}.
In Section \ref{Sec_IS}, 
we introduce some notations
about intersection measures,
and we state our main results on large deviations 
in the following Section \ref{Sec_Main_results}.
In Section \ref{Sec_Preliminaries},
we give basic lemmas and calculations, 
used in the proof of our main results.
From Section \ref{Sec_existence} 
to Section \ref{Sec_proof_mgf}, 
we will prove our main results, respectively.
In Section \ref{Sec_7}, 
we will check the examples 
in Section \ref{Sec_Examples} satisfy 
the assumptions.

This paper is based on 
the author's master thesis
(unpublished, available only at Kyoto university).

\subsection{Assumptions}
\label{Sec_assumptions}

Let $p$ be an integer with $p\geq 2$.
Let 
$X$ be an irreducible $m$-symmetric Hunt process on $E$, 
with life time $\zeta$.
Let
$p_t(x, dy)$, $t>0$ be its transition probability
and
$\{T_t\} = \{T_t : t\geq 0\}$ 
be the associated strongly continuous 
contraction semigroup of symmetric, Markovian linear
operators on $L^2(E; m)$.
Let
$R_1$ be the 1-order resolvent of $\{T_t\}$.
By 
the Markovian property of $\{T_t\}$, $R_1$
can be considered as an operator on $L^\infty(E; m)$.

We now make six assumptions on $X$:

\begin{itemize}
\item[\textbf{(A1)}]
$X$ has the following tightness property:
\begin{equation}
\label{eq_tight}
 \text{for all } \varepsilon >0, 
 \text{ there exists a compact set } K
 \text{ such that }
 \sup_{x\in E}
 R_1 1_{K^c} (x) \leq \varepsilon
.
\end{equation}

\item[\textbf{(A2)}]
For each $t>0$ and $x\in E$, the measure $p_t(x, dy)$
on $E$ is absolutely
continuous with respect to $m(dy)$, 
and
its density $p_t(\cdot, \cdot)$ is continuous and 
uniformly bounded on $E\times E$.

\item[\textbf{(A2')}]
For 
each $t>0$ and $x\in E$, the measure $p_t(x, dy)$
on $E$ is absolutely continuous with respect to $m(dy)$, 
and
its density $p_t(\cdot, \cdot)$ is 
\textit{uniformly} continuous and 
uniformly bounded on $E\times E$.

\item[\textbf{(A3)}]
There exist 
$\rho >0$, $t_0>0$ and $C>0$ such that
\begin{equation}
\label{eq_trace}
 C^{-1}
 t^{-\rho/2}
\leq
 \int_E p_t (x, x) m(dx)
\leq
 C
 t^{-\rho/2}
\quad
 \text{for all } t\in (0, t_0]
.
\end{equation}

\item[\textbf{(A4)}]
There exist 
$\mu \in (2, \frac{2p}{p-1})$, 
$C>0$ and $t_0 >0$ such that
\begin{equation}
\label{eq_ultra-cont}
 \|T_t\|_{1\rightarrow\infty}
\leq
 C
 t^{-\mu/2}
\quad
 \text{for all } t\in (0, t_0]
.
\end{equation}
Here 
$\|\cdot\|_{1\rightarrow\infty}$ is the operator norm
from $L^1(E; m)$ to $L^\infty(E; m)$.

\item[\textbf{(A5)}]
$X$ satisfies
\begin{equation}
\label{eq_Green1}
 \sup_{x \in E}
 \int_E
  R_1(x, y)^p
 m(dy)
<
 \infty
\end{equation}
and
\begin{equation}
\label{eq_Green2}
 \lim_{\delta \downarrow 0}
 \sup_{x \in E}
 \int_E
  R_{1, \delta}(x, y)^p
 m(dy)
=
 0
,
\end{equation}
where
\begin{equation*}
 R_1 (x, y)
=
 \int_0 ^\infty
 e^{-t}
 p_t (x, y)
 dt
,\quad
 R_{1, \delta} (x, y)
=
 \int_0 ^\delta
 e^{-t}
 p_t (x, y)
 dt
\quad
 \text{for }
 x, y\in E
.
\end{equation*}
\end{itemize}
We 
say that $X$ satisfies Assumption \textbf{(A)} if
$X$ satisfies
\textbf{(A1)},
\textbf{(A2)},
\textbf{(A3)},
\textbf{(A4)} and
\textbf{(A5)}.
We 
say that $X$ satisfies Assumption \textbf{(A')} if
$X$ satisfies
\textbf{(A1)},
\textbf{(A2')},
\textbf{(A3)},
\textbf{(A4)} and
\textbf{(A5)}.
When we need to clarify constants, 
we denote the above assumptions as
$
 \textbf{(A3; $\rho, t_0, C$)}
$,
$
 \textbf{(A4; $\mu, t_0, C$)}
$,
$
 \textbf{(A; $\rho, \mu, t_0, C$)}
$
and
$
 \textbf{(A'; $\rho, \mu, t_0, C$)}
$.

\begin{Rmk}
\ \par
\vspace{-2mm}
\begin{itemize}
\setlength{\itemsep}{0mm}
\item[i)]
When $E$ is compact, clearly (A2) and (A2') are equivalent.

\item[ii)]
If $m(E)<\infty$, the ultra-contractivity (A4) 
implies the tightness (A1); 
see \cite{MR3685590} for example.

\item[iii)]
In the proof of \cite[Theorem7]{MR2165257}, 
Chen and Rosen used a stronger condition than (A2'),
namely the global Lipschitz continuity of $p_t(\cdot, \cdot)$.

\item[iv)]
\cite[Theorem 3.1]{MR2465826}
says that
the ultra-contractivity (A4) implies the existence of 
the bounded and continuous density on $E\setminus N$, 
where $N$ is a properly exceptional set. 
\end{itemize}
\end{Rmk}

\subsection{Examples}
\label{Sec_Examples}

Let $(E, d)$ be a 
locally compact, separable, bounded and connected metric space. 
For simplicity, suppose 
$
 \sup
 \{
  d(x, y)
 :
  x, y\in E
 \}
=
 1
$.
Let 
$m$ be a finite measure on $E$ with $\text{supp}[m] = E$
and 
$X$ be an $m$-symmetric Hunt process on $E$ with 
the transition density $p_t(\cdot, \cdot)$.

In view of 
the proof of \cite[Exercise 4.6.3]{MR2778606},
the connectivity of $E$ and the absolute continuity of $p_t$
imply the irreducibility of $X$.

Suppose 
that there exist positive constants
$t_0$, 
$c_1, \cdots, c_4$, 
$d_{\rm f}$ and $d_{\rm w}$
such that
the following \eqref{eq_ex1}, \eqref{eq_ex2} 
and either \eqref{eq_ex31} or \eqref{eq_ex32}
hold:

\noindent
\begin{itemize}
\setlength{\itemsep}{0mm}
\item
The uniformly volume growth condition:
\begin{equation}
\label{eq_ex1}
 c_1  ^{-1}
 r^{d_{\rm f}}
\leq
 m( B(x, r) )
\leq
 c_1
 r^{d_{\rm f}}
\quad
 \text{for all }
 x \in E
 \text{ and }
 0<r\leq 1
.
\end{equation}

\item
The on-diagonal heat kernel lower bound estimate:
\begin{equation}
\label{eq_ex2}
 c_2
 t^{-d_{\rm f} / d_{\rm w}}
\leq
 p_t(x, x)
\quad
 \text{for all }
 x \in E \text{ and } t<t_0 
.
\end{equation}

\item
The (sub-)Gaussian type 
heat kernel upper bound estimate:
\begin{equation}
\label{eq_ex31}
 \quad
 p_t (x, y)
\leq
 c_3
 t^{-d_{\rm f} / d_{\rm w}}
 \exp
 \left\{
  -c_4
  \left(
   \frac{d(x, y)^{d_{\rm w}}}{t}
  \right)^{1/(d_{\rm w} -1)}
 \right\}
\quad
 \text{for all }
 x, y\in E
 \text{ and }
 t<t_0
.
\end{equation}

\item
The jump-type
heat kernel upper bound estimate:
\begin{equation}
\label{eq_ex32}
 p_t (x, y)
\leq
 c_3
 \left\{
  t^{-d_{\rm f} / d_{\rm w}}
 \wedge
  \frac{t}{d(x, y)^{d_{\rm f} + d_{\rm w}}}
 \right\}
\quad
 \text{for all }
 x, y\in E
 \text{ and }
 t<t_0
.
\end{equation}

\end{itemize}

Then, 
as we will see in Section \ref{Sec_7},
Assumption \textbf{(A)} holds if
\begin{equation*}
 d_{\rm s} - p(d_{\rm s} - 2) >0
,
\end{equation*}
where 
$d_{\rm s} := 2d_{\rm f}/d_{\rm w}$ 
is the so-called spectral dimension.
Many 
processes satisfy the above conditions.
Here are some examples.

\begin{enumerate}
\item
Let $E$ 
be the closure of a bounded Lipschitz domain
$D\subset \mathbb{R}^N$ with $N=2\text{ or }3$,
i.e., there exist positive constants $R$ and $\Lambda$ 
such that, for every $z\in \partial D$,
there exists a Lipschiz function 
$
 \psi_z 
: 
 \mathbb{R}^{N-1}\rightarrow \mathbb{R}
$
such that
$
 \text{Lip}(\psi_z) 
\leq 
 \Lambda
$
and 
$B(z, R)\cap \partial D$ is represented 
as a part of the graph of $\psi_z$.
Let
$X$ be the reflecting Brownian motion on $E$,
i.e., a Hunt process on $E$ with the Dirichlet form
\begin{align*}
 \mathcal{F}
=
 W^{1,2}(E)
,
\quad
 \mathcal{E}(f, f)
=
 \int_E |\nabla f|^2 dm
\quad
 \text{for }
 f \in \mathcal{F}
,
\end{align*}
where $W^{1,2}(E)$ is the Sobolev space on $E$.
Then
\eqref{eq_ex1}, \eqref{eq_ex2} and \eqref{eq_ex31} 
hold with 
$d_{\rm f} = N$ and $d_{\rm w} = 2$.
For details, see \cite[Theorem 3.10]{MR2807275} 
for instance.

\item
Let
$E$ be a compact complete Riemannian manifold $(M, g)$ with
the dimension $N$ and the nonnegative Ricci curvature,
and
$m$ be the volume measure of $(M, g)$.
Also let $X$ be the Brownian motion on $(M, g)$,
i.e. a Hunt process on $E$
with the Dirichlet form
\begin{align*}
 \mathcal{F}
=
 W^{1,2}(M)
,
\quad
 \mathcal{E}(f, f)
=
 \int_E |\nabla f|^2 dm
\quad
 \text{for }
 f \in \mathcal{F}
,
\end{align*}
where $W^{1,2}(M)$ is the Sobolev space on $(M, g)$.
Then 
\eqref{eq_ex1}, \eqref{eq_ex2} and \eqref{eq_ex31} 
hold with 
$d_{\rm f} = N$ and $d_{\rm w} = 2$.
See \cite{MR834612} for details.

\item
Let 
$(E, d, m)$ be a compact $RCD^*(K, N)$ space 
with $K\in\mathbb{R}$ and $N\in [1, \infty)$,
which generalizes the notations that
``the Ricci curvature being bounded from below by $K$'' 
and 
``the dimension being bounded from above by $N$''.
(For precise definitions, 
see \cite{MR3205729} for instance.)
Suppose
\eqref{eq_ex1} holds with $d_{\rm f} = N$
and
$X$ is the Brownian motion on $E$,
i.e. 
a Hunt process on $E$ with the Dirichlet form 
\begin{align*}
&
 \mathcal{E}(f,f)
=
 \frac{1}{2}
 \inf
 \biggl\{
  \liminf_{j\rightarrow \infty}
  \int_E
   |\nabla f_j|^2
  dm
 :
  f_j\in \text{Lip}(E)
 ,
  f_j\rightarrow f \text{ in } L^2(E; m)
 \biggr\}
,
\\
&
 \mathcal{F}
=
 \{
  f\in L^2(E; m)
 :
  \mathcal{E}(f, f) < \infty
 \}
,
\end{align*}
where 
\begin{equation*}
 |\nabla f_j|(x)
:=
 \limsup_
 {
  \substack
  {
   y\rightarrow x,
   y\not= x
  }
 }   
 \frac
 {
  |f(x) - f(y)|
 }
 {
  d(x, y)
 }
\quad
 \text{for }
 x\in E
\end{equation*}
is the local Lipschitz constant of $f_j$.
Then 
\eqref{eq_ex1}, \eqref{eq_ex2} and \eqref{eq_ex31} 
hold with 
$d_{\rm f} = N$ and $d_{\rm w} = 2$.
See \cite{MR3489857} for details.

\item
Let
$E\subset \mathbb{R}^2$ be the compact Sierpi\'nski gasket,
$d$ be the Euclidean distance, 
and
$m$ be the Hausdorff measure $\mathcal{H}^{\alpha}$
of $E$ with $\alpha={\log 3}/{\log 2}$.
Also let
$X$ be the Brownian motion on $E$.
Then
\eqref{eq_ex1}, \eqref{eq_ex2} and \eqref{eq_ex31} 
hold with 
$d_{\rm f} = {\log 3}/{\log 2}$ and 
$d_{\rm w} = {\log 5}/{\log 2}$. 
See \cite{MR966175} for details, 
and see \cite{MR1668115}
for other diffusions on fractals.

\item
Let 
$E\subset \mathbb{R}^N$ be a bounded $C^{1, 1}$ open subset,
i.e., 
there exist positive constants $R$ and $\Lambda$ 
such that, for every $z\in \partial E$,
there exists a $C^1$-function 
$
 \psi_z 
: 
 \mathbb{R}^{N-1}\rightarrow \mathbb{R}
$
such that
$
 \|\psi_z\|_\infty
\leq
 \Lambda
$,
$
 \text{Lip}(\nabla\psi_z) 
\leq 
 \Lambda
$
and 
$B(z, R)\cap \partial E$ is represented 
as a part of the graph of $\psi_z$.
For
$0<\alpha\leq 2$, let
$X$ is a symmetric $\alpha$-stable process
before exiting $E$.
Then 
\eqref{eq_ex1}, \eqref{eq_ex2} and \eqref{eq_ex32} 
hold with $d_{\rm f} = N$ and $d_{\rm w} = \alpha$.
See \cite{MR3237737} and \cite{MR1900329} for details.

\item
Let 
$E\subset \mathbb{R}^N$ be a compact set 
and suppose $m$ satisfies 
\eqref{eq_ex1} with $d_{\rm f} >0$.
For 
$\alpha \in (0, 2)$
and a measurable function 
$c: E\times E \rightarrow \mathbb{R}$ such that
\begin{equation*}
 0< M^{-1} \leq c(x, y)=c(y, x) \leq M <\infty
\quad
 \text{for $m$-a.e. } x, y\in E
,
\end{equation*}
we can define the regular Dirichlet form
\begin{align*}
&
 \mathcal{F^{\alpha}}
:=
 \left\{
  u\in L^2(E; m)
 ;\quad
  \int_{E\times E}
   \frac{(u(x)-u(y))^2}{|x-y|^{N+\alpha}}
  m(dx)m(dy)
 <
  \infty
 \right\}
,
\\
&
 \mathcal{E^{\alpha}}(u, v)
:=
 \int_{F\times F}
  \frac{c(x, y)(u(x)-u(y))(v(x)-v(y))}
       {|x-y|^{N+\alpha}}
 m(dx)m(dy)
\quad
 \text{for }
 u, v\in\mathcal{F}^\alpha
.
\end{align*}
An associated Hunt process $X$ 
is called a stable-like process on $E$,
and 
it satisfies 
\eqref{eq_ex2} and \eqref{eq_ex32} with
$d_{\rm f} = N$ and $d_{\rm w} = \alpha$. 
See \cite{MR2008600} for details.

\end{enumerate}

\begin{Rmk}
\ \par
\vspace{-2mm}
\begin{itemize}
\setlength{\itemsep}{0mm}
\item[i)]
When $E$ is unbounded, some assumptions are not trivial.
For example, 
Brownian motion on entire space $\mathbb{R}^N$
does not satisfy Assumption (A3).

\item[ii)]
In example 2 and 3, we cannot relax the compactness of $E$. 
Indeed, on $RCD^*(K, N)$, Assumption (A3) implies
the compactness of $E$.
See \cite[Theorem 3.1]{MR3489857}.
\end{itemize}
\end{Rmk}

\subsection{Intersection measures}
\label{Sec_IS}

Before stating our main results 
about large deviation principles, 
we introduce some definitions and notations 
about intersection measures.

Let
$X^{(1)}, \cdots, X^{(p)}$ be independent $m$-symmetric Hunt processes,
starting at 
$
 x_0 ^{(1)}, \cdots, x_0 ^{(p)}
\in
 E
$
with the life times $\zeta^{(1)}, \cdots, \zeta^{(p)}$, respectively.
Suppose each $X^{(i)}$ has the transition density 
$p^{(i)}_t(\cdot, \cdot)$.
Fix their starting points
$
 x_0
=
 (x_0 ^{(1)}, \cdots, x_0 ^{(p)})
\in
 E^p
$.
The normalized probability measure 
$\widetilde{\mathbb{P}}_t$ up to time $t>0$ 
is defined by
\begin{equation*}
 \widetilde{\mathbb{P}}_t(F)
:=
 \frac
 {
  \mathbb{P}(F \cap \{t<\zeta^{(1)} \wedge \cdots \wedge \zeta^{(p)}\})
 }
 {
  \mathbb{P}(t<\zeta^{(1)} \wedge \cdots \wedge \zeta^{(p)})
 }
\quad
 \text{for }
 F\subset \Omega
.
\end{equation*}

\noindent
The occupation measure $\ell_t ^{(i)}$ of $X^{(i)}$ 
up to time $t>0$ is defined by 
\begin{equation*}
 \ell^{(i)} _{t} (A)
:=
 \int_0 ^t
  1_A (X^{(i)} (s))
 ds
\end{equation*}
for $A \subset E$,
on the event $\{t < \zeta^{(i)} \}$.
Note that $t^{-1}\ell^{(i)}_t$ is in $\mathcal{M}_1(E)$,
the set of probability measures on $E$.
For each $\varepsilon > 0$,
the approximated occupation measure 
$\ell_{\varepsilon, t} ^{(i)}$ of $X^{(i)}$ 
up to $t$ is defined by
\begin{equation*}
 \ell^{(i)} _{\varepsilon, t} (A)
:=
 \int_{A}
  \left[
   \int_{[0, t]}
    p^{(i)} _\varepsilon (X^{(i)} (s), x)
   ds
  \right]
 m(dx)
\end{equation*}
for $A \subset E$,
on the event $\{t < \zeta^{(i)} \}$.
Note that $t^{-1}\ell^{(i)}_{\varepsilon, t}$ is in 
$\mathcal{M}_{\leq1}(E)$, the set of sub-probability measures on $E$.

\begin{Def}
For each $\varepsilon > 0$, 
the approximated (mutual) intersection measure
$\ell_{\varepsilon, t} ^{\mathrm{IS}}$ 
of $X^{(1)}, \cdots, X^{(p)}$ up to $t$
is defined by
\begin{equation*}
 \ell_{\varepsilon, t} ^{\mathrm{IS}} (A)
:=
 \int_{A}
  \left[
   \int_{[0, t]^p}
    \prod_{i=1} ^p
    p^{(i)} _\varepsilon (X^{(i)} (s_i), x)
   ds_1 \cdots ds_p
  \right]
 m(dx)
\end{equation*}
for $A \subset E$, on the event
$\{t < \zeta^{(1)}\wedge\cdots\wedge\zeta^{(p)} \}$.
\end{Def}

Note 
that $\ell^{\mathrm{IS}}_{\varepsilon, t}$ is in 
$\mathcal{M}_{f}(E)$, the set of finite measures on $E$
equipped with the vague topology.

\begin{Def}
Fix $t>0$.
If the family of random measures 
$
 \{
  \ell_{\varepsilon, t} ^{\mathrm{IS}}
 ;
  \varepsilon>0
 \}
\subset
 \mathcal{M}_f(E)
$
converges 
in distribution as $\varepsilon \rightarrow 0$
with respect to the probability measure $\widetilde{\mathbb{P}}_t$,
we write the limit as $\ell_t ^{\mathrm{IS}}$ and call
it the (mutual) intersection measure
of $X^{(1)}, \cdots, X^{(p)}$ up to $t$.
\end{Def}

As in \eqref{eq_ISformally}, 
the intersection measure $\ell_t ^{\mathrm{IS}}$ 
can be formally written as
\begin{equation*}
 \ell_{t} ^{\mathrm{IS}} (A)
``
=
"
 \int_{A}
  \left[
   \int_{[0, t]^p}
    \prod_{i=1} ^p
    \delta_x (X^{(i)} (s_i))
   ds_1 \cdots ds_p
  \right]
 m(dx)
\end{equation*}
for $A \subset E$, on the event
$\{t < \zeta^{(1)}\wedge\cdots\wedge\zeta^{(p)} \}$.
Here $\delta_x$ is the Dirac delta function at $x$.
For the case of Brownian motion, 
there are several ways of constructing
the intersection measure.
See \cite{MR1229519} and \cite{MR1944002} 
for example.
The following proposition ensures 
that the intersection measure
also exists in our setting.
We will prove this in Section \ref{Sec_existence}.
\begin{Prop}
[Existence of the mutual intersection measure]
\label{Prop_existence}
Suppose each $X^{(i)}$ satisfies 
\eqref{eq_Green1} of Assumption (A5)
and let $t>0$.
Then,
there exists a random measure 
$
 \ell_t ^{\mathrm{IS}}
\in
 \mathcal{M}_f(E)
$ 
such that, 
in the vague topology of $\mathcal{M}_f(E)$,
\begin{equation*}
 \ell_{t, \varepsilon} ^{\mathrm{IS}}
\rightarrow
 \ell_{t} ^{\mathrm{IS}}
\quad
 \text{ in distribution}
,
 \text{ as }
 \varepsilon \rightarrow 0
\end{equation*}
with respect to the probability measure 
$\widetilde{\mathbb{P}}_t$.

Furthermore, for all $f\in C_K ^+(E)$
and all integer $k\geq 1$, it holds that
\begin{equation}
\label{eq_convergence_any_moment}
 \langle
  f, \ell_{\varepsilon, t} ^{\mathrm{IS}}
 \rangle
\rightarrow  
 \langle
  f, \ell_{t} ^{\mathrm{IS}}
 \rangle
\quad
 \text{ in } L^k(\widetilde{\mathbb{P}}_t)
,
 \text{ as }
 \varepsilon \rightarrow 0
.
\end{equation}
\end{Prop}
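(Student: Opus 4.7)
My plan is to establish Proposition \ref{Prop_existence} by the method of moments: for each $f\in C_K^+(E)$ and each integer $k\geq 1$, show that $\{\langle f,\ell_{\varepsilon,t}^{\mathrm{IS}}\rangle\}_{\varepsilon>0}$ is Cauchy in $L^k(\widetilde{\mathbb{P}}_t)$, then assemble the $L^k$-limits into a random measure $\ell_t^{\mathrm{IS}}$ and upgrade to vague convergence in distribution. Conditioning by $\widetilde{\mathbb{P}}_t$ is harmless, since the event $\{t<\zeta^{(1)}\wedge\cdots\wedge\zeta^{(p)}\}$ factorises into independent events $\{t<\zeta^{(i)}\}$ that are absorbed naturally into the sub-Markov transition kernels $p_t^{(i)}(\cdot,\cdot)$ of each $X^{(i)}$.

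The central step is an exact mixed-moment computation. Fix $\vec\varepsilon=(\varepsilon_1,\ldots,\varepsilon_k)\in(0,1]^k$. By Fubini, independence of the processes, and the Markov property applied to each $X^{(i)}$ after symmetrising over $k!$ time orderings, one writes
\begin{equation*}
\mathbb{E}\Bigl[\prod_{l=1}^k\langle f,\ell_{\varepsilon_l,t}^{\mathrm{IS}}\rangle\,1_{\{t<\zeta^{(1)}\wedge\cdots\wedge\zeta^{(p)}\}}\Bigr]=\int_{E^k}\prod_{l=1}^k f(x_l)\cdot\prod_{i=1}^p M^{(i)}_{\vec\varepsilon}(x_0^{(i)};\vec x)\,m^{\otimes k}(d\vec x),
\end{equation*}
where
\begin{equation*}
M^{(i)}_{\vec\varepsilon}(x_0;\vec x)=\sum_{\sigma\in\mathfrak{S}_k}\int_{0<s_1<\cdots<s_k<t}p^{(i)}_{s_1+\varepsilon_{\sigma(1)}}(x_0,x_{\sigma(1)})\prod_{j=2}^k p^{(i)}_{s_j-s_{j-1}+\varepsilon_{\sigma(j)}}(x_{\sigma(j-1)},x_{\sigma(j)})\,d\vec s.
\end{equation*}
The elementary estimate $\int_0^t p^{(i)}_{u+\varepsilon}(a,b)\,du\leq e^{t+1}R_1^{(i)}(a,b)$ dominates each time integral by a Green function, so $M^{(i)}_{\vec\varepsilon}\leq C_{k,t}\sum_\sigma\mathrm{ch}^{(i)}_\sigma(\vec x)$ with the ``chain'' $\mathrm{ch}^{(i)}_\sigma(\vec x)=R_1^{(i)}(x_0^{(i)},x_{\sigma(1)})\prod_{j=2}^k R_1^{(i)}(x_{\sigma(j-1)},x_{\sigma(j)})$. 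Applying H\"older's inequality to the $p$-fold product over $i$ (each factor in $L^p(m^{\otimes k})$), and iteratively peeling off leaves of each chain using $\sup_x\int R_1^{(i)}(x,y)^p m(dy)<\infty$ from \eqref{eq_Green1}, one obtains the uniform bound
\begin{equation*}
\int_{E^k}\prod_{l=1}^k f(x_l)\cdot\prod_{i=1}^p\mathrm{ch}^{(i)}_{\sigma_i}(\vec x)\,m^{\otimes k}(d\vec x)\le(C\,\|f\|_\infty)^k,
\end{equation*}
independent of $\sigma_1,\ldots,\sigma_p$ and of $\vec\varepsilon$.

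For convergence of the mixed moments as $\vec\varepsilon\downarrow 0$, the substitution $u_j=s_j+\varepsilon_{\sigma(j)}$ in each $M^{(i)}_{\vec\varepsilon}$ removes $\vec\varepsilon$ from the integrand and transfers it to the endpoints of integration, which converge as $\vec\varepsilon\to 0$. Continuity of $p_u^{(i)}(\cdot,\cdot)$ in $u>0$ then gives $M^{(i)}_{\vec\varepsilon}\to M^{(i)}_0$ for $m^{\otimes k}$-a.e.\ $\vec x$, and dominated convergence in the spatial integral — with the Green-function product above as dominating function — shows that the mixed moments converge to a common limit depending only on $k$ and $f$, not on the individual $\varepsilon_l$. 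Expanding $\mathbb{E}[(\langle f,\ell_{\varepsilon,t}^{\mathrm{IS}}\rangle-\langle f,\ell_{\varepsilon',t}^{\mathrm{IS}}\rangle)^k\,1_{\{\cdot\}}]$ by the binomial theorem into an alternating sum of such mixed moments now shows that it vanishes as $\varepsilon,\varepsilon'\to 0$, proving the $L^k(\widetilde{\mathbb{P}}_t)$-Cauchy property \eqref{eq_convergence_any_moment}. By linearity and positivity of $f\mapsto\lim_\varepsilon\langle f,\ell_{\varepsilon,t}^{\mathrm{IS}}\rangle$ on a countable $\mathbb{Q}_+$-cone dense in $C_K^+(E)$, a Riesz--Daniell extension produces the random measure $\ell_t^{\mathrm{IS}}\in\mathcal{M}_f(E)$, and joint $L^1$-convergence on arbitrary finite tuples of test functions upgrades to vague convergence in distribution. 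The principal obstacle will be the careful bookkeeping in the moment expansion — tracking the $k!$ time orderings and $p$ chain structures simultaneously, and verifying that the H\"older step balances exactly so that only the $L^p$-integrability of $R_1^{(i)}$ from \eqref{eq_Green1} is needed, without appeal to any stronger hypothesis.
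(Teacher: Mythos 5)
Your overall strategy (method of moments, $L^k$-Cauchy property, bounding time integrals by resolvent kernels, H\"older on the $p$-fold product, dominated convergence, then assembling the limit into a random measure) is the same as the paper's, but the closed form you write for $M^{(i)}_{\vec\varepsilon}$ is incorrect for $k\geq 2$, and this breaks the convergence step. Expanding by the Markov property with $z_j := X^{(i)}_{s_{\sigma(j)}}$ gives
\begin{equation*}
\mathbb{E}_{x_0^{(i)}}\Bigl[\prod_{l=1}^k p^{(i)}_{\varepsilon_l}(X^{(i)}_{s_l},x_l)\,;\,t<\zeta^{(i)}\Bigr]
=\int_{E^{k+1}}\prod_{j=1}^k p_{\varepsilon_{\sigma(j)}}^{(i)}(z_j,x_{\sigma(j)})\prod_{j=1}^{k+1} p^{(i)}_{s_{\sigma(j)}-s_{\sigma(j-1)}}(z_{j-1},z_j)\,dz_1\cdots dz_{k+1},
\end{equation*}
with $z_0=x_0^{(i)}$, $s_{\sigma(0)}=0$, $s_{\sigma(k+1)}=t$. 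Each intermediate $z_j$ carries \emph{three} edges (from $z_{j-1}$, to $z_{j+1}$, and the branch to $x_{\sigma(j)}$), so Chapman--Kolmogorov cannot fold the smoothing leg $p_{\varepsilon_{\sigma(j)}}^{(i)}(z_j,x_{\sigma(j)})$ into the chain to produce your $p^{(i)}_{s_j-s_{j-1}+\varepsilon_{\sigma(j)}}(x_{\sigma(j-1)},x_{\sigma(j)})$; already for $k=2$, $p_{s_1+\varepsilon_1}(x_0,x_1)p_{s_2-s_1+\varepsilon_2}(x_1,x_2)$ differs from $\int p_{s_1}(x_0,z_1)p_{\varepsilon_1}(z_1,x_1)p_{s_2-s_1}(z_1,z_2)p_{\varepsilon_2}(z_2,x_2)\,dz_1dz_2$. (You also drop the final survival factor for $\{t<\zeta^{(i)}\}$, but that is minor.) Because the $\varepsilon$'s live on the branch legs and not on the time increments along the chain, your substitution $u_j=s_j+\varepsilon_{\sigma(j)}$ does not remove $\vec\varepsilon$ from the integrand, so the pointwise-limit plus dominated-convergence argument as you set it up does not go through.

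The fix is the paper's device: keep the intermediate $z$'s, define $H_t^{(i)}\in L^p(E^k)$ as the time-integrated chain in the $z$-variables (your chain-of-resolvents estimate is exactly what shows $H_t^{(i)}\in L^p$), and observe that the $\varepsilon$-smoothing acts as the (permutated) tensor operator ${T_\varepsilon^{(i)}}^{\otimes k}$ on $H_t^{(i)}$. Then $\varepsilon\to 0$ convergence follows from $L^p$-contractivity and $L^p$-strong continuity of the semigroup (Propositions \ref{Prop_Lp-cont-op} and \ref{Prop_Lp-cont-semigrp}), not from a change of variables in the time integral. Your H\"older/peeling bound survives unchanged once the formula is corrected, and the remaining steps (Cauchy in $L^2$, bootstrapping to $L^k$ via H\"older and uniform integrability, construction of the limit measure) can proceed as you outline.
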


\subsection{Main results: Large deviations}
\label{Sec_Main_results}

We first 
recall a definition of the large deviation principle.
Let
$\mathcal{X}$ be a topological space
and 
$\{\mathbb{P}_t\}_{t>0}$ 
be a family of probability measures 
on a common sample space. 
\begin{Def}
[\cite{MR1619036}, Section 1.2]
\label{Def_LDP}
\ \par
\begin{enumerate}
\item[$\cdot$]
A function $I: \mathcal{X}\rightarrow [0, \infty]$ is said to be 
a rate function (resp. a good rate function) 
if 
the set $I^{-1}[0, \alpha]$ is closed (resp. compact) 
for all $\alpha \geq 0$.

\item[$\cdot$]
We say that
a family of $\mathcal{X}$-valued random variables
$\{Z_t\}$ satisfies the large deviation principle 
(LDP in abbreviation)
as $t\rightarrow \infty$ 
with probabilities $\{\mathbb{P}_t\}$, 
scale $t$ and a rate function $I$ 
if, 
for all Borel sets $\Gamma\subset \mathcal{X}$,
it holds that
\begin{equation*}
 -
 \inf_{x\in \text{int}(\Gamma)}
 I(x)
\leq
 \liminf_{t\rightarrow\infty}
 \frac{1}{t}
 \log
 \mathbb{P}_t (Z_t \in \Gamma)
\leq
 \limsup_{t\rightarrow\infty}
 \frac{1}{t}
 \log
 \mathbb{P}_t (Z_t \in \Gamma)
\leq
 -
 \inf_{x\in \overline{\Gamma}}
 I(x)
.
\end{equation*}
\end{enumerate}
\end{Def}

\vspace{1eM}

Let $X^{(1)}, \cdots, X^{(p)}$ 
be as in Section \ref{Sec_IS}
and suppose 
each $X^{(i)}$ has the associated regular Dirichlet form
$(\mathcal{E}^{(i)}, \mathcal{F}^{(i)})$. 

We introduce some notations about large deviation rate functions. 
The bottom of the spectrum $\lambda_1 ^{(i)}$ of $X^{(i)}$
is defined by
\begin{equation*}
 \lambda_1 ^{(i)}
:=
 \inf
 \biggl\{
  \mathcal{E}^{(i)} (\psi,\psi)
 : \quad
  \psi \in \mathcal{F}^{(i)}
  ,
  \int_E
   \psi^2
  dm
 =
  1
 \biggr\}
 .
\end{equation*}

\noindent
We define the function
$
 I^{(i)} 
:
 \mathcal{M}_1(E) \rightarrow [0, \infty]
$ 
by
\begin{align*}
 I^{(i)} (\mu)
:=&
 \begin{cases}
  \mathcal{E}^{(i)} (\psi, \psi)
 &
  \text{if }
   \psi 
  =
   \displaystyle
   \sqrt{\frac{d\mu}{dm}} 
  \in
   \mathcal{F}^{(i)}
 \\
  \infty
 &
  \text{otherwise} 
 \end{cases}
\end{align*}
for $\mu\in \mathcal{M}_1(E)$, 
and define the function
$
 J^{(i)} 
:
 \mathcal{M}_1(E) \rightarrow [0, \infty]
$ 
by
$
 J^{(i)}
:=
 I^{(i)} - \lambda_1 ^{(i)}
$.
We define the function
$
 \mathbf{J}
:
 \mathcal{M}_f(E) \times (\mathcal{M}_{1}(E))^p 
\rightarrow 
 [0, \infty]
$ 
by
\begin{equation}
\label{eq_bfJ}
 \mathbf{J}(\mu ; \mu_1, \cdots, \mu_p)
:=
 \begin{cases}
  \displaystyle
  \sum_{i=1} ^p
  J^{(i)} (\mu_i)
 &
  \text{if }
  \psi_i 
 = 
  \displaystyle
  \sqrt{\frac{d\mu_i}{dm}}\in \mathcal{F}^{(i)}
 \text{ and }
  \displaystyle
  \prod_{i=1} ^p
  \frac{d\mu_i}{dm}
 =
  \frac{d\mu}{dm}
 ,
 \\
  \infty
 &
  \text{otherwise}
 \end{cases}
\end{equation}
for 
$
 (\mu ; \mu_1, \cdots, \mu_p) 
\in
 \mathcal{M}_f(E) \times (\mathcal{M}_{1}(E))^p 
$.

We now state the main theorem of this paper.
We will prove it in Section \ref{Sec_4}. 
\begin{Thm}
[Large deviation principle]
\label{Thm_LDP}
Suppose each $X^{(i)}$ satisfies Assumption 
$\textbf{(A)}$.
Then
the tuple
\begin{equation*}
 \left(
  \frac{1}{t^p} \ell_{t} ^{\mathrm{IS}}
 ;
  \frac{1}{t} \ell_{t} ^{(1)}
 ,
  \cdots
 ,
  \frac{1}{t} \ell_{t} ^{(p)}
 \right)
\in
 \mathcal{M}_f(E) \times (\mathcal{M}_{1}(E))^p 
\end{equation*}
satisfies the 
LDP as $t\rightarrow \infty$, 
with probability $\widetilde{\mathbb{P}}_t$,
scale $t$ and the good rate function 
$\mathbf{J}$.
\end{Thm}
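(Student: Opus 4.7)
The plan is to reduce to a joint LDP for the occupation measures, then lift to include the intersection measure via the contraction principle at each fixed smoothing parameter $\varepsilon > 0$, and finally pass to the limit $\varepsilon \to 0$ by an exponential approximation argument in the spirit of \cite{MR1619036, MR2999298}.

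For the first ingredient, standard Donsker--Varadhan theory applied to each $m$-symmetric Hunt process $X^{(i)}$---using tightness \textbf{(A1)}, heat kernel regularity \textbf{(A2)} and ultracontractivity \textbf{(A4)}---yields that, under $\mathbb{P}(\,\cdot\mid t<\zeta^{(i)})$, the measure $\tfrac{1}{t}\ell_t^{(i)}$ satisfies the LDP with good rate function $J^{(i)} = I^{(i)} - \lambda_1^{(i)}$; the shift by $\lambda_1^{(i)}$ absorbs the cost of survival up to time $t$ via the ground-state transform. Independence of the $X^{(i)}$ promotes this to a joint LDP for $(\tfrac{1}{t}\ell_t^{(1)},\ldots,\tfrac{1}{t}\ell_t^{(p)})$ under $\widetilde{\mathbb{P}}_t$ with rate function $\sum_i J^{(i)}(\mu_i)$. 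Next, from the identity
\[
 \tfrac{1}{t^p}\ell_{\varepsilon,t}^{\mathrm{IS}}(dx)
=
 \prod_{i=1}^p \bigl(P_\varepsilon^{(i)}\tfrac{1}{t}\ell_t^{(i)}\bigr)(x)\, m(dx),
 \qquad
 (P_\varepsilon^{(i)}\nu)(x) := \int_E p_\varepsilon^{(i)}(y,x)\,\nu(dy),
\]
together with the continuity of $\nu \mapsto P_\varepsilon^{(i)}\nu$ from the weak topology on $\mathcal{M}_{\leq 1}(E)$ to the space of bounded continuous functions on $E$ (a consequence of \textbf{(A2)}), the contraction principle gives, at each fixed $\varepsilon > 0$, the LDP for $(\tfrac{1}{t^p}\ell_{\varepsilon,t}^{\mathrm{IS}};\,\tfrac{1}{t}\ell_t^{(1)},\ldots,\tfrac{1}{t}\ell_t^{(p)})$ with good rate function $\mathbf{J}_\varepsilon$ equal to $\sum_i J^{(i)}(\mu_i)$ when $\tfrac{d\mu}{dm} = \prod_i P_\varepsilon^{(i)}\mu_i$ and $+\infty$ otherwise.

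The third and most delicate step is exponential approximation: for each $f\in C_K^+(E)$ and $\delta > 0$,
\[
 \lim_{\varepsilon \downarrow 0}
 \limsup_{t\to\infty}
 \tfrac{1}{t}\log \widetilde{\mathbb{P}}_t\bigl(\bigl|\langle f,\ell_{\varepsilon,t}^{\mathrm{IS}}-\ell_t^{\mathrm{IS}}\rangle\bigr|>t^p\delta\bigr)
=
 -\infty.
\]
By Chebyshev's inequality applied to a large moment $k\sim t$, this reduces to controlling the $k$-th $\widetilde{\mathbb{P}}_t$-moment of $\langle f,\ell_{\varepsilon,t}^{\mathrm{IS}}-\ell_t^{\mathrm{IS}}\rangle$, which expands combinatorially into sums over partitions of $p(k+\mathrm{const})$ time coordinates; each summand factorizes across the $p$ independent processes and is dominated by products of $R_1^{(i)}$ factors and, for the pieces corresponding to time separations less than $\varepsilon$, by $R_{1,\varepsilon}^{(i)}$ factors, whose $L^p$ decay is exactly the content of \eqref{eq_Green2}. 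Combined with the $\varepsilon$-level LDPs, the Dawson--G\"artner-type inverse contraction then yields the full LDP with rate function $\sup_{\varepsilon>0}\mathbf{J}_\varepsilon$, which is identified with $\mathbf{J}$ by sending $\varepsilon\to 0$ in the constraint $\tfrac{d\mu}{dm} = \prod_i P_\varepsilon^{(i)}\mu_i$ (using $P_\varepsilon^{(i)}\mu_i \to d\mu_i/dm$ in $L^p(E;m)$ on $\{I^{(i)}(\mu_i)<\infty\}$ via the Dirichlet-form structure and \eqref{eq_Green1}). The main obstacle I anticipate is precisely this third step: the combinatorial expansion produces factorially many terms in $k$, and beating this growth on the exponential scale uniformly in $t$ requires the short-range suppression in \eqref{eq_Green2} to be leveraged together with careful bookkeeping, much as in the Brownian case of \cite{MR2999298}.
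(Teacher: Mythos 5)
Your proposal follows the same overall strategy as the paper: the LDP for the individual occupation measures from Theorem \ref{Thm_LDP_FOT}, the contraction principle at each fixed $\varepsilon>0$, and removal of $\varepsilon$ by an exponentially good approximation argument whose engine is the high-moment estimate of Proposition \ref{Prop_exp_app}. Your small variant of contracting only the product into $\mathcal{M}_f(E)$ while keeping the occupation-measure marginals unsmoothed is legitimate and would in fact spare you the analogue of the paper's Lemma \ref{Lem_4-2}(1).

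Two points in the final reduction are misstated, and one of them would make the argument fail as written. First, the relevant theorem is Theorem 4.2.16 of Dembo--Zeitouni (exponentially good approximations), not a Dawson--G\"artner inverse contraction; the latter applies to continuous deterministic images of a common random variable, whereas $\ell_{\varepsilon,t}^{\mathrm{IS}}$ is a random approximation of $\ell_t^{\mathrm{IS}}$, not a contraction of it. Second, and more substantively, the limiting rate function is \emph{not} $\sup_{\varepsilon>0}\mathbf{J}_\varepsilon$. For a tuple $(\mu;\mu_1,\ldots,\mu_p)$ on the target set $\{d\mu/dm=\prod_i d\mu_i/dm\}$, the $\varepsilon$-level constraint $d\nu/dm=\prod_i p_\varepsilon^{(i)}[\nu_i]$ generically fails for every $\varepsilon>0$ (the smoothing operators are not surjective), so that supremum is $+\infty$ exactly where you need it to be finite. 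What Theorem 4.2.16 produces is the candidate $\widetilde{I}(x)=\sup_{\delta>0}\liminf_{\varepsilon\to 0}\inf_{B_\delta(x)}\mathbf{J}_\varepsilon$, and one must then verify $\widetilde{I}=\mathbf{J}$, that $\mathbf{J}$ is a good rate function, and the closed-set condition $\inf_F\mathbf{J}\le\limsup_{\varepsilon\to 0}\inf_F\mathbf{J}_\varepsilon$; these are precisely the three parts of Proposition \ref{Prop_4-1} and they require the compact embedding $\mathcal{F}^{(i)}\hookrightarrow L^{2p}(E;m)$ from \eqref{eq_compact2p}, which uses the ultracontractivity \textbf{(A4)}, not just \eqref{eq_Green1}. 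The identification is won by first passing to a nearby $\varepsilon$-admissible point inside the $\delta$-ball, extracting an $L^{2p}$-convergent subsequence of the minimizing $\psi_i^{(\delta,\varepsilon)}$ as $\varepsilon\to 0$, and only then shrinking $\delta$; your ``send $\varepsilon\to 0$ in the constraint'' at a fixed point does not survive this diagonalization.
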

Note that
each occupation measure $t^{-1}\ell_t ^{(i)}$ 
satisfies the LDP as $t\rightarrow \infty$ 
with probability $\widetilde{\mathbb{P}}_t$,
scale $t$ 
and the good rage function $J^{(i)}$
(see Section \ref{Sec_Preliminaries}).
It can be regarded
as a special case $p=1$ of the above theorem.

For the proof the main theorem,
Proposition \ref{Prop_exp_app} 
plays an important role.
This proposition roughly says that
the approximated intersection measure 
$\ell_{\varepsilon, t} ^{\mathrm{IS}}$ 
is a ``good'' approximation 
of the intersection measure
$\ell_{t} ^{\mathrm{IS}}$.

Our second result is another application of 
Proposition \ref{Prop_exp_app},
which is an extension of 
\cite[Theorem 1]{MR2165257} in some sense.
We will prove this in Section \ref{Sec_proof_mgf}.
\begin{Prop}
[Asymptotics of the moment generating function]
\label{Prop_mgf}
Suppose each $X^{(i)}$ satisfies Assumption 
$
 (\textbf{A}';  \rho^{(i)}, \mu^{(i)}, t_0, C)
$.
Let $h \in \mathcal{B}_b(E)$ be nonnegative and 
compactly supported.
Then, for any $\theta>0$, it holds that
\begin{align*}
 \lim_{t\rightarrow \infty}
 \frac{1}{t}
 \log
 \widetilde{\mathbb{E}}_t
 \exp
  {
   \bigl\{
    \theta 
    \langle \ell_t ^{\mathrm{IS}}, h   \rangle^{1/p}
   \bigr\}
  }
=&
 \frac{1}{p}
 \sum_{i=1} ^p
 \sup_{\psi\in \mathcal{F}^{(i)} , \|\psi\|_2=1}
 \biggl\{
  \theta
  \Bigl(
   \int_E
    \psi^{2p}
   \cdot
    h
   dm
  \Bigr)^{1/p}
 -
  p\mathcal{E}^{(i)} (\psi,\psi) 
 +
  p\lambda_1 ^{(i)}
 \biggr\}
.
\end{align*}
\end{Prop}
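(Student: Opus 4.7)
My plan is to use Proposition~\ref{Prop_exp_app} to replace $\ell_t^{\mathrm{IS}}$ by its smoothed version $\ell_{\varepsilon,t}^{\mathrm{IS}}$ inside the exponential, compute the $t\to\infty$ asymptotic for the smoothed quantity via Varadhan's lemma applied to the individual occupation measures, and finally pass $\varepsilon \downarrow 0$. The key algebraic identity
\[
 \langle h,\ell_{\varepsilon,t}^{\mathrm{IS}}\rangle
=
 \int_{E} h(x)\prod_{i=1}^{p}\Bigl(\int_{0}^{t} p_\varepsilon^{(i)}(X^{(i)}(s),x)\,ds\Bigr)\,m(dx)
\]
exhibits the smoothed intersection measure as a continuous $p$-linear functional of the occupation measures $\ell_t^{(i)}$.

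For the upper bound, I apply H\"older's inequality with $p$ equal exponents followed by AM--GM to obtain the pointwise estimate
\[
 \langle h,\ell_{\varepsilon,t}^{\mathrm{IS}}\rangle^{1/p}
\le
 \frac{1}{p}\sum_{i=1}^{p}\Bigl(\int_{E} h(x)\Bigl(\int_{0}^{t} p_\varepsilon^{(i)}(X^{(i)}(s),x)\,ds\Bigr)^{p}m(dx)\Bigr)^{1/p}.
\]
Because the processes $X^{(i)}$ are independent, the exponential of the right-hand side factors as a product over $i$. To each single-process factor I apply Varadhan's lemma, using the LDP for $t^{-1}\ell_{t}^{(i)}$ under $\widetilde{\mathbb{P}}_{t}$ with good rate $J^{(i)}$ (the $p=1$ instance of Theorem~\ref{Thm_LDP}); the continuity of the resulting functional on $\mathcal{M}_{1}(E)$ lets me pass $\varepsilon\downarrow 0$, yielding the single-process contribution $S_i(\theta)/p$, where $S_i(\theta)$ denotes the supremum on the right-hand side of the proposition. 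Summing over $i$ gives the claimed upper bound.

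For the matching lower bound I appeal to the joint LDP of Theorem~\ref{Thm_LDP} and apply Varadhan's lemma to the continuous functional $(\mu;\mu_{1},\dots,\mu_{p})\mapsto\theta\langle h,\mu\rangle^{1/p}$, producing
\[
 \liminf_{t\to\infty}\frac{1}{t}\log\widetilde{\mathbb{E}}_{t}\exp\bigl\{\theta\langle h,\ell_{t}^{\mathrm{IS}}\rangle^{1/p}\bigr\}
\ge
 \sup_{(\psi_{i})}\Bigl\{\theta\Bigl(\int_{E} h\prod_{i=1}^{p}\psi_{i}^{2}\,dm\Bigr)^{1/p}-\sum_{i=1}^{p}J^{(i)}(\psi_{i}^{2})\Bigr\}
\]
over $\psi_{i}\in\mathcal{F}^{(i)}$ with $\|\psi_{i}\|_{2}=1$. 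The remaining step is the variational identification of this joint supremum with $\tfrac{1}{p}\sum_{i=1}^{p} S_i(\theta)$, which I expect to follow from a careful analysis of the H\"older and AM--GM equality cases (achieved when the $\psi_{i}^{2p}$ are mutually proportional and have coincident $h$-weighted $L^{p}$-norms), combined with the scaling freedom within each Dirichlet space.

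The principal obstacles are verifying the moment-integrability hypothesis of Varadhan's lemma for the unbounded exponent $\theta\langle h,\ell_t^{\mathrm{IS}}\rangle^{1/p}$ and rigorously exchanging the $\varepsilon\downarrow 0$ limit with the variational supremum. Both are controlled by Proposition~\ref{Prop_exp_app}, which provides exponential approximation uniform in $t$, together with the uniform continuity of the heat kernels $p_\varepsilon^{(i)}$ guaranteed by Assumption~$(\textbf{A}')$.
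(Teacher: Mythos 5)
Your upper bound is essentially the paper's: the pointwise H\"older--AM--GM estimate, followed by factoring by independence, a single-process Varadhan computation, and passage $\varepsilon\downarrow 0$. The lower bound, however, is where your route diverges and where the genuine gap sits. You apply Varadhan to the joint LDP of Theorem~\ref{Thm_LDP}, arriving at $\sup_{(\psi_i)}\bigl\{\theta(\int_E h\prod_i\psi_i^2\,dm)^{1/p}-\sum_i J^{(i)}(\psi_i^2\,dm)\bigr\}$, and then assert that this joint supremum equals $\tfrac{1}{p}\sum_i S_i(\theta)$ by a ``careful analysis of the H\"older and AM--GM equality cases.'' That identification does not follow, and in fact is false in general. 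The inequalities $\bigl(\int h\prod_i\psi_i^2\bigr)^{1/p}\le\prod_i\bigl(\int h\psi_i^{2p}\bigr)^{1/p^2}\le\tfrac{1}{p}\sum_i\bigl(\int h\psi_i^{2p}\bigr)^{1/p}$ show the joint supremum is never larger than $\tfrac{1}{p}\sum_i S_i(\theta)$, but equality throughout forces all the $\psi_i$ to coincide (after normalization), whereas the $S_i$ are typically maximized by \emph{different} functions $\psi_i^*$ because the Dirichlet forms $\mathcal{E}^{(i)}$ are allowed to differ. Concretely, when $\mathcal{E}^{(1)}\ne\mathcal{E}^{(2)}$ (even on a two-point space) one finds that the joint supremum is strictly below $\tfrac{1}{p}\sum_i S_i(\theta)$, so your lower bound cannot close the gap and the proposed variational step does not salvage it. The ``scaling freedom within each Dirichlet space'' does not help, because the $L^2(m)$-normalization constraint pins that scale.

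The paper avoids any such variational identity. Its lower bound for the smoothed quantities (Proposition~\ref{Prop_6-2}) instead relies on a compactness and covering argument: by Arzel\`a--Ascoli the set of possible profiles $L_{t,\varepsilon}^{(i)}$ is precompact in $L^p(E';m_h)$, one covers it by finitely many small balls, and on the event that all $p$ processes lie in the \emph{same} ball the H\"older--AM--GM discrepancy \eqref{eq_6-1} is at most $\delta$, after which independence factors the expectation. The $\varepsilon\downarrow 0$ passage is then handled by Lemma~\ref{Lem_6-3} (exponential approximation, built from Proposition~\ref{Prop_exp_app}) and the variational continuity estimates in Lemma~\ref{Lem_66}. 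Two secondary points about your plan: first, $h\in\mathcal{B}_b(E)$ is only a bounded Borel function, so $\mu\mapsto\langle h,\mu\rangle$ is not continuous for the vague topology and Varadhan's lemma (Lemma~\ref{Lem_DZ_Varadhan}) is not directly applicable to your chosen functional without an additional approximation step; the paper's route sidesteps this by working with test functions $f\in C_b(E)$ or $C_K(E')$ in the per-process Varadhan step. Second, the moment-integrability hypothesis for the unbounded exponent is indeed supplied by Proposition~\ref{Prop_exp_app}, but the derivation is not immediate; it requires the intermediate bound of Lemma~\ref{Lem_6-3}, which combines Proposition~\ref{Prop_exp_app} with H\"older's inequality and the exit-time asymptotics \cite[Corollary~6.4.2]{MR2778606}.
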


Chen and Rosen consider
the stable processes on $\mathbb{R}^N$
and show this formula
by using the Fourier transformation method,
but
we use Proposition \ref{Prop_exp_app} instead.
Compare our Lemma \ref{Lem_6-3} 
and \cite[Theorem 6]{MR2165257}.


\section{Preliminaries}
\label{Sec_Preliminaries}

Before proving main results, we state some basic facts 
and give some easy calculations.

\subsection{Lemmas for our assumptions}

The following lemma is obtained from our assumptions, 
which forms some sufficient conditions for our main results.

\begin{Lem}
\label{Lem_pre}
\ \par
\vspace{-2.5mm}
\begin{enumerate}

\item
Assume Assumption (A2).
Then \eqref{eq_trace} implies that
the following eigenfunction expansion of the heat kernel;
there exist $L^2$-normalized and essentially bounded
$\psi_n$ and nonnegative $\lambda_n \uparrow \infty$ 
such that, 
$
 T_t \psi_n = e^{-\lambda_n t} \psi_n
$
for all $t>0$, $n\geq 1$
and
\begin{align}
\label{eq_eigenfct}
 p_t (x, y)
=
 \sum_{n=1} ^\infty 
 e^{-\lambda_n t}
 \psi_n (x) \psi_n (y)
\quad
 \text{for all }
 t>0
,
\end{align}
where the convergence is $L^2$ and
locally uniformly in $E\times E$.

\hspace{4mm}
Furthermore,
there exist an integer $N\geq 1$ 
and a positive constant $C$ such that
\begin{align}
\label{eq_growth_of_eigen}
 \|\psi_n\|_\infty 
\leq 
 C \lambda_n ^{\rho/2}
 \text{ and }
 C^{-1} n^{1/\rho}
 \leq
 \lambda_n
 \leq
 C n^{1/\rho}
\quad
 \text{for all } n\geq N
.
\end{align}
Here $\rho$ is the constant as in \eqref{eq_trace}.

\item
\eqref{eq_ultra-cont} implies that, 
for all $\delta >0$ there exists $C_\delta >0$ such that
\begin{align}
\label{eq_Sobolev}
 \|f\|_{2p} ^2
\leq
 C_\delta \|f\|_2 ^2
+
 \delta \mathcal{E}(f, f)
\quad
 \text{for all }f\in \mathcal{F}
.
\end{align}

\item
The first inequality \eqref{eq_Green1} in Assumption (A5) 
implies that the mappings
\begin{align}
\label{eq_compactp}
 (\mathcal{F}, \|\cdot\|_{\mathcal{E}_1})
\hookrightarrow
 L^2(E; m) 
,
 L^p(E; m) 
\end{align}
are compact embeddings.

Furthermore, under \eqref{eq_ultra-cont}, the mappings 
\begin{equation}
\label{eq_compact2p}
 (\mathcal{F}, \|\cdot\|_{\mathcal{E}_1})
\hookrightarrow
 L^2(E; m) 
,
 L^{2p}(E; m) 
\end{equation}
are compact embeddings.
\end{enumerate}
\end{Lem}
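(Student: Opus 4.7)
For part (1), I would first verify that $T_t$ is Hilbert--Schmidt on $L^2(E;m)$: by (A3) and the identity $\|T_t\|_{\mathrm{HS}}^2 = \int_E p_{2t}(x,x)\,m(dx)$, we have $\|T_t\|_{\mathrm{HS}}^2\le C(2t)^{-\rho/2}$. Since $T_t$ is self-adjoint and positivity-preserving, the spectral theorem yields an $L^2$-orthonormal basis $\{\psi_n\}$ with eigenvalues $e^{-\lambda_n t}$ and $\lambda_n\uparrow\infty$. Continuity of $p_t$ from (A2) propagates to each eigenfunction through the identity $\psi_n = e^{\lambda_n t}T_t\psi_n$, and the expansion \eqref{eq_eigenfct} is then a Mercer-type argument based on $L^2(E\times E)$-convergence and continuity. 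For the sup-norm bound, Cauchy--Schwarz applied to $\psi_n(x) = e^{\lambda_n t}T_t\psi_n(x)$ gives $|\psi_n(x)|^2\le e^{2\lambda_n t}\|T_t\|_{2\to\infty}^2$; the semigroup/duality identity $\|T_t\|_{2\to\infty}^2 = \|T_{2t}\|_{1\to\infty}$ together with the ultra-contractive estimate (A4) provides a polynomial bound after optimising $t\asymp 1/\lambda_n$. The growth of $\lambda_n$ follows from Karamata's Tauberian theorem applied to the trace formula $\sum_n e^{-\lambda_n t}\asymp t^{-\rho/2}$, which gives polynomial asymptotics for the eigenvalue counting function and hence for $\lambda_n$.

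For part (2), I would invoke the classical Varopoulos-type equivalence between ultra-contractivity (A4) and a Sobolev inequality of the form $\|f\|_q^2 \le C_1\mathcal{E}(f,f)+C_2\|f\|_2^2$ for $q=2\mu/(\mu-2)$. The range condition $\mu<2p/(p-1)$ in (A4) is equivalent to $q>2p$, so $2p$ lies strictly between $2$ and $q$; H\"older interpolation $\|f\|_{2p}\le\|f\|_2^\theta\|f\|_q^{1-\theta}$ with $\theta\in(0,1)$, squared and combined with Young's inequality with a free parameter, yields the required form \eqref{eq_Sobolev} with $\delta$ as the absorbed small constant multiplying $\mathcal{E}(f,f)$.

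For part (3), the key step is the compactness of the $1$-resolvent $R_1$ on $L^2(E;m)$, which is equivalent to compactness of $(\mathcal{F},\|\cdot\|_{\mathcal{E}_1})\hookrightarrow L^2$ through the isometric isomorphism $R_1:L^2\to(\mathcal{F},\|\cdot\|_{\mathcal{E}_1})$. The tightness assumption (A1) lets one approximate $R_1$ in operator norm by its localisation $1_K R_1 1_K$ for a large compact $K$, and the Green-kernel $L^p$ bound in (A5), combined with the finiteness of $m(K)$, shows this localisation is Hilbert--Schmidt, hence compact. Compactness of $\mathcal{F}\hookrightarrow L^p$ then follows by the same tightness-plus-localisation strategy together with the $L^p$-mapping properties of $R_1$ given by (A5). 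Under the additional assumption (A4), part (2) yields the continuous embedding $\mathcal{F}\hookrightarrow L^{q}$ for some $q>2p$; composing the compact embedding $\mathcal{F}\hookrightarrow L^2$ with interpolation against this bounded embedding into $L^q$ then upgrades compactness to $L^{2p}$.

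The main obstacle I anticipate is in part (1), namely extracting the exponent $\rho/2$ in $\|\psi_n\|_\infty\le C\lambda_n^{\rho/2}$: the integrated trace bound (A3) alone does not seem to yield a sharp pointwise diagonal estimate, so the argument must essentially go through (A4) and rely on the relation between $\mu$ and $\rho$ that holds in the intended applications. A secondary technical difficulty is keeping the tightness argument in part (3) uniform when $m(E)=\infty$; it is precisely here that (A1) does real work rather than bookkeeping.
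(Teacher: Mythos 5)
Your route is essentially the same as what the paper's citation-based proof is invoking: a Mercer/spectral expansion (the Davies reference), a Laplace-transform Tauberian theorem giving $\lambda_n\asymp n^{1/\rho}$ (the de Haan--Stadtm\"uller reference), and the Carlen--Kusuoka--Stroock/Varopoulos equivalence between $\|T_t\|_{1\to\infty}\lesssim t^{-\mu/2}$ and a Sobolev inequality followed by H\"older interpolation. Your exponent bookkeeping in part (2) --- $q=2\mu/(\mu-2)>2p$ precisely when $\mu<2p/(p-1)$ --- is correct, and so is the $L^{2p}$ upgrade in part (3) by composing $L^2$-compactness with continuity into $L^q$, $q>2p$.

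The obstacle you flag in part (1) is a genuine gap, and your proposed fix does not close it as written. Optimising $|\psi_n(x)|^2\le e^{2\lambda_n t}\|T_t\|_{2\to\infty}^2=e^{2\lambda_n t}\|T_{2t}\|_{1\to\infty}$ against (A4) at $t\asymp 1/\lambda_n$ yields $\|\psi_n\|_\infty\lesssim\lambda_n^{\mu/2}$, not $\lambda_n^{\rho/2}$; these agree only if $\mu\le\rho$, and (A4) alone does not ensure this (in the paper's examples with $d_{\rm s}\le 2$ one has $\rho=d_{\rm s}$ but can only take $\mu=2+\varepsilon>\rho$). The exponent $\rho/2$ really comes from a \emph{pointwise} on-diagonal bound $\sup_x p_t(x,x)\lesssim t^{-\rho/2}$, which is strictly stronger than the trace estimate \eqref{eq_trace}; with it, $\psi_n(x)^2\le e^{\lambda_n t}p_t(x,x)$ gives $\lambda_n^{\rho/2}$ directly, and this is exactly what is used downstream in Lemma \ref{Lem_B} in the form $\|\psi_n\|_\infty^2\lesssim n$. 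So the estimate is not derivable from (A2) and \eqref{eq_trace} alone; either a pointwise diagonal bound or a relation $\mu\le\rho$ is additionally needed. A second, smaller point: your argument for \eqref{eq_compactp} invokes the tightness (A1), which is not listed among the hypotheses of that clause; when $m(E)<\infty$ (as in all the paper's examples), \eqref{eq_Green1} alone already makes $R_1$ Hilbert--Schmidt on $L^2$ via
\begin{equation*}
\iint_{E\times E} R_1(x,y)^2\, m(dx)m(dy)
\le
m(E)^{2-2/p}\,\sup_{x\in E}\Bigl(\int_E R_1(x,y)^p m(dy)\Bigr)^{2/p}<\infty ,
\end{equation*}
whereas (A1) is genuinely needed in the $m(E)=\infty$ case --- you should record which regime your argument is in.
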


\begin{proof}
For \eqref{eq_eigenfct}, see \cite[Theorem 7.2.5]{MR2359869}. 
For \eqref{eq_growth_of_eigen}, see \cite{MR793651}.
For \eqref{eq_Sobolev}, use 
\cite[Theorem 2.16]{MR898496}
and H\"older's inequality.
\end{proof}

\begin{Rmk}
Assume Assumption (A2). 
In the proof of Proposition \ref{Prop_exp_app} and 
Theorem \ref{Thm_LDP}, we only use
 \eqref{eq_tight},
 \eqref{eq_Green1},
 \eqref{eq_Green2},
 \eqref{eq_eigenfct},
 \eqref{eq_growth_of_eigen},
 \eqref{eq_Sobolev} and
 \eqref{eq_compact2p}.


\end{Rmk}


\subsection{Basic facts and calculations}

\subsubsection{Basics on large deviations}

We recall basic definitions and facts on large deviations.
In this subsection, 
let 
$(\mathcal{X}, d)$ be a separable metric space,
$\mathcal{Y}$ be a Hausdorff topological space,
$\{(\Omega, \mathcal{B}, \mathbb{P}_t)\}_{t>0}$ 
be a family of probability spaces. 
We also let
$\{Z_t\}_{t>0}$ and $\{Z_{t, m}\}_{t>0}$, $m=1, 2, \dots$ be 
families of $\mathcal{X}$-valued random variables.

%

\begin{Thm}
[Contraction principle, {\cite[Theorem 4.2.1]{MR1619036}}]
\label{Thm_DZ_contraction}
Let 
$f:\mathcal{X}\rightarrow\mathcal{Y}$ be a continuous function.
Consider
$\{Z_t\}$ satisfies the LDP as $t\rightarrow \infty$ 
with probabilities $\{\mathbb{P}_t\}$, scale $t$ 
and a good rate function $I$.
Then,
$\{f(Z_t)\}$ satisfies the LDP as $t\rightarrow \infty$ 
with probabilities $\{\mathbb{P}_t\}$, scale $t$ 
and the good rate function
\begin{equation*}
 \bar{I}(y)
:=
 \inf_{x\in f^{-1}(y)}
 I(x)
\quad
 \text{for }y\in \mathcal{Y}
.
\end{equation*}
\end{Thm}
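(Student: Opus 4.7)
The plan is to reduce everything to the fact that $f$ is continuous, so preimages of open sets are open and preimages of closed sets are closed, and then transport the LDP for $\{Z_t\}$ through $f$.

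First I would verify that $\bar{I}$ is a good rate function. The natural identification to establish is
\[
 \bar{I}^{-1}[0,\alpha]
 \;=\;
 f\!\left( I^{-1}[0,\alpha] \right)
 \qquad \text{for every } \alpha \geq 0,
\]
from which compactness of the level sets of $\bar{I}$ follows immediately, since the continuous image of the compact set $I^{-1}[0,\alpha]$ is compact (and therefore closed, as $\mathcal{Y}$ is Hausdorff). The inclusion ``$\supseteq$'' is obvious from the definition of $\bar{I}$. For ``$\subseteq$'', given $y$ with $\bar{I}(y)\leq \alpha$, I would take a minimizing sequence $x_n \in f^{-1}(y)$ with $I(x_n)\leq \alpha + 1/n$; eventually $x_n \in I^{-1}[0,\alpha+1]$, which is compact, so extract a subsequence $x_{n_k}\to x_\infty$. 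Then $f(x_\infty)=y$ by continuity of $f$, and $I(x_\infty)\leq \alpha$ by lower semicontinuity of $I$ (a good rate function is automatically lsc since its sub-level sets are closed). So $x_\infty \in I^{-1}[0,\alpha]$ and $y=f(x_\infty)\in f(I^{-1}[0,\alpha])$. This also shows the infimum in the definition of $\bar{I}(y)$ is attained whenever it is finite.

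Next I would prove the two LDP bounds for $\{f(Z_t)\}$. For any $\Gamma \subset \mathcal{Y}$, the identity
\[
 \mathbb{P}_t(f(Z_t) \in \Gamma) \;=\; \mathbb{P}_t(Z_t \in f^{-1}(\Gamma))
\]
is the main tool. Continuity of $f$ gives $f^{-1}(\overline{\Gamma})$ closed and $f^{-1}(\mathrm{int}\,\Gamma)$ open; moreover $\overline{f^{-1}(\Gamma)} \subset f^{-1}(\overline{\Gamma})$ and $\mathrm{int}(f^{-1}(\Gamma)) \supset f^{-1}(\mathrm{int}\,\Gamma)$. Applying the LDP for $\{Z_t\}$ and using the obvious swap of infima
\[
 \inf_{x \in f^{-1}(A)} I(x)
 \;=\;
 \inf_{y \in A}\;\inf_{x \in f^{-1}(y)} I(x)
 \;=\;
 \inf_{y \in A} \bar{I}(y),
\]
the upper bound on $\overline{\Gamma}$ and the lower bound on $\mathrm{int}\,\Gamma$ translate directly into the corresponding bounds with $\bar{I}$ on $\Gamma$.

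There is not really a serious obstacle here; the only step that needs any care is the attainment argument showing that $\bar{I}^{-1}[0,\alpha] = f(I^{-1}[0,\alpha])$, which is where the \emph{goodness} of $I$ (not merely lower semicontinuity) is essential—without it the level sets of $\bar{I}$ need not even be closed. Everything else is formal manipulation of the continuity of $f$ together with the definition of the LDP.
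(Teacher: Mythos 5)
Your proof is correct and follows essentially the standard argument given in the cited reference (Dembo--Zeitouni, Theorem 4.2.1), which the paper itself invokes without reproducing. In particular you correctly isolate the two nontrivial ingredients: the attainment of the infimum in $\bar I(y)$ via compactness of the sub-level sets of $I$, which gives $\bar I^{-1}[0,\alpha]=f(I^{-1}[0,\alpha])$ and hence goodness of $\bar I$; and the transport of the LDP bounds through the set-theoretic identities $\overline{f^{-1}(\Gamma)}\subset f^{-1}(\overline{\Gamma})$, $\mathrm{int}(f^{-1}(\Gamma))\supset f^{-1}(\mathrm{int}\,\Gamma)$ together with the tautological swap $\inf_{f^{-1}(A)}I=\inf_A\bar I$. (Your sequential-compactness step is valid here because, as stated in the paper's framework, $\mathcal{X}$ is a metric space; for general Hausdorff $\mathcal{X}$ one would replace sequences by nets, but that is immaterial in the present setting.)
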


\begin{Def}
[\cite{MR1619036}, Definition 4.2.14]
We say that
$\{Z_{t, m}\}_t, m=1, 2, \dots$ are 
exponentially good approximations of $\{Z_t\}_t$ 
with respect to probability measures $\{\mathbb{P}_t\}$ if, 
for every $\delta>0$,
\begin{equation*}
 \lim_{m\rightarrow \infty}
 \limsup_{t\rightarrow \infty}
 \frac{1}{t}
 \log
 \mathbb{P}_t
 (
  d(Z_{t, m}, Z_{t}) > \delta
 )
=
 -\infty
.
\end{equation*}
\end{Def}

\begin{Thm}
[\cite{MR1619036}, Theorem 4.2.16]
\label{Thm_DZ_expgood}
Suppose that
for every $m$,
$\{Z_{t, m}\}_t$ satisfies the LDP as $t\rightarrow \infty$ 
with probabilities $\{\mathbb{P}_t\}$, scale $t$ 
and a good rate function $I_m$,
and that
$\{Z_{t, m}\}_t, m=1, 2, \dots$ are 
exponentially good approximations of $\{Z_t\}_t$ 
with respect to probability measures $\{\mathbb{P}_t\}$. 
Assume
that the function
\begin{equation*}
 \widetilde{I}(x)
:=
 \sup_{\delta > 0}
 \liminf_{m\rightarrow \infty}
 \inf_{z\in B(x; \delta)}
 I_m(z)
\quad
 \text{for }x\in \mathcal{X}
\end{equation*}
is a good rate function, and assume that
for every closed set $F\subset \mathcal{X}$,
it holds that
\begin{equation*}
 \inf_{x\in F}
 \widetilde{I}(x)
\leq
 \limsup_{m\rightarrow\infty}
 \inf_{x\in F}
 I_m(x)
.
\end{equation*}
Then 
$\{Z_t\}$ satisfies the LDP as $t\rightarrow \infty$ 
with probabilities $\{\mathbb{P}_t\}$, scale $t$ 
and the good rate function $\widetilde{I}$.
\end{Thm}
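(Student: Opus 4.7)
The plan is to prove the LDP for $\{Z_t\}$ by establishing the standard upper and lower bounds separately. In each case the strategy is identical: express events involving $Z_t$ in terms of events involving the approximants $Z_{t,m}$, modulo a discrepancy event $\{d(Z_{t,m},Z_t)>\delta\}$, then apply the LDP for $Z_{t,m}$ and use the exponentially-good-approximation hypothesis to make the discrepancy contribute nothing in the iterated limit $t\to\infty$, $m\to\infty$.

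For the upper bound, I fix a closed set $F\subset\mathcal{X}$ and $\delta>0$, and set $F_\delta:=\{x\in\mathcal{X}:d(x,F)\leq\delta\}$, which is closed. From the inclusion $\{Z_t\in F\}\subset\{Z_{t,m}\in F_\delta\}\cup\{d(Z_{t,m},Z_t)>\delta\}$ together with $\limsup_t t^{-1}\log(a_t+b_t)\leq\max\{\limsup_t t^{-1}\log a_t,\;\limsup_t t^{-1}\log b_t\}$, I obtain
\begin{equation*}
 \limsup_{t\to\infty}\frac{1}{t}\log\mathbb{P}_t(Z_t\in F)
\leq
 \max\Bigl\{-\inf_{x\in F_\delta}I_m(x),\ \Lambda_m(\delta)\Bigr\},
\end{equation*}
where $\Lambda_m(\delta):=\limsup_t t^{-1}\log\mathbb{P}_t(d(Z_{t,m},Z_t)>\delta)$ satisfies $\Lambda_m(\delta)\to-\infty$ as $m\to\infty$. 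Taking $\liminf_m$ on the right eliminates the second term and leaves $-\limsup_m\inf_{x\in F_\delta}I_m(x)$; the closed-set hypothesis applied to $F_\delta$ bounds this by $-\inf_{x\in F_\delta}\widetilde{I}(x)$. Since $\widetilde{I}$ is a good rate function and $F=\bigcap_{\delta>0}F_\delta$, a standard compactness argument on the sublevel sets gives $\inf_{x\in F_\delta}\widetilde{I}(x)\uparrow\inf_{x\in F}\widetilde{I}(x)$ as $\delta\downarrow0$, completing the upper bound.

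For the lower bound, I fix an open set $G$ and $x\in G$; it suffices to consider $\widetilde{I}(x)<\infty$. Choose $\delta>0$ with $B(x,2\delta)\subset G$. The definition of $\widetilde{I}$ supplies, for every $\epsilon>0$, a subsequence $m_j\to\infty$ along which $\inf_{z\in B(x,\delta)}I_{m_j}(z)\leq\widetilde{I}(x)+\epsilon$. Using the inclusion $\{Z_t\in G\}\supset\{Z_{t,m_j}\in B(x,\delta)\}\setminus\{d(Z_{t,m_j},Z_t)>\delta\}$, I combine the LDP lower bound for $Z_{t,m_j}$ (producing exponential decay rate at most $\widetilde{I}(x)+\epsilon$) with the fact that, for $m_j$ large enough, $\Lambda_{m_j}(\delta)<-\widetilde{I}(x)-\epsilon$, so $\mathbb{P}_t$ of the discrepancy event is exponentially negligible compared to $\mathbb{P}_t(Z_{t,m_j}\in B(x,\delta))$. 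Hence $\liminf_t t^{-1}\log\mathbb{P}_t(Z_t\in G)\geq-\widetilde{I}(x)-\epsilon$; sending $\epsilon\downarrow0$ and then taking the infimum over $x\in G$ finishes the proof.

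The main obstacle, and the reason the closed-set hypothesis on $\widetilde{I}$ is imposed as a separate assumption, is the upper bound: pointwise one only controls $\widetilde{I}$ via $\liminf_m I_m$, while the LDP for $Z_{t,m}$ delivers $\inf_{F_\delta}I_m$, and the interchange of infimum with $\liminf_m$ is precisely what the hypothesis bypasses. The subtle bookkeeping in the lower bound is the ordering of the limits $t\to\infty$ before $m\to\infty$ and the dependence of the subsequence $m_j$ on both $\delta$ and $\epsilon$, but once $\widetilde{I}(x)<\infty$ the cancellation between the LDP rate and $\Lambda_{m_j}(\delta)$ is automatic.
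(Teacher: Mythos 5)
The paper does not prove this statement — it is cited verbatim from Dembo--Zeitouni \cite{MR1619036}, Theorem 4.2.16 — so there is no internal proof to compare against. Your reconstruction is correct and follows the canonical Dembo--Zeitouni argument: the $\delta$-fattening $F_\delta$ plus the max-of-rates bookkeeping for the upper bound, the $B(x,\delta)$-minus-discrepancy inclusion plus a diagonal subsequence for the lower bound, and the good-rate-function compactness lemma to pass from $\inf_{F_\delta}\widetilde{I}$ to $\inf_F\widetilde{I}$. The only place that deserves a touch more care is the lower bound: you assert $\Lambda_{m_j}(\delta)<-\widetilde{I}(x)-\epsilon$, which is the \emph{same} exponential rate as the lower bound on $\mathbb{P}_t(Z_{t,m_j}\in B(x,\delta))$, so ``exponentially negligible compared to'' does not quite follow as stated. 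What you actually need (and have, since $\Lambda_{m_j}(\delta)\to-\infty$ along any subsequence) is a strict gap, e.g.\ $\Lambda_{m_j}(\delta)<-\widetilde{I}(x)-\epsilon-1$; then for $t$ large one gets $b_t/a_t\to 0$, hence $a_t-b_t\geq a_t/2$, and the $\liminf$ carries over. With that one-line repair the argument is complete.
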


\begin{Lem}
[Varadhan's integral lemma, {\cite[Theorem 4.3.1]{MR1619036}}]
\label{Lem_DZ_Varadhan}
Let 
$\phi:\mathcal{X}\rightarrow \mathbb{R}$ be a continuous function.
Suppose
$\{Z_t\}$ satisfies the LDP as $t\rightarrow \infty$ 
with probabilities $\{\mathbb{P}_t\}$, scale $t$ 
and a good rate function $I$.
If
\begin{equation*}
 \limsup_{t\rightarrow\infty}
 \frac{1}{t}
 \log
 \mathbb{E}_t[e^{\gamma t \phi(Z_t)}]
<
 \infty
\end{equation*}
for some $\gamma > 1$, then it holds that
\begin{equation*}
 \lim_{t\rightarrow\infty}
 \frac{1}{t}
 \log
 \mathbb{E}_t[e^{\gamma t \phi(Z_t)}]
=
 \sup_{x\in\mathcal{X}}
 \bigl\{
  \phi(x) - I(x)
 \bigr\}
.
\end{equation*}
\end{Lem}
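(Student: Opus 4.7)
My approach follows the method of Chen and Rosen (\cite{MR2165257}), replacing their Fourier-analytic input by the exponential approximation of Proposition~\ref{Prop_exp_app}. Throughout, let $L_{\varepsilon,t}^{(i)}(x):=\int_0^t p_\varepsilon^{(i)}(X^{(i)}(s),x)\,ds$, so that the approximated intersection measure has density $\prod_{i=1}^p L_{\varepsilon,t}^{(i)}$ with respect to $m$. The first step is to use Proposition~\ref{Prop_exp_app} together with the $\tfrac{1}{p}$-H\"older continuity of $x\mapsto x^{1/p}$ on $[0,\infty)$ and moment bounds for $\ell_t^{\mathrm{IS}}$ (coming from Proposition~\ref{Prop_existence}) to show that the log-MGFs for $\ell_t^{\mathrm{IS}}$ and for $\ell_{\varepsilon,t}^{\mathrm{IS}}$ differ by $o(t)$ uniformly as $\varepsilon\downarrow 0$, thereby reducing the problem to the analogous statement for the smoothed measure.

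For the upper bound, combine the generalized H\"older inequality and AM--GM to get the pointwise estimate
\[
\langle\ell_{\varepsilon,t}^{\mathrm{IS}},h\rangle^{1/p}=\Bigl(\int_E h\prod_i L_{\varepsilon,t}^{(i)}\,dm\Bigr)^{1/p}\leq \frac{1}{p}\sum_{i=1}^p\Bigl(\int_E h\,(L_{\varepsilon,t}^{(i)})^p\,dm\Bigr)^{1/p}.
\]
By the independence of $X^{(1)},\ldots,X^{(p)}$ the MGF upper bound factorizes over $i$, and for each $i$ I would apply Varadhan's integral lemma (Lemma~\ref{Lem_DZ_Varadhan}) to the single-process LDP for $t^{-1}\ell_{\varepsilon,t}^{(i)}$ (the $p=1$ case of Theorem~\ref{Thm_LDP}) with continuous integrand $\mu\mapsto(\theta/p)\bigl(\int h\,(d\mu/dm)^p\,dm\bigr)^{1/p}$. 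This identifies each individual limit as $\tfrac{1}{p}\sup_\psi\{\theta(\int h\psi^{2p}\,dm)^{1/p}-p\mathcal{E}^{(i)}(\psi,\psi)+p\lambda_1^{(i)}\}$; the moment hypothesis of Varadhan is verified via the Sobolev-type inequality~\eqref{eq_Sobolev} from Lemma~\ref{Lem_pre}(2) together with the ultracontractivity~\textbf{(A4)}. Summing over $i$ and sending $\varepsilon\downarrow 0$ yields the upper bound matching the RHS.

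For the matching lower bound, I would fix a near-optimizer $\psi_i^\star\in\mathcal{F}^{(i)}$ of each single-process variational problem on the RHS and use independence together with the individual LDPs to construct a joint event of $\widetilde{\mathbb{P}}_t$-probability at least $\exp\bigl\{-t\sum_i(\mathcal{E}^{(i)}(\psi_i^\star,\psi_i^\star)-\lambda_1^{(i)})+o(t)\bigr\}$ on which each $t^{-1}\ell_t^{(i)}$ is close to $(\psi_i^\star)^2\,m$. The delicate point---and the main obstacle of the proof---is converting this concentration into a lower bound for $\langle\ell_{\varepsilon,t}^{\mathrm{IS}},h\rangle^{1/p}$ matching the decoupled RHS: a naive pointwise bound via H\"older yields only the joint variational sup $\sup_{\psi_1,\ldots,\psi_p}\bigl\{\theta(\int h\prod\psi_i^2\,dm)^{1/p}-\sum_i(\mathcal{E}^{(i)}(\psi_i,\psi_i)-\lambda_1^{(i)})\bigr\}$, so a careful quantitative argument exploiting the saturation conditions of H\"older and AM--GM is required to recover the individual summands. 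The extension from $h\in C_c^+(E)$ to general $h\in\mathcal{B}_b(E)$ nonnegative and compactly supported is finally handled by a standard monotone sandwich with continuous approximations, using monotonicity in $h$ of both the MGF and the variational RHS.
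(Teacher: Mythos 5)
Your proposal does not address the statement you were asked to prove. The statement in question is Lemma~\ref{Lem_DZ_Varadhan} -- Varadhan's integral lemma itself -- which this paper states as a quotation from \cite[Theorem~4.3.1]{MR1619036} and does not prove; it is invoked as an off-the-shelf tool. A proof of it would have nothing to do with intersection measures: it is a statement about a single family $\{Z_t\}$ in an abstract space $\mathcal{X}$ satisfying an LDP, and the standard argument splits the expectation according to a finite cover of $\mathcal{X}$ by sets on which $\phi$ oscillates by at most $\delta$, applies the LDP upper and lower bounds on each piece, and controls the tail where $\phi(Z_t)$ is large by the integrability hypothesis with some $\gamma>1$ together with H\"older's inequality.

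What you have instead written is a proof sketch for Proposition~\ref{Prop_mgf}, the asymptotics of the log-moment generating function of the intersection measure, which is a different result proved later in Section~\ref{Sec_proof_mgf}. For what it is worth, that sketch resembles the paper's strategy in broad strokes (reduce to $\ell_{\varepsilon,t}^{\mathrm{IS}}$ via Proposition~\ref{Prop_exp_app}, apply Varadhan's lemma to the single-process occupation measures, use H\"older/AM--GM for the upper bound), but it leaves the lower bound unresolved: you note that a ``careful quantitative argument exploiting the saturation conditions of H\"older and AM--GM is required'' without supplying it. The paper's actual argument for that step (Proposition~\ref{Prop_6-2}) is not a saturation analysis; it exploits that $\{t^{-1}\ell_{\varepsilon,t}^{(i)}\}$ lives in a fixed compact set $K_\varepsilon^{(i)}\subset L^p(E';m_h)$ (via Arzel\`a--Ascoli, using the uniform continuity in assumption \textbf{(A2')}), covers this compact set by finitely many small balls, and observes that on each ball the gap between $\bigl(\int\prod_i|f_i|\,dm_h\bigr)^{1/p}$ and $\frac1p\sum_i\bigl(\int|f_i|^p\,dm_h\bigr)^{1/p}$ is at most $\delta$; independence then factorizes the conditional expectations on that ball. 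Without this compactness/covering step the desired lower bound does not follow. Still, the primary issue is that you have proven the wrong statement: you should either reproduce the standard Dembo--Zeitouni proof of Varadhan's lemma, or simply note that the lemma is cited, not proved, in this paper.
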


\subsubsection{Large deviation principles for occupation measures}
\label{Sec_LDP_for_occ_meas}

We 
recall large deviation principles
for the occupation measures of $m$-symmetric Hunt processes, proved in 
\cite{MR2778606}, Section 6.3, 
and we make some remarks about it.

Let
$E$ be a locally compact, second countable Hausdorff space, 
$m$ be a $\sigma$-finite Radon measure on $E$ with $\text{supp}[m]=E$ and
$X$ be an $m$-symmetric Hunt process on $E$ with the associated 
regular Dirichlet form $(\mathcal{F}, \mathcal{E})$ 
on $L^2(E; m)$.
Let 
$\ell_t$, $\widetilde{\mathbb{P}}_t$, $J$ be introduced
in Section \ref{Sec_intro} and omit index $^{(i)}$.
We 
site the large deviation principle for 
the occupation measure $\ell_t$ of $X$:

\begin{Thm}
[\cite{MR2778606}, Theorem 6.4.6]
\label{Thm_LDP_FOT}
Suppose 
\begin{enumerate}
\item[I.]
(Irreducibility)
$X$ is irreducible,
\item[I\hspace{-0.1em}I.]
(Resolvent strong Feller property)
$
 R_1(\mathcal{B}_b(E))
\subset
 C_b(E)
$ 
and
\item[I\hspace{-0.1em}I\hspace{-0.1em}I.]
(Tightness)
 for all $\varepsilon >0$,
 there exists a compact set $K$
 such that
 $
  \sup_{x\in E}
  R_1 1_{K^c} (x) 
 \leq 
  \varepsilon
 $.
\end{enumerate}
Then 
$\ell_t$ satisfies the LDP
as $t\rightarrow \infty$ with probability 
$
 \widetilde{\mathbb{P}}_t
$,
scale $t$ and the good rage function $J$.
\end{Thm}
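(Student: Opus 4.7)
The plan is to follow the classical Donsker-Varadhan strategy adapted to $m$-symmetric Hunt processes through Dirichlet form techniques. The three ingredients are: exponential tightness of $\{t^{-1}\ell_t\}$ in $\mathcal{M}_1(E)$, a Feynman-Kac computation of the logarithmic moment generating function $\Lambda(V)$ for $V\in C_b(E)$, and Legendre duality identifying $\Lambda^{*}=J$; these combine via a Bryc-type inverse theorem.

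First I would establish exponential tightness under $\widetilde{\mathbb{P}}_t$. Applying assumption III, for each $n\geq 1$ pick a compact $K_n\subset E$ with $\sup_{x}R_1 1_{K_n^c}(x)\leq 2^{-n}\eta_n$ for a chosen sequence $\eta_n\downarrow 0$. An exponential Chebyshev estimate of the form
\begin{equation*}
\mathbb{E}_x\exp\Bigl\{\lambda\int_0^t 1_{K_n^c}(X_s)\,ds-\lambda t\Bigr\}\leq C\bigl(1-\lambda\,\|R_1 1_{K_n^c}\|_\infty\bigr)^{-1}
\end{equation*}
(valid for small $\lambda$ via iterating the resolvent identity), together with the subexponential cost of the conditioning on $\{t<\zeta\}$, shows that the compact sets $\mathcal{K}_M:=\{\mu\in\mathcal{M}_1(E):\mu(K_n^c)\leq \eta_n\text{ for all }n\}$ exhaust exponentially fast with arbitrary rate.

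Next I would compute the logarithmic moment generating function. For $V\in C_b(E)$, the Feynman-Kac semigroup $P_t^V f(x):=\mathbb{E}_x[e^{\int_0^t V(X_s)ds}f(X_t)1_{\{t<\zeta\}}]$ is symmetric and strongly continuous on $L^2(E;m)$ with quadratic form $\mathcal{E}(\psi,\psi)-\int V\psi^2\,dm$. Irreducibility I, resolvent strong Feller II, and tightness III together force a Perron-Frobenius type structure: the top of the spectrum is a simple isolated eigenvalue
\begin{equation*}
\lambda(V)=\sup\Bigl\{\int V\psi^2\,dm-\mathcal{E}(\psi,\psi):\psi\in\mathcal{F},\ \|\psi\|_2=1\Bigr\}
\end{equation*}
with strictly positive continuous principal eigenfunction $h_V$. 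The resulting spectral gap yields, uniformly in $x$ on compacts,
\begin{equation*}
\lim_{t\to\infty}\frac{1}{t}\log\mathbb{E}_x\bigl[e^{\int_0^t V(X_s)ds}1_{\{t<\zeta\}}\bigr]=\lambda(V),
\end{equation*}
and applied to $V\equiv 0$ this reads $\lim_t t^{-1}\log\mathbb{P}_x(t<\zeta)=-\lambda_1$. Dividing,
\begin{equation*}
\Lambda(V):=\lim_{t\to\infty}\frac{1}{t}\log\widetilde{\mathbb{E}}_t\exp\Bigl\{\int_0^t V(X_s)\,ds\Bigr\}=\lambda(V)+\lambda_1=\sup_{\mu\in\mathcal{M}_1(E)}\Bigl\{\int V\,d\mu-J(\mu)\Bigr\},
\end{equation*}
where the last identity is immediate from the variational formula for $\lambda(V)$ under the substitution $\mu=\psi^2 m$.

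Combining exponential tightness with the pointwise existence of $\Lambda(V)$ for every $V\in C_b(E)$, an abstract Bryc/G\"artner-Ellis inverse theorem on $\mathcal{M}_1(E)$ upgrades these ingredients to the full LDP for $t^{-1}\ell_t$ with good rate function $\Lambda^{*}=J$. The main obstacle is the spectral analysis of $P_t^V$: the sharpness of the Feynman-Kac asymptotics uniformly in the starting point $x$ hinges on $\lambda(V)$ being a simple isolated eigenvalue with a continuous strictly positive principal eigenfunction, and this is precisely where the three hypotheses must be used in concert --- irreducibility for uniqueness of the ground state, resolvent strong Feller for its continuity, and tightness for promoting $L^2$-spectral information to the $C_b$-uniform asymptotics needed under $\widetilde{\mathbb{P}}_t$.
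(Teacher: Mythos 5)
The paper does not prove this statement at all: it is cited verbatim from Fukushima--Oshima--Takeda \cite[Theorem 6.4.6]{MR2778606}, so there is no in-paper proof to compare against. Your proposal is a proof attempt of FOT's theorem, and it follows the classical Donsker--Varadhan route; the comparison is therefore to FOT's argument, which establishes the upper and lower bounds of the LDP directly (Sections 6.3--6.4 of the book), with the lower bound obtained via a ground-state/tilting (Doob $h$-transform) argument rather than via Legendre-duality machinery.

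The serious gap in your proposal is the final step, where you invoke ``an abstract Bryc/G\"artner--Ellis inverse theorem'' to pass from exponential tightness plus $\Lambda(V)=\lim_t t^{-1}\log\widetilde{\mathbb{E}}_t\exp\{\int_0^t V(X_s)\,ds\}$ for all $V\in C_b(E)$ to the full LDP with rate $\Lambda^*=J$. No such theorem applies as a black box. Bryc's inverse Varadhan lemma requires the Laplace principle for \emph{all} bounded continuous functionals $F:\mathcal{M}_1(E)\to\mathbb{R}$, not just the linear ones $\mu\mapsto\int V\,d\mu$, and knowing $\Lambda$ only on linear test functions does not produce the nonlinear Laplace limits. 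The G\"artner--Ellis alternative gives the upper bound from exponential tightness plus $\Lambda$, but the lower bound in the infinite-dimensional setting requires verifying that the set of exposed points of $\Lambda^*$ with exposing hyperplanes in the effective domain is dense (\cite[Theorem 4.5.20 and the surrounding discussion]{MR1619036}); you neither verify this nor acknowledge it as a nontrivial condition. This exposed-points verification is precisely what FOT circumvents by a direct tilting argument for the lower bound: for $\mu$ with $J(\mu)<\infty$ and $\psi=\sqrt{d\mu/dm}$ strictly positive, one changes measure via the $h$-transform with $h$ close to $\psi$, reducing the problem to an ergodic theorem for the transformed (ergodic) process, and the irreducibility I and resolvent strong Feller II are used there to control the approximation and guarantee strict positivity of the ground state. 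Your spectral Perron--Frobenius discussion correctly identifies the ingredients that make the Feynman--Kac asymptotics work, but that machinery is needed for the lower bound via tilting, not to feed a G\"artner--Ellis theorem that in this generality does not yield the lower bound on its own.

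Two smaller points. Your exponential tightness display has a $-\lambda t$ where the Khasminskii-type iteration of the resolvent identity gives $-t$ (for the $1$-resolvent), and your phrase ``the subexponential cost of the conditioning on $\{t<\zeta\}$'' is imprecise: $\mathbb{P}_{x_0}(t<\zeta)$ decays \emph{exponentially} at rate $\lambda_1$, which is fine for exponential tightness at arbitrary rate since it is a fixed exponential factor, but it is not subexponential and the shift by $\lambda_1$ is exactly what turns $I^{(i)}$ into $J^{(i)}$. Finally, the passage from the $L^2$-spectral gap of $P_t^V$ to the $C_b$-uniform asymptotics of $\mathbb{E}_x[e^{\int_0^t V}1_{\{t<\zeta\}}]$ needs the resolvent strong Feller property to bootstrap $L^2$-information up to continuous pointwise control; you name the issue but do not supply the argument.
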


This large deviation principle can be regarded as 
$p=1$ version of Theorem \ref{Thm_LDP}.

\begin{Rmk}
\ \par
In view of Section 6.1--6.4 in \cite{MR2778606},
The resolvent strong Feller property is used
for deriving the following two properties:
\begin{enumerate}
\item[1.]
(page 347)
$R_\alpha(x, \cdot)$ is absolutely continuous 
with respect to $m$ for each $\alpha > 0$ and $x\in E$.

\item[2.]
(page 348)
For any function $\phi \in \mathcal{D}^+ (A)$
and for all $x \in E$,
$\phi(x) > 0$,
where
\begin{equation*}
 \mathcal{D}^+(A)
:=
 \{
  R_\alpha f
 :
  \alpha > 0
 ,
  f\in L^2(E; m)\cap C_b ^+(E)
 \text{ and }
  f\not= 0
 \}
.
\end{equation*}
\end{enumerate}

Property 1 easily follows from our assumption (A2).
Property 2 holds when 
$R_\alpha f$ is lower semicontinuous for all nonnegative 
Borel function $f$, and indeed this follows from
our assumption (A2) and Fatou's lemma.
\end{Rmk}

\subsubsection{Extension of the $L^2$ operator}

Let 
$E$ be a locally compact, second countable Hausdorff space and
$m$ be a $\sigma$-finite Radon measure on $E$. 

The aim of this subsection is to show the following two propositions:

\begin{Prop}
\label{Prop_Lp-cont-op}
Let
$T$ be a symmetric, Markovian contraction linear operator on $L^2(E; m)$.
Then, for all $p\in [1, \infty)$,
$T$ can be extended to a bounded linear operator on $L^p(E; m)$
with $\|T\|_p \leq 1$.
\end{Prop}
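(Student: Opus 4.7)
The plan is the classical three-step argument: first extend $T$ to an $L^\infty$-contraction, then dualize via symmetry to obtain an $L^1$-contraction, and finally interpolate between the two endpoint bounds. For Step 1, take $f\in L^2\cap L^\infty$ real-valued with $M:=\|f\|_\infty$, and decompose $f=f_+-f_-$ with $f_\pm\in L^2$ and $0\le f_\pm\le M$. Applying the Markovian property to $f_\pm/M\in L^2$ yields $0\le Tf_\pm\le M$ a.e., hence $|Tf|=|Tf_+-Tf_-|\le\max(Tf_+,Tf_-)\le M$ a.e.; complex-valued $f$ is handled by separating real and imaginary parts. This gives $\|Tf\|_\infty\le\|f\|_\infty$ on $L^2\cap L^\infty$.

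For Step 2, I would derive the $L^1$-contraction from symmetry and Step 1. Given $f\in L^2\cap L^1$, use the $\sigma$-finiteness of $m$ to pick an increasing sequence $A_n\uparrow E$ of finite-measure sets, and set $g_n:=\mathrm{sgn}(Tf)\,1_{A_n}\in L^2\cap L^\infty$ with $\|g_n\|_\infty\le 1$. Then symmetry and Step 1 give
\begin{equation*}
 \int_{A_n}|Tf|\,dm
=
 \langle Tf,g_n\rangle
=
 \langle f,Tg_n\rangle
\le
 \|f\|_1\,\|Tg_n\|_\infty
\le
 \|f\|_1,
\end{equation*}
and monotone convergence in $n$ yields $\|Tf\|_1\le\|f\|_1$.

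For Step 3, I would apply the Riesz--Thorin interpolation theorem to $T$ on the common dense subspace $L^1\cap L^2\cap L^\infty$, using the endpoint bounds of constant $1$ obtained above, to conclude $\|Tf\|_p\le\|f\|_p$ for every $p\in(1,\infty)$ on this subspace. Since $L^2\cap L^p$ is dense in $L^p$ for every $p\in[1,\infty)$, the inequality extends by continuity to a bounded operator on all of $L^p$ with $\|T\|_p\le 1$.

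No step presents a serious obstacle, but the one subtlety worth flagging is Step 1: the naive estimate $|Tf|\le T|f|\le\|f\|_\infty\,T\mathbf{1}$ is unavailable when $m(E)=\infty$, since $\mathbf{1}\notin L^2$ and $T$ is a priori only defined on $L^2$. The positive/negative decomposition sidesteps this by applying the Markovian property solely to elements of $L^2$, and is the key technical observation that makes the rest of the argument routine.
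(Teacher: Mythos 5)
Your proof is correct and follows the same three-step skeleton as the paper ($L^\infty$ endpoint, $L^1$ endpoint by symmetry/duality, interpolation), but it diverges in two places. In Step 1, you obtain the $L^\infty$ bound on $L^2\cap L^\infty$ directly from the positive/negative decomposition $f=f_+-f_-$ and the Markovian property applied to $f_\pm/\|f\|_\infty$, sidestepping the paper's heavier $\eta_n:=(n\eta)\wedge 1$ truncation argument, which the paper uses to produce a genuine extension of $T$ to \emph{all} of $L^\infty$. Since you only ever pair $Tf$ against test functions $g_n\in L^2\cap L^\infty$ in Step 2, you never need that full extension, so your version is lighter and self-contained. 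The more substantive difference is Step 3: the paper uses Marcinkiewicz interpolation only to obtain \emph{boundedness} on $L^p$ and then runs a separate pointwise-H\"older argument (Lemma \ref{Lem_Lp-cont}, applied to $T[f1_{E_n}]$, raised to the $p$-th power and integrated) to recover the sharp constant $\|T\|_p\le 1$, whereas you invoke Riesz--Thorin to get the constant $1$ in one stroke. That is a genuine shortcut, but one detail deserves to be flagged: the $L^p$-spaces in Dirichlet-form theory are real, and real Riesz--Thorin incurs a constant upon complexification unless the operator is positive. Here positivity follows from Markovianity (scale nonnegative $f$ into $[0,1]$), and for a positive operator the complexification preserves the $L^1$ and $L^\infty$ norms via $\sqrt{a^2+b^2}=\sup_\theta(a\cos\theta+b\sin\theta)$ together with positivity, so your argument closes -- but the paper's pointwise route has the virtue of bypassing complex interpolation altogether.
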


\begin{Prop}
\label{Prop_Lp-cont-semigrp}
Let
$\{T_t\}$ be a $C_0$-semigroup of symmetric, Markovian contraction 
linear operators on $L^2(E; m)$.
Then, for all $p\in (1, \infty)$,
$\{T_t\}$ can be extended to a contraction $C_0$-semigroup on $L^p(E; m)$. 
\end{Prop}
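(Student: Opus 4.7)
The plan is to combine Proposition \ref{Prop_Lp-cont-op} (which gives $L^p$-contractivity of each operator $T_t$) with a strong continuity argument at $t=0$, obtained by interpolating between $L^2$-strong continuity and the trivial $L^1$- and $L^\infty$-bounds. Strong continuity at every $t>0$ then follows from the semigroup property in the usual way.

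First I would apply Proposition \ref{Prop_Lp-cont-op} to each $T_t$ with the Markovian contraction property in hand, yielding for every $p\in[1,\infty]$ an extension $T_t: L^p(E;m)\to L^p(E;m)$ with $\|T_t\|_p\leq 1$. The semigroup law $T_{t+s}=T_tT_s$ transfers from $L^2$ to $L^p$ by density of $L^2\cap L^p$ in $L^p$ together with $L^p$-contractivity of each operator, so $\{T_t\}$ is a semigroup of contractions on $L^p(E;m)$. What remains is strong continuity at $t=0^+$.

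Next, fix $p\in(1,\infty)$ and take $f$ in the dense subspace $\mathcal{D}:=L^1(E;m)\cap L^2(E;m)\cap L^\infty(E;m)$ (density in $L^p$ uses the $\sigma$-finiteness of $m$, which is automatic here since $m$ is Radon). Since $\{T_t\}$ is $C_0$ on $L^2$, $\|T_tf-f\|_2\to 0$ as $t\to 0^+$. The $L^1$- and $L^\infty$-contraction bounds give $\|T_tf-f\|_1\leq 2\|f\|_1$ and $\|T_tf-f\|_\infty\leq 2\|f\|_\infty$. The log-convexity of $L^p$-norms now yields, for $1<p<2$ with $\tfrac{1}{p}=\theta+\tfrac{1-\theta}{2}$,
\begin{equation*}
 \|T_tf-f\|_p
\leq
 \|T_tf-f\|_1^{\theta}
 \|T_tf-f\|_2^{1-\theta}
\leq
 (2\|f\|_1)^\theta
 \|T_tf-f\|_2^{1-\theta}
\xrightarrow[t\to 0^+]{} 0,
\end{equation*}
and symmetrically, for $2<p<\infty$ with $\tfrac{1}{p}=\tfrac{\theta}{2}$,
\begin{equation*}
 \|T_tf-f\|_p
\leq
 \|T_tf-f\|_2^{\theta}
 \|T_tf-f\|_\infty^{1-\theta}
\xrightarrow[t\to 0^+]{} 0.
\end{equation*}
The case $p=2$ is given.

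Finally I would extend strong continuity from $\mathcal{D}$ to all of $L^p(E;m)$ by the standard $\varepsilon/3$ argument using the uniform contraction bound $\|T_t\|_p\leq 1$: given $g\in L^p$ and $\varepsilon>0$, pick $f\in\mathcal{D}$ with $\|f-g\|_p<\varepsilon/3$, then $\|T_tg-g\|_p\leq\|T_t(g-f)\|_p+\|T_tf-f\|_p+\|f-g\|_p<\varepsilon$ for $t$ small. Strong continuity at arbitrary $s>0$ then follows from $\|T_{s+h}f-T_sf\|_p\leq\|T_hf-f\|_p$ for $h>0$ and $\|T_{s-h}f-T_sf\|_p\leq\|f-T_hf\|_p$ for $0<h<s$, using only the semigroup law and contractivity. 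There is no genuinely hard step: the only point requiring any care is choosing the dense class $\mathcal{D}$ so that the interpolation works uniformly for all $p\in(1,\infty)$, which is why $L^1\cap L^2\cap L^\infty$ is the natural choice.
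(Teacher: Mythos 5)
Your proposal is correct and takes essentially the same approach as the paper: extend each $T_t$ to an $L^p$-contraction via Proposition \ref{Prop_Lp-cont-op}, then obtain strong continuity at $t=0^+$ on a dense subclass by interpolating the $L^p$-norm between the $L^2$-norm (where continuity is given) and a norm where only a uniform bound is needed, and finish with the standard $\varepsilon/3$ density argument. The only difference is cosmetic: for $p>2$ you interpolate against the $L^\infty$-endpoint, whereas the paper interpolates against $L^{2p}$; both are legitimate and the rest of the argument is the same.
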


The next lemma will be used in the proof of these propositions.
\begin{Lem}
\label{Lem_Lp-cont}
Let $T$ be a positivity preserving linear operator on $L^1(E; m)$,
let $p, q\in [1, \infty]$ with $p^{-1} + q^{-1} =1$
and let $0\leq f\in L^p(E; m)$, $0\leq g\in L^q(E; m)$.
Then it holds that 
\begin{equation*}
 T[fg](x)
\leq
 (T[f^p](x))^{1/p}
 (T[g^q](x))^{1/q}
\quad
 \text{for $m$-a.e. }x\in E
.
\end{equation*}
\end{Lem}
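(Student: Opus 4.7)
The plan is to apply Young's inequality pointwise inside $T$ and then optimize in a scaling parameter, using that a positivity preserving linear operator is automatically monotone (apply $T$ to $v-u\ge 0$ and use linearity). Note first that $fg$, $f^p$, $g^q$ all lie in $L^1(E;m)$ by the pointwise H\"older inequality, so the three images $T[fg]$, $T[f^p]$, $T[g^q]$ are well-defined elements of $L^1(E;m)$.

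The boundary case $\{p,q\}=\{1,\infty\}$ is immediate: $fg\le \|g\|_\infty f$ pointwise, so monotonicity of $T$ yields $T[fg]\le \|g\|_\infty T[f]$, which is the claimed inequality with the convention $(T[g^\infty])^{1/\infty}=\|g\|_\infty$. For $1<p<\infty$, I would fix a parameter $\lambda>0$ and apply Young's inequality pointwise in $y$:
\begin{equation*}
f(y)g(y) = (\lambda f(y))\cdot(\lambda^{-1} g(y)) \le \frac{\lambda^p}{p}f(y)^p + \frac{1}{q\lambda^q}g(y)^q.
\end{equation*}
Applying $T$ and using linearity and monotonicity gives, $m$-almost everywhere in $x$,
\begin{equation*}
T[fg](x) \le \frac{\lambda^p}{p}T[f^p](x) + \frac{1}{q\lambda^q}T[g^q](x).
\end{equation*}
Since this holds off an $m$-null set for each fixed rational $\lambda>0$, it holds simultaneously for all rational $\lambda$ off a single $m$-null set $N$. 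For $x\notin N$ with $A:=T[f^p](x)$ and $B:=T[g^q](x)$ both in $(0,\infty)$, minimizing the right-hand side in $\lambda$ (the minimizer satisfies $\lambda^{pq}=B/A$, using the identity $p+q=pq$ which follows from $1/p+1/q=1$) yields exactly $A^{1/p}B^{1/q}$, as desired.

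What remains is the bookkeeping for the degenerate cases. When one of $A,B$ is infinite and the other is positive, the claim is vacuous. When, say, $A=0$, letting $\lambda\to 0$ along rationals in the displayed inequality (with $B$ held fixed and finite, which holds off a null set since $g^q\in L^1$) forces $T[fg](x)\le 0$, hence equality with the zero right-hand side; the case $A=B=0$ is analogous. I do not expect any real obstacle here: the whole argument reduces to a pointwise Young inequality followed by a one-variable optimization, made rigorous by restricting the scaling parameter to a countable dense set so that a single exceptional null set suffices.
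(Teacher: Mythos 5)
Your proof is correct and follows essentially the same route as the paper's: apply the pointwise Young inequality with a free scaling parameter, push it through $T$ by linearity and positivity, restrict the parameter to rationals to obtain a single null set, and then optimize. The paper cites this as the argument from a reference and is terser about the degenerate cases, which you handle explicitly, but the underlying idea is identical.
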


\begin{proof}
We follow the proof of \cite[Theorem 7.24]{MR3410920}.

Since $T:L^1 \rightarrow L^1$ is positivity preserving, 
the inequality 
\begin{equation*}
 ab 
\leq 
 \frac{1}{p}
 \frac{a^p}{s^p}
+
 \frac{1}{q}
 b^q s^q
\quad
 \text{for all }
 a, b \geq 0
 \text{ and } s>0
\end{equation*}
implies that, there exists $N\subset E$ 
such that $m(N)=0$ and
\begin{equation*}
 T[f g](x)
\leq 
 \frac{1}{p}
 \frac{T[f^p](x)}{s^p}
+
 \frac{1}{q}
 T[g^q](x) s^q
\quad
 \text{for all } 
 x\in E\setminus N
 \text{ and }
 s\in \mathbb{Q}
 \text{ with }s>0
.
\end{equation*}
For each $x\in E\setminus N$, 
letting 
$
 s
\rightarrow
 \left(
 \frac{T[f^p](x)}{T[g^q](x)}
 \right)^{1/pq}
$
and obtain the desired conclusion.
\end{proof}

\begin{proof}
[Proof of Proposition \ref{Prop_Lp-cont-op}]
First, we extend the operator $T$ on $L^2$
to a bounded operator on $L^p$, $p\in [1,\infty]$.
The 
following arguments in Step 1 and 2 
are based on those in Section 4.1 of \cite{BA8478266X}:

\noindent
\textbf{Step 1}
(Extension to a $L^\infty$ operator)%
\textbf{.}

Take 
$\eta\in L^1(E; m)$ such that $\eta > 0$ $m$-a.e. and define
$\eta_n := (n\eta)\wedge 1$.
We 
extend the operator $T$ on $L^2(E; m)\cap L^\infty(E; m)$
to an operator on $L^\infty(E; m)$ by
\begin{equation*}
\begin{cases}
 Tf := \lim_{n\rightarrow \infty}T(f\eta_n)
&
 \text{for } f\in L^\infty_+ (E; m)
,
\\
 Tf := Tf^+ - Tf^-
&
 \text{for } f\in L^\infty (E; m)
.
\end{cases}
\end{equation*}
Here 
$
 L_+ ^\infty (E; m) 
= 
 \{
  f\in L^\infty(E; m) : f\geq 0 \text{ $m$-a.e.} 
 \}
$
and
$f^+ := f\vee 0$, $f^- := (-f)\vee 0$.
This extension is well-defined due to the Markov property 
\begin{equation*}
 0\leq Tf \leq 1 \text{ $m$-a.e.,}
 \text{ for all }f\in L^2(E; m)
 \text{ with }0\leq f\leq 1 \text{ $m$-a.e.}
\end{equation*}
of $T$, 
and is unique under the relation 
\begin{equation*}
 \int f (Tg) dm = \int (Tf) g dm
\quad
 \text{for all } 
 f\in L^1(E; m)\cap L^\infty(E; m)
 \text{ and }
 g\in L^\infty(E; m)
.
\end{equation*}


\noindent
\textbf{Step 2}
(Extension to a $L^1$ operator)%
\textbf{.}

Since 
$L^1(E; m)\cap L^\infty(E; m)$ is dense in $L^1(E; m)$,
we can also extend the operator $T$ on $L^1(E; m)\cap L^\infty(E; m)$
to an operator on $L^1(E; m)$. 
It is
easy to see that $T: L^1(E; m) \rightarrow L^1(E; m)$ is positivity preserving, 
Markovian and contractive.
By the Markov property of $T$, 
we 
can also see the following, 
\begin{equation*}
 \int f (Tg) dm = \int (Tf) g dm
\quad
 \text{for all } 
 f\in L^1(E; m)
 \text{ and }
 g\in L^\infty(E; m)
.
\end{equation*}


\noindent
\textbf{Step 3}
(Extension to a $L^p$ operator)%
\textbf{.}

By the Marcinkiewicz interpolation theorem,
we can extend $T$ to a bounded linear operator 
on $L^p(E; m)$, for any $p\in (1, \infty)$. 
\\

Next
we show that the contraction property $\|T\|_{p\rightarrow p}\leq 1$.
Let 
$f\in L^p(E; m)$ and take $E_n$ such that 
$m(E_n)<\infty$, $E_n\uparrow E$.
By 
Lemma \ref{Lem_Lp-cont} and the properties of 
the $L^1$ operator $T$, we have
\begin{align*}
 (T[f 1_{E_n}](x) )^p
\leq&
 (T[|f| 1_{E_n}](x) )^p
\\
\leq&
 T[|f|^p](x) 
 (T[1_{E_n} ^q](x) )^{p/q} 
\leq
 T[|f|^p](x) 
\end{align*}
for a.e. $x\in E$.
Taking 
integral and using the $L^1$-contractivity of $T$, 
we have
\begin{equation*}
 \|
  T[f 1_{E_n}]
 \|_p ^p
\leq
 \|T[|f|^p]\|_1
\leq
 \|f\|_p ^p
.
\end{equation*}
Letting $n\rightarrow \infty$, 
the dominate convergence theorem concludes
$
 \|T[f]\|_p
\leq
 \|f\|_p
$.
\end{proof}

\vspace{1eM}

\begin{proof}
[Proof of Proposition \ref{Prop_Lp-cont-semigrp}]
By Proposition \ref{Prop_Lp-cont-op}, we have already seen that
each $T_t$ can be extended to a contraction operator on $L^p(E; m)$,
and hence it is sufficient to show the continuity in $L^p$ 
with respect to $t$.

Fix 
$p\in (1, \infty)$, $f\in L^p(E; m)$ and $\varepsilon >0$.
Since 
$E$ is a Lusin space, $L^p(E; m)\cap C_b(E)$ is dense in $L^p(E; m)$.
Hence we can choose $f_\varepsilon \in L^1(E; m) \cap L^\infty(E; m)$
such that
$
 \|
  f - f_\varepsilon
 \|_{p}
<
 \varepsilon
$.
Write 
\begin{equation*}
 q
:=
 \begin{cases}
  2p & \text{when $p\geq2$},
 \\
  1  & \text{when $p<2$},
 \end{cases}
\end{equation*}
and take $\theta\in (0, 1]$ such that
$
 \frac{\theta}{2}
+
 \frac{1-\theta}{q}
=
 \frac{1}{p} 
$.
Then 
by H\"older's inequality and 
Proposition \ref{Prop_Lp-cont-op}, we have
\begin{align*}
 \|
  f - T_t f
 \|_{p}
\leq&
 \|
  f - f_\varepsilon
 \|_{p}
+
 \|
  f_\varepsilon - T_t f_\varepsilon
 \|_{p}
+
 \|
  T_t f_\varepsilon - T_t f
 \|_{p}
\\
\leq &
 2
 \|
  f - f_\varepsilon
 \|_{p}
+
 \|
  f_\varepsilon - T_t f_\varepsilon
 \|_{2} ^\theta
 \|
  f_\varepsilon - T_t f_\varepsilon
 \|_{q} ^{1-\theta}
\\
\leq&
 2\varepsilon
+
 \|
  f_\varepsilon - T_t f_\varepsilon
 \|_{2} ^\theta
 ( 
  2 \|f_\varepsilon\|_{q}
 )^{1-\theta} 
\end{align*}
and hence
$
 \limsup_{t\rightarrow 0}
 \|
  f - T_t f
 \|_{p}
\leq
 2\varepsilon
$.
Since $\varepsilon>0$ can be chosen arbitrary, 
we conclude
$
 \lim_{t\rightarrow 0}
 \|
  f - T_t f
 \|_{p}
=
 0
$.
\end{proof}

\subsubsection{Permutated tensor product}
\label{Sec_tensor}

In this subsection, 
we introduce a permutated tensor product of linear operators
and discuss about them.
Using this notation, we can simplify our proof of the main results.

Let $\mathfrak{S}_k$ be the symmetric group of degree $k$.
For bounded linear operators $T_1, \cdots, T_k$ on $L^2(E; m)$
and for $\sigma\in\mathfrak{S}_k$,
define two operators
$
 T_1 \otimes \cdots \otimes T_k
$
and
$
 T_1 \otimes \underset{\sigma}{\cdots} \otimes T_k
$
on $L^2(E^k; m^{\otimes k})$ by
\begin{align*}
 [
  T_1 \otimes \cdots \otimes T_k
 ]
 (
  g_1 \otimes \cdots \otimes g_k
 )
=&
 T_1 g_1
 \otimes
 \cdots
 \otimes
 T_k g_k
,
\\
 [
  T_1 \otimes \underset{\sigma}{\cdots} \otimes T_k
 ]
 (
  g_{1} \otimes \cdots \otimes g_{k}
 )
=&
 T_1 g_{\sigma(1)}
 \otimes
 \cdots
 \otimes
 T_k g_{\sigma(k)}
\end{align*}
for $g_1, \dots, g_k \in L^2(E; m)$.
We say the operator
$
 T_1 \otimes \underset{\sigma}{\cdots} \otimes T_k
$
the permutated tensor product of $T_1, \cdots, T_k$ with respect to $\sigma$.

In particular, for two bounded linear operators 
$S$, $T$ on $L^2(E; m)$ and $m\leq k$, 
we have 
\begin{align*}
&
 \Bigl(
  [
   S^{\otimes m}
  \underset{\sigma}{\otimes}
   T^{\otimes (k-m)}
  ]
 (
  g_1 \otimes \cdots \otimes g_k
 )
 \Bigr)
 (
  x_1, \cdots, x_k
 )
\\
=&
 S g_{\sigma(1)} (x_1)
 \cdots
 S g_{\sigma(m)} (x_m)
\cdot
 T g_{\sigma(m+1)} (x_{m+1})
 \cdots
 T g_{\sigma(k)} (x_k)
\\
=&
 U_1 g_1 (x_{\sigma^{-1}(1)})
 \cdots
 U_k g_k (x_{\sigma^{-1}(k)})
\\
=&
 \left(
  [U_1 \otimes \cdots \otimes U_k]
  (g_1 \otimes \cdots \otimes g_k)
 \right)
 (
  x_{\sigma^{-1}(1)}
 ,\cdots,
  x_{\sigma^{-1}(k)}
 )
,
\end{align*}
for $g_1, \dots, g_k \in L^2(E; m)$, where
\begin{equation}
\label{eq_tensorU}
 U_{j} = 
\begin{cases}
 S
&
 \text{when }
 \sigma^{-1}(j) \leq m
,
\\
 T
&
 \text{when }
 \sigma^{-1}(j) \geq m+1
,
\end{cases}
\end{equation}

and hence, for $F\in L^2(E^k; m^{\otimes k})$,
\begin{align*}
&
 \Bigl(
  [
   S^{\otimes m}
  \underset{\sigma}{\otimes}
   T^{\otimes (k-m)}
  ]
  F
 \Bigr)
 (
  x_1, \cdots, x_k
 )
=
 \left(
  [
   U_1 \otimes \cdots \otimes U_k
  ]
  F
 \right)
 (
  x_{\sigma^{-1}(1)}
 ,\cdots,
  x_{\sigma^{-1}(k)}
 )
.
\end{align*}

Furthermore, if $S$ and $T$ have kernels $s$ and $t$ respectively,
then by writing the kernel of $U_j$ as $u_j$,
we have
\begin{flalign*}
 \Bigl(
  [
   S^{\otimes m}
  \underset{\sigma}{\otimes}
   T^{\otimes (k-m)}
  ]
  F
 \Bigr)
 (
  x_1, \cdots, x_k
 )
=&
 \left(
  [
   U_1 \otimes \cdots \otimes U_k
  ]
  F
 \right)
 (
  x_{\sigma^{-1}(1)}
 ,\cdots,
  x_{\sigma^{-1}(k)}
 )
\\
=&
 \int_{E^k}
  \prod_{i=1} ^k 
  u_i(x_{\sigma^{-1}(i)}, y_i)
  F(y_1, \cdots, y_k)
 m(dy_1) \cdots m(dy_k)
\\
=&
 \int_{E^k}
  \prod_{i=1} ^m 
  s(x_{i}, y_{\sigma(i)})
  \prod_{i={m+1}} ^k 
  t(x_{i}, y_{\sigma(i)})
  F(y_1, \cdots, y_k)
 m(dy_1) \cdots m(dy_k)
.
&
\end{flalign*}


\subsubsection{Lemmas about nonnegative integer valued measures}

In
this subsection, we state some basic facts
about nonnegative integer valued measures.
From
now on, denote $\mathbb{Z}_{\geq 0}$ (resp. $\mathbb{Z}_{>0}$) 
as the set of nonnegative (resp. positive) integers.
\\

The following lemma is used in the computation
which yields \eqref{eq_55a-3} in Section \ref{Sec_55a}. 

\begin{Lem}
\label{Lem_meas1}
Let $\mathcal{X}$ be a finite set
,
$\pi$ be a $\mathbb{Z}_{\geq 0}$-valued measure on $\mathcal{X}$
and  
$
 f
:
 \mathbb{Z}_{\geq 0}\times \mathbb{Z}_{\geq 0}
\rightarrow 
 \mathbb{R}
$.
Write
\begin{equation*}
 \mathcal{M}_{\leq \pi}
:=
 \left\{
  \rho
 :
  \begin{array}{l}
   \text{$\rho$ is a $\mathbb{Z}_{\geq 0}$-valued measure on $\mathcal{X}$}
   \\
   \text{with $\rho(x)\leq \pi(x)$ for all $x\in \mathcal{X}$}
  \end{array}
 \right\}
.
\end{equation*}
Then, it holds that
\begin{equation*}
 \sum_{\rho \in \mathcal{M}_{\leq \pi}}
 \prod_{x\in \mathcal{X}}
 f(\pi(x), \rho(x))
=
 \prod_{x\in \mathcal{X}}
 \sum_{s=0} ^{\pi(x)}
 f(\pi(x), s)
.
\end{equation*}
\end{Lem}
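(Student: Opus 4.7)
The statement is a purely combinatorial distributive-law identity, so the plan is short and essentially formal. The key observation is that, since $\mathcal{X}$ is finite, a $\mathbb{Z}_{\geq 0}$-valued measure $\rho$ on $\mathcal{X}$ is determined by its values $(\rho(x))_{x\in\mathcal{X}}$, and the constraint $\rho\in\mathcal{M}_{\leq\pi}$ is equivalent to $0\le\rho(x)\le\pi(x)$ \emph{independently} for each $x\in\mathcal{X}$. Hence there is a canonical bijection
\[
 \mathcal{M}_{\leq \pi}
 \;\longleftrightarrow\;
 \prod_{x\in\mathcal{X}} \{0,1,\dots,\pi(x)\},
 \qquad
 \rho \longmapsto (\rho(x))_{x\in\mathcal{X}}.
\]

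The first step I would carry out is to rewrite the left-hand side of the identity using this bijection, so that
\[
 \sum_{\rho\in\mathcal{M}_{\leq\pi}}
 \prod_{x\in\mathcal{X}} f(\pi(x), \rho(x))
 \;=\;
 \sum_{(s_x)_{x\in\mathcal{X}} \in \prod_{x}\{0,\dots,\pi(x)\}}
 \prod_{x\in\mathcal{X}} f(\pi(x), s_x).
\]
The second step is then to apply the elementary distributive law for finite sums and products: for any finite family of real-valued functions $\{a_x\}_{x\in\mathcal{X}}$ on finite index sets $I_x$, one has $\sum_{(s_x)\in\prod_x I_x} \prod_x a_x(s_x) = \prod_x \sum_{s\in I_x} a_x(s)$. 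Taking $I_x=\{0,\dots,\pi(x)\}$ and $a_x(s)=f(\pi(x),s)$ yields exactly the right-hand side $\prod_{x\in\mathcal{X}}\sum_{s=0}^{\pi(x)} f(\pi(x), s)$.

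There is no real obstacle here; the only mild formality is recording the bijection between measures and tuples, after which the identity is the standard expansion of a product of sums. If one wants to be even more pedantic one could give a short induction on $\#\mathcal{X}$, splitting off one point of $\mathcal{X}$ at each step and using the induction hypothesis on the remainder, but the bijection-plus-distributivity argument is cleaner and is what I would write.
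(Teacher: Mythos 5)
Your proof is correct and takes essentially the same approach as the paper: reparametrize the sum over $\rho\in\mathcal{M}_{\leq\pi}$ as a sum over tuples $(s_x)_x\in\prod_x\{0,\dots,\pi(x)\}$, then apply the distributive law for finite sums and products. The only cosmetic difference is that you make the bijection explicit before invoking distributivity, whereas the paper does both in one display.
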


\begin{proof}
We have
\begin{flalign*}
 \sum_{\rho \in \mathcal{M}_{\leq \pi}}
 \prod_{x\in \mathcal{X}}
 f(\pi(x), \rho(x))
=&
 \sum_
 {
  \substack
  {
   \{s_x\}_{x\in \mathcal{X}}
  ;
  \\
   0\leq s_x \leq \pi(x)
  \\
   \text{for all } x\in \mathcal{X}
  }
 }
 \prod_{x\in \mathcal{X}}
 f(\pi(x), s_x)
\\
=&
 \prod_{x\in \mathcal{X}}
 \Biggl(
  \sum_
  {
   0\leq s_x \leq \pi(x)
  }
  f(\pi(x), s_x)
 \Biggr)
=
 \prod_{x\in \mathcal{X}}
 \sum_{s=0} ^{\pi(x)}
 f(\pi(x), s)
.
&
\end{flalign*}
\end{proof}

The following two lemmas are used in the proof of 
Lemma \ref{Lem_meas5}.
They are obtained by easy inductions,
so we omit the proofs.  

\begin{Lem}
\label{Lem_meas2}
Let $\mathcal{X}$ be a finite set and 
$
 \{ b_i \}_{i=1} ^n
\subset
 \mathcal{X}
$,
$m\leq n$.
For 
$\sigma\in\mathfrak{S}_n$, 
we define measures
\begin{align*}
 \pi
:=
 \sum_{i=1} ^n
 \delta_{b_i}
,\quad
 \pi_m
:=
 \sum_{i=1} ^m
 \delta_{b_i}
,\quad
 \pi_m ^\sigma
:=
 \sum_{i=1} ^m
 \delta_{b_{\sigma(i)}}
.
\end{align*}

Then, it holds that
\begin{equation*}
 \left\{
   \rho
  \in
   \mathcal{M}_{\leq \pi}
  :
   \rho(\mathcal{X}) = m
 \right\}
=
 \{
  \pi_m ^\sigma
 ;
  \sigma\in\mathfrak{S}_n
 \}
.
\end{equation*}
\end{Lem}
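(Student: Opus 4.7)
The plan is to prove the two inclusions of the set equality separately; as the authors note, the argument is elementary and combinatorial.

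For the inclusion $\{\pi_m^\sigma : \sigma \in \mathfrak{S}_n\} \subset \{\rho \in \mathcal{M}_{\leq \pi} : \rho(\mathcal{X}) = m\}$, I would observe that each $\pi_m^\sigma$ is a sum of $m$ Dirac measures, so it is $\mathbb{Z}_{\geq 0}$-valued with total mass $m$. To see $\pi_m^\sigma \leq \pi$, I note that for every $x \in \mathcal{X}$,
\begin{equation*}
 \pi_m^\sigma(x) = \#\{i \leq m : b_{\sigma(i)} = x\} \leq \#\{i \leq n : b_{\sigma(i)} = x\} = \#\{j \leq n : b_j = x\} = \pi(x),
\end{equation*}
where the last equality uses that $\sigma$ is a bijection of $\{1,\dots,n\}$.

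For the reverse inclusion, my plan is to give a direct selection argument rather than a formal induction. Given $\rho \in \mathcal{M}_{\leq \pi}$ with $\rho(\mathcal{X}) = m$, set $I_x := \{i \leq n : b_i = x\}$ for each $x \in \mathcal{X}$, so that $|I_x| = \pi(x)$ and the $I_x$ partition $\{1,\dots,n\}$. Since $\rho(x) \leq \pi(x) = |I_x|$, I can choose a subset $J_x \subset I_x$ with $|J_x| = \rho(x)$. Let $J := \bigsqcup_{x \in \mathcal{X}} J_x$; then $|J| = \sum_x \rho(x) = m$. I then pick any $\sigma \in \mathfrak{S}_n$ satisfying $\sigma(\{1,\dots,m\}) = J$ (such a $\sigma$ exists since $|J| = m$). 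For each $x$,
\begin{equation*}
 \pi_m^\sigma(x) = \#\{i \leq m : b_{\sigma(i)} = x\} = \#(J \cap I_x) = |J_x| = \rho(x),
\end{equation*}
so $\rho = \pi_m^\sigma$, as required.

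There is no real obstacle here; the statement is essentially a restatement of the fact that a submultiset of $\{b_1,\dots,b_n\}$ of size $m$ can be realised by choosing $m$ indices and then permuting the remaining $n-m$ arbitrarily. The only point to be careful about is that different permutations $\sigma$ may produce the same measure $\pi_m^\sigma$ (when the $b_i$ are not distinct or when one permutes the last $n-m$ positions), but this does not affect the set equality claimed.
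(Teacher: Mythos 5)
Your proof is correct. The paper itself omits a proof of this lemma, remarking only that it and Lemma~\ref{Lem_meas3} are ``obtained by easy inductions''; your argument instead gives a direct, non-inductive construction, which is a slightly different route. The key step is the selection of $J_x \subset I_x$ with $|J_x| = \rho(x)$ and an arbitrary $\sigma$ sending $\{1,\dots,m\}$ onto $J = \bigsqcup_x J_x$, after which the identity $\pi_m^\sigma(x) = \#(J\cap I_x) = |J_x| = \rho(x)$ closes the argument in one stroke. This avoids any appeal to induction on $n$ or $m$ and makes the underlying combinatorics (that a submultiset of size $m$ of $\{b_1,\dots,b_n\}$ is realised by picking $m$ source indices and permuting the rest freely) explicit. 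Both routes are sound; yours is arguably more transparent for a reader, while the induction the paper alludes to would presumably proceed by removing one index at a time.
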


\begin{Lem}
\label{Lem_meas3}
Let $\mathcal{X}$, $\mathcal{Y}$ be finite sets and 
$
 \{
  (a_i, b_i)
 \}_{i=1} ^n
\subset
 \mathcal{X} \times \mathcal{Y}
$.
For 
$\sigma\in \mathfrak{S}_n$, 
we define measures on $\mathcal{X} \times \mathcal{Y}$ 
\begin{align*}
 \pi
:=
 \sum_{i=1} ^n
 \delta_{(a_i, b_i)}
,\quad
 \pi^\sigma
:=
 \sum_{i=1} ^n
 \delta_{(a_i, b_{\sigma(i)})}
\end{align*}
and measure on $\mathcal{X}$ and $\mathcal{Y}$
\begin{align*}
 \pi_{\mathcal{X}}
:=
 \pi\circ (\mathrm{proj}_{\mathcal{X}} )^{-1} 
=
 \sum_{i=1} ^n
 \delta_{a_i}
,\quad
 \pi_{\mathcal{Y}}
:=
 \pi\circ (\mathrm{proj}_{\mathcal{Y}} )^{-1} 
=
 \sum_{i=1} ^n
 \delta_{b_i}
\end{align*}
respectively.
Set
\begin{equation*}
 \mathcal{M}(\pi_{\mathcal{X}}, \pi_{\mathcal{Y}})
:=
 \left\{
   \rho
  :
   \begin{array}{l}
    \rho \text{ is a }
    \mathbb{Z}_{\geq 0} \text{-valued measure on }
    \mathcal{X} \times \mathcal{Y}
   \\
    \text{with marginals }\pi_{\mathcal{X}} \text{ and }\pi_{\mathcal{Y}}
   \end{array}
 \right\}
.
\end{equation*}
Then, it holds that 
\begin{equation*}
 \mathcal{M}(\pi_{\mathcal{X}}, \pi_{\mathcal{Y}})
=
 \{
  \pi^\sigma
 ;
  \sigma\in\mathfrak{S}_n
 \}
.
\end{equation*}
\end{Lem}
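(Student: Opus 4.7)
The statement is an equality of two sets of $\mathbb{Z}_{\geq 0}$-valued measures on $\mathcal{X}\times\mathcal{Y}$, so I would prove the two inclusions separately.

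For the inclusion $\{\pi^\sigma : \sigma \in \mathfrak{S}_n\} \subseteq \mathcal{M}(\pi_{\mathcal{X}}, \pi_{\mathcal{Y}})$, the verification is direct: for any $\sigma \in \mathfrak{S}_n$, the $\mathcal{X}$-marginal of $\pi^\sigma$ is $\sum_{i=1}^{n} \delta_{a_i} = \pi_{\mathcal{X}}$, and the $\mathcal{Y}$-marginal is $\sum_{i=1}^{n} \delta_{b_{\sigma(i)}} = \sum_{j=1}^{n} \delta_{b_j} = \pi_{\mathcal{Y}}$ because $\sigma$ is a bijection on $\{1,\dots,n\}$.

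For the reverse inclusion, I would proceed by induction on $n$. The base case $n=1$ is immediate since $\rho$ is forced to be $\delta_{(a_1,b_1)} = \pi^{\mathrm{id}}$. For the inductive step, let $\rho \in \mathcal{M}(\pi_{\mathcal{X}}, \pi_{\mathcal{Y}})$. Since the $\mathcal{X}$-marginal gives $\rho(\{a_1\}\times\mathcal{Y}) = \pi_{\mathcal{X}}(\{a_1\}) \geq 1$, there exists $y \in \mathcal{Y}$ with $\rho(a_1, y) \geq 1$; the $\mathcal{Y}$-marginal condition then forces $y = b_j$ for some $j \in \{1,\dots,n\}$. Set $\sigma(1) := j$, and consider $\rho' := \rho - \delta_{(a_1, b_j)}$, which is still a $\mathbb{Z}_{\geq 0}$-valued measure, with $\mathcal{X}$-marginal $\sum_{i \geq 2} \delta_{a_i}$ and $\mathcal{Y}$-marginal $\sum_{i \neq j} \delta_{b_i}$.

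These are exactly the marginals associated with the reduced family of $n-1$ pairs obtained by deleting $a_1$ on the $\mathcal{X}$-side and $b_j$ on the $\mathcal{Y}$-side. Applying the induction hypothesis, after relabeling via the order-preserving bijections $\{1,\dots,n-1\} \to \{2,\dots,n\}$ and $\{1,\dots,n-1\} \to \{1,\dots,n\} \setminus \{j\}$, produces a permutation $\tau \in \mathfrak{S}_{n-1}$ such that $\rho'$ has the form $\sum_{k=1}^{n-1} \delta_{(a_{\phi(k)}, b_{\psi(\tau(k))})}$, where $\phi, \psi$ are those bijections. I would then assemble $\sigma \in \mathfrak{S}_n$ from $\sigma(1) = j$ together with $\sigma(\phi(k)) := \psi(\tau(k))$, obtaining $\rho = \pi^\sigma$. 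The one small subtlety is the careful bookkeeping of this relabeling so that $\sigma$ is honestly a permutation of $\{1,\dots,n\}$; but since everything is finite, this is mechanical and constitutes the only mild obstacle in an otherwise routine induction.
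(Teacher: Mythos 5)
Your proof is correct. The paper omits the argument for this lemma, remarking only that it is "obtained by easy induction"; your induction on $n$ — peeling one atom $\delta_{(a_1,b_j)}$ off $\rho$, applying the induction hypothesis to the reduced marginals, and reassembling $\sigma\in\mathfrak{S}_n$ after relabeling — is exactly the routine argument the authors had in mind, and your accounting of the relabeling bijections $\phi,\psi$ shows that the "mild obstacle" you flag does indeed resolve mechanically.
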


\vspace{1eM}

The next Lemma \ref{Lem_meas4} is used only 
in the proof of Lemma \ref{Lem_56-2}.
\begin{Lem}
\label{Lem_meas4}
Suppose $\mathcal{X}$ be a finite set.
Let
$A$ be a $\mathbb{Z}_{\geq 0}$-valued measure on $\mathcal{X}^2$
with $n:=A(\mathcal{X}^2)<\infty$. 
Fix
$x_{n+1}\in \mathcal{X}$.
Then, it holds that 
\begin{equation*}
 \#
 \left\{
  \{x_j\}_{j=1} ^n
 \subset
  \mathcal{X}
 \left|
  A
 =
  \sum_{j=1} ^n
  \delta_{(x_j, x_{j+1})}
 \right.
 \right\}
\leq
 k
\cdot
 \frac
 {
  \displaystyle 
  \prod_{l_1 \in \mathcal{X}}
  \overline{A}(l_1) !
 }
 {
  \displaystyle 
  \prod_{l \in \mathcal{X}^2}
  A(l) !
 }
,
\end{equation*}
where $\overline{A}$ is a measure on $\mathcal{X}$
determined by
$
 \overline{A}(\cdot)
:=
 A(\cdot\times \mathcal{X}) 
$.
\end{Lem}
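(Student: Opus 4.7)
The plan is to interpret the measure $A$ as a directed multigraph $G$ on the vertex set $\mathcal{X}$, where the multiplicity of the edge from $a$ to $b$ is $A(a, b)$. Under this identification, a sequence $\{x_j\}_{j=1}^n \subset \mathcal{X}$ satisfying $A = \sum_{j=1}^n \delta_{(x_j, x_{j+1})}$ with the fixed endpoint $x_{n+1}$ is exactly an Eulerian trail in $G$ terminating at $x_{n+1}$, i.e.\ a walk $x_1 \to x_2 \to \cdots \to x_{n+1}$ that traverses every edge of $G$ precisely once. The task is therefore to upper-bound the number of such Eulerian trails.

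The key step is a ``departure-sequence'' count, carried out vertex by vertex. For each $v \in \mathcal{X}$ the trail leaves $v$ exactly $\overline{A}(v) = A(\{v\}\times\mathcal{X})$ times, and among these departures the next vertex is $w$ on precisely $A(v, w)$ occasions. Hence the ordered list of destinations out of $v$ is a sequence of length $\overline{A}(v)$ in which each $w \in \mathcal{X}$ appears with multiplicity $A(v, w)$, and the number of such sequences is the multinomial coefficient $\overline{A}(v)! / \prod_{w\in\mathcal{X}} A(v, w)!$.

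Next, I would observe that the full trail is recovered uniquely from the choice of the starting vertex $x_1$ together with the family of departure sequences at every vertex: stepping along the trail, whenever we are at some $v \in \mathcal{X}$ the next destination is read off as the next unused entry of $v$'s departure sequence. Taking the product of the multinomial counts over $v$ therefore yields, for each fixed starting vertex, the upper bound
$$\prod_{v \in \mathcal{X}} \frac{\overline{A}(v)!}{\prod_{w \in \mathcal{X}} A(v, w)!} \;=\; \frac{\prod_{l_1 \in \mathcal{X}} \overline{A}(l_1)!}{\prod_{l \in \mathcal{X}^2} A(l)!}$$
on the number of admissible trails. A trivial factor of $k$ is then used to absorb the choice of $x_1$, yielding the stated inequality; in fact the in-/out-degree constraints imposed by $A$ and the fixed $x_{n+1}$ essentially pin down $x_1$, so this factor is quite generous.

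The one subtle point is the recoverability assertion: not every combination of a starting vertex and a tuple of departure sequences extends to a genuine Eulerian trail, since the greedy walk may get stuck at a vertex with no unused outgoing edges before exhausting $G$. However, because only an upper bound is needed, discarding such invalid combinations merely strengthens the inequality, and no deeper combinatorial identity (such as the BEST theorem) is required. This is the only place I expect any conceptual care to be needed; the actual counting is routine once the Eulerian-trail reformulation is in place.
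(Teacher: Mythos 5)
Your proposal is correct and matches the argument the paper delegates to \cite[Chapter~II.2, p.~17]{MR1739680}: interpret $A$ as a directed multigraph, identify admissible sequences with Eulerian trails ending at $x_{n+1}$, bound the trails by departure-sequence tuples (giving the product of multinomial coefficients), and absorb the choice of the start vertex $x_1$ into the prefactor $k$ (justified since a start vertex must have positive out-degree, and there are at most $n \leq k$ such vertices). The injectivity of the map from trails to $(x_1,\text{departure sequences})$ pairs is exactly the right observation, and you correctly note that surjectivity fails but is not needed for an upper bound.
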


\begin{proof}
This can be proved by similar arguments as 
in Chapter II.2, p.17 of \cite{MR1739680}.
\end{proof}

\vspace{1eM}

Let 
$\mathcal{X}$ and $T$ be finite sets, 
$S^*$ be a subset of $T$
and $p$ be a positive integer.
For
each $i=1, \cdots, p$, fix 
$F'_i\subset T$ with $\#S^* = \#F'_i $
and fix a disjoint partition $\{S_1 ^*, S_2 ^*\}$ of $S^*$.
Let
$A$ and $r$ be $\mathbb{Z}_{\geq 0}$-valued measures 
on $\mathcal{X}^p$ satisfying
$
 A(\mathcal{X}^p)=\#S^*
$,
$
 r(\mathcal{X})=\#S_1 ^*
$
and
$
 r(x)\leq A(x)
$
for all
$
 x\in \mathcal{X}^p
$.
Let $\{a_j ^{(i)}\}_{j\in T} \subset \mathcal{X}$
and simply denote
\begin{equation*}
 a_j 
=
 \{a_j ^{(i)}\}_{i=1} ^p
\in 
 \mathcal{X}^p
,
\quad
 a_{\sigma(j)} 
=
 \{a_{\sigma_i (j)} ^{(i)}\}_{i=1} ^p
\in
 \mathcal{X}^p
\end{equation*}
for $j\in S^*$.
Write
\begin{equation*}
 \Psi_p (A, r, a)
:=
 \left\{
  (\sigma_i)_{i=1} ^p 
 \in 
  \prod_{i=1} ^p
  \mathrm{Bij}(S^*, F'_i )
 \left|
 \quad
  A
 =
  \sum_{j\in S^*}
  \delta_{a_{\sigma(j)}}
 ,\quad
  r
 =
  \sum_{j\in S_1 ^*}
  \delta_{a_{\sigma(j)}}
 \right.
 \right\}
,
\end{equation*}
where
for two sets $T_1$ and $T_2$,
$\mathrm{Bij}(T_1, T_2)$
is the set of bijections from $T_1$ to $T_2$.

The 
next lemma is used in the computation
which yields \eqref{eq_55a-1} in Section \ref{Sec_55a}. 

\begin{Lem}
\label{Lem_meas5}
If
$\Psi_p(A, r, a) \not= \varnothing$,
then it holds that
\begin{equation*}
 \# \Psi_p(A, r, a)
=
 \# S_1 ^*! \# S_2 ^*! 
 \frac
 {
  \displaystyle
  \prod_{i=1}^p
  \prod_{x^{(i)}\in \mathcal{X}}
  A_i(x^{(i)})!
 }
 {
  \displaystyle
  \prod_{x\in \mathcal{X}^p}
  A(x)!
 }
 \prod_{x\in \mathcal{X}}
 \binom{A(x)}{r(x)}
.
\end{equation*}
\end{Lem}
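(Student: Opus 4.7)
The plan is to decompose each bijection $\sigma_i : S^* \to F'_i$ into two independent pieces of data and count each separately. The \emph{coarse} piece is the value function $f_i : S^* \to \mathcal{X}$ defined by $f_i(j) := a^{(i)}_{\sigma_i(j)}$, and the \emph{fine} piece is, for each $x\in\mathcal{X}$, a choice of bijection between the fiber $f_i^{-1}(x)$ and $\{k\in F'_i : a^{(i)}_k = x\}$. These two together reconstruct $\sigma_i$ uniquely, so it suffices to enumerate the pairs (coarse, fine) compatible with the defining conditions of $\Psi_p(A,r,a)$.

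First I package the $p$ value functions into a single map $F : S^* \to \mathcal{X}^p$, $F(j) = (f_1(j),\dots,f_p(j)) = a_{\sigma(j)}$. Under this identification, the two constraints defining $\Psi_p(A,r,a)$ become the multiset identities
\begin{equation*}
 \sum_{j\in S_1^*}\delta_{F(j)} = r,
 \qquad
 \sum_{j\in S_2^*}\delta_{F(j)} = A - r,
\end{equation*}
which are independent constraints on the restrictions $F|_{S_1^*}$ and $F|_{S_2^*}$. An elementary multinomial count (in the spirit of Lemma \ref{Lem_meas3} applied to each piece) then gives
\begin{equation*}
 \#\{F\}
 =
 \frac{(\#S_1^*)!\,(\#S_2^*)!}{\prod_{x\in\mathcal{X}^p} r(x)!\,(A(x)-r(x))!}
 =
 (\#S_1^*)!\,(\#S_2^*)!\,
 \frac{\prod_{x\in\mathcal{X}^p}\binom{A(x)}{r(x)}}{\prod_{x\in\mathcal{X}^p}A(x)!}.
\end{equation*}

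Next I fix such an admissible $F$ and count the tuples $(\sigma_i)_{i=1}^p$ that induce it. The problem decouples across $i$: the constraint $a^{(i)}_{\sigma_i(\cdot)} = f_i(\cdot)$ says exactly that $\sigma_i$ must biject the fiber $f_i^{-1}(x)$ onto $\{k\in F'_i : a^{(i)}_k = x\}$ for every $x\in\mathcal{X}$. The non-emptiness of $\Psi_p(A,r,a)$ forces the $i$-th marginal of $A$ to agree with $\sum_{k\in F'_i}\delta_{a^{(i)}_k}$, so both fibers have common cardinality $A_i(x)$; thus the number of compatible $\sigma_i$ is $\prod_{x\in\mathcal{X}}A_i(x)!$, and the product over $i$ is $\prod_{i=1}^p\prod_{x^{(i)}\in\mathcal{X}}A_i(x^{(i)})!$. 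Multiplying the two counts gives the claimed formula. The only subtle point is the marginal-consistency check underpinning the decoupling in the second step; everything else is routine multinomial bookkeeping, and I do not anticipate a serious obstacle.
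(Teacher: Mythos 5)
Your proof is correct, and it takes a genuinely different route from the paper's. The paper proves Lemma~\ref{Lem_meas5} by induction on $p$: the base case $p=1$ is a hands-on count of compositions $\sigma\circ f\circ\tau$, and the inductive step $p=2$ rests on the ``marginal'' claim \eqref{eq_claim} that the set of $\sigma_1$'s admitting a compatible $\sigma_2$ is precisely $\Psi_1(A_1, r_1, a^{(1)})$, whose proof in turn invokes Lemmas~\ref{Lem_meas2} and \ref{Lem_meas3}; the general case is then asserted by iterating. Your argument instead factors each $\sigma_i$ into its ``coarse'' value function and a ``fine'' choice of fiberwise bijections, observes that the defining constraints of $\Psi_p(A,r,a)$ depend only on the coarse data $F:S^*\to\mathcal{X}^p$, and counts the two layers independently: the coarse count is a product of two multinomial coefficients (one over $S_1^*$ giving $r$, one over $S_2^*$ giving $A-r$), and the fine count is $\prod_{i,x}A_i(x)!$, constant across admissible $F$ once non-emptiness pins down the fiber sizes $\#\{k\in F_i':a_k^{(i)}=x\}=A_i(x)$. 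This sidesteps the induction and the intermediate claim entirely, and it makes the role of each factor in the final formula transparent: the $\binom{A(x)}{r(x)}/A(x)!$ pieces come from the coarse multinomial counts, and the $\prod A_i(x)!$ from the fine reconstruction. The paper's inductive argument perhaps generalizes more mechanically to variants where the constraints do not decouple so cleanly across $S_1^*$ and $S_2^*$, but for the statement as given your direct decomposition is shorter and, I think, more illuminating. One small point worth making explicit when writing this up: after fixing an admissible $F$, the $p$ fine counts decouple across $i$ because the constraint $a^{(i)}_{\sigma_i(\cdot)}=f_i(\cdot)$ involves only $\sigma_i$; you say this, but it deserves a sentence, since it is exactly the decoupling that the paper's marginal claim \eqref{eq_claim} is laboring to establish.
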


The next corollary is given for the comparison with 
(3.35) in \cite[Lemma 3.6]{MR2999298}.
It seems that 
$
 \prod_{i=1} ^p
 \#(W_i \setminus S^*)!
$
is missing in (3.35).

\begin{Cor}
\label{Cor_meas5}
Let $F_i, W_i \subset T$ with 
$S^* \subset W_i$ and $\# W_i = \# F_i$.
We 
similarly define
\begin{equation*}
 \widetilde{\Psi}_p (A, r, a)
:=
 \left\{
  (\sigma_i)_{i=1} ^p
 \in 
  \prod_{i=1} ^p
  \mathrm{Bij}(W_i , F_i )
 \left|
 \quad
  A
 =
  \sum_{j\in S^*}
  \delta_{a_{\sigma(j)}}
 ,\quad
  r
 =
  \sum_{j\in S_1 ^*}
  \delta_{a_{\sigma(j)}}
 \right.
 \right\}
.
\end{equation*}

If
$\widetilde{\Psi}_p (A, r, a) \not= \varnothing$,
then it holds that
\begin{equation*}
 \# \widetilde{\Psi}_p
 (A, r, a)
=
 \# S_1 ^*! \# S_2 ^*! 
 \frac
 {
  \displaystyle
  \prod_{i=1}^p
  \prod_{x^{(i)}\in \mathcal{X}}
  A_i(x^{(i)})!
 }
 {
  \displaystyle
  \prod_{x\in \mathcal{X}^p}
  A(x)!
 }
 \prod_{x\in \mathcal{X}}
 \binom{A(x)}{r(x)}
\cdot
 \prod_{i=1} ^p
 \#(W_i \setminus S^*)!
.
\end{equation*}
\begin{flushright}
 $\square$ 
\end{flushright}
\end{Cor}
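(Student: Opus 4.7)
The proof is essentially a bookkeeping argument, and I will sketch the structure rather than grind through it. The strategy is to decompose each full bijection $\sigma_i \in \mathrm{Bij}(W_i, F_i)$ into its restriction to $S^*$, which carries all of the constraints defining $\widetilde{\Psi}_p$, together with its restriction to $W_i \setminus S^*$, which is completely unconstrained. This reduces the counting problem to a direct application of Lemma \ref{Lem_meas5}.

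Concretely, for $\sigma_i \in \mathrm{Bij}(W_i, F_i)$ I set $\tau_i := \sigma_i|_{S^*}$, $\nu_i := \sigma_i|_{W_i \setminus S^*}$, and $F'_i := \sigma_i(S^*) \subset F_i$. Then $\tau_i \in \mathrm{Bij}(S^*, F'_i)$ with $\#F'_i = \#S^*$, and $\nu_i \in \mathrm{Bij}(W_i \setminus S^*, F_i \setminus F'_i)$; conversely, any such triple $(F'_i, \tau_i, \nu_i)$ reconstructs $\sigma_i$ uniquely. The crucial observation is that the defining conditions
\[
 A = \sum_{j \in S^*} \delta_{a_{\sigma(j)}},
 \qquad
 r = \sum_{j \in S_1^*} \delta_{a_{\sigma(j)}}
\]
involve only the values $\sigma_i(j)$ for $j \in S^*$, hence only $(\tau_i)_{i=1}^p$, and impose no condition on $(\nu_i)_{i=1}^p$. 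Consequently $(\sigma_i)_{i=1}^p \in \widetilde{\Psi}_p(A, r, a)$ if and only if $(\tau_i)_{i=1}^p \in \Psi_p(A, r, a)$ (with $F'_i = \tau_i(S^*)$), while each $\nu_i$ can be chosen arbitrarily in $\mathrm{Bij}(W_i \setminus S^*, F_i \setminus F'_i)$.

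Since $\#(F_i \setminus F'_i) = \#W_i - \#S^* = \#(W_i \setminus S^*)$, the number of choices for each $\nu_i$ is exactly $\#(W_i \setminus S^*)!$. The non-emptiness assumption on $\widetilde{\Psi}_p(A, r, a)$ immediately transfers to $\Psi_p(A, r, a)$ via the restriction $\sigma_i \mapsto \tau_i$, so Lemma \ref{Lem_meas5} applies; multiplying the formula it provides by $\prod_{i=1}^p \#(W_i \setminus S^*)!$ yields the asserted identity. There is no serious obstacle here — the corollary is a routine extension of Lemma \ref{Lem_meas5} that records the additional freedom in extending the bijections beyond $S^*$, which is precisely the factor the author observes to be missing from (3.35) of \cite{MR2999298}.
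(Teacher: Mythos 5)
Your decomposition of each $\sigma_i \in \mathrm{Bij}(W_i, F_i)$ into the triple $(F'_i, \tau_i, \nu_i)$ with $\tau_i = \sigma_i|_{S^*} \in \mathrm{Bij}(S^*, F'_i)$, $\nu_i = \sigma_i|_{W_i\setminus S^*} \in \mathrm{Bij}(W_i\setminus S^*, F_i\setminus F'_i)$ and $F'_i = \sigma_i(S^*)$, together with the observation that the defining constraints see only $(\tau_i)$, is the natural reduction; since the paper states this corollary with only a $\square$, this is surely the intended argument.

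However, there is a gap in the final counting step. The decomposition you describe identifies $\widetilde{\Psi}_p(A,r,a)$ with the disjoint union, over all tuples $(F'_1,\dots,F'_p)$ with $F'_i\subset F_i$ and $\#F'_i = \#S^*$, of the sets $\Psi_p(A,r,a)$ (for that choice of $(F'_i)$) times $\prod_i \mathrm{Bij}(W_i\setminus S^*, F_i\setminus F'_i)$. Since the closed form of Lemma \ref{Lem_meas5} does not depend on $(F'_i)$, this yields
\[
 \# \widetilde{\Psi}_p(A,r,a)
 \;=\;
 \#\bigl\{(F'_i)_i : \Psi_p(A,r,a) \neq \varnothing\bigr\}
 \cdot
 \# \Psi_p(A,r,a)
 \cdot
 \prod_{i=1}^p \#(W_i\setminus S^*)!
 \,,
\]
and your proof, like the corollary's displayed formula, tacitly sets the first factor equal to $1$. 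You never justify this uniqueness, and it can fail: with $p=1$, $\mathcal{X}=\{x,y\}$, $T = W_1 = F_1 = \{1,2\}$, $S^*=S_1^*=\{1\}$, $S_2^* = \varnothing$, $a^{(1)}_1 = a^{(1)}_2 = x$, and $A = r = \delta_x$, both bijections of $\{1,2\}$ lie in $\widetilde{\Psi}_1(A,r,a)$, so $\#\widetilde{\Psi}_1(A,r,a) = 2$, while the stated right-hand side evaluates to $1$; the two valid choices $F'_1=\{1\}$ and $F'_1=\{2\}$ supply the missing factor. This looks like a defect in the corollary's statement itself (and presumably in the passage of \cite{MR2999298} it is criticizing) rather than only in your write-up, but your own reduction visibly produces a sum over tuples $(F'_i)$ which you then collapse to a single term without any hypothesis excluding repetitions among the values $a^{(i)}_k$, $k\in F_i$; you should have flagged this step.
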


The following proof is the same as the proof of 
\cite[Lemma 3.6]{MR2999298}.
\begin{proof}
[Proof of Lemma \ref{Lem_meas5}]
\ \par
\noindent
\textbf{Case 1; $p=1$}
\ \par

In this case, we have
\begin{align*}
 \Psi_1(A, r, a)
=&
 \left\{
  \sigma
 \in
  \mathrm{Bij}(S^*, F') 
 ;\quad
  A
 =
  \sum_{j\in S^*}
  \delta_
  {
   a_{\sigma (j)}
  }
 ,
\quad
  r
 =
  \sum_{j\in S_1^*}
  \delta_
  {
   a_{\sigma (j)}
  }
 \right\}
.
\end{align*}

First, fix $\sigma \in \mathrm{Bij}(S^*, F')$.
For $x\in \mathcal{X}$, write
$
 I_x
:=
 \{
  j\in S^*
 ;\quad
  a_{\sigma (j)} = x
 \}
$,
then $\{I_x\}_{x\in \mathcal{X}}$ is a disjoint partition of $S^*$.

There are 
$\# S_1 ^* !$ and $\# S_2 ^* !$ permutations in 
$S^* _1$ and $S^* _2$, respectively.
Take
$
 \tau_{1}\in \mathfrak{S}(S_1 ^*)
$,
$
 \tau_{2}\in \mathfrak{S}(S_2 ^*)
$
and set
$
 \tau = (\tau_1, \tau_2 )
\in
 \mathfrak{S}(S^*)
$.
For each 
$x\in \mathcal{X}$, there are $\binom{A(x)}{r(x)}$ ways
to decompose $I_x$ into $S^* _1$ and $S^* _2$.
Take
$
 f \in \mathfrak{S}(S^*)
 \text{ with }
 f(I_x) = I_x
$
for all $x$.
Then we find that
$
 \sigma \circ f \circ \tau
\in
 \Psi_1 (A, r, a)
$.
Except for the duplication, we obtain 
$
 \# S_1 ^* ! \# S_1 ^* ! 
 \prod_{x\in \mathcal{X}} 
 \binom{A(x)}{r(x)}
$
many elements of $\Psi_1(A, r, a)$ of the form
$
 \sigma \circ f \circ \tau
$.
Hence
\begin{equation*}
 \# \Psi_1(A, r, a)
\leq
 \# S_1 ^* ! \# S_1 ^* ! 
 \prod_{x\in \mathcal{X}} 
 \binom{A(x)}{r(x)}
.
\end{equation*}

For the converse inequality, 
fix $\sigma\in \Psi_1(A, r, a)$.
Then 
clearly we can decompose $\sigma$ as the above form,
and hence
%
\begin{equation*}
 \# \Psi_1(A, r, a)
\geq
 \# S_1 ^* ! \# S_1 ^* ! 
 \prod_{x\in \mathcal{X}} 
 \binom{A(x)}{r(x)}
.
\end{equation*}



\noindent
\textbf{Case 2; $p = 2$}
\ \par


We define the marginals of measures
\begin{equation*}
 A_1(\cdot)
:=
 A(\cdot\times \mathcal{X})
,\quad
 r_1(\cdot)
:=
 r(\cdot\times \mathcal{X})
,
\end{equation*}
\begin{equation*}
 A_2(\cdot)
:=
 A(\mathcal{X}\times\cdot)
,\quad
 r_2(\cdot)
:=
 r(\mathcal{X}\times\cdot)
,
\end{equation*}
and write
\begin{equation*}
 \Psi_1(A_1, r_1, a^{(1)})
:=
 \left\{
  \sigma_1
 \in
  \mathrm{Bij}(S^*, F') 
 ;\quad
  A_1
 =
  \sum_{j\in S^*}
  \delta_
  {
   a^{(i)} _{\sigma_1 (j)}
  }
 ,
\quad
  r
 =
  \sum_{j\in S_1^*}
  \delta_
  {
   a^{(i)} _{\sigma_1 (j)}
  }
 \right\}
.
\end{equation*}

\noindent
We claim that
\begin{equation}
\label{eq_claim}
 H
:=
 \left\{
  \sigma_1 \in \mathrm{Bij}(S^*, F'_1)
 ;
  \begin{array}{l}
   \text{ there exists }
   \sigma_2 \in \mathrm{Bij}(S^*, F'_2)
   ,
  \\
   \text{ such that }
   (\sigma_1, \sigma_2) 
  \in
   \Psi_2 (A, r, a)
  \end{array}
 \right\}
=
 \Psi_1(A_1, r_1, a^{(1)})
.
\end{equation}

We first prove $H \subset \Psi_1(A_1, r_1, a^{(1)})$.
Fix 
$\sigma_1$ and $\sigma_2$ as in $H$.
We have
\begin{align*}
 A_1(x^{(1)})
=&
 \sum_{x^{(2)} \in \mathcal{X}}
 A(x^{(1)}, x^{(2)})
\\
=&
 \sum_{x^{(2)} \in \mathcal{X}}
 \#
 \{
  j\in S^*
 ;\quad
  x^{(1)} = a_{\sigma_1(j)} ^{(1)}
 ,
  x^{(2)} = a_{\sigma_2(j)} ^{(2)}
 \}
\\
=&
 \#
 \{
  j\in S^*
 ;\quad
  x^{(1)} = a_{\sigma_1(j)} ^{(1)}
 \}
.
\end{align*}
By the same way, 
we obtain the similar equality about $r_1$.

We next prove $H\supset \Psi_1(A_1, r_1, a^{(1)})$.
Fix 
$\sigma_1 \in \Psi_1(A_1, r_1, a^{(1)})$.
Since
$\Psi_2(A, r, a)\not=\varnothing$, we can choose 
$\sigma_2\in \mathrm{Bij}(S^*, F'_2)$ 
such that
\begin{equation*}
 A_2 
=
 \sum_{j\in S^*}
 \delta_
 {
  a^{(2)} _{\sigma_2 (j)}
 }
.
\end{equation*}
By 
the definition of $\Psi_1(A_1, r_1,a^{(1)})$, 
we have
\begin{align*}
 A_1
=
 \sum_{j\in S^*}
 \delta_{a^{(1)}_{\sigma_1(j)} }
,\quad
 r_1
=
 \sum_{j\in S_1^*}
 \delta_{a^{(1)}_{\sigma_1(j)} }
.
\end{align*}
First, 
there exists a permutation
(see Lemma \ref{Lem_meas2})
$\tau_1 \in \mathfrak{S}(S^*)$ such that
\begin{align*}
 A_2
=
 \sum_{j\in S^*}
 \delta_{a^{(2)}_{\sigma_2\circ\tau_1(j)} }
,\quad
 r_2
=
 \sum_{j\in S_1^*}
 \delta_{a^{(2)}_{\sigma_2\circ\tau_1(j)} }
.
\end{align*}
Second,
there exists a permutation 
(see Lemma \ref{Lem_meas3})
$\tau'_2 \in \mathfrak{S}(S_1^*)$
such that
\begin{align*}
 r_1
=
 \sum_{j\in S_1 ^*}
 \delta_{a^{(1)} _{\sigma_1(j)} }
,\quad
 r_2
=
 \sum_{j\in S_1 ^*}
 \delta_
 {
  a^{(1)} _
  {
   \sigma_2\circ \tau_1\circ \tau'_2 (j)
  } 
 }
,\quad
 r
=
 \sum_{j\in S_1 ^*}
 \delta_
 {
  (
   a^{(1)} _{\sigma_1(j)} 
  ,
   a^{(2)} _{\sigma_2\circ \tau_1 \circ \tau_2' (j)} 
  ) 
 }
.
\end{align*}
Similarly, 
we can take a permutation 
$\tau'' _2\in \mathfrak{S}(S_2 ^*)$
which have the same property as $\tau' _2$. 
Finally, we find that 
$
 \widetilde{\sigma}_2
:=
 \sigma _2
 \circ
 \tau_1
 \circ
 (\tau' _2, \tau'' _2)
\in
 \mathrm{Bij}(S^*, F'_2)
$
is a required bijection.

\vspace{1eM}

By the above claim, we obtain
\begin{align*}
 \#\Psi_2(A, r, a)
=&
 \sum_{\sigma_1 \in \mathrm{Bij}(S^*, F'_1)}
 \#
 \{
  \sigma_2
 \in
  \mathrm{Bij}(S^*, F'_2)
 ;\quad
  (\sigma_1, \sigma_2)
  \in
  \Psi_2(A,r,a)
 \}
\\
=&
 \sum_
 {
  \sigma_1 
 \in 
  \Psi_1(A_1, r_1, a^{(1)}) 
 }
 \hspace{-5mm}
 \#
 \{
  \sigma_2
 \in
  \mathrm{Bij}(S^*, F'_2)
 ;\quad
  (\sigma_1, \sigma_2)
  \in
  \Psi_2(A,r,a)
 \}
.
\end{align*}

From now on, fix 
$
 \sigma_1 
\in
 \Psi_1(A_1, r_1, a^{(1)})
$
and we will calculate
$
 \#
 \{
  \sigma_2
 \in
  \mathrm{Bij}(S^*, F'_2)
 ;
  (\sigma_1, \sigma_2)
  \in
  \Psi_2(A,r,a)
 \}
$.

First, we construct such $\sigma_2$'s.
For each $x^{(1)} \in \mathcal{X}$,
take a disjoint partition
$\{D, \overline{D}\}$ of 
$
 \{
  j\in S^*
 ;
  a_{\sigma_1(j)} ^{(1)}
 =
  x^{(1)}
 \}
$
such that
\begin{align*}
 \{
  j\in S_1^*
 ;
  a_{\sigma_1(j)} ^{(1)}
 =
  x^{(1)}
 \}
=&
 \bigcup_{x^{(2)}\in \mathcal{X}}
 D (x^{(1)}, x^{(2)} )
,\quad
 \#
 D (x^{(1)}, x^{(2)} )
=
 r (x^{(1)}, x^{(2)} )
,
\\
 \{
  j\in S_2^*
 ;
  a_{\sigma_1(j)} ^{(1)}
 =
  x^{(1)}
 \}
=&
 \bigcup_{x^{(2)} \in \mathcal{X}}
 \overline{D} (x^{(1)}, x^{(2)} )
,\quad
 \#
 \overline{D} (x^{(1)}, x^{(2)} )
=
 A (x^{(1)}, x^{(2)} )
-
 r (l^{(1)}, l^{(2)} )
.
\end{align*}

There are
\begin{equation*}
 \prod_{x^{(1)} \in \mathcal{R}^2}
 \left(
  \frac
  {r_1(x^{(1)}) !}
  {
   \prod_{x^{(2)}\in \mathcal{X}}
   r(x^{(1)}, x^{(2)} ) !
  }
  \frac
  {(A-r)_1(x^{(1)}) !}
  {
   \prod_{x^{(2)} \in \mathcal{X}}
   (A-r)(x^{(1)}, x^{(2)} ) !
  }
 \right)
\end{equation*}
ways 
to choose the couple $\{D, \overline{D}\}$.
Note that 
for $x^{(2)}\in \mathcal{X}$,
\begin{equation*}
 \#
 \Biggl\{
  \bigcup_{x^{(1)} \in \mathcal{X}}
  \left(
   D (x^{(1)}, x^{(2)} )
   \cup
   \overline{D} (x^{(1)}, x^{(2)} )
  \right)
 \Biggr\}
=
 A_2 (x^{(2)})
.
\end{equation*}
For fixed $\{D, \overline{D}\}$, 
take $\sigma_2\in \mathrm{Bij}(S^*, F'_2)$ such that
\begin{equation*}
  \{
   j\in S^*
  ;\quad
   a_{\sigma_2(j)  } ^{(2)}
  =
   x^{(2)}
  \}
=
 \bigcup_{x^{(1)} \in \mathcal{X}}
 \left(
  D (x^{(1)}, x^{(2)} )
  \cup
  \overline{D} (x^{(1)}, x^{(2)} )
 \right)
\quad
 \text{for all } x^{(2)} \in \mathcal{X}
.
\end{equation*}
There are
$
 \prod_{x^{(2)} \in \mathcal{X}}
  A_2(x^{(2)} ) !
$
ways to choose such $\sigma_2$.
Then we have 
$
 (\sigma_1, \sigma_2) 
\in 
 \Psi_2(A, r, a)
$. 
Indeed, we can find that for
$
 (x^{(1)}, x^{(2)}) \in \mathcal{X}^2
$,
\begin{align*}
 \{
  j\in S_1 ^*
 ;
  a_{\sigma_i(j)} ^{(i)}
 =
  x^{(i)} 
  \hspace{2mm}
  \text{for }i = 1, 2
 \}
=&
 \bigcap_{i=1,2}
 \{
  j\in S_1 ^*
 ;
  a_{\sigma_i(j)} ^{(i)}
 =
  x^{(i)} 
 \}
=
 D(x^{(1)}, x^{(2)})
.
\end{align*}
Similarly we can find that
$
 \{
  j\in S_2 ^*
 ;
  a_{\sigma_i(j)} ^{(i)}
 =
  x^{(i)} 
  \hspace{2mm}
  \text{for }i = 1, 2
 \}
=
 \overline{D}(x^{(1)}, x^{(2)})
$.
Hence we have
\begin{align*}
&
 \#
 \{
  \sigma_2
 \in
  \mathrm{Bij}(S^*, F'_2)
 ;
  (\sigma_1, \sigma_2)
  \in
  \Psi_2(A,r,a)
 \}
\\
\geq&
 \prod_{x^{(1)} \in \mathcal{X}}
 \left(
  \frac
  {r_1(x^{(1)}) !}
  {
   \prod_{x^{(2)}\in \mathcal{X}}
   r(x^{(1)}, x^{(2)} ) !
  }
  \frac
  {(A-r)_1(x^{(1)}) !}
  {
   \prod_{x^{(2)}\in \mathcal{X}}
   (A-r)(x^{(1)}, x^{(2)} ) !
  }
 \right)
 \prod_{x^{(2)} \in \mathcal{X}}
  A_2(x^{(2)} ) !
.
\end{align*}
We can 
show the converse of the above inequality. 
Indeed, for 
$(\sigma_1, \sigma_2)\in \Psi_2(A, r, a)$,
write
\begin{align*}
 D(x^{(1)}, x^{(2)})
:=&
 \{
  j\in S_1^*
 ;
  a_{\sigma_i(j)} ^{(i)}
 =
  x^{(i)} 
 \quad
  \text{for }i = 1, 2
 \}
\\
 \overline{D}(x^{(1)}, x^{(2)})
:=&
 \{
  j\in S_2^*
 ;
  a_{\sigma_i(j)} ^{(i)}
 =
  x^{(i)} 
 \quad
  \text{for }i = 1, 2
 \}
\end{align*}
and obtain the same partition.


Therefore, we have
\begin{align*}
&
 \#\Psi_2(A, r, a)
\\
=&
 \sum_
 {
  \sigma_1 
 \in
  \Psi_1(A_1, r_1, a^{(1)}) 
 }
 \#
 \{
  \sigma_2
 \in
  \mathrm{Bij}(S^*, F'_2)
 ;\quad
  (\sigma_1, \sigma_2)
  \in
  \Psi_2(A,r,a)
 \}
\\
=&
 \#\Psi_1(A_1, r_1, a^{(1)})
 \prod_{x^{(1)} \in \mathcal{X}}
 \left(
  \frac
  {r_1(x^{(1)}) !}
  {
   \prod_{x^{(2)}\in \mathcal{X}}
   r(x^{(1)}, x^{(2)} ) !
  }
  \frac
  {(A-r)_1(x^{(1)}) !}
  {
   \prod_{x^{(2)}\in \mathcal{X}}
   (A-r)(x^{(1)}, x^{(2)} ) !
  }
 \right)
 \prod_{x^{(2)} \in \mathcal{X}}
  A_2(x^{(2)} )!
\\
\begin{split}
=&
 m_1 ! m_3 !
 \frac
 {
  \prod_{x^{(1)}\in \mathcal{X}}
  A_1 (x^{(1)}) !
 }
 {
  \prod_{x^{(1)}\in \mathcal{X}}
  r_1 (x^{(1)}) !
  \prod_{x^{(1)}\in \mathcal{X}}
  (A_1 -r_1) (x^{(1)}) !
 }
\\
&
\qquad
 \frac
 {
  \prod_{x^{(1)} \in \mathcal{X}}
  r_1(x^{(1)}) !
 }
 {
  \prod_{x^{(1)} \in \mathcal{X}}
  \prod_{x^{(2)} \in \mathcal{X}}
  r(x^{(1)}, x^{(2)} ) !
 }
 \frac
 {
  \prod_{x^{(1)} \in  \mathcal{X}}
  (A-r)_1(x^{(1)}) !
 }
 {
  \prod_{x^{(1)} \in  \mathcal{X}}
  \prod_{x^{(2)}\in \mathcal{X}}
  (A-r)(x^{(1)}, x^{(2)} ) !
 }
 \prod_{x^{(2)} \in  \mathcal{X}}
  A_2(x^{(2)} )!
\end{split}
\\
=&
 m_1 ! m_3 !
 \frac
 {
  \prod_{x^{(1)}\in \mathcal{X}}
  A_1 (x^{(1)}) !
  \prod_{x^{(2)}\in \mathcal{X}}
  A_2 (x^{(2)}) !
 }
 {
  \prod_{x^{(1)} \in  \mathcal{X}}
  \prod_{x^{(2)}\in \mathcal{X}}
  r(x^{(1)}, x^{(2)} ) !
  \prod_{x^{(1)} \in  \mathcal{X}}
  \prod_{x^{(2)}\in \mathcal{X}}
  (A-r)(x^{(1)}, x^{(2)} ) !
 }
\\
=&
 m_1 ! m_3 !
 \frac
 {
  \prod_{x^{(1)}\in \mathcal{X}}
  A_1 (x^{(1)}) !
  \prod_{x^{(2)}\in \mathcal{X}}
  A_2 (x^{(2)}) !
 }
 {
  \prod_{x \in \mathcal{X}^2 }
  A(x) !
 }
 \prod_{x \in \mathcal{X}^2 }
 \binom{ A(x) }{ r(x) }
.
\end{align*}


\vspace{1eM}

We can prove inductively for the case $p\geq 3$.
\end{proof}


\section{Proof of Proposition \ref{Prop_existence} }
\label{Sec_existence}

In this section, we give the proof of Proposition \ref{Prop_existence}.
The following proof is given by the same strategy 
as the proof of \cite[Theorem 2.2.3]{MR2584458}.

In the following, 
we abbreviate the measure $m(dx)$ just as $dx$.

Note that
$
 \{t < \zeta^{(i)}\}
=
 \{ X^{(i)}_t\in E\}
$.
For $f\in C_b (E)$, we recall that
\begin{equation*}
 \langle 
  f
 ,
  \ell_{\varepsilon, t} ^{\mathrm{IS}}
 \rangle
=
 \int_{E}
  f(x)
  \left[
   \int_{[0,t]^p}
    \prod_{i=1} ^p
    p^{(i)}_\varepsilon (X^{(i)}(s_i), x)
   ds_1 \cdots ds_p
  \right]
 m(dx)
.
\end{equation*}

\noindent
For each $i$, define
\begin{equation}
\label{Def_Ht(i)}
 H_t ^{(i)}
 (x_1, \cdots, x_k)
:=
 \int_{[0,t]^k}
 \int_E
  1_
  {
   \left\{
    \sum_{j=1} ^k r_j \leq t
   \right\}
  }
  \prod_{j=1} ^{k+1}
  p^{(i)} _{r_j}
  (x_{j-1}, x_{j})
 dx_{k+1}
 dr_1 \cdots dr_k
,
\end{equation}
where
$
 r_{k+1} 
=
 t - \sum_{j=1} ^k r_j
$.
Then we find that $H_t ^{(i)}\in L^p(E^k)$. 
Indeed,
\begin{align*}
&
 \int_{E^k}
  \biggl[
   \int_{[0,t]^k}
   \int_E
    1_
    {
     \left\{
      \sum_{j=1} ^k r_j \leq t
     \right\}
    }
    \prod_{j=1} ^{k+1}
    p^{(i)} _{r_j}
    (x_{j-1}, x_{j})
   dx_{k+1}
   dr_1 \cdots dr_k
  \biggr]^p
 dx_1 \cdots dx_k
\\
\leq&
 \int_{E^k}
  \biggl[
   \int_{[0,t]^k}
    1_
    {
     \left\{
      \sum_{j=1} ^k r_j \leq t
     \right\}
    }
    \prod_{j=1} ^{k}
    p^{(i)} _{r_j}
    (x_{j-1}, x_{j})
   dr_1 \cdots dr_k
  \biggr]^p
 dx_1 \cdots dx_k
\\
\leq&
 e^{pt}
 \int_{E^k}
  \biggl[
   \int_{[0,t]^k}
    1_
    {
     \left\{
      \sum_{j=1} ^k r_j \leq t
     \right\}
    }
    \prod_{j=1} ^{k}
    e^{-r_j}
    p^{(i)} _{r_j}
    (x_{j-1}, x_{j})
   dr_1 \cdots dr_k
  \biggr]^p
 dx_1 \cdots dx_k
\\
\leq&
 e^{pt}
 \left[
  \sup_{y\in E}
  \int_E
   \Gr{i} (x, y)^p
  dx
 \right]^k
\\
<&
 \infty
.
\end{align*}

By Proposition \ref{Prop_Lp-cont-op}, 
we have for 
$\sigma_1, \cdots, \sigma_p\in\mathfrak{S}_k$,
\begin{flalign}
\notag
 \int_{E^k}
   f^{\otimes k}
  \prod_{i=1} ^p
  \left[
   [
    T_\varepsilon ^{(i)}
   \otimes
    \underset{\sigma_i}{\cdots}
   \otimes
    T_\varepsilon ^{(i)}
   ]
   H_t ^{(i)}
  \right]
 dm^{\otimes k}
\leq&
 \|f\|_{\infty} ^k
 \prod_{i=1}^p
 \|
  [
   T_\varepsilon ^{(i)}
  \otimes
   \underset{\sigma_i}{\cdots}
  \otimes
   T_\varepsilon ^{(i)}
  ]
  H_t ^{(i)}
 \|_{L^p(E^k)}
\\
\leq&
\label{eq_uni.int}
 \|f\|_{\infty} ^k
 \prod_{i=1}^p
 \|
  H_t ^{(i)}
 \|_{L^p(E^k)}
<
 \infty
.
&
\end{flalign}

Next, we denote
\begin{equation}
\label{eq_0t<}
 [0, t]_< ^k
:=
 \{
  (s_1, \cdots, s_k)\in [0, t]^k
 :
  s_1 < \cdots < s_k
 \}
\end{equation}
and we regard $s_0 =0$, $s_{k+1} = t$ and 
$\sigma(0) = 0$, 
$\sigma(k+1) = k+1$ for $\sigma\in \mathfrak{S}_k$.
Then we have
\begin{flalign}
\notag
&
 \mathbb{E}_{x_0}
 \left[
  \langle
   f
  ,
   \ell_{\varepsilon, t}^{\mathrm{IS}}
  \rangle^k
 ;
  t < \zeta^{(1)} \wedge \cdots \wedge \zeta^{(p)}
 \right]
\\
\notag
=&
 \mathbb{E}_{x_0}
 \left[
  \langle
   f
  ,
   \ell_{\varepsilon, t}^{\mathrm{IS}}
  \rangle^k
 ;
  X^{(i)}(t) \in E \text{ for all }i
 \right]
\\
\notag
=&
 \int_{E^k}
  \prod_{l=1} ^k
  f(x_l)
  \prod_{i=1} ^p
  \left\{
   \int_{[0,t]^k}
    \mathbb{E}_{x_0}
    \left[
     \prod_{j=1} ^k
     p^{(i)} _\varepsilon 
     (X^{(i)}(s_j), x_j)
    ;
     X^{(i)}(t) \in E
    \right]
   ds_1 \cdots ds_k
  \right\}
 dx_1 \cdots dx_k
\\
\notag
\begin{split}
=&
 \int_{E^{k}}
  \prod_{l=1} ^k
   f(x_l)
  \prod_{i=1} ^p
  \left[
   \int_{E^{k+1}}
    \left(
     \prod_{j=1} ^{k}
     p^{(i)} _\varepsilon 
     (z_j , x_j)
    \right)
    \left(
     \sum_{\sigma\in\mathfrak{S}_k}
     \int_{[0,t]^k _<}
      \prod_{j=1} ^{k+1}
      p^{(i)} _{s_j - s_{j-1}} 
      (z_{\sigma(j-1)}, z_{\sigma(j)})
     ds_1\cdots ds_k
    \right)
  \right.
\\
&
\hspace{125mm}
   dz_1\cdots dz_{k+1}
  \Biggr]
 dx_1\cdots dx_k
\end{split}
\\
\notag
=&
 \int_{E^k}
  \prod_{l=1} ^k
  f(x_l)
  \prod_{i=1} ^p
  \left[
   \int_{E^{k}}
    \left(
     \prod_{j=1} ^k
     p^{(i)} _\varepsilon 
     (z_j , x_j)
    \right)
    \left(
     \sum_{\sigma\in\mathfrak{S}_k}
     H_t ^{(i)}
     (z_{\sigma^{-1}(1)}, \cdots, z_{\sigma^{-1}(k)})
    \right)
   dz_1\cdots dz_{k}
  \right]
 dx_1\cdots dx_k
\\
\notag
=&
 \int_{E^k}
   f^{\otimes k}
  \prod_{i=1} ^p
  \left[
   \sum_{\sigma\in\mathfrak{S}_k}
   [
    T_\varepsilon ^{(i)}
   \otimes
    \underset{\sigma}{\cdots}
   \otimes
    T_\varepsilon ^{(i)}
   ]
   H_t ^{(i)}
  \right]
 dm^{\otimes k}
\\
\label{eq_convergence}
\rightarrow&
 \int_{E^k}
   f^{\otimes k}
  \prod_{i=1} ^p
  \left[
   \sum_{\sigma\in\mathfrak{S}_k}
   [
    \mathrm{id} 
   \otimes
    \underset{\sigma}{\cdots}
   \otimes
    \mathrm{id} 
   ]
   H_t ^{(i)}
  \right]
 dm^{\otimes k}
\quad
 \text{as }\varepsilon \rightarrow 0
.
&
\end{flalign}

\noindent
By a similar argument of \eqref{eq_convergence}, 
we also have
\begin{flalign*}
&
 \mathbb{E}_{x_0}
 \left[
 |
  \langle
   f
  ,
   \ell_{\varepsilon, t} ^{\mathrm{IS}}
  \rangle
 -
  \langle
   f
  ,
   \ell_{\varepsilon', t} ^{\mathrm{IS}}
  \rangle
 |^2
 ;
  t < \zeta^{(1)} \wedge \cdots \wedge \zeta^{(p)}
 \right]
\\
\begin{split}
=&
 \int_{E^2}
  f^{\otimes 2}
  \prod_{i=1} ^p
  \left[
   \sum_{\sigma\in\mathfrak{S}_2}
   [
    T_\varepsilon ^{(i)}
   \underset{\sigma}{\otimes}
    T_\varepsilon ^{(i)}
   ]
   H_t ^{(i)}
  \right]
 dm^{\otimes 2}
+
 \int_{E^2}
  f^{\otimes 2}
  \prod_{i=1} ^p
  \left[
   \sum_{\sigma\in\mathfrak{S}_2}
   [
    T_{\varepsilon'} ^{(i)}
   \underset{\sigma}{\otimes}
    T_{\varepsilon'} ^{(i)}
   ]
   H_t ^{(i)}
  \right]
 dm^{\otimes 2}
\\
&
\hspace{70mm}
-
 2
 \int_{E^2}
  f^{\otimes 2}
  \prod_{j=1} ^p
  \left[
   \sum_{\sigma\in\mathfrak{S}_2}
   [
    T_{\varepsilon} ^{(i)}
   \underset{\sigma}{\otimes}
    T_{\varepsilon'} ^{(i)}
   ]
   H_t ^{(i)}
  \right]
 dm^{\otimes 2}
\end{split}
&
\end{flalign*}
and this converges to zero as 
$\varepsilon$ and $\varepsilon'$ tend to zero.
Hence we have shown that 
$
 \{
  \langle
   f
  ,
   \ell_{\varepsilon, t}^{\mathrm{IS}}
  \rangle
 \}_\varepsilon
$
is Cauchy in $L^k(\widetilde{\mathbb{P}}_{t})$, for $k = 2$.
By 
using H\"older's inequality and \eqref{eq_uni.int},
the case $k\not=2$ is derived from the case $k=2$.
We
thus obtain for all $k$,
\begin{equation}
\label{eq_convergence_in_moments}
 \widetilde{\mathbb{E}}_{t}
 \left[
 |
  \langle
   f
  ,
   \ell_{\varepsilon, t} ^{\mathrm{IS}}
  \rangle
 -
  \langle
   f
  ,
   \ell_{\varepsilon', t} ^{\mathrm{IS}}
  \rangle
 |^k
 \right]
\rightarrow
 0
\quad
 \text{as } \varepsilon, \varepsilon'\downarrow 0
.
\end{equation}



Next, we will show the existence of the vague limit.
Recall that
one can choose a subset 
$\{f_n\}_n$ of $C_K ^+ (E)$
such that
\begin{equation}
\label{eq_metric}
 d_{\{f_n\}}(\mu, \nu)
:=
 \sum_{n=1} ^\infty
 \frac{1}{2^n}
 \{
  |
   \langle
    \mu, f_n 
   \rangle
  -
   \langle
    \nu, f_n 
   \rangle
  |
 \wedge
  1
 \}
\quad
 \text{for }
 \mu, \nu \in \mathcal{M}(E)
\end{equation}
metrizes the vague topology of $\mathcal{M}(E)$,
the space of Radon measures on $E$.
(See Section 15.7 of \cite{MR818219} for example.)
In the following, we fix such $\{f_n\}$ 
and simply denote $d=d_{\{f_n\}}$.

By \eqref{eq_convergence_in_moments},
we can find that
$
 \widetilde{\mathbb{E}}_{t}
 \left[
  d
  (
   \ell_{\varepsilon, t} ^{\mathrm{IS}}
  ,
   \ell_{\varepsilon', t} ^{\mathrm{IS}}
  )
 \right]
\rightarrow
 0
$
as $\varepsilon, \varepsilon'\rightarrow 0$.
Take a sequence $\varepsilon_l \downarrow 0$ such that
$
 \widetilde{\mathbb{P}}_{t}
 (
  d
  (
   \ell_{\varepsilon_l, t} ^{\mathrm{IS}}
  ,
   \ell_{\varepsilon_{l+1}, t} ^{\mathrm{IS}}
  )
 >
  2^{-l}
 )
<
 2^{-l}
$
for all $l$.
Since $(\mathcal{M}(E), d)$ is a complete
metric space, there exists a random measure 
$\ell_{t} ^\mathrm{IS}\in \mathcal{M}(E)$ such that
\begin{equation}
\label{eq_a.e.}
 d
 (
  \ell_{\varepsilon_l, t} ^{\mathrm{IS}}
 ,
  \ell_{t} ^{\mathrm{IS}}
 )
\rightarrow
 0
; \quad
 \widetilde{\mathbb{P}}_{t}
\text{ -a.e.},
\text{ as }l\rightarrow\infty
.
\end{equation}

\noindent
In particular,
$
 \ell_{\varepsilon_l, t} ^{\mathrm{IS}}
$
converges to
$
 \ell_{t} ^{\mathrm{IS}}
$
as $l \rightarrow \infty$ in distribution.

Now we can find that the limit $\ell_t ^{\mathrm{IS}}$
is independent of the choice of the sequence $\varepsilon_l$.
Indeed, 
for a given $f\in C_K ^+(E)$ and a bounded Lipschitz function 
$G:\mathbb{R}\rightarrow \mathbb{R}$, we have
by \eqref{eq_convergence_in_moments},
\begin{align*}
&
 |
  \widetilde{\mathbb{E}}_t
  G
  (
   \langle \ell_{\varepsilon, t} ^\mathrm{IS}, f\rangle
  )
 -
  \widetilde{\mathbb{E}}_t
  G
  (
   \langle \ell_{t} ^\mathrm{IS}, f\rangle
  )
 | 
\\
\leq&
 \text{Lip}(G)
 \widetilde{\mathbb{E}}_t
 |
  \langle \ell_{\varepsilon, t} ^{\mathrm{IS}}, f\rangle
 -
  \langle \ell_{\varepsilon_l, t} ^{\mathrm{IS}}, f\rangle
 | 
+
 |
  \widetilde{\mathbb{E}}_t
  G\langle \ell_{\varepsilon_l, t} ^{\mathrm{IS}}, f\rangle
  -
  \widetilde{\mathbb{E}}_t
  G\langle \ell_{t} ^{\mathrm{IS}}, f\rangle
 | 
\rightarrow 
 0
.
\end{align*}

\noindent
and hence 
$
 \ell_{\varepsilon, t} ^{\mathrm{IS}}
$
converges to 
$
 \ell_{t} ^{\mathrm{IS}}
$
in distribution.
Here, we used the fact that a sequence of 
random measures $\{\xi_n\}$ converges in distribution, 
if and only if, the sequence of the integral 
$\{\langle f, \xi_n  \rangle\}$ 
converges in distribution,
for any function $f$ of $C_K ^+ (E)$. 
(See for example \cite[Theorem 16.16]{MR1876169}.)

To show the finiteness of $\ell_t^{\mathrm{IS}}$,
take $g_n\in C_K ^+(E)$ with $g_n \uparrow 1_E$.
We regard
$
 \widetilde{\mathbb{E}}_{t}
 \left[
  \langle
   \cdot, \ell_{t} ^{\mathrm{IS}}
  \rangle
 \right]
$
as a Radon measure on $E$.
By combining Fatou's lemma with 
\eqref{eq_uni.int} and \eqref{eq_convergence}, 
we have
\begin{align*}
 \widetilde{\mathbb{E}}_{t}
 \left[
  \langle
   1_E, \ell_{t} ^{\mathrm{IS}}
  \rangle
 \right]
\leq
 \liminf_{n\rightarrow\infty}
 \widetilde{\mathbb{E}}_{t}
 \left[
  \langle
   g_n, \ell_{t} ^{\mathrm{IS}}
  \rangle
 \right]
\leq&
 \prod_{i=1}^p
 \|
  H_t ^{(i)}
 \|_{L^p(E)}
\cdot
 \mathbb{P}_{t}
 (
  t < \zeta^{(1)} \wedge \cdots \wedge \zeta^{(p)}
 )^{-1}
<
 \infty
.
\end{align*}

Finally,
to prove the second half of the claim,
fix $f\in C_K ^+ (E)$. 
By \eqref{eq_a.e.}, we can take a sequence 
$\varepsilon_l \downarrow 0$
such that
$
 \langle
  \ell_{\varepsilon_l, t} ^{\mathrm{IS}}
 ,
  f
 \rangle
$
converges to
$
 \langle
  \ell_{t} ^{\mathrm{IS}}
 ,
  f
 \rangle
$
for
$
 \widetilde{\mathbb{P}}_{t}
\text{-a.e.}
$,
as $l\rightarrow \infty$.
By 
combining Fatou's lemma with 
\eqref{eq_uni.int} and \eqref{eq_convergence}, 
we have
\begin{align*}
 \widetilde{\mathbb{E}}_{t}
 \left[
  \langle
   f, \ell_{t} ^{\mathrm{IS}}
  \rangle^{2k}
 \right]
\leq&
 \liminf_{l\rightarrow \infty}
 \widetilde{\mathbb{E}}_{t}
 \left[
  \langle
   f, \ell_{\varepsilon_l, t} ^{\mathrm{IS}}
  \rangle^{2k}
 \right]
\\
\leq&
 \sup_{l}
 \widetilde{\mathbb{E}}_{t}
 \left[
  \langle
   f, \ell_{\varepsilon_l, t} ^{\mathrm{IS}}
  \rangle^{2k}
 \right]
\leq
 \|f\|_{\infty} ^{2k}
 \prod_{i=1}^p
 \|
  H_t ^{(i)}
 \|_{L^p(E^{2k})}
\cdot
 \mathbb{P}_{t}
 (
  t < \zeta^{(1)} \wedge \cdots \wedge \zeta^{(p)}
 )^{-1}
.
\end{align*}
These inequalities imply the uniform integrability
of 
$
 \{
  |
   \langle
    f, \ell_{\varepsilon_l, t} ^{\mathrm{IS}}
   \rangle
  -
   \langle
    f, \ell_{t} ^{\mathrm{IS}}
   \rangle
  |^k
 \}
$
and hence 
$
 \langle
  f, \ell_{\varepsilon_l, t} ^{\mathrm{IS}}
 \rangle
$
converges to
$
 \langle
  f, \ell_{t} ^{\mathrm{IS}}
 \rangle
$
in $L^k(\widetilde{\mathbb{P}}_t)$.
Therefore, 
by combining with \eqref{eq_convergence_in_moments},
we conclude \eqref{eq_convergence_any_moment}.
\begin{flushright}
\vspace{-1.6eM}
 $\square$ 
\end{flushright}


\section{Proof of Theorem \ref{Thm_LDP} }
\label{Sec_4}

In this section, 
we give the proof of Theorem \ref{Thm_LDP},
assuming Proposition \ref{Prop_exp_app}.
Our proof is 
based on the proofs in Section 2.2, \cite{MR2999298}.  

First, 
recall that each $t^{-1}\ell_t ^{(i)}$ satisfies the
large deviation principle as $t\rightarrow \infty$,
with probability $\widetilde{\mathbb{P}}_t$,
scale $t$ and the good rate function $J^{(i)}$
(Theorem \ref{Thm_LDP_FOT}).
For each $\varepsilon > 0$, 
define the continuous mappings 
$
 p_\varepsilon ^{(i)}
:
 \mathcal{M}_1(E)\rightarrow \mathcal{M}_{\leq 1}(E)
$,
$i=1, \cdots, p$ and
$
 \Phi
:
 (\mathcal{M}_{1}(E) )^p
\rightarrow
 \mathcal{M}_f(E) 
\times
 (\mathcal{M}_{\leq 1}(E) )^p
$
by
\begin{equation*}
 \langle
  f
 ,
  p_\varepsilon ^{(i)}[\mu]
 \rangle
=
 \int_E
  f(x)
  \biggl[
   \int_E
   p_\varepsilon ^{(i)} (x, y) \mu(dy)
  \biggr]
 m(dx)
\quad
 \text{for }
 \mu\in\mathcal{M}_1(E)
,
 f\in C_b(E)
,
\end{equation*}
\begin{equation*}
 \Phi(\mu_1, \cdots, \mu_1)
:=
 \left(
  \Bigl(
  \prod_{i=1} ^p
  \frac
  {d p_\varepsilon ^{(i)} [\mu_i]}
  {dm}
  \Bigr)
  dm
 ;
 \hspace{2mm}
  p_\varepsilon ^{(1)}  [\mu_1]
 ,
  \cdots
 ,
  p_\varepsilon ^{(p)}  [\mu_p]
 \right)
\quad
 \text{for }
 \left(
  \mu_1
 ,
  \cdots
 ,
  \mu_p
 \right)
\in
 (\mathcal{M}_{1}(E))^p
.
\end{equation*}

By the contraction principle 
(Theorem \ref{Thm_DZ_contraction}), 
we find that the tuple 
$
 (
  t^{-p}\ell_{\varepsilon, t} ^{\mathrm{IS}} 
 ;
  t^{-1}\ell_{\varepsilon, t} ^{(1)}
 ,
  \cdots
 ,
  t^{-1}\ell_{\varepsilon, t} ^{(p)}
 )
$
satisfies the LDP
as $t\rightarrow \infty$, 
with probability $\widetilde{\mathbb{P}}_t$,
scale $t$ and the good rate function 
$\mathbf{J}_\varepsilon$ which is defined by
\begin{align*}
&
 \mathbf{J}_\varepsilon
 (
  \nu
 ;
  \nu_1, \cdots, \nu_p
 )
\\
:=&
 \inf
 \biggl\{
  \sum_{i=1} ^p
  J^{(i)}(\mu_i)
 ;
   \left(
    \mu_1, \cdots, \mu_p
   \right)
  \in
   (\mathcal{M}_{1} (E) )^p
  ,
   \Phi 
   \left(
    \mu_1, \cdots, \mu_p
   \right)
  =
   \left(
    \nu
   ;
    \nu_1, \cdots, \nu_p
   \right)
 \biggr\}
\\
=&
 \inf
 \biggl\{
  \sum_{i=1} ^p
  J^{(i)} (\mu_i) 
 ;\quad
  \mu_i \in \mathcal{M}_1(E)
 , 
  p_\varepsilon ^{(i)} [\mu_i] 
 =
  \nu_i
 ,
  \prod_{i=1} ^p
  \frac{d p_\varepsilon ^{(i)} [\mu_i]}{dm}
 =
  \frac{d\nu}{dm}
 \biggr\}
\\
=&
 \inf
 \left\{
  \sum_{i=1} ^p
  \{
   \mathcal{E}^{(i)}
   (\psi_i, \psi_i)
  -
   \lambda_1 ^{(i)}
  \}
 ;\quad
  \mu_i \in \mathcal{M}_1(E)
 , 
  p_\varepsilon ^{(i)}[\mu_i] 
 =
  \nu_i
 ,
  \prod_{i=1} ^p
  \frac{d p_\varepsilon ^{(i)} [\mu_i]}{dm}
 =
  \frac{d\nu}{dm}
 ,
  \psi_i = \sqrt{\frac{d\mu_i}{dm}} \in \mathcal{F}^{(i)}
 \right\}
\end{align*}
for 
$
 \left(
  \nu
 ;
  \nu_1
 ,
  \cdots
 ,
  \nu_p
 \right)
\in
 \mathcal{M}_f(E)
\times 
 (\mathcal{M}_{\leq 1} (E) )^p
$.
\\

Until 
the end of this section,
we fix the same  
$
 \{ f_n\}_{n=1} ^\infty 
\subset
 C_K ^+ (E)
$ 
as in \eqref{eq_metric},
and define a metric $\mathbf{d}$ 
on the product space
$
 \mathcal{M}_f(E)
\times
 (
  \mathcal{M}_{\leq 1}(E)
 )^p
$
as
\begin{equation*}
 \mathbf{d}
 \bigl(
  (\mu; \mu_1, \cdots, \mu_p) 
 ,
  (\nu; \nu_1, \cdots, \nu_p) 
 \bigr)
:=
 d(\mu, \nu)
+
 \sum_{i=1} ^p
 d(\mu_i, \nu_i)
.
\end{equation*}
We denote 
$\mathbf{B}_\delta$ as the open ball
of radius $\delta > 0$
in
$
 \mathcal{M}_f(E)
\times
 (
  \mathcal{M}_{\leq 1}(E)
 )^p
$
with respect to the metric $\mathbf{d}$,
and denote
$B_\delta$ as the open ball of radius $\delta > 0$
in
$\mathcal{M}_f(E)$
or
$\mathcal{M}_{\leq 1}(E)$
with respect to the metric $d$.

Fix $\delta > 0$.
We have
\begin{align*}
&
 \widetilde{\mathbb{P}}_{t}
 \left(
  \mathbf{d}
  \bigl(
   (
    t^{-p} 
    \ell_{t} ^{\mathrm{IS}}
   ,
    t^{-1} 
    \ell_{t} ^{(1)}
   ,
    \cdots
   ,
    t^{-1} 
    \ell_{t} ^{(p)}
   )
  ,
   (
    t^{-p} 
    \ell_{\varepsilon, t} ^{\mathrm{IS}}
   ,
    t^{-1} 
    \ell_{\varepsilon, t} ^{(1)}
   ,
    \cdots
   ,
    t^{-1} 
    \ell_{\varepsilon, t} ^{(p)}
   )
  \bigr)
 >
  (p+1)
  \delta
 \right)
\\
\leq&
 \widetilde{\mathbb{P}}_{t}
 \left(
  d
  (
   t^{-p} 
   \ell_{t} ^{\mathrm{IS}}
  ,
   t^{-p} 
   \ell_{\varepsilon, t} ^{\mathrm{IS}}
  )
 >
  \delta
 \right)
+
 \sum_{i=1} ^p
 \widetilde{\mathbb{P}}_{t}
 \left(
  d
  (
   t^{-1} 
   \ell_{t} ^{(i)}
  ,
   t^{-1} 
   \ell_{\varepsilon, t} ^{(i)}
  )
 >
  \delta
 \right)
.
\end{align*}
In order to prove 
Theorem \ref{Thm_LDP}, it is sufficient to show 
the following Proposition \ref{Prop_4-1} and Lemma \ref{Lem_4-2}.
(See \cite[Theorem 4.2.16]{MR1619036} for instance.) 

\begin{Prop}
\label{Prop_4-1}
Let $\mathbf{J}$ be as in \eqref{eq_bfJ}.
The following three statements hold:

\vspace{-2mm}
\begin{enumerate}
\item
For every $\mu \in \mathcal{M}_{f}(E)$ and 
$\mu_1, \cdots, \mu_p \in \mathcal{M}_{1}(E)$,
it holds that
\begin{equation}
\label{eq_Prop_4-1-1}
 \sup_{\delta>0}
 \liminf_{\varepsilon \downarrow 0}
 \inf_{\mathbf{B}_\delta (\mu ; \mu_1, \cdots, \mu_p)}
 \mathbf{J}_\varepsilon
=
 \mathbf{J}(\mu; \mu_1, \cdots, \mu_p)
.
\end{equation}

\item
For any closed set
$
 F
\subset 
 \mathcal{M}_f(E)
\times
 (
  \mathcal{M}_{\leq 1}(E)
 )^p
$,
it holds that
\begin{equation}
\label{eq_Prop_4-1-2}
 \inf_{(\mu; \mu_1, \cdots, \mu_p) \in F}
 \mathbf{J}(\mu; \mu_1, \cdots, \mu_p)
\leq
 \limsup_{\varepsilon \downarrow 0}
 \inf_{(\mu ; \mu_1, \cdots, \mu_p)\in F}
 \mathbf{J}_\varepsilon(\mu; \mu_1, \cdots, \mu_p)
.
\end{equation}

\item
$\mathbf{J}$
is a good rate function on
$
 \mathcal{M}_f(E)
\times 
 (\mathcal{M}_{1}(E))^p
$.
\end{enumerate}

\end{Prop}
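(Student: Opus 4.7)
The plan is to exploit the compact embedding $(\mathcal{F}^{(i)}, \|\cdot\|_{\mathcal{E}_1}) \hookrightarrow L^{2p}(E; m)$ from Lemma \ref{Lem_pre}(3), which upgrades bounded Dirichlet energy of $\psi_i := \sqrt{d\mu_i/dm}$ into strong $L^{2p}$ precompactness, and hence via H\"older's inequality into $L^1$ convergence of the product densities $\prod_i \psi_i^2$. All three assertions follow from this precompactness combined with the goodness of each $J^{(i)}$ (known from the $p=1$ LDP of Theorem \ref{Thm_LDP_FOT}) and the standard lower semicontinuity of each $\mathcal{E}^{(i)}$ on $L^2(E;m)$.

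First I would establish (3). Fix $\alpha \geq 0$ and suppose $\mathbf{J}(\mu; \mu_1, \dots, \mu_p) \leq \alpha$. Then each $\psi_i \in \mathcal{F}^{(i)}$ satisfies $\|\psi_i\|_2 = 1$ and $\mathcal{E}^{(i)}(\psi_i, \psi_i) \leq \alpha + \sum_j \lambda_1^{(j)}$, so the tuple $(\psi_1, \dots, \psi_p)$ lies in a bounded $\mathcal{E}_1$-ball of $\prod_i \mathcal{F}^{(i)}$. By Lemma \ref{Lem_pre}(3) this tuple is relatively compact in $\prod_i L^{2p}(E; m)$; any convergent subsequence $\psi_i^n \to \widetilde{\psi}_i$ yields $\mu_i^n \to \widetilde{\mu}_i := \widetilde{\psi}_i^2\, dm$ vaguely and, via H\"older applied to the differences, $\prod_i (\psi_i^n)^2 \to \prod_i \widetilde{\psi}_i^2$ in $L^1$. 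Combining with lsc of each $\mathcal{E}^{(i)}$ shows the limit $(\widetilde{\mu}; \widetilde{\mu}_1, \dots, \widetilde{\mu}_p)$ still has $\mathbf{J} \leq \alpha$; hence the level set is sequentially compact and $\mathbf{J}$ is lsc.

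Next I turn to (1). For the $\leq$ direction, given $(\mu; \mu_1, \dots, \mu_p)$ with $\mathbf{J}(\mu; \mu_1, \dots, \mu_p) < \infty$, I set $\nu_i^{\varepsilon} := p_\varepsilon^{(i)}[\mu_i]$ and $\nu^{\varepsilon} := \bigl(\prod_i d\nu_i^{\varepsilon}/dm\bigr)\, dm$. Taking the $\mu_i$ themselves as competitors in the infimum defining $\mathbf{J}_\varepsilon$ gives $\mathbf{J}_\varepsilon(\nu^\varepsilon; \nu_1^\varepsilon, \dots, \nu_p^\varepsilon) \leq \sum_i J^{(i)}(\mu_i) = \mathbf{J}(\mu; \mu_1, \dots, \mu_p)$. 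Since $T_\varepsilon^{(i)} \psi_i^2 \to \psi_i^2$ in $L^p$ as $\varepsilon \downarrow 0$ by Proposition \ref{Prop_Lp-cont-semigrp} (applicable because $\psi_i^2 \in L^p$ by \eqref{eq_Sobolev}), both $\nu_i^\varepsilon \to \mu_i$ and $\nu^\varepsilon \to \mu$ vaguely, so $(\nu^\varepsilon; \nu_1^\varepsilon, \dots, \nu_p^\varepsilon) \in \mathbf{B}_\delta(\mu; \mu_1, \dots, \mu_p)$ eventually. For the $\geq$ direction, I pick near-minimizers $(\mu_1^{\varepsilon}, \dots, \mu_p^{\varepsilon})$ attached to points $(\nu^{\varepsilon}; \nu_1^{\varepsilon}, \dots, \nu_p^{\varepsilon}) \in \mathbf{B}_\delta(\mu; \mu_1, \dots, \mu_p)$, extract an $L^{2p}$-convergent subsequence of $\sqrt{d\mu_i^\varepsilon/dm}$ via goodness of $J^{(i)}$ and the compact embedding, and pass to the limit in the constraints $p_\varepsilon^{(i)}[\mu_i^\varepsilon] = \nu_i^\varepsilon$ (first $\varepsilon \downarrow 0$ using that $p_\varepsilon^{(i)}$ is an approximate identity, then $\delta \downarrow 0$) to obtain the limit $(\mu; \mu_1, \dots, \mu_p)$; the lsc from (3) then yields the claimed bound.

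Finally, (2) follows by the same machinery: along an almost-minimizing sequence $(\nu^\varepsilon; \nu_1^\varepsilon, \dots, \nu_p^\varepsilon) \in F$ realizing $\limsup_\varepsilon \inf_F \mathbf{J}_\varepsilon$, the associated competitors admit a subsequential limit whose induced tuple lies in $F$ by closedness and has $\mathbf{J}$-value at most the $\limsup$. The main obstacle throughout is preserving the nonlinear constraint $\prod_i d\mu_i/dm = d\mu/dm$ under mere vague convergence of the $\mu_i$; this is exactly what the compact embedding $\mathcal{F}^{(i)} \hookrightarrow L^{2p}$ resolves, by upgrading to strong $L^{2p}$ convergence of the $\psi_i$ so that H\"older yields $L^1$ convergence of the product densities.
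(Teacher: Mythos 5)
Your proposal is correct and follows essentially the same route as the paper's proof: the compact embedding $\mathcal{F}^{(i)}\hookrightarrow L^2\cap L^{2p}$ from Lemma \ref{Lem_pre}(3) is precisely the tool the paper uses to upgrade bounded $\mathcal{E}_1$-norm into strong $L^{2p}$ precompactness, which via H\"older yields $L^1$ convergence of the product densities and thus preservation of the nonlinear constraint $\prod_i d\mu_i/dm = d\mu/dm$. The three parts are organized the same way (near-minimizers, subsequence extraction, lower semicontinuity of $\mathcal{E}^{(i)}$), with the paper's lower bound of (1) likewise running a two-stage limit $\varepsilon\downarrow 0$ then $\delta\downarrow 0$.
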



\begin{Lem}
\label{Lem_4-2}
\ \par

\vspace{-2mm}
\begin{enumerate}
\item
For each $i=1, \cdots, p$ and for any $\delta >0$,
it holds that
\begin{equation*}
 \lim_{\varepsilon\rightarrow 0}
 \limsup_{t\rightarrow \infty}
 \frac{1}{t}
 \log
 \widetilde{\mathbb{P}}_{t}
 \left(
  d
  (
   t^{-1} 
   \ell_{t} ^{(i)}
  ,
   t^{-1} 
   \ell_{\varepsilon, t} ^{(i)}
  )
 >
  \delta
 \right)
=
 -\infty
.
\end{equation*}

\item
For given $f\in C_K(E)$ and $\delta>0$,
it holds that
\begin{equation}
\label{eq_Lem_4-2-2}
 \lim_{\varepsilon\rightarrow 0}
 \limsup_{t \rightarrow \infty}
 \frac{1}{t}
 \log
 \widetilde{\mathbb{P}}_{t}
 \left(
  |
  \langle 
   t^{-p}
   (\ell_t ^{\mathrm{IS}} - \ell_{\varepsilon, t} ^{\mathrm{IS}})
   , f
  \rangle
  |
 >
  \delta
 \right)
=
 -\infty
.
\end{equation}
\end{enumerate}

\end{Lem}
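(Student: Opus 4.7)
The plan is to reduce both claims to controlling exponential moments against a single fixed test function, exploiting the $C_0$-semigroup property on $L^q$ (Proposition \ref{Prop_Lp-cont-semigrp}) to make the approximation errors vanish as $\varepsilon\downarrow 0$. Since the vague metric $d=d_{\{f_n\}}$ of \eqref{eq_metric} is generated by countably many $f_n\in C_K^+(E)$, truncating the tail of the series and applying a union bound reduces part 1 to proving, for each $f\in C_K^+(E)$ and $\delta>0$,
\begin{equation*}
\lim_{\varepsilon\downarrow 0}\limsup_{t\to\infty}\frac{1}{t}\log\widetilde{\mathbb{P}}_t\bigl(|\langle f,\ell_t^{(i)}-\ell_{\varepsilon,t}^{(i)}\rangle|>\delta t\bigr)=-\infty,
\end{equation*}
while part 2 already has this single-function form at scale $t^p$.

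For part 1, using $m$-symmetry of $p_\varepsilon^{(i)}$ and Fubini,
\begin{equation*}
\langle f,\ell_t^{(i)}-\ell_{\varepsilon,t}^{(i)}\rangle=\int_0^t g_\varepsilon(X^{(i)}_s)\,ds,\qquad g_\varepsilon:=f-T_\varepsilon^{(i)}f,
\end{equation*}
which turns the estimate into a classical Donsker--Varadhan bound. For $\theta>0$, an exponential Chebyshev inequality followed by Varadhan's integral lemma (Lemma \ref{Lem_DZ_Varadhan}) applied to the occupation-measure LDP of Theorem \ref{Thm_LDP_FOT} yields
\begin{equation*}
\limsup_{t\to\infty}\frac{1}{t}\log\widetilde{\mathbb{E}}_t\bigl[e^{\theta\int_0^t g_\varepsilon(X^{(i)}_s)\,ds}\bigr]\leq\sup_{\psi\in\mathcal{F}^{(i)},\,\|\psi\|_2=1}\bigl\{\theta\langle g_\varepsilon\psi^2,m\rangle-\mathcal{E}^{(i)}(\psi,\psi)+\lambda_1^{(i)}\bigr\}.
\end{equation*}
Bounding $|\langle g_\varepsilon\psi^2,m\rangle|\leq\|g_\varepsilon\|_q\|\psi^2\|_{q'}$ by Hölder and using \eqref{eq_Sobolev} to absorb the Dirichlet form, the supremum vanishes as $\varepsilon\downarrow 0$ because $\|g_\varepsilon\|_q=\|f-T_\varepsilon^{(i)}f\|_q\to 0$ by Proposition \ref{Prop_Lp-cont-semigrp}; the supremum is attained at $0$ by the ground state. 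Exponential Chebyshev then gives $-\theta\delta+o_\varepsilon(1)$, and letting $\theta\to\infty$ produces $-\infty$. The lower tail is obtained by replacing $\theta$ with $-\theta$.

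For part 2, since $\ell_t^{\mathrm{IS}}$ is only defined as an $L^k(\widetilde{\mathbb{P}}_t)$-limit of $\ell_{\varepsilon',t}^{\mathrm{IS}}$ (Proposition \ref{Prop_existence}), I would first fix $\varepsilon$ and work with a second regularization scale $0<\varepsilon'\leq\varepsilon$, passing $\varepsilon'\downarrow 0$ at the end via \eqref{eq_convergence_any_moment}. For fixed $\varepsilon,\varepsilon'$,
\begin{equation*}
\langle f,\ell_{\varepsilon,t}^{\mathrm{IS}}-\ell_{\varepsilon',t}^{\mathrm{IS}}\rangle=\int_{[0,t]^p}(F_\varepsilon-F_{\varepsilon'})\bigl(X^{(1)}_{s_1},\ldots,X^{(p)}_{s_p}\bigr)\,ds_1\cdots ds_p,
\end{equation*}
where $F_\varepsilon(x_1,\ldots,x_p):=\int_E f(y)\prod_{i=1}^p p_\varepsilon^{(i)}(x_i,y)\,m(dy)$ is a functional of the independent product process on $E^p$. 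I would then apply exponential Chebyshev together with a tensor-product Feynman--Kac bound (using the notation of Section \ref{Sec_tensor}), whose exponential rate is controlled by a product-type Sobolev inequality built from the single-factor compact embedding \eqref{eq_compact2p}. The required smallness of $F_\varepsilon-F_{\varepsilon'}$ in the relevant $L^q(E^p;m^{\otimes p})$-norm follows once again from the $C_0$-semigroup property on each factor.

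The main obstacle lies in part 2: unlike part 1, there is no pre-existing joint LDP for the $p$-tuple $(X^{(1)},\ldots,X^{(p)})$ on which to invoke Varadhan's lemma, so the product Feynman--Kac bound must be produced directly and its variational upper bound shown to vanish uniformly in $\varepsilon'\in(0,\varepsilon]$ as $\varepsilon\downarrow 0$. This is precisely the content of the forthcoming Proposition \ref{Prop_exp_app}, which carries the main technical weight of the paper; once that estimate is in hand, \eqref{eq_convergence_any_moment} transfers the exponential bound from $\ell_{\varepsilon',t}^{\mathrm{IS}}$ to $\ell_t^{\mathrm{IS}}$ to conclude \eqref{eq_Lem_4-2-2}.
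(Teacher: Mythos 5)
For part 1 your Chebyshev--Varadhan route is correct and genuinely different from the paper's, which instead applies the contraction principle to $\mu\mapsto\langle f-f_\varepsilon,\mu\rangle$ and then shows directly that $\inf_{|a|>\delta}J^{(i)}_{(f-f_\varepsilon)}(a)\to\infty$. Both arguments boil down to the same Hölder--Sobolev estimate with \eqref{eq_Sobolev} (taking $q'=p$, so that $\|\psi^2\|_{q'}=\|\psi\|_{2p}^2$ matches the inequality), and to $\|f-T_\varepsilon^{(i)}f\|_q\to 0$ from Proposition \ref{Prop_Lp-cont-semigrp}; your version has the mild advantage that it postpones the choice of $\theta$ until after $\varepsilon\downarrow 0$ rather than packaging everything into a rate-function bound, but it also needs a small care you gloss over: $\sup_\psi\{\theta\langle g_\varepsilon\psi^2\rangle-\mathcal{E}^{(i)}(\psi,\psi)+\lambda_1^{(i)}\}$ is bounded above by Hölder plus Sobolev, not ``attained at the ground state,'' though the conclusion $\to 0$ is correct.

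For part 2 there is a genuine gap in the mechanism, not just vagueness. Exponential Chebyshev in the form $\widetilde{\mathbb{P}}_t(|\cdot|>\delta t^p)\leq e^{-\theta\delta t}\,\widetilde{\mathbb{E}}_t[e^{\theta t^{1-p}\langle\cdots,f\rangle}]$ cannot be applied here: the $k$-th moment of $\langle\ell_t^{\mathrm{IS}}-\ell_{\varepsilon,t}^{\mathrm{IS}},f\rangle$ grows like $(k!)^p\,C(\varepsilon)^k e^{pt}$ by Proposition \ref{Prop_exp_app}, and for $p\geq 2$ the series $\sum_k (k!)^{p-1}x^k$ diverges for every $x>0$, so the exponential moment generating function is identically $+\infty$. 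There is likewise no Feynman--Kac representation for this exponential moment, because $\langle f,\ell_{\varepsilon,t}^{\mathrm{IS}}\rangle$ is a $p$-fold time integral of a non-product kernel, not a single additive functional. The paper's actual device is Markov's inequality at the $k$-th moment with $k=\lceil t\rceil$: using \eqref{eq_prop1.6_2},
$$
 \widetilde{\mathbb{P}}_t\bigl(|\langle t^{-p}(\ell_t^{\mathrm{IS}}-\ell_{\varepsilon,t}^{\mathrm{IS}}),f\rangle|>\delta\bigr)
 \leq
 \delta^{-k}(k/t)^{pk}\,e^{pt}\,C(\varepsilon)^k
 \cdot
 \mathbb{P}_{x_0}\bigl(t<\zeta^{(1)}\wedge\cdots\wedge\zeta^{(p)}\bigr)^{-1},
$$
and since $k/t\to 1$, dividing by $t$ and letting $t\to\infty$ gives the bound $-\log\delta+p+\log C(\varepsilon)+\sum_i\lambda_1^{(i)}$, which tends to $-\infty$ as $\varepsilon\downarrow 0$. (The only exponential-Chebyshev variant that works is the one applied to the $p$-th root $\langle\cdots,f\rangle^{1/p}$, as in Lemma \ref{Lem_6-3}, since then the $(k!)^p$ is tamed to $k!$.) You do correctly identify Proposition \ref{Prop_exp_app} and \eqref{eq_convergence_any_moment} as the essential inputs, but you should replace ``exponential Chebyshev with a Feynman--Kac bound'' with the $k\approx t$ moment argument, otherwise the reduction from the moment bound to the probability estimate is missing.
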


\begin{proof}
[Proof of Proposition \ref{Prop_4-1}]
We 
first prove the upper bound of \eqref{eq_Prop_4-1-1}.
Let 
$
 (\mu; \mu_1, \cdots, \mu_p)
\in
 \mathcal{M}_f(E)
\times (\mathcal{M}_{\leq 1}(E))^p
$
with
$
 \mathbf{J}
 (\mu; \mu_1, \cdots, \mu_p)
<
 \infty
$
be given.
Take 
$\psi_i \in \mathcal{F}^{(i)}$ such that
$
 \psi_i = \sqrt{\frac{d\mu_i}{dm} } \in \mathcal{F}^{(i)}
$
and
$
 \prod_{i=1} ^p \psi_i ^2
=
 \prod_{i=1} ^p
 \frac{d\mu_i}{dm}
=
 \frac{d\mu}{dm} 
$.
Fix 
$\delta > 0$ and take $\varepsilon >0$ so small 
such that
$
 p_\varepsilon ^{(i)} [\mu_i]dm
\in
 B_{\delta/2} (\mu_i)
$
and that
$
 \bigl(
  \prod_{i=1} ^p 
  \frac
  {p_\varepsilon ^{(i)} [\mu_i]}
  {dm}
 \bigr)
 dm
\in
 B_{\delta/2p} (\mu)
$.
This is possible, indeed, 
because of the inclusion
$
 \mathcal{F}^{(i)} 
\subset
 L^2(E;m) \cap L^{2p}(E;m)
$
(assertion 3 of Lemma \ref{Lem_pre})
and Proposition \ref{Prop_Lp-cont-semigrp},
we have
for fixed $f\in C_K (E)$,
\begin{align*}
 |
  \langle f, p_\varepsilon ^{(i)}[\mu_i] \rangle 
 -
  \langle f, \mu_i \rangle 
 |
=&
 |
  \langle 
   f
  ,
   p_\varepsilon ^{(i)}[\psi_i ^2] dm
  \rangle 
 -
  \langle f, \psi_i ^2 dm \rangle 
 |
\\
\leq&
 \|f\|_q 
 \cdot
 \|
  p_\varepsilon ^{(i)} [\psi_i ^2] 
 -
  \psi_i ^2
 \|_p
\rightarrow
 0
 \text{ as } \varepsilon \rightarrow 0
.
\end{align*}

\noindent
By
H\"older's inequality and the $L^p$-contractivity of 
$p_\varepsilon ^{(i)}$ (Proposition \ref{Prop_Lp-cont-semigrp}),
it holds that
\begin{align}
\notag
&
 \left\|
  \prod_{i=1} ^p 
  p_\varepsilon ^{(i)}[ \psi_i ^2 ]
 -
  \prod_{i=1} ^p 
  \psi_i ^2 
 \right\|_1
\\
\notag
\begin{split}
\leq&
 \left\|
  \prod_{i=1} ^p 
  p_\varepsilon ^{(i)}[ \psi_i ^2 ]
 -
  \psi_1 ^2 
  \prod_{i=2} ^p 
  p_\varepsilon ^{(i)}[ \psi_i ^2 ]
 \right\|_1
+
 \left\|
  \psi_1 ^2 
  \prod_{i=2} ^p 
  p_\varepsilon ^{(i)}[ \psi_i ^2 ]
 -
  \psi_1 ^2 
  \psi_2 ^2 
  \prod_{i=3} ^p 
  p_\varepsilon ^{(i)}[ \psi_i ^2 ]
 \right\|_1
+
\cdots
\\
&
\hspace{80mm}
 \cdots
+
 \left\|
  \prod_{i=1} ^{p-1} 
  \psi_i ^2 
  p_\varepsilon ^{(p)}[ \psi_p ^2 ]
 -
  \prod_{i=1} ^p 
  \psi_i ^2 
 \right\|_1
\end{split}
\\
\notag
\leq&
 \sum_{i=1} ^p
 \Biggl(
  \prod_{l< i}
  \|
   \psi_l ^2
  \|_p
 \cdot
  \|
   p_\varepsilon ^{(i)} [\psi_i ^2] - \psi_i ^2 
  \|_p
 \cdot
  \prod_{l>i}
  \|
   p_\varepsilon ^{(l)} [\psi_l ^2] 
  \|_p
 \Biggr)
\\
\label{eq_Prop4-1-3}
\leq&
 \sum_{i=1} ^p
 \Biggl(
  \|
   p_\varepsilon ^{(i)} [\psi_i ^2] - \psi_i ^2 
  \|_p
  \prod_{l\not=i} 
  \|\psi_l \|_{2p} ^2
 \Biggr)
.
\end{align}

\noindent
Hence we have
\begin{align*}
 \left|
  \langle 
   f
  ,
   \Bigl(
    \prod_{i=1} ^p 
    \frac
    {d p_\varepsilon ^{(i)} [\mu_i]}
    {dm}
    dm
   \Bigr)
  \rangle 
 -
  \langle f, \mu \rangle 
 \right|
=&
 \left|
  \langle 
   f
  ,
   \prod_{i=1} ^p 
   p_\varepsilon ^{(i)}[ \psi_i ^2 ]
   dm
  \rangle 
 -
  \langle 
   f
  , 
   \prod_{i=1} ^p 
   \psi_i ^2 
   dm
  \rangle 
 \right|
\\
\leq&
 \|f\|_\infty
 \cdot
 \left\|
  \prod_{i=1} ^p 
  p_\varepsilon ^{(i)}[ \psi_i ^2 ]
 -
  \prod_{i=1} ^p 
  \psi_i ^2 
 \right\|_1
\\
\leq&
 \|f\|_\infty
 \cdot
 \sum_{i=1} ^p
 \Biggl(
  \|
   p_\varepsilon ^{(i)} [\psi_i ^2] - \psi_i ^2 
  \|_p
  \prod_{l\not=i} 
  \|\psi_l \|_{2p} ^2
 \Biggr)
\\
\rightarrow&
 0
 \text{ as } \varepsilon \rightarrow 0
.
\end{align*}

\noindent
We thus obtain
$
 \left(
  \bigl(
   \prod_{i=1} ^p 
   \frac
   {p_\varepsilon ^{(i)} [\mu_i]}
   {dm}
  \bigr)
  dm
 ;
  \hspace{3mm}
  p_\varepsilon ^{(1)} [\mu_1]dm
 ,
  \cdots
 ,
  p_\varepsilon ^{(p)} [\mu_p]dm
 \right)
\in
 B_\delta (\mu; \mu_1, \cdots, \mu_p)
$
and conclude
\begin{align*}
 \inf_{\mathbf{B}_\delta(\mu; \mu_1, \cdots, \mu_p)}
 \textbf{J}_\varepsilon 
\leq&
 \textbf{J}_\varepsilon 
 \left(
  \Bigl(
   \prod_{i=1} ^p 
   \frac
   {p_\varepsilon ^{(i)} [\mu_i]}
   {dm}
  \Bigr)
  dm
 ;
  \hspace{3mm}
  p_\varepsilon ^{(1)} [\mu_1]dm
 ,
  \cdots
 ,
  p_\varepsilon ^{(p)} [\mu_p]dm
 \right)
\leq
 \mathbf{J}(\mu; \mu_1, \cdots, \mu_p)
.
\end{align*}


\vspace{1eM}

We next 
prove the lower bound of \eqref{eq_Prop_4-1-1}.
Let 
$
 (\mu; \mu_1, \cdots, \mu_p)
\in
 \mathcal{M}_f(E) \times (\mathcal{M}_{1}(E))^p
$
be given.
Without loss of generality, we may assume that
$
 \sup_{\delta>0}
 \liminf_{\varepsilon \downarrow 0}
 \inf_{\mathbf{B}_\delta (\mu ; \mu_1, \cdots, \mu_p)}
 \mathbf{J}_\varepsilon 
<
 \infty
$.

For $\delta > 0$ and $\varepsilon > 0$, we pick 
$
 \bigl(
  \nu^{(\delta, \varepsilon)}
 ;
  \nu_1^{(\delta, \varepsilon)}
 ,
  \cdots
 ,
  \nu_p^{(\delta, \varepsilon)}
 \bigr)
\in
 \mathbf{B}_\delta
 (\mu; \mu_1, \cdots, \mu_p)
$
such that
\begin{equation}
\label{eq_41-1}
 \inf_
 {
  B_\delta
  (\mu; \mu_1, \cdots, \mu_p)
 }
 \mathbf{J}_\varepsilon 
\geq
 \mathbf{J}_\varepsilon 
 \bigl(
  \nu^{(\delta, \varepsilon)}
 ;
  \nu_1^{(\delta, \varepsilon)}
 ,
  \cdots
 ,
  \nu_p^{(\delta, \varepsilon)}
 \bigr)
-
 \delta
.
\end{equation}

\noindent
By the definition of 
$\mathbf{J}_\varepsilon$, 
there are nonnegative, $L^2$-normalized 
$
 \psi_i ^{(\delta, \varepsilon)}
\in
 \mathcal{F}^{(i)}
$
for $i=1,\cdots, p$ such that
$
 \frac{d\nu_i ^{(\delta, \varepsilon)}}{dm}
=
 p_\varepsilon ^{(i)}
 [(\psi_i ^{(\delta, \varepsilon)})^2]
$,
$
 \frac{d\nu ^{(\delta, \varepsilon)}}{dm}
=
 \prod_{i=1} ^p
 p_\varepsilon ^{(i)}
 [(\psi_i ^{(\delta, \varepsilon)})^2]
$
and
$
 \mathbf{J}_\varepsilon
 \bigl(
  \nu^{(\delta, \varepsilon)}
 ;
  \nu_1^{(\delta, \varepsilon)}
 ,
  \cdots
 ,
  \nu_p^{(\delta, \varepsilon)}
 \bigr)
\geq
 \sum_{i=1} ^p
 \{
  \mathcal{E}^{(i)}
  (
   \psi_i ^{(\delta, \varepsilon)}
  ,
   \psi_i ^{(\delta, \varepsilon)}
  )
 -
  \lambda_1 ^{(i)}
 \}
-
 \varepsilon
$.
In particular, 
$
 \{
  \psi_i ^{(\delta, \varepsilon)}
 \}_{\varepsilon}
$
is bounded in 
$
 (
  \mathcal{F}^{(i)}
 ,
  \|\cdot \|_{\mathcal{E}^{(i)}_1}
 )
$.
By 
taking a subsequence, there exists 
$
 \psi_i ^{(\delta)}
\in
 \mathcal{F}^{(i)}
$
with
$
 \|\psi_i ^{(\delta)}\|_2 = 1
$,
such that
$
 \psi_i ^{(\delta, \varepsilon)}
$
converges to 
$
 \psi_i ^{(\delta)}
$,
as $\varepsilon\rightarrow 0$ 
and in
$
 L^2(E;m) \text{ and } L^{2p}(E;m)
$
and that
$
 \liminf_{\varepsilon\rightarrow 0}
 \mathcal{E}^{(i)}
 (
  \psi_i ^{(\delta, \varepsilon)}
 ,
  \psi_i ^{(\delta, \varepsilon)}
 )
\geq
 \mathcal{E}^{(i)}
 (
  \psi_i ^{(\delta)}
 ,
  \psi_i ^{(\delta)}
 )
$,
where we used \eqref{eq_compact2p} in Lemma \ref{Lem_pre}.
By taking 
$
 \liminf_{\varepsilon\rightarrow 0}
$
to \eqref{eq_41-1}, we have
\begin{equation}
\label{eq_41-2}
 \liminf_{\varepsilon\rightarrow 0}
 \inf_{\mathbf{B}_\delta} \mathbf{J}_\varepsilon
\geq
 \sum_{i=1} ^p
 \{
  \mathcal{E}^{(i)}
  (
   \psi_i ^{(\delta)}
  ,
   \psi_i ^{(\delta)}
  )
 -
  \lambda_1 ^{(i)}
 \}
-
 \delta
.
\end{equation}

Now, we can see that
\begin{equation*}
 \nu_i ^{(\delta, \varepsilon)}
\rightarrow
 \mu_i ^{(\delta)}
:=
 (\psi_i ^{(\delta)})^2 dm
\quad
 \text{ vaguely in } 
 \mathcal{M}_{\leq 1}(E)
,
 \text{ as }
 \varepsilon \rightarrow 0
,
\end{equation*}
because, for fixed $f\in C_K (E)$,
\begin{align*}
 \left|
  \langle f, \nu_i ^{(\delta, \varepsilon)} \rangle
 -
  \langle f, {\mu_i ^{(\delta)}} \rangle
 \right|
=&
 \left|
  \langle
   f
  ,
   p_\varepsilon ^{(i)} 
   [(\psi_i ^{(\delta, \varepsilon)})^2] 
  \rangle
 -
  \langle 
   f
  ,
   (\psi_i ^{(\delta)})^2
  \rangle
 \right|
\\
\leq&
 \left|
  \langle
   f
  ,
   p_\varepsilon ^{(i)} 
   [(\psi_i ^{(\delta, \varepsilon)})^2] 
  \rangle
 -
  \langle
   f
  ,
   p_\varepsilon ^{(i)} 
   [(\psi_i ^{(\delta)})^2] 
  \rangle
 \right|
+
 \left|
  \langle
   f
  ,
   p_\varepsilon ^{(i)} 
   [(\psi_i ^{(\delta)})^2] 
  \rangle
 -
  \langle f, (\psi_i ^{(\delta)})^2 \rangle
 \right|
\\
\leq&
 \|f\|_q 
 \cdot
 \|
  (\psi_i ^{(\delta, \varepsilon)})^2
 -
  (\psi_i ^{(\delta)})^2
 \|_p
+
 \|f\|_q
 \cdot
 \|
  p_\varepsilon ^{(i)}[(\psi_i ^{(\delta)})^2]
 -
  (\psi_i ^{(\delta)})^2
 \|_p
\\
\rightarrow&
 0
,
 \text{ as }
 \varepsilon \rightarrow 0
.
\end{align*}
Hence we have 
$\mu_i ^{(\delta)} \in B_\delta(\mu_i)$.
Similarly we can obtain
$\mu ^{(\delta)} \in B_\delta(\mu)$.


Now we let $\delta\downarrow 0$ and
by taking a subsequence of 
$\psi_i ^{(\delta)}$, there exists 
$
 \psi_i 
\in
 \mathcal{F}^{(i)}
$
such that
$
 \psi_i ^{(\delta)}
\rightarrow
 \psi_i 
$
in
$L^2(E;m)$ and $L^{2p}(E;m)$
and
\begin{equation}
\label{eq_41-3}
 \liminf_{\delta\rightarrow 0}
 \mathcal{E}^{(i)}
 (
  \psi_i ^{(\delta)}
 ,
  \psi_i ^{(\delta)}
 )
\geq
 \mathcal{E}^{(i)}
 (
  \psi_i 
 ,
  \psi_i
 )
,
\end{equation}
where we used again \eqref{eq_compact2p} in Lemma \ref{Lem_pre}.

Since $\mu_i ^{(\delta)} \in B_\delta (\mu_i)$, 
we have for fixed $f\in C_K(E)$,
\begin{align*}
 |
  \langle f, \psi_i ^2 \rangle
 -
  \langle f, \mu_i \rangle
 |
\leq&
 |
  \langle f, \psi_i ^2 \rangle
 -
  \langle f, (\psi_i ^{(\delta)} )^2 \rangle
 |
+
 |
  \langle f, \mu_i ^{(\delta)} \rangle
 -
  \langle f, \mu_i \rangle
 |
\\
\leq&
 \|f\|_q
 \cdot
 \|
  {\psi_i}^2 - (\psi_i ^{(\delta)} )^2 
 \|_p
+
 C \delta
\\
\rightarrow&
 0
 \text{ as }
 \delta \rightarrow 0
.
\end{align*}
Hence 
$\psi_i ^2$ must be a density of $\mu_i$,
and similarly, 
$\prod_{i=1} ^p \psi_i ^2$ 
must be a density of $\mu$.

Therefore, 
by the definition of $\mathbf{J}$ and
\eqref{eq_41-3}, we have
\begin{equation*}
 \liminf_{\delta\rightarrow 0}
 \sum_{i=1}^p
 \{
  \mathcal{E}^{(i)}
  (
   \psi_i ^{(\delta)}
  ,
   \psi_i ^{(\delta)}
  )
 -
  \lambda_1 ^{(i)}
 \}
\geq
 \mathbf{J}(\mu; \mu_1, \cdots, \mu_p)
.
\end{equation*}
Combining 
this with \eqref{eq_41-2}, we obtain \eqref{eq_Prop_4-1-1}.


\vspace{1eM}

We next prove \eqref{eq_Prop_4-1-2}.
Without loss of generality, 
we may assume that
$
 \sup_{\varepsilon < \varepsilon'}
 \inf_{(\mu ; \mu_1, \cdots, \mu_p) \in F}
 \mathbf{J}_\varepsilon 
<
 \infty
$
for some $\varepsilon'>0$.

We repeat 
the argument as in the proof of the upper bound 
of \eqref{eq_Prop_4-1-1}.
For 
$\delta > 0$ and $\varepsilon < \varepsilon'$, 
we pick 
$
 \bigl(
  \nu^{(\delta, \varepsilon)}
 ;
  \nu_1^{(\delta, \varepsilon)}
 ,
  \cdots
 ,
  \nu_p^{(\delta, \varepsilon)}
 \bigr)
\in
 F
$
such that
\begin{equation}
\label{eq_41-4}
 \inf_
 {
  (\mu; \mu_1, \cdots, \mu_p)
 \in
  F
 }
 \mathbf{J}_\varepsilon 
\geq
 \mathbf{J}_\varepsilon 
 \bigl(
  \nu^{(\delta, \varepsilon)}
 ;
  \nu_1^{(\delta, \varepsilon)}
 ,
  \cdots
 ,
  \nu_p^{(\delta, \varepsilon)}
 \bigr)
-
 \delta
.
\end{equation}

By the definition of 
$\mathbf{J}_\varepsilon$, 
there are nonnegative, $L^2$-normalized 
$
 \psi_i ^{(\delta, \varepsilon)}
\in
 \mathcal{F}^{(i)}
$
for $i=1,\cdots, p$ such that
$
 \frac{d\nu_i ^{(\delta, \varepsilon)}}{dm}
=
 p_\varepsilon ^{(i)}
 [(\psi_i ^{(\delta, \varepsilon)}) ^2]
$,
$
 \frac{d\nu ^{(\delta, \varepsilon)}}{dm}
=
 \prod_{i=1} ^p
 p_\varepsilon ^{(i)}
 [(\psi_i ^{(\delta, \varepsilon)}) ^2]
$
and
$
 \mathbf{J}_\varepsilon
 \bigl(
  \nu^{(\delta, \varepsilon)}
 ;
  \nu_1^{(\delta, \varepsilon)}
 ,
  \cdots
 ,
  \nu_p^{(\delta, \varepsilon)}
 \bigr)
\geq
 \sum_{i=1} ^p
 \{
  \mathcal{E}^{(i)}
  (
   \psi_i ^{(\delta, \varepsilon)}
  ,
   \psi_i ^{(\delta, \varepsilon)}
  )
 -
  \lambda_1 ^{(i)}
 \}
-
 \varepsilon
$.
In particular, 
$
 \{
  \psi_i ^{(\delta, \varepsilon)}
 \}_{\varepsilon}
$
is bounded in 
$
 (
  \mathcal{F}^{(i)}
 ,
  \|\cdot \|_{\mathcal{E}^{(i)}_1}
 )
$.

Then, by taking further subsequences,
there exists
$
 \psi_i ^{(\delta)}
\in
 \mathcal{F}^{(i)}
$
such that
$
 \|\psi_i ^{(\delta)}\|_2 = 1
$,
$
 \psi_i ^{(\delta, \varepsilon)}
\rightarrow
 \psi_i ^{(\delta)}
$
in
$L^2(E;m)$ and $L^{2p}(E;m)$
and that
$
 \liminf_{\varepsilon\rightarrow 0}
 \mathcal{E}^{(i)}
 (
  \psi_i ^{(\delta, \varepsilon)}
 ,
  \psi_i ^{(\delta, \varepsilon)}
 )
\geq
 \mathcal{E}^{(i)}
 (
  \psi_i ^{(\delta)}
 ,
  \psi_i ^{(\delta)}
 )
,
$
where we used 
\eqref{eq_compact2p} in Lemma \ref{Lem_pre}.
By taking 
$
 \liminf_{\varepsilon\rightarrow 0}
$
to \eqref{eq_41-4}, we have
\begin{equation}
\label{eq_41-5}
 \liminf_{\varepsilon\rightarrow 0}
 \inf_{F} \mathbf{J}_\varepsilon
\geq
 \sum_{i=1} ^p
 \{
  \mathcal{E}^{(i)}
  (
   \psi_i ^{(\delta)}
  ,
   \psi_i ^{(\delta)}
  )
 -
  \lambda_1 ^{(i)}
 \}
-
 \delta
.
\end{equation}

Since
\begin{equation*}
 \nu_i ^{(\delta, \varepsilon)}
\rightarrow
 \mu_i ^{(\delta)}
:=
 (\psi_i ^{(\delta)})^2 dm
\quad
 \text{ vaguely in } 
 \mathcal{M}_{\leq 1}(E)
,
 \text{ as }
 \varepsilon \rightarrow 0
\end{equation*}
and
\begin{equation*}
 \nu ^{(\delta, \varepsilon)}
\rightarrow
 \mu ^{(\delta)}
:=
 \prod_{i=1} ^p
 (\psi_i ^{(\delta)})^2 
 dm
\quad
 \text{ vaguely in } 
 \mathcal{M}_{f}(E)
,
 \text{ as }
 \varepsilon \rightarrow 0
,
\end{equation*}
we have 
$
 (
  \mu^{(\delta)}; \mu_1 ^{(\delta)}, \cdots, \mu_p ^{(\delta)}
 )
\in
 F
$.
Hence \eqref{eq_41-5} implies that
\begin{equation*}
 \limsup_{\varepsilon\rightarrow 0}
 \inf_{F} \mathbf{J}_\varepsilon
\geq
 \liminf_{\varepsilon\rightarrow 0}
 \inf_{F} \mathbf{J}_\varepsilon
\geq
 \sum_{i=1} ^p
 \{
  \mathcal{E}^{(i)}
  (
   \psi_i ^{(\delta)}
  ,
   \psi_i ^{(\delta)}
  )
 -
  \lambda_1 ^{(i)}
 \}
-
 \delta
\geq
 \inf_{F}
 \mathbf{J}
-
 \delta
.
\end{equation*}
Letting
$\delta \rightarrow 0$, we obtain \eqref{eq_Prop_4-1-2}.
\\


Finally we will prove 3.
Fix $a \geq 0$. 
Let
$
 \{
  (\mu_n; \mu^{(1)}_n, \cdots, \mu^{(p)}_n)
 \}_{n=1} ^\infty
\subset
 \{\mathbf{J}\leq a\}
$
and write
$
 \psi_n ^{(i)}
=
 \sqrt
 {
  \frac{d\mu_n ^{(i)}}{dm}
 }
\in
 \mathcal{F}^{(i)}
$
and
$
 \frac{d\mu_n}{dm}
=
 \prod_{i=1} ^p
 \frac{d\mu_n ^{(i)}}{dm}
$.
We find that each 
$\{\psi_n ^{(i)}\}_n$ is bounded in $\mathcal{F}^{(i)}$.
By
\eqref{eq_compact2p}, we may assume that
$\{\psi_n ^{(i)} \}_n$ converges in $L^2$ and $L^{2p}$ 
to some 
$
 \psi^{(i)}
\in
 \mathcal{F}^{(i)}
$.
Then, since $\mathcal{F}^{(i)}\subset L^2 \cap L^{2p}$, 
we have
$
 \mu_n ^{(i)}
\rightarrow
 \mu^{(i)}
:=
 (\psi^{(i)})^2 dm
$
and
$
 \mu_n
\rightarrow
 \mu
:=
 \bigl(
  \prod_{i=1} ^p
  \frac{d\mu^{(i)}}{dm}
 \bigr)
 dm
$
with respect to the metric $d$, as $n\rightarrow\infty$.
Hence $\mathbf{J}$ is a good rate function.
\end{proof}

\vspace{1eM}

\begin{proof}
[Proof of Lemma \ref{Lem_4-2}]
We first prove 1.
Fix $\varepsilon >0$ and for each $f\in C_K(E)$,
we write $f_\varepsilon := p_\varepsilon[f]$. 
Then we have
\begin{equation*}
 \langle
  f
 ,
  t^{-1} \ell^{(i)} _{t}
 \rangle
-
 \langle
  f
 ,
  t^{-1} \ell^{(i)} _{\varepsilon, t}
 \rangle
=
 \langle
  (f - p_\varepsilon ^{(i)} [f])
 ,
  t^{-1} \ell^{(i)} _{t}
 \rangle
=
 \langle
  (f - f_\varepsilon)
 ,
  t^{-1} \ell^{(i)} _{t}
 \rangle
.
\end{equation*}
By the contraction principle 
(Theorem \ref{Thm_DZ_contraction}), 
we can find that
$
 \langle
  (f - f_\varepsilon)
 ,
  t^{-1}\ell_t ^{(i)}
 \rangle
$
satisfies 
the LDP as $t\rightarrow \infty$,
with probability $\widetilde{\mathbb{P}}_t$,
scale $t$ and the good rate function
$J_{(f - f_\varepsilon)} ^{(i)}$, where
\begin{align*}
 J_{(f - f_\varepsilon)} ^{(i)} (a)
:=&
 \inf
 \left\{
  J^{(i)}(\mu)
 ;
  \quad
  \mu\in \mathcal{M}_1(E)
 ,
  \langle (f - f_\varepsilon), \mu\rangle = a
 \right\}
\\
=&
 \inf_
 {
  \substack
  {
   \psi\in \mathcal{F}^{(i)}, \|\psi\|_2 = 1
  ;
  \\
   \langle (f - f_\varepsilon), \mu\rangle = a
  }
 }
 \left\{
  \mathcal{E}^{(i)}(\psi, \psi)
 -
  \lambda_1 ^{(i)}
 \right\}
\end{align*}
for $a\in \mathbb{R}$.
Hence, 
for $\delta >0$,
\begin{align*}
 \inf_{ |a| > \delta}
 J_{(f - f_\varepsilon)} ^{(i)}(a)
=&
 \inf_
 {
  \substack
  {
   \psi\in \mathcal{F}^{(i)}, \|\psi\|_2 = 1
  ;
  \\
   |
    \langle (f - f_\varepsilon),\psi^2\rangle 
   |
  >
   \delta
  }
 }
 \left\{
  \mathcal{E}^{(i)}(\psi, \psi)
 -
  \lambda_1 ^{(i)}
 \right\}
.
\end{align*}

Now we will show that 
\begin{equation}
\label{eq_42-3}
 \lim_{\varepsilon \rightarrow 0}
 \inf_{ |a| > \delta}
 J_{(f-f_\varepsilon)} ^{(i)}(a)
=
 \infty
.
\end{equation}
Indeed, 
for each $\psi\in \mathcal{F}^{(i)}$ with 
$
 \|\psi\|_2
=
 1
$
and
$
 |
  \langle 
  (f-f_\varepsilon) , \psi^2
  \rangle
 |
>
 \delta
$,
applying \eqref{eq_Sobolev} with $\delta=1$
and we have
\begin{equation*}
 \delta
<
 \|f - f_\varepsilon\|_q
 \|\psi^2\|_{p} 
\leq
 \|f - f_\varepsilon\|_q
 (C_1 + 1\cdot \mathcal{E}^{(i)}(\psi,\psi)) 
,
\end{equation*}
where $p^{-1} + q^{-1} =1$.
Taking infimum over such $\psi$, we have
\begin{equation*}
 \delta
\leq
 \|f- f_\varepsilon\|_q
 (
  C_1
 +
  \inf_{ |a| > \delta}
  J_{(f-f_\varepsilon)} ^{(i)}(a)
 +
  \lambda_1 ^{(i)}
 ) 
.
\end{equation*}
%
By Proposition \ref{Prop_Lp-cont-semigrp}, we have
$
 \lim_{\varepsilon \rightarrow 0}
 \|f - f_\varepsilon\|_q
=
 0
$
and therefore \eqref{eq_42-3} follows.
\\

We have by the LDP,
\begin{align*}
 \limsup_{t\rightarrow \infty}
 \frac{1}{t}
 \log
 \widetilde{\mathbb{P}}_t
 \bigl(
  |
   \langle
    (
     t^{-1}\ell_{\varepsilon, t} ^{(i)}
    -
     t^{-1}\ell_{t} ^{(i)}
    )
   ,
    f
   \rangle
  |
 >
  \delta
 \bigr)
=&
 \limsup_{t\rightarrow \infty}
 \frac{1}{t}
 \log
 \widetilde{\mathbb{P}}_t
 \bigl(
  |
   \langle
    t^{-1}\ell_{t} ^{(i)}
   ,
    (f - f_\varepsilon)
   \rangle
  |
 >
  \delta
 \bigr)
\\
\leq&
 -
 \inf_{|a|>\delta}
 J_{(f-f_\varepsilon)}^{(i)}(a)
.
\end{align*}
Letting $\varepsilon\rightarrow 0$
and the conclusion follows from \eqref{eq_42-3}.


\vspace{1eM}

We next prove 2.
Assume 
Proposition \ref{Prop_exp_app} holds.
Fix 
$\delta > 0$ and $\eta > 0$.
Take 
$C(\varepsilon)$ as in Proposition \ref{Prop_exp_app}.
By writing $k= \lceil t \rceil$, 
we have for $t,\varepsilon >0$,
\begin{align*}
&
 \mathbb{P}_{x_0}
 \left(
  |
  \langle 
   t^{-p}
   (
    \ell_t ^{\mathrm{IS}}
   -
    \ell_{\varepsilon, t} ^{\mathrm{IS}}
   )
  ,
   f
  \rangle
  |
 >
  \delta
 ;
  t < \zeta^{(1)} \wedge \cdots \wedge \zeta^{(p)}
 \right)
\\
\leq&
 \delta^{-k}
 t^{-pk}
 {\mathbb{E}}_{x_0}
 \left[
  \left|
   \langle 
    \ell_t ^{\mathrm{IS}} - \ell_{\varepsilon, t} ^{\mathrm{IS}}
    , f
   \rangle
  \right|^k
 ;
  t < \zeta^{(1)} \wedge \cdots \wedge \zeta^{(p)}
 \right]
\\
\leq&
 \delta^{-k}
 t^{-pk}
\cdot
 (k!)^p
 e^{pt}
 C(\varepsilon)^k 
\\
\leq&
 \delta^{-k}
 ( t^{-1} k )^{kp}
 e^{pt}
 C(\varepsilon)^k 
.
\end{align*}

Therefore
\begin{equation*}
 \limsup_{t\rightarrow \infty}
 \frac{1}{t}
 \log
 \mathbb{P}_{x_0}
 \left(
  |
  \langle 
   t^{-p}
   (\ell_t ^{\mathrm{IS}} - \ell_{\varepsilon, t} ^{\mathrm{IS}})
   , f
  \rangle
  |
 >
  \delta
 ;
  t < \zeta^{(1)} \wedge \cdots \wedge \zeta^{(p)}
 \right)
\leq
-
 \log \delta
+
 p
+
 \log C(\varepsilon)
\end{equation*}
and consequently, \eqref{eq_Lem_4-2-2} follows because of 
\begin{equation*}
 \limsup_{t \rightarrow \infty}
 \frac{1}{t}
 \log
 \mathbb{P}_{x_0}
 \left(
  t < \zeta^{(1)} \wedge \cdots \wedge \zeta^{(p)}
 \right)
=
 -
 \sum_{i=1} ^p
 \lambda_1 ^{(i)}
,
\end{equation*}
which is proved in \cite[Corollary 6.4.2]{MR2778606}.
\end{proof}

\begin{Prop}
[Exponential approximation]
\label{Prop_exp_app}
Suppose each $X^{(i)}$ satisfies Assumption 
$(\textbf{A};$ $\rho^{(i)}, \mu^{(i)}, t_0, C)$.
Then for each $f\in C_K(E)$ and $\varepsilon > 0$,
there exists positive constant
$C(\varepsilon)$,
which depends on 
$
 p, f, t_0, C
$,
$
 p^{(i)}(\cdot, \cdot), \rho^{(i)}, \mu^{(i)}
;
 i=1, \cdots, p
$
and is independent of $t$ and $k$, such that  
\begin{equation}
\label{eq_exp_app_modif}
 \lim_{\varepsilon\downarrow 0}
 C(\varepsilon)
=
 0
\end{equation}
and
\begin{equation}
\label{eq_prop1.6_2}
 {\mathbb{E}}
 \left[
  \left|
   \langle 
    \ell_t ^{\mathrm{IS}} - \ell_{\varepsilon, t} ^{\mathrm{IS}}
    , f
   \rangle
  \right|^k
 ;
  t < \zeta^{(1)} \wedge \cdots \wedge \zeta^{(p)}
 \right]
\leq
 e^{pt}
 (k!)^p
 C(\varepsilon)^k
\end{equation}
for all
$
 \varepsilon >0
$,
$t>0$ and $k\in \mathbb{Z}$ with $k\geq 0$.
\end{Prop}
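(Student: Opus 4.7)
The plan is to first reduce to a bound on the $L^k$-norm of $\langle f, \ell^{\mathrm{IS}}_{\varepsilon',t} - \ell^{\mathrm{IS}}_{\varepsilon,t}\rangle$ for $0<\varepsilon'<\varepsilon$, and then pass $\varepsilon'\downarrow 0$ via Fatou's lemma using the $L^k$-convergence established in Proposition \ref{Prop_existence}. Thus I may work entirely with approximated intersection measures on both sides and it suffices to produce a bound of the form $e^{pt}(k!)^p C(\varepsilon,\varepsilon')^k$ with $C(\varepsilon,\varepsilon') \to 0$ as $\varepsilon,\varepsilon'\downarrow 0$.

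Writing $U^{(i)}_\eta(x) := \int_0^t p^{(i)}_\eta(X^{(i)}(s),x)\,ds$, we have $\langle f, \ell^{\mathrm{IS}}_{\eta,t}\rangle = \int_E f(x)\prod_{i=1}^p U^{(i)}_\eta(x)\,m(dx)$. I telescope the product over $i$:
\[
 \prod_{i=1}^p U^{(i)}_{\varepsilon'}(x) - \prod_{i=1}^p U^{(i)}_\varepsilon(x)
= \sum_{i=1}^p \Bigl(\prod_{j<i} U^{(j)}_{\varepsilon'}(x)\Bigr)\bigl(U^{(i)}_{\varepsilon'}(x) - U^{(i)}_\varepsilon(x)\bigr)\Bigl(\prod_{j>i} U^{(j)}_\varepsilon(x)\Bigr).
\]
Expanding the $k$-th power of $\langle f, \ell^{\mathrm{IS}}_{\varepsilon',t} - \ell^{\mathrm{IS}}_{\varepsilon,t}\rangle$ into a $k$-fold integral over $E^k$ and inserting this decomposition in each factor, the $k$-th moment becomes a sum over tuples $\mathbf{i}=(i_1,\ldots,i_k)\in\{1,\ldots,p\}^k$ (of which there are $p^k$). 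By the independence of $X^{(1)},\ldots,X^{(p)}$, the expectation of the integrand in each term factors as a product over $i \in \{1,\ldots,p\}$ of expectations that involve only $X^{(i)}$.

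For each process $i$ and each $\mathbf{i}$, the resulting factor is of the form
\[
 \mathbb{E}\Bigl[\prod_{l:i_l<i} U^{(i)}_{\varepsilon'}(x_l) \prod_{l:i_l=i}\bigl(U^{(i)}_{\varepsilon'}(x_l)-U^{(i)}_\varepsilon(x_l)\bigr) \prod_{l:i_l>i} U^{(i)}_\varepsilon(x_l); t<\zeta^{(i)}\Bigr],
\]
which I evaluate by the same semigroup expansion as in Section \ref{Sec_existence}: it becomes a sum over $\sigma\in\mathfrak{S}_k$ of permuted tensor products of $T^{(i)}_{\varepsilon'}$, $T^{(i)}_\varepsilon$, and $(T^{(i)}_{\varepsilon'}-T^{(i)}_\varepsilon)$ (in slots corresponding to $i_l<i$, $i_l>i$, and $i_l=i$ respectively) applied to $H^{(i)}_t$ from \eqref{Def_Ht(i)}. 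After the standard substitution $p_s^{(i)} \leq e^t \cdot e^{-s}p_s^{(i)}$ used already in \eqref{eq_uni.int}, each permutation is controlled by an integral of products of $R_1^{(i)}$-kernels (for ``regular'' slots) and $R_{1,\eta}^{(i)}$-kernels with $\eta \in\{\varepsilon,\varepsilon'\}$ (for ``difference'' slots, since $(T^{(i)}_{\varepsilon'}-T^{(i)}_\varepsilon)$ against the time integrals localizes in short time scales). Applying H\"older's inequality with exponent $p$ in the spatial $m^{\otimes k}$-integral then decouples the $p$ processes, yielding for process $i$ a bound
\[
 k!\cdot e^t \cdot \Bigl(\sup_x \int R_1^{(i)}(x,y)^p m(dy)\Bigr)^{(k-n_i)/p} \Bigl(\sup_x \int R_{1,\varepsilon}^{(i)}(x,y)^p m(dy)\Bigr)^{n_i/p},
\]
where $n_i:=\#\{l:i_l=i\}$. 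Multiplying across $i$ produces the $(k!)^p e^{pt}$ prefactor, while the difference-slot norms supply the required decay by \eqref{eq_Green2}; the sum over $\mathbf{i}$ contributes an extra $p^k$ which is absorbed into $C(\varepsilon)$.

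The main obstacle is the careful bookkeeping of the semigroup expansion in the presence of mixed operators: one must verify that the $\mathfrak{S}_k$-sum structure from Section \ref{Sec_existence} survives the insertion of ``difference'' operators $T^{(i)}_{\varepsilon'}-T^{(i)}_\varepsilon$ in some tensor slots, and that these difference operators genuinely extract an $R_{1,\eta}^{(i)}$-kernel (rather than a full $R_1^{(i)}$-kernel) after the $e^{-s}$-weighting. The product structure of the permuted tensor products from Section \ref{Sec_tensor} is essential here, as it enables H\"older to separate the $p$ processes cleanly, so that \eqref{eq_Green1}--\eqref{eq_Green2} deliver the uniform bounds on ``regular'' slots and the required $\varepsilon$-smallness on ``difference'' slots.
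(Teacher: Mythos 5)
Your reduction to comparing $\ell^{\mathrm{IS}}_{\varepsilon',t}$ with $\ell^{\mathrm{IS}}_{\varepsilon,t}$ and passing $\varepsilon'\downarrow 0$ via the $L^k$-convergence of Proposition \ref{Prop_existence} is legitimate, as is the telescoping over processes and the factoring of the $k$-th moment by independence. The gap is in the claim that is doing all the work, namely that the difference operator $T^{(i)}_{\varepsilon'}-T^{(i)}_{\varepsilon}$, when placed in a tensor slot acting on $H^{(i)}_t$, ``extracts an $R^{(i)}_{1,\eta}$-kernel.'' This is not true. Writing out $[T^{(i)}_{\tau_1}\otimes\cdots\otimes T^{(i)}_{\tau_k}]H^{(i)}_t$, the kernel $p^{(i)}_{\varepsilon'}(z_m,y_m)-p^{(i)}_{\varepsilon}(z_m,y_m)$ of a difference slot multiplies the integrand as a \emph{transverse branch} at the intermediate variable $z_m$, which already carries the two chain factors $p^{(i)}_{r_m}(z_{m-1},z_m)$ and $p^{(i)}_{r_{m+1}}(z_m,z_{m+1})$; there is no change of variable absorbing the smoothing scale $\varepsilon$ into the chain increments $r_j$, and hence no $R_{1,\varepsilon}$-type kernel appears. (The $R_{1,\delta}$ factors in the paper arise from a cutoff on the \emph{chain increments} $r_j\leq\delta$ in $H_t^{(i)}$, which is a genuinely different parameter from the smoothing scale $\varepsilon$ of the auxiliary $p_\varepsilon$-kernels.) All that is available without further structure is the contraction bound $\|T^{(i)}_{\varepsilon'}-T^{(i)}_{\varepsilon}\|_{L^p\to L^p}\leq 2$, which does not decay: the semigroup is strongly continuous but not norm-continuous, so no slot-by-slot argument can produce a factor $C(\varepsilon)\to 0$ uniformly in $t$ and $k$.

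This is exactly the difficulty the paper's proof is designed around, and the workaround is substantially more involved than a Hölder decoupling. The paper first isolates the short chain increments ($r_j\leq\delta$, controlled by $R_{1,\delta}$ in Lemma \ref{Lem_A}), then truncates the eigenfunction expansion to a finite set $\mathcal{R}$ of low modes (controlled by the eigenvalue growth \eqref{eq_growth_of_eigen} in Lemma \ref{Lem_B}), and only after this finite-dimensional reduction does it exploit the strong continuity $T_\varepsilon\psi\to\psi$ --- but now only on the finite family $\{\psi^{(i)}_{l_1}\psi^{(i)}_{l_2}: l_1,l_2\in\mathcal{R}\}$ --- by engineering a cancellation of the form $\prod_l[a_\varepsilon(l)-a(l)]^{A(l)}$ via the multinomial structure of the binomial expansion of $(\langle f,\ell_t^{\mathrm{IS}}\rangle-\langle f,\ell_{\varepsilon,t}^{\mathrm{IS}}\rangle)^k$ (Lemma \ref{Lem_C}, using Lemmas \ref{Lem_55} and \ref{Lem_meas5}). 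Your proposal removes all of this combinatorics, but without a finite-mode reduction the difference slots contribute nothing small, so the argument cannot close.
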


As we have noted in Section \ref{Sec_Main_results},
this proposition says that 
the approximated intersection measure 
$\ell_{\varepsilon, t} ^{\mathrm{IS}}$ 
is a ``good'' approximation of the intersection measure
$\ell_{t} ^{\mathrm{IS}}$.


This proposition 
has the same role as \cite[Proposition 2.3]{MR2999298}.
The factor $e^{pt}$
comes from the fact that we use 1-order resolvent
while they use 0-order resolvent.
We will prove this proposition 
in Section \ref{Sec_exp_app},
which is the main part of this paper.


\section{Proof of Proposition \ref{Prop_exp_app} }
\label{Sec_exp_app}

\subsection{Outline of the proof}

Our proof of Proposition \ref{Prop_exp_app} is 
motivated by the same heuristics 
as Section 3.2 of \cite{MR2999298}
and based on 
the proof of \cite[Proposition 2.3]{MR2999298}
with some modifications.
Here is a list of differences:
\begin{itemize}
\item[1.]
We do not use 0-order resolvents (Green functions),
but use 1-order resolvents for 
deriving the desired estimates.
By this modification, we do not need to take care of the divergence
of 0-order Green functions on the diagonal set.

\item[2.]
We change the definition of $G_t$.
Compare our \eqref{eq_def_G_t} with  (3.27) in page 288 of \cite{MR2999298}.

\item[3.]
We change the way of taking partition of $\sigma_i$
in order to apply Lemma \ref{Lem_meas5}.
Compare our \eqref{eq_bij} with page 290 of \cite{MR2999298}.

\item[4.]
We do not take the summation over 
$W_i\subset \{1, \cdots, k\} $ with $\# W_i = \# F_i$,
but over $F_i ' \subset F_i$ with $\# F_i ' = \# S^*$.
Compare our \eqref{eq_F_i'} with page 290 of \cite{MR2999298}.

\item[5.]
When we take the summation over 
$
 \mathcal{N}^{(i)}
\in
 \mathcal{R}^{E_i}
$,
we do not decompose as
$E_i = (E_i \setminus F_i) \cup F_i$,
but decompose as
$E_i = (E_i \setminus J_i) \cup J_i$.
Compare 
our \eqref{eq_J_i} with the last equality of 
page 290 of \cite{MR2999298}.
\end{itemize}

The changes in 2--5 are related to the comments
before Corollary \ref{Cor_meas5}. 
Due to some (relatively minor) possible problems
pointed out below, 
it seems such modifications are needed.
We will note these differences more concretely 
in Remarks \ref{Rmk_54-1}, \ref{Rmk_54-2} and
\ref{Rmk_55-1}.

\subsection{Preliminary computations}

Before proving Proposition \ref{Prop_exp_app},
we give the following lemma and proposition.
In the following, 
we abbreviate the measure $m(dx)$ just as $dx$.
Recall
the definition of $H_t ^{(i)}$ in \eqref{Def_Ht(i)}:
\begin{equation*}
 H^{(i)} _t (y_1, \cdots, y_k) 
:=
 \int_{[0, t]^k}
 \int_E
  1_
  {
   \left\{
    \sum_{j=1}^k
    r_j
   \leq
    t
   \right\}
  }
 \prod_{j=1} ^{k+1}
 p^{(i)} _{r_j}
 (y_{j-1}, y_{j})
 dy_{k+1}
 dr_1 \cdots dr_k
,
\end{equation*}
where
$
 r_{k+1} 
= 
 t - \sum_{j=1} ^k r_j
$.

\begin{Lem}
\label{Lem_moment}
For $m\leq k$, it holds that
\begin{flalign*}
&
 \mathbb{E}_{x_0} 
 \bigl[
  \langle f, \ell_{t} ^{\mathrm{IS}} \rangle^{m}
  \langle f, \ell_{\varepsilon, t} ^{\mathrm{IS}} \rangle^{k-m}
 ;
  t < \zeta^{(1)} \wedge \cdots \wedge \zeta^{(p)}
 \bigr]
\\
&=
 \int_{E^k}
  \prod_{l=1} ^k f(y_l)
  \cdot
  \prod_{i=1} ^p
  \Biggl[
   \sum_{\sigma\in \mathfrak{S}_k}
   [
    \mathrm{id} ^{\otimes m}
   \underset{\sigma}{\otimes}
    {T_\varepsilon ^{(i)} } ^{\otimes (k-m)}
   ]
   H^{(i)} _t
   (y_1, \cdots, y_k)
  \Biggr]
 dy_1 \cdots dy_k
\\
\begin{split}
&=
 \int_{E^k}
  \prod_{l=1} ^k f(y_l)
 \cdot
  \prod_{i=1} ^p
  \Biggl[
   \sum_{\sigma\in \mathfrak{S}_k}
   \int_{[0, t]_{<} ^k}
   \Biggl[
    \int_{E^{k-m+1} }
     \prod_{j=m+1} ^k
     p_{\varepsilon} ^{(i)} ( z_j, y_j)
     \prod_{j=1} ^{k+1}
     p_{s_{j} - s_{j-1} } ^{(i)} (w_{j-1}, w_{j})
\\
&
\hspace{100mm}
    dz_{m+1} \cdots dz_{k+1}
   \Biggr]
   ds_1 \cdots ds_k
  \Biggr]
  dy_1 \cdots dy_k
,
\end{split}
&
\end{flalign*}
where
$[0, t]^k _<$ is defined in \eqref{eq_0t<}
and we set
$\sigma(0)=0$,
$\sigma(k+1)=k+1$
and
\begin{equation*}
 w_{j} 
= 
 w_{j} ^{(i)} 
=
 \displaystyle
 \begin{cases}
  y_{\sigma^{-1}(j)}
 &
  \text{when } \sigma^{-1}(j) \leq m
 ,
 \\
  z_{\sigma^{-1}(j)}
 &
  \text{when } \sigma^{-1}(j) \geq m+1
 .
 \end{cases}
\end{equation*}
\end{Lem}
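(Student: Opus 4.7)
The plan is to extend the calculation in \eqref{eq_convergence} of the proof of Proposition \ref{Prop_existence}, by first computing the mixed moment with \emph{both} intersection measures approximated and then taking a limit on the $m$ copies.

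First, I would introduce a second approximation parameter $\varepsilon' > 0$ and compute
\[
 \mathbb{E}_{x_0}\bigl[
  \langle f, \ell_{\varepsilon', t}^{\mathrm{IS}}\rangle^{m}
  \langle f, \ell_{\varepsilon, t}^{\mathrm{IS}}\rangle^{k-m}
 ;\;
  t<\zeta^{(1)}\wedge\cdots\wedge\zeta^{(p)}
 \bigr].
\]
Expanding each approximated intersection measure by its defining formula, applying Fubini, and using independence of $X^{(1)},\ldots,X^{(p)}$, the expectation factorises into a product over $i=1,\ldots,p$. For each $i$, the factor is an integral over $(s_1,\ldots,s_k)\in [0,t]^k$ of
\[
 \mathbb{E}_{x_0^{(i)}}
 \Bigl[
  \prod_{l=1}^{m} p^{(i)}_{\varepsilon'}(X^{(i)}(s_l),y_l)
  \prod_{l=m+1}^{k} p^{(i)}_{\varepsilon}(X^{(i)}(s_l),y_l)
 ;\; X^{(i)}(t)\in E
 \Bigr].
\]

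Second, I would split $[0,t]^k$ into the $k!$ ordered simplices $\{s_{\sigma(1)}<\cdots<s_{\sigma(k)}\}$ indexed by $\sigma\in\mathfrak{S}_k$, and on each one apply the Markov property of $X^{(i)}$ to chain the densities $p^{(i)}_{s_j-s_{j-1}}$ along the trajectory, with an extra $p^{(i)}_{\varepsilon'}$ or $p^{(i)}_{\varepsilon}$ factor grafted onto every visited intermediate point according to whether the corresponding $l$ lies in $\{1,\ldots,m\}$ or $\{m+1,\ldots,k\}$. Recognising the time-chain as $H_t^{(i)}$ and translating the resulting integral operators into the language of Section \ref{Sec_tensor} yields exactly
\[
 \sum_{\sigma\in\mathfrak{S}_k}
 [\, (T^{(i)}_{\varepsilon'})^{\otimes m}
  \underset{\sigma}{\otimes}
   (T^{(i)}_{\varepsilon})^{\otimes (k-m)} ]
 H_t^{(i)}(y_1,\ldots,y_k).
\]
The explicit kernel form in the second display of the lemma then follows by unfolding this operator using the $w_j$ bookkeeping at the end of Section \ref{Sec_tensor}.

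Third, I would let $\varepsilon'\downarrow 0$. On the left-hand side the convergence
$
 \langle f,\ell_{\varepsilon',t}^{\mathrm{IS}}\rangle
\to
 \langle f,\ell_{t}^{\mathrm{IS}}\rangle
$
in every $L^{k}(\widetilde{\mathbb{P}}_{t})$ is supplied by \eqref{eq_convergence_any_moment} of Proposition \ref{Prop_existence}, so H\"older's inequality gives convergence of the mixed moments. On the right-hand side, $T^{(i)}_{\varepsilon'}\to \mathrm{id}$ strongly in $L^{p}(E;m)$ by Proposition \ref{Prop_Lp-cont-semigrp}, and the uniform bound \eqref{eq_uni.int} on $\|H_t^{(i)}\|_{L^p(E^k)}$ makes the finite sum over $\sigma$ and product over $i$ stable under this limit when paired against $f^{\otimes k}\in L^{q}(E^k)$, $1/p+1/q=1$; replacing the first $m$ tensor factors by $\mathrm{id}$ gives the asserted formula.

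No deep obstacle is anticipated; the only genuinely delicate point is the index bookkeeping in the second step, where one must correctly identify, for each $\sigma$ and each $j$, whether the chain passes through a $y$-variable (when $\sigma^{-1}(j)\leq m$, where the operator is $\mathrm{id}$) or through an auxiliary $z$-variable subsequently integrated against $p_\varepsilon^{(i)}(z,y)$ (when $\sigma^{-1}(j)\geq m+1$), so that the variables $w_j$ in the statement match the chain variables in $H_t^{(i)}$.
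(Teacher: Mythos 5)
Your proposal is correct and matches the paper's proof essentially step by step: you introduce a second mollification parameter (the paper calls it $\eta$, you call it $\varepsilon'$), compute the mixed moment of the two approximated intersection measures by expanding, factorising over $i$, decomposing $[0,t]^k$ into $\sigma$-simplices and applying the Markov property to obtain $\sum_\sigma[T_{\varepsilon'}^{(i)\otimes m}\underset{\sigma}{\otimes}T_\varepsilon^{(i)\otimes(k-m)}]H_t^{(i)}$, and then send $\varepsilon'\downarrow 0$ using \eqref{eq_convergence_any_moment} on the left and strong $L^p$-convergence of $T_{\varepsilon'}^{(i)}\to\mathrm{id}$ together with $H_t^{(i)}\in L^p(E^k)$ on the right. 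The index bookkeeping you flag as delicate is handled exactly as in Section \ref{Sec_tensor}, with $U_j=\mathrm{id}$ when $\sigma^{-1}(j)\leq m$ and $U_j=T_\varepsilon^{(i)}$ otherwise, which is what produces the $w_j$ variables in the statement.
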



The above lemma immediately implies 
the following moment formula:
which is almost the same as \cite[Lemma 3.1]{MR2999298}
\begin{Prop}
[Moment formula]
\label{Prop_moment}
For $k\geq 1$, it holds that
\begin{flalign*}
&
 \mathbb{E}_{x_0} 
 \Bigl[
  \bigl(
   \langle f, \ell_{t} ^{\mathrm{IS}} \rangle
  -
   \langle f, \ell_{\varepsilon, t} ^{\mathrm{IS}} \rangle
  \bigr)^k
 ;
  t < \zeta^{(1)} \wedge \cdots \wedge \zeta^{(p)}
 \Bigr]
\\
=&
 \sum_{m=0} ^k
 (-1)^m
 \binom{k}{m}
 \int_{E^k}
  \prod_{l=1} ^k f(y_l)
  \cdot
  \prod_{i=1} ^p
  \Biggl[
   \sum_{\sigma\in \mathfrak{S}_k}
   [
    \mathrm{id} ^{\otimes m}
   \underset{\sigma}{\otimes}
    {T_\varepsilon ^{(i)} } ^{\otimes (k-m)}
   ]
   H^{(i)} _t
   (y_1, \cdots, y_k)
  \Biggr]
 dy_1 \cdots dy_k
.
&
\end{flalign*}
\end{Prop}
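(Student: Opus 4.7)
The plan is to derive Proposition \ref{Prop_moment} as an essentially immediate corollary of Lemma \ref{Lem_moment} via the binomial theorem. The claimed formula is a signed combination of cross moments of the form $\mathbb{E}_{x_0}[\langle f,\ell_t^{\mathrm{IS}}\rangle^{m'}\langle f,\ell_{\varepsilon,t}^{\mathrm{IS}}\rangle^{k-m'};\,t<\zeta^{(1)}\wedge\cdots\wedge\zeta^{(p)}]$, each of which is already evaluated by the lemma. Once one expands and collects, the proposition drops out by linearity of expectation.

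Concretely, I would first apply the binomial theorem to write
\begin{equation*}
 \bigl(\langle f,\ell_t^{\mathrm{IS}}\rangle-\langle f,\ell_{\varepsilon,t}^{\mathrm{IS}}\rangle\bigr)^k
 =\sum_{m=0}^{k}\binom{k}{m}(-1)^{m}\langle f,\ell_t^{\mathrm{IS}}\rangle^{k-m}\langle f,\ell_{\varepsilon,t}^{\mathrm{IS}}\rangle^{m},
\end{equation*}
then take $\mathbb{E}_{x_0}[\,\cdot\,;\,t<\zeta^{(1)}\wedge\cdots\wedge\zeta^{(p)}]$ on both sides. The finiteness of the $k$-th moments guaranteed by Proposition \ref{Prop_existence} (via \eqref{eq_convergence_any_moment}) ensures expectation commutes with the finite binomial sum. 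Applying Lemma \ref{Lem_moment} with its parameter ``$m$'' set to $k-m$ to each resulting cross moment produces the integral representation appearing in the statement, after a natural reindexing of the summation variable so that the $\mathrm{id}^{\otimes m}$ block and the $T_\varepsilon^{(i)\,\otimes (k-m)}$ block are placed as prescribed. All the care required at this step is bookkeeping of the sign $(-1)^m$ and of the relative positions of the tensor factors under the reindexing.

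There is no real analytical obstacle in the present argument. All of the nontrivial probabilistic and semigroup-theoretic content --- the Markov property along ordered time intervals on the event $\{t<\zeta^{(1)}\wedge\cdots\wedge\zeta^{(p)}\}$, the $L^p$-boundedness of the semigroups $T_\varepsilon^{(i)}$ from Proposition \ref{Prop_Lp-cont-semigrp}, and the permutated tensor product formalism developed in Section \ref{Sec_tensor} --- has already been absorbed into Lemma \ref{Lem_moment}; the present statement is a purely formal corollary. The genuine work sits in the proof of Lemma \ref{Lem_moment} itself, namely splitting the time domain $[0,t]^k$ along orderings of $s_1,\dots,s_k$, using the Markov property between consecutive visit times of each $X^{(i)}$, and identifying the resulting expression with the permutated tensor product acting on $H_t^{(i)}$, which is carried out separately.
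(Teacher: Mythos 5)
Your approach is exactly the one the paper takes: the paper's entire proof of Proposition~\ref{Prop_moment} is the one-line remark that it ``immediately'' follows from Lemma~\ref{Lem_moment}, and what you spell out (binomial expansion, linearity of the truncated expectation justified by the finite moments of Proposition~\ref{Prop_existence}, term-by-term application of Lemma~\ref{Lem_moment}) is precisely that implicit argument.

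One caveat you should not wave away as mere ``bookkeeping'': the sign and the tensor pattern as written in the statement are not simultaneously compatible with the binomial theorem. Writing $a=\langle f,\ell_t^{\mathrm{IS}}\rangle$, $b=\langle f,\ell_{\varepsilon,t}^{\mathrm{IS}}\rangle$, the expansion $(a-b)^k=\sum_{m=0}^k\binom{k}{m}(-1)^{k-m}a^m b^{k-m}$ combined with Lemma~\ref{Lem_moment} (which matches $a^m b^{k-m}$ with $\mathrm{id}^{\otimes m}\underset{\sigma}{\otimes}T_\varepsilon^{\otimes(k-m)}$) produces the factor $(-1)^{k-m}$, not $(-1)^{m}$; your proposed reindexing $m\mapsto k-m$ merely trades the sign mismatch for a mismatch in which block is $\mathrm{id}^{\otimes\cdot}$. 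The two versions differ by a global factor $(-1)^k$, an inconsistency already present in the paper's own statement (and carried forward into \eqref{eq_Ia-3}); it is harmless for the sequel because Proposition~\ref{Prop_exp_app} and everything downstream only uses $\bigl|\langle f,\ell_t^{\mathrm{IS}}\rangle-\langle f,\ell_{\varepsilon,t}^{\mathrm{IS}}\rangle\bigr|^k$, but a careful write-up should either change $(-1)^m$ to $(-1)^{k-m}$ or swap the roles of $\mathrm{id}$ and $T_\varepsilon^{(i)}$ in the tensor.
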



\begin{proof}
[Proof of Lemma \ref{Lem_moment}]
For $\eta > 0$, we have
\begin{flalign*}
&
 \mathbb{E}_{x_0} 
 \bigl[
  \langle f, \ell_{\eta, t} ^{\mathrm{IS}} \rangle^{m}
  \langle f, \ell_{\varepsilon, t} ^{\mathrm{IS}} \rangle^{k-m}
 ;
  t < \zeta^{(1)} \wedge \cdots \wedge \zeta^{(p)}
 \bigr]
\\
=&
 \mathbb{E}_{x_0} 
 \biggl[
  \langle f, \ell_{\eta, t} ^{\mathrm{IS}} \rangle^{m}
  \langle f, \ell_{\varepsilon, t} ^{\mathrm{IS}} \rangle^{k-m}
  \prod_{i=1} ^p 
  1_E (X^{(i)} _t) 
 \biggr]
\\
\begin{split}
=&
 \mathbb{E}_{x_0} 
 \Biggl[
  \prod_{j=1} ^m 
  \int_E f(y_j) 
  \biggl(
   \prod_{i=1} ^p 
   \int_0 ^t 
    p_{\eta} ^{(i)}( X^{(i)} _{s}, y_j)
   ds
  \biggr)
  dy_j
 \cdot
  \prod_{j=m+1} ^k 
  \int_E f(y_j) 
  \biggl(
   \prod_{i=1} ^p 
   \int_0 ^t 
    p_{\varepsilon} ^{(i)} ( X^{(i)} _{s}, y_j) 
   ds
  \biggr)
  dy_j
 \cdot
 \prod_{i=1} ^p 
 1_E (X^{(i)} _t) 
 \Biggr]
\end{split}
\\
=&
 \mathbb{E}_{x_0} 
 \Biggl[
  \int_{E^k}
   \prod_{l=1} ^k f(y_l)
  \cdot
   \prod_{i=1} ^p
   \Biggl(
    \prod_{j=1} ^m
    \int_0 ^t p_{\eta}^{(i)}( X^{(i)} _{s}, y_j) ds
   \cdot
    \prod_{j=m+1} ^k
    \int_0 ^t p_{\varepsilon}^{(i)}( X^{(i)} _{s}, y_j) ds
   \cdot
    1_E(X^{(i)} _t) 
   \Biggr)
  dy_1 \cdots dy_k
 \Biggr]
\\
=&
 \int_{E^k}
  \prod_{l=1} ^k f(y_l)
 \cdot
  \prod_{i=1} ^p
  \mathbb{E}_{x_0} 
  \Biggl[
   \int_{[0,t]^k}
    \prod_{j=1} ^m
    p_{\eta}^{(i)}( X^{(i)} _{s_j}, y_j)
    \prod_{j=m+1} ^k
    p_{\varepsilon}^{(i)}( X^{(i)} _{s_j}, y_j)
   \cdot
    1_E(X^{(i)} _t) 
   ds_1 \cdots ds_k
  \Biggr]
 dy_1 \cdots dy_k
.
&
\end{flalign*}

\noindent
Now, we calculate the expectation of the above equation:
\begin{flalign*}
&
 \mathbb{E}_{x_0} 
 \Biggl[
  \int_{[0,t]^k}
   \prod_{j=1} ^m
   p_{\eta}^{(i)}( X^{(i)} _{s_j}, y_j)
   \prod_{j=m+1} ^k
   p_{\varepsilon}^{(i)}( X^{(i)} _{s_j}, y_j)
  \cdot
   1_E(X^{(i)} _t) 
  ds_1 \cdots ds_k
 \Biggr]
\\
=&
 \mathbb{E}_{x_0} 
 \Biggl[
  \sum_{\sigma\in \mathfrak{S}_k}
  \int_{0\leq s_{\sigma(1)} < \cdots < s_{\sigma(k)} \leq t}
   \prod_{j=1} ^m
   p_{\eta}^{(i)}( X^{(i)} _{s_j}, y_j)
   \prod_{j=m+1} ^k
   p_{\varepsilon}^{(i)}( X^{(i)} _{s_j}, y_j)
   \cdot
   1_E(X^{(i)} _t) 
  ds_1 \cdots ds_k
 \Biggr]
\\
=&
 \sum_{\sigma\in \mathfrak{S}_k}
 \int_{[0, t]_{<} ^k}
  \mathbb{E}_{x_0} 
  \Biggl[
   \prod_{j=1} ^m
   p_{\eta} ^{(i)}      ( X^{(i)} _{ s_{\sigma(j)} }, y_j)
   \prod_{j=m+1} ^k
   p_{\varepsilon}^{(i)}( X^{(i)} _{ s_{\sigma(j)} }, y_j)
   \cdot
   1_E(X^{(i)} _t) 
  \Biggr]
 ds_1 \cdots ds_k
\\
=&
 \sum_{\sigma\in \mathfrak{S}_k}
 \int_{[0, t]_{<} ^k}
  \Biggl[
  \int_{E^{k+1}}
   \prod_{j=1} ^m
   p_{\eta} ^{(i)}      ( x_{\sigma(j)}, y_j)
   \prod_{j=m+1} ^k
   p_{\varepsilon}^{(i)}( x_{\sigma(j)}, y_j)
   \prod_{j=1} ^{k+1}
   p_{s_j - s_{j-1} }^{(i)} (x_{j-1}, x_j)
  dx_1 \cdots dx_{k+1}
  \Biggr]
 ds_1 \cdots ds_k
\\
=&
 \sum_{\sigma\in \mathfrak{S}_k}
 [
  {T_\eta        ^{(i)} }^{\otimes m}
 \underset{\sigma}{\otimes}
  {T_\varepsilon ^{(i)}} ^{\otimes (k-m)}
 ]
 H^{(i)} _t
 (y_1, \cdots, y_k)
,
&
\end{flalign*}
where in the fourth line, 
we set
$s_0 = 0$
and
$s_{k+1} = t$.

We have $H_t ^{(i)} \in L^p(E^k)$ and
$
 \displaystyle
 \lim_{\eta\downarrow 0}
 [
  {T_\eta ^{(i)}      } ^{\otimes m}
 \underset{\sigma}{\otimes}
  {T_\varepsilon^{(i)}} ^{\otimes (k-m)}
 ]
 H^{(i)} _t
=
 [
  \mathrm{id} ^{\otimes m}
 \underset{\sigma}{\otimes}
  {T_\varepsilon^{(i)} }^{\otimes (k-m)}
 ]
 H^{(i)} _t
$
in $L^p(E^k)$.
Since Proposition \ref{Prop_existence} says that
$\langle f, \ell_{\eta, t} ^{\mathrm{IS}} \rangle$ 
converges to $\langle f, \ell_{t} ^{\mathrm{IS}} \rangle$ 
in any moments with respect to the measure 
$
 \mathbb{P}_{x_0} 
 (
  \hspace{1mm}\cdot\hspace{1mm}
 ;
  t < \zeta^{(1)} \wedge \cdots \wedge \zeta^{(p)}
 )
$,
we conclude that
\begin{flalign*}
&
 \mathbb{E}_{x_0} 
 \bigl[
  \langle f, \ell_{t} ^{\mathrm{IS}} \rangle^{m}
  \langle f, \ell_{\varepsilon, t} ^{\mathrm{IS}} \rangle^{k-m}
 ;
  t < \zeta^{(1)} \wedge \cdots \wedge \zeta^{(p)}
 \bigr]
\\
=&
 \lim_{\eta \downarrow 0}
 \mathbb{E}_{x_0} 
 \bigl[
  \langle f, \ell_{\eta, t} ^{\mathrm{IS}} \rangle^{m}
  \langle f, \ell_{\varepsilon, t} ^{\mathrm{IS}} \rangle^{k-m}
 ;
  t < \zeta^{(1)} \wedge \cdots \wedge \zeta^{(p)}
 \bigr]
\\
=&
\lim_{\eta\downarrow 0}
 \int_{E^k}
  f^{\otimes k}
  \prod_{i=1} ^p
  \Biggl[
   \sum_{\sigma\in \mathfrak{S}_k}
   [
    {T_\eta ^{(i)}      } ^{\otimes m}
   \underset{\sigma}{\otimes}
    {T_\varepsilon^{(i)}} ^{\otimes (k-m)}
   ]
   H^{(i)} _t
  \Biggr]
 dm^{\otimes k}
\\
=&
 \int_{E^k}
  f^{\otimes k}
  \prod_{i=1} ^p
  \Biggl[
   \sum_{\sigma\in \mathfrak{S}_k}
   [
    \mathrm{id} ^{\otimes m}
   \underset{\sigma}{\otimes}
    {T_\varepsilon^{(i)}} ^{\otimes (k-m)}
   ]
   H^{(i)} _t
  \Biggr]
 dm^{\otimes k}
.
\end{flalign*}
\end{proof}

\subsection{Outline of the proof}

In the following, 
we abbreviate the measure $m(dx)$ just as $dx$.
Also,
the constants may change from line to line.

We 
now begin the proof of Proposition \ref{Prop_exp_app}.
It 
is sufficient to show the following inequality, 
replacing $|\cdots|$ with $(\cdots)$ in \eqref{eq_prop1.6_2}:
for all $t>0$ and $k\in \mathbb{Z}_{>0}$, it holds that
\begin{equation}
\label{eq_sufficient}
 {\mathbb{E}_{x_0}}
 \bigl[
   \langle 
    \ell_t ^{\mathrm{IS}} - \ell_{\varepsilon, t} ^{\mathrm{IS}}
    , f
   \rangle^k
 ;
  t < \zeta^{(1)} \wedge \cdots \wedge \zeta^{(p)}
 \bigr]
\leq
 e^{pt}
 (k!)^p
 C(\varepsilon)^k
.
\end{equation}

\noindent
Indeed, 
if $k$ is even, clearly \eqref{eq_prop1.6_2} and 
\eqref{eq_sufficient} are same.
When $k=2l+1$, we have by H\"older's inequality, 
\begin{align*}
&
 \mathbb{E}_{x_0}
 \bigl[
  \bigl|
   \langle f, \ell_{t} ^{\mathrm{IS}} \rangle
  -
   \langle f, \ell_{\varepsilon, t} ^{\mathrm{IS}} \rangle
  \bigr|^{2l+1}
 ;
  t < \zeta^{(1)} \wedge \cdots \wedge \zeta^{(p)}
 \bigr]
\\
\begin{split}
\leq&
 \mathbb{E}_{x_0}
 \bigl[
  \bigl(
   \langle f, \ell_{t} ^{\mathrm{IS}} \rangle
  -
   \langle f, \ell_{\varepsilon, t} ^{\mathrm{IS}} \rangle
  \bigr)^{2l}
 ;
  t < \zeta^{(1)} \wedge \cdots \wedge \zeta^{(p)}
 \bigr]^{1/2}
\\
&
\hspace{50mm}
\cdot
 \mathbb{E}_{x_0}
 \bigl[
  \bigl(
   \langle f, \ell_{t} ^{\mathrm{IS}} \rangle
  -
   \langle f, \ell_{\varepsilon, t} ^{\mathrm{IS}} \rangle
  \bigr)^{2l+2}
 ;
  t < \zeta^{(1)} \wedge \cdots \wedge \zeta^{(p)}
 \bigr]^{1/2}
\end{split}
\\
\leq&
 e^{pt}
 \{ (2l)! ^{1/2} (2l+2)! ^{1/2} \}^{p}
 C(\varepsilon)^{2l+1} 
\\
\leq&
 e^{pt}
 2^p \{ (2l+1)! \}^p
 C(\varepsilon)^{2l+1} 
\\
\leq&
 e^{pt}
 \{ (2l+1)! \}^p
 (2^p)^{2l+1}
 C(\varepsilon)^{2l+1} 
,
\end{align*}
and hence \eqref{eq_prop1.6_2} follows.

\vspace{1eM}

Recall the definition of $H^{(i)} _t$
in \eqref{Def_Ht(i)}: 
\begin{align*}
 H_t ^{(i)}
 (x_1, \cdots, x_k)
=
 \int_{[0, t]^k}
 \int_E
  1_
  {
   \left\{
    \sum_{j=1} ^k r_j \leq t
   \right\}
  }
  \prod_{j=1} ^{k+1}
  p_{r_j} ^{(i)} (x_{j-1}, x_j)
 dx_{k+1}
 dr_1 \cdots dr_k
,
\end{align*}
where
$
 r_{k+1}
=
 t - \sum_{j=1} ^k r_j
$.

Fix $\delta>0$.
For each $D_i \subset \{1, \cdots, k\}$, set
$
 D_i ^c
:=
 \{1, \cdots, k\}\setminus D_i
$.
Also for each
$
 \mathcal{N}^{(i)}
=
 (n^{(i)} _j)_{j\in D_i ^c}
\in
 (\mathbb{Z}_{>0})^{D_i ^c}
$, define
\begin{align*}
\begin{split}
&
 H_t ^{(i)} (D_i; x_1, \cdots, x_k)
\\
:=&
 \int_{[0, t]^k}
 \int_E
  1_
  {
   \left\{
    \sum_{j=1} ^k r_j \leq t
   \right\}
  }
  \prod_{j\in D_i}
  1_
  {
   \{
    r_j\leq \delta
   \}
  }
  \prod_{j\in D_i ^c}
  1_
  {
   \{
    r_j> \delta
   \}
  }
  \prod_{j=1} ^{k+1}
  p_{r_j} ^{(i)} (x_{j-1}, x_{j})
 dx_{k+1}
 dr_1 \cdots dr_k
,
\end{split}
\\
\begin{split}
&
 H_t ^{(i)} (D_i; \mathcal{N}^{(i)}; x_1, \cdots, x_k)
\\
:=&
 \int_{[0, t]^k}
 \int_E
  1_
  {
   \left\{
    \sum_{j=1} ^k r_j \leq t
   \right\}
  } 
  \prod_{j\in D_i}
  1_
  {
   \{
    r_j\leq \delta
   \}
  }
  \prod_{j\in D_i ^c}
  1_
  {
   \{
    r_j> \delta
   \}
  }
\\
&
\hspace{5mm}
  \prod_{j\in D_i}
  p_{r_j} ^{(i)} (x_{j-1}, x_j)
  \prod_{j\in D_i ^c}
  \exp
  \left\{
   -r_j \lambda_{n_j ^{(i)}} ^{(i)}
  \right\}
  \psi^{(i)} _{n^{(i)} _j}(x_{j-1})
  \psi^{(i)} _{n^{(i)} _j}(x_{j  })
  p_{r_{k+1}} ^{(i)} (x_{k}, x_{k+1})
 dx_{k+1}
 dr_1 \cdots dr_k
.
\end{split}
\end{align*}

\noindent
We remark that
our 
$H_t ^{(i)} (D_i; \mathcal{N}^{(i)}; x_1, \cdots, x_k)$
is obtained 
by 
multiplying $p_{r_j} ^{(i)}$'s and $\psi_{{n_j} ^{(i)}}^{(i)}$'s
by 
$H_r(\mathcal{N}^{(i)}; D_i)$ defined in (3.18) of \cite{MR2999298},
and
by integrating 
$
\int_{[0, t]^k}
\int_E
 1_
 {
  \left\{
   \sum_{j=1} ^k r_j \leq t
  \right\}
 } 
dx_{k+1}
dr_1 \cdots dr_k
$.
 
We furthermore fix $R\in\mathbb{Z}_{>0}$
and set 
$
 \mathcal{R}
:=
 \{1, \cdots, R\}
$
and
$
 \mathcal{R}^c 
:=
 \mathbb{Z}_{>0}\setminus \mathcal{R}
$.
By 
the eigenfunction expansion
\eqref{eq_eigenfct} in Lemma \ref{Lem_pre}, 
we have
\begin{flalign*}
 \prod_{j\in D_i ^c}
 p_{r_j} ^{(i)} (x_{j-1}, x_j)
=&
 \prod_{j\in D_i ^c}
 \sum_{n_j ^{(i)} =1} ^\infty 
 \exp
 {
  \left\{
   -r_j \lambda_{n_j ^{(i)}} ^{(i)}
  \right\}
 }
 \psi_{n_j ^{(i)}} ^{(i)} (x_{j-1})
 \psi_{n_j ^{(i)}} ^{(i)} (x_{j  })
\\
=&
 \prod_{j\in D_i ^c}
 \Bigl(
  \sum_{n_j ^{(i)} \in \mathcal{R}} 
 +
  \sum_{n_j ^{(i)} \in \mathcal{R}^c} 
 \Bigr)
 \exp
 {
  \left\{
   -r_j \lambda_{n_j ^{(i)}} ^{(i)} 
  \right\}
 }
 \psi_{n_j ^{(i)}} ^{(i)} (x_{j-1})
 \psi_{n_j ^{(i)}} ^{(i)} (x_{j  })
\\
=&
 \sum_{ E_i \subset D_i ^c }
 \sum_
 {
  \mathcal{N}^{(i)} |_{D_i ^c\setminus E_i}
 \in
  (\mathcal{R}^c)^{D_i ^c\setminus E_i}
 }
 \sum_
 {
  \mathcal{N}^{(i)} |_{E_i}
 \in
  \mathcal{R}^{E_i}
 }
 \prod_{j\in D_i ^c}
 \exp
 {
  \left\{
   -r_j \lambda_{n_j ^{(i)}} ^{(i)} 
  \right\}
 }
 \psi_{n_j ^{(i)}} ^{(i)} (x_{j-1})
 \psi_{n_j ^{(i)}} ^{(i)} (x_{j  })
.
\end{flalign*}

\noindent
Hence we can see that
\begin{align}
\notag
&
 H_t ^{(i)} (x_1, \cdots, x_k)
\\
\label{eq_H1}
=&
 \sum_{D_i \subset \{1, \cdots, k\}}
 H_t ^{(i)} (D_i; x_1, \cdots, x_k)
\\
\label{eq_H2}
=&
 \sum_{D_i \subset \{1, \cdots, k\}}
 \sum_{E_i \subset D_i ^c}
 \sum_
 {
  \mathcal{N}^{(i)} |_{D_i ^c\setminus E_i}
 \in
  (\mathcal{R}^c)^{D_i ^c\setminus E_i}
 }
 \sum_
 {
  \mathcal{N}^{(i)} |_{E_i}
 \in
  \mathcal{R}^{E_i}
 }
 H_t ^{(i)} (D_i, \mathcal{N}^{(i)}; x_1, \cdots, x_k)
.
\end{align}

Fix small $\eta, \gamma >0$ such that 
$1 - 2p(\eta + \gamma) >0$.
We can easily see that
\begin{align*}
 \sum_
 {
  \substack
  {
   D_i \subset \{1, \cdots, k\}
  \\
   i=1, \cdots, p
  }
 }
=&
 \sum_
 {
  \substack
  {
   D_i \subset \{1, \cdots, k\}
  ,
  \\ 
   \# D_i \leq \eta k
   \text{ for all } i
  }
 }
+
 \sum_
 {
  \substack
  {
   D_i \subset \{1, \cdots, k\}
  ,
  \\ 
   \# D_i > \eta k
   \text{ for some } i 
  }
 }
\\
 \sum_
 {\substack
  {
   E_i \subset D_i ^c,\\
   i=1, \cdots, p
  }
 }
=&
 \sum_
 {\substack
  {
   E_i \subset D_i ^c,\\
   \# (D_i ^c \setminus E_i ) \leq \gamma k 
   \text{ for all } i
  }
 }
+
 \sum_
 {\substack
  {
   E_i \subset D_i ^c,\\
   \# (D_i ^c \setminus E_i ) >    \gamma k 
   \text{ for some } i
  }
 }
.
\end{align*}

\noindent
Now 
we split the left hand side of \eqref{eq_sufficient}
in the manner of the above two equalities,
by 
using Proposition \ref{Prop_moment} and 
\eqref{eq_H1}--\eqref{eq_H2}:
\begin{flalign*}
&
 \mathbb{E}_{x_0} 
 \bigl[
  \bigl(
   \langle f, \ell_{t} ^{\mathrm{IS}} \rangle
  -
   \langle f, \ell_{\varepsilon, t} ^{\mathrm{IS}} \rangle
  \bigr)^k
 ;
  t < \zeta^{(1)} \wedge \cdots \wedge \zeta^{(p)}
 \bigr]
\\
=&
 (\textbf{I})_{t, k} (\delta, \varepsilon)
+
 (\textbf{II})_{t, k} (\delta, \varepsilon)
\\
=&
 (\textbf{Ia})_{t, k} (\delta, \varepsilon, R)
+
 (\textbf{Ib})_{t, k} (\delta, \varepsilon, R)
+
 (\textbf{II})_{t, k} (\delta, \varepsilon)
,
\end{flalign*}

\noindent
where
\begin{flalign}
\notag
\begin{split}
&
 (\textbf{I})_{t,k} (\delta, \varepsilon)
\\
:=&
 \sumDiall
 \sum_{m=0} ^k
 (-1)^m
 \binom{k}{m}  
 \sum_
 {
  \substack
  {
   \sigma_i \in \mathfrak{S}_k
  ,
  \\
   i=1, \cdots p
  }
 }
 \int_{E^k}
  f^{\otimes k}
  \cdot
  \prod_{i=1} ^p
  \biggl\{
   [
    \mathrm{id} ^{\otimes m}
   \underset{\sigma}{\otimes}
    {T_\varepsilon ^{(i)}} ^{\otimes (k-m)}
   ]
   H_t ^{(i)} (D_i; \cdot)
  \biggr\}
 dm^{\otimes k}
,
\end{split}
\\
\begin{split}
&
 (\textbf{Ia})_{t,k} 
 (\delta, \varepsilon, R)
\\
:=&
 \sumDiall
 \sumEiall
 \sumNiDE
 \sumNiE
 \sum_{m=0} ^k
 (-1)^m
 \binom{k}{m}  
\\
\label{eq_Ia}
&
\hspace{35mm}
 \sum_
 {
  \substack
  {
   \sigma_i \in \mathfrak{S}_k
  ;\\
   i=1, \cdots p
  }
 }
 \int_{E^k}
  f^{\otimes k}
 \cdot
  \prod_{i=1} ^p
  \left\{
   \left[
    \mathrm{id} ^{\otimes m}
   \underset{\sigma_{i}}{\otimes}
    {T_\varepsilon^{(i)}} ^{\otimes (k-m)}
   \right]
   H_t ^{(i)} (D_i, \mathcal{N}^{(i)} ; \cdot)
  \right\}
 dm^{\otimes k}
\end{split}
&
\end{flalign}

\noindent
and
\begin{align*}
 (\textbf{II})_{t, k} (\delta, \varepsilon)
:=&
 \mathbb{E}_{x_0} 
 \bigl[
  \bigl(
   \langle f, \ell_{t} ^{\mathrm{IS}} \rangle
  -
   \langle f, \ell_{\varepsilon, t} ^{\mathrm{IS}} \rangle
  \bigr)^k
 ;
  t < \zeta^{(1)} \wedge \cdots \wedge \zeta^{(p)}
 \bigr]
- 
 (\textbf{I})_{t, k} (\delta, \varepsilon)
,
\\
 (\textbf{Ib})_{t, k} (\delta, \varepsilon, R)
:=&
 (\textbf{I})_{t, k} (\delta, \varepsilon)
-
 (\textbf{Ia})_{t, k} (\delta, \varepsilon, R)
.
\end{align*}
We use the same symbol 
$(\mathbf{I})_{t, k}(\delta, \varepsilon, R)$
and so on,
as in Section 3.3 of \cite{MR2999298}.

\vspace{1eM}

We divide 
the proof of Proposition \ref{Prop_exp_app} 
into three parts:

\begin{Lem}
\label{Lem_A}
For sufficiently small $\delta >0$,
There exists $C(\delta)>0$, independent of 
$t$, $k$ and $\varepsilon$, such that 
$\lim_{\delta \rightarrow 0} C(\delta) = 0$
and
\begin{equation*}
 (\textbf{II})_{t, k} (\delta, \varepsilon)
\leq
 (k!)^p
 e^{tp}
 C(\delta)^k
.
\end{equation*}
\end{Lem}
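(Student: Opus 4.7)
The plan is to bound $(\mathbf{II})_{t,k}(\delta,\varepsilon)$ termwise by pulling absolute values inside every sum, and then reduce each term to an $L^p$-estimate controlled by the quantity $M_\delta^{(i)} := \sup_x \int R_{1,\delta}^{(i)}(x,y)^p\,dm(y)$, which tends to $0$ as $\delta\downarrow 0$ by Assumption (A5). Since $T_\varepsilon^{(i)}$ is positivity preserving and each $H_t^{(i)}(D_i;\cdot)$ is nonnegative, the integrand of each term is nonnegative. Writing $G_i := [\mathrm{id}^{\otimes m} \underset{\sigma_i}{\otimes} (T_\varepsilon^{(i)})^{\otimes(k-m)}]\, H_t^{(i)}(D_i;\cdot)$, I bound $|f^{\otimes k}| \leq \|f\|_\infty^k$ and apply the generalized $p$-fold H\"older inequality
\begin{equation*}
\int_{E^k} \prod_{i=1}^p G_i\,dm^{\otimes k} \leq \prod_{i=1}^p \|G_i\|_{L^p(E^k)},
\end{equation*}
together with the $L^p(E^k)$-contractivity of the permuted tensor product (obtained by iterating Minkowski's integral inequality coordinatewise from Proposition \ref{Prop_Lp-cont-op}, noting that coordinate permutations are $L^p$-isometries), to reduce the problem to estimating $\|H_t^{(i)}(D_i;\cdot)\|_{L^p(E^k)}$.

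For this estimate I will use sub-Markovianity $\int_E p_{r_{k+1}}^{(i)}(x_k,x_{k+1})\,dm(x_{k+1}) \leq 1$ to absorb the last factor, then multiply by the identity $1 \leq e^t \prod_{j=1}^k e^{-r_j}$, valid on $\{\sum_j r_j \leq t\}$, so that each one-dimensional integral in $r_j$ becomes a truncated $1$-order resolvent kernel:
\begin{equation*}
H_t^{(i)}(D_i; x_1,\ldots,x_k) \leq e^t \prod_{j \in D_i} R_{1,\delta}^{(i)}(x_{j-1}, x_j) \prod_{j \in D_i^c} R_1^{(i)}(x_{j-1}, x_j).
\end{equation*}
Raising to the $p$-th power and integrating iteratively over $x_k, x_{k-1}, \ldots, x_1$, each step absorbing the last chain factor in $L^\infty$ against $M_\delta^{(i)}$ or $M^{(i)} := \sup_x \int R_1^{(i)}(x,y)^p\,dm(y)$, yields
\begin{equation*}
\|H_t^{(i)}(D_i;\cdot)\|_{L^p(E^k)} \leq e^t (M_\delta^{(i)})^{\# D_i/p}(M^{(i)})^{(k-\# D_i)/p},
\end{equation*}
where $M^{(i)} < \infty$ and $M_\delta^{(i)} \downarrow 0$ by (A5).

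Finally I count the combinatorics: $\sum_m \binom{k}{m} \leq 2^k$ and $\#\{(\sigma_1,\ldots,\sigma_p)\in \mathfrak{S}_k^p\} = (k!)^p$. The tuple sum $\sumDisome$ is handled by a union bound over the distinguished index $i_0$ for which $\# D_{i_0} > \eta k$: once $\delta$ is small enough that $M_\delta^{(i)} \leq 1$ for all $i$, the inequality $(M_\delta^{(i_0)})^{\# D_{i_0}/p} \leq (M_\delta^{(i_0)})^{\eta k/p}$ extracts an exponentially small factor in $k$ for the distinguished index, while for each remaining index $i\neq i_0$ the binomial identity $\sum_{D_i}(M_\delta^{(i)})^{\# D_i/p}(M^{(i)})^{(k-\# D_i)/p} = ((M_\delta^{(i)})^{1/p}+(M^{(i)})^{1/p})^k$ gives a uniform $k$-th power bounded independently of $\delta$. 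Assembling these factors with $\|f\|_\infty^k$, $2^k$, $(k!)^p$ and $e^{pt}$ yields $|(\mathbf{II})_{t,k}(\delta,\varepsilon)| \leq (k!)^p e^{pt} C(\delta)^k$ with $C(\delta) \to 0$ as $\delta \downarrow 0$. The main subtlety will be verifying the $L^p(E^k)$-contractivity of the permuted tensor product; this is a Fubini--Minkowski argument that is crucially uniform in $\varepsilon > 0$, as required for the rate $C(\delta)$ to be $\varepsilon$-independent.
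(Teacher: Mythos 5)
Your proof is correct and follows essentially the same route as the paper: bound $H_t^{(i)}(D_i;\cdot)$ pointwise by a chain of truncated and full resolvent kernels using sub-Markovianity and $1 \leq e^t\prod_j e^{-r_j}$, iterate the $L^p$-integral along the chain to extract a factor $(M_\delta^{(i)})^{\#D_i/p}$ with $M_\delta^{(i)}\to 0$ by (A5), pass the permuted tensor product through $\|\cdot\|_{L^p(E^k)}$ via Proposition \ref{Prop_Lp-cont-op}, and account for the $\binom{k}{m}$, $\mathfrak{S}_k^p$, and $D$-tuple sums. The only difference is cosmetic: you split the $D$-sum by a union bound over the distinguished index $i_0$ with $\#D_{i_0}>\eta k$ and use the exact binomial identity $\sum_{D_i}(M_\delta^{(i)})^{\#D_i/p}(M^{(i)})^{(k-\#D_i)/p}=((M_\delta^{(i)})^{1/p}+(M^{(i)})^{1/p})^k$ for the remaining indices, whereas the paper simply bounds the whole $D$-tuple sum crudely by $2^{kp}$ after extracting $C(\delta)^{\eta k}$ from the distinguished index — your version is slightly sharper but yields the same $C(\delta)^k$ form after renaming the constant.
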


\begin{Lem}
\label{Lem_B}
Fix small $\delta >0$ as in Lemma \ref{Lem_A}.
Then, for sufficiently large $R\in \mathbb{Z}_{\geq 0}$,
there exists $C_\delta(R)>0$, 
independent of $t$, $k$ and $\varepsilon$, 
such that 
$\lim_{R \rightarrow \infty} C_\delta(R) = 0$
and
\begin{equation*}
 (\textbf{Ib})_{t, k} (\delta, \varepsilon, R)
\leq
 (k!)^p
 e^{tp}
 C_\delta(R)^k
.
\end{equation*}
\end{Lem}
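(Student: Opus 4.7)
The plan is to exploit the fact that the decomposition \eqref{eq_H2} separates ``small'' modes $n\in \mathcal{R}$ from ``large'' modes $n\in \mathcal{R}^c$, and that $(\textbf{Ib})$ collects precisely those contributions for which, for some index $i$, the ``large mode'' sector $D_i^c\setminus E_i$ has cardinality exceeding $\gamma k$. Each such coordinate carries a factor $e^{-r_j\lambda_{n_j^{(i)}}^{(i)}}$ with $r_j>\delta$, and after summation over $n_j^{(i)}>R$ this supplies the gain that drives $C_\delta(R)\to 0$ as $R\to\infty$.

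First, I rewrite $(\textbf{Ib}) = (\textbf{I})-(\textbf{Ia})$ explicitly as the same multiple sum as in \eqref{eq_Ia} but with the inner summation over $E_i$ replaced by $\sumEisome$. By the union bound over $i$ and symmetry, it suffices to treat the case in which the defect condition $\#(D_i^c\setminus E_i)>\gamma k$ holds for a single fixed index.

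Next, I would estimate a generic summand. Using the $L^p$-contractivity of $T_\varepsilon^{(i)}$ (Proposition \ref{Prop_Lp-cont-op}) and H\"older's inequality as in \eqref{eq_uni.int}, the integral is bounded by $\|f\|_\infty^k\prod_i \|H_t^{(i)}(D_i,\mathcal{N}^{(i)};\cdot)\|_{L^p(E^k)}$. For factors $j\in D_i$ I use $\int_0^\delta p_r^{(i)}(x,y)\,dr\le e^\delta R_{1,\delta}^{(i)}(x,y)$ together with \eqref{eq_Green1}; for factors $j\in E_i$ I use the uniform eigenfunction bound $\|\psi_n^{(i)}\|_\infty\le C(\lambda_n^{(i)})^{\rho^{(i)}/2}$ of \eqref{eq_growth_of_eigen} and integrate $\int_\delta^t e^{-r\lambda_n^{(i)}}\,dr$. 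The decisive contribution comes from factors $j\in D_i^c\setminus E_i$: summing over $n_j^{(i)}\in \mathcal{R}^c$ yields the tail
\[
 \sum_{n>R} \|\psi_n^{(i)}\|_\infty^2\, e^{-\delta\lambda_n^{(i)}}
 \le C\sum_{n>R}(\lambda_n^{(i)})^{\rho^{(i)}}e^{-\delta\lambda_n^{(i)}},
\]
which tends to $0$ as $R\to\infty$ by the lower bound $\lambda_n^{(i)}\ge C^{-1}n^{1/\rho^{(i)}}$ in \eqref{eq_growth_of_eigen}. The prefactor $e^{pt}$ arises, as in \eqref{eq_uni.int}, from bounding $\int_0^t \le e^t\int_0^\infty e^{-r}\,dr$ to convert heat-kernel factors into $1$-order resolvents.

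Finally, I collect combinatorial contributions: the permutation sums $\sum_{\sigma_i\in\mathfrak{S}_k}$ give $(k!)^p$; the sums over $m$, $D_i$, $E_i$, the signed binomial $\sum_m(-1)^m\binom{k}{m}$, and the small-mode sums $\mathcal{N}^{(i)}|_{E_i}\in\mathcal{R}^{E_i}$ are all dominated by $(C_0R)^{pk}$ for some combinatorial constant $C_0=C_0(\eta,\gamma,p)$. The crucial point is that at least $\gamma k$ of the per-coordinate bounds come from the large-mode tail, each supplying a weight $\tilde C_\delta(R)\to 0$, giving an overall bound of the form $(k!)^p e^{pt}\bigl((C_0R)^p\tilde C_\delta(R)^{p\gamma}\bigr)^k$; setting $C_\delta(R):=(C_0R)^p\tilde C_\delta(R)^{p\gamma}$ yields the claim once $R$ is taken large enough. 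The main obstacle will be calibrating $\eta$ and $\gamma$ small enough so that the combinatorial entropy in the choice of $D_i$ and $E_i$ remains strictly dominated by the exponential decay from the large-mode tails, uniformly in $k$ and $t$.
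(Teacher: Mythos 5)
Your proposal is essentially correct and follows the same route as the paper's proof: split the ``defective'' coordinates $D_i^c\setminus E_i$ (which by construction number $>\gamma k$ for some $i$), bound the $L^p$-norm of $H_t^{(i)}(D_i,\mathcal{N}^{(i)};\cdot)$ factor by factor via \eqref{eq_Green1} and \eqref{eq_growth_of_eigen}, and use the geometric smallness of the tail $\sum_{n>R}e^{-\delta\lambda_n^{(i)}}\|\psi_n^{(i)}\|_\infty^2$-type sum (raised to the power $\gamma k$) to overwhelm the $O(R)^{\text{poly}}$ per-coordinate growth from the small-mode sums and the $2^{O(k)}$ combinatorial entropy. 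Two small remarks. First, your claimed bound $(C_0R)^{pk}$ for the small-mode sector is slightly off: the weights $\|\psi_n^{(i)}\|_\infty^2\le C_1 n$ make $\sum_{n\le R}\|\psi_n^{(i)}\|_\infty^2\sim R^2$, so the correct per-$k$ factor is closer to $C(R)^{p}$ with $C(R)\sim R^2$; this does not break the argument since the tail $\tilde C_\delta(R)$ decays super-polynomially in $R$. Second, the worry at the end about ``calibrating $\eta,\gamma$'' is unfounded for this lemma: $\eta,\gamma$ are already fixed (with $1-2p(\eta+\gamma)>0$) before this point, and for \emph{any} fixed $\gamma>0$ the product $\tilde C_\delta(R)^{\gamma}C(R)^{p}$ tends to $0$ as $R\to\infty$; one only takes $R$ large, with no fresh constraint on $\eta,\gamma$.
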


\begin{Lem}
\label{Lem_C}
Fix small $\delta >0$ and large $R\in \mathbb{Z}_{\geq 0}$ 
as in Lemma \ref{Lem_B}.
Then,
for sufficiently small $\varepsilon >0$,
there exists $C_{\delta, R}(\varepsilon)>0$, 
independent of $t$ and $k$, such that 
$\lim_{\varepsilon \rightarrow 0} C_{\delta, R}(\varepsilon) = 0$
and
\begin{equation*}
 (\textbf{Ia})_{t, k} (\delta, \varepsilon, R)
\leq
 (k!)^p
 e^{tp}
 C_{\delta, R}(\varepsilon)^k
.
\end{equation*}
\end{Lem}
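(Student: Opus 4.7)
The plan follows the strategy of \cite[Lemma 3.7]{MR2999298}, adapted to our setting per the outline preceding Lemma \ref{Lem_A}. The key structural feature of $(\mathbf{Ia})_{t,k}(\delta,\varepsilon,R)$ is that at each step $j \in E_i$ the transition density $p^{(i)}_{r_j}$ has been replaced by the explicit low-frequency spectral factor $e^{-r_j \lambda^{(i)}_{n_j^{(i)}}}\psi^{(i)}_{n_j^{(i)}}(x_{j-1})\psi^{(i)}_{n_j^{(i)}}(x_j)$ with $n_j^{(i)} \leq R$. Since $\#E_i \geq (1-\eta-\gamma)k$ for all $i$, this low-frequency spectral structure dominates, and the proof exploits the fact that $T^{(i)}_\varepsilon$ acts almost as the identity on such factors as $\varepsilon \downarrow 0$.

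First I would reorganise the $(m,\sigma_i)$-sum. Using the tensor-permutation formulas from Section \ref{Sec_tensor} together with the symmetry of $f^{\otimes k}$, each integrand depends on $\sigma_i$ only through the subset $A_i := \sigma_i(\{1, \ldots, m\}) \subset \{1, \ldots, k\}$ of positions at which $\mathrm{id}$ acts. Each $A_i$ of size $m$ is hit by $m!(k-m)!$ permutations, giving, for a single factor and after integrating against $f^{\otimes k}$, the operator identity
\begin{equation*}
\sum_{m=0}^k (-1)^m \binom{k}{m} \sum_{\sigma_i \in \mathfrak{S}_k} [\mathrm{id}^{\otimes m}\underset{\sigma_i}{\otimes}{T_\varepsilon^{(i)}}^{\otimes(k-m)}] \;=\; k!\sum_{A_i \subset \{1,\ldots,k\}} (-1)^{|A_i|}\bigotimes_{j=1}^k U^{(i)}_j(A_i) \;=\; k!\bigotimes_{j=1}^k (T_\varepsilon^{(i)} - \mathrm{id})_j,
\end{equation*}
where $U^{(i)}_j(A_i) = \mathrm{id}$ if $j \in A_i$ and $T^{(i)}_\varepsilon$ otherwise. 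Applied across the $p$ independent $\sigma_i$-sums, this produces the $(k!)^p$ factor and replaces each original operator by the position-wise difference $T^{(i)}_\varepsilon - \mathrm{id}$.

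Next I would estimate $(T^{(i)}_\varepsilon - \mathrm{id})_j$ acting on $H^{(i)}_t(D_i, \mathcal{N}^{(i)}; \cdot)$. At a ``good'' position $j$ (one whose two neighbouring steps both lie in $E_i$), the dependence on $x_j$ is a product $\psi^{(i)}_n(x_j)\psi^{(i)}_{n'}(x_j)$ with $n,n' \leq R$. Since $T^{(i)}_\varepsilon$ is not diagonal on such a product, I expand in the eigenbasis $\{\psi^{(i)}_l\}_l$: the difference operator multiplies the $l$-th coefficient by $-(1 - e^{-\varepsilon \lambda^{(i)}_l})$. The needed uniform smallness $\omega_R(\varepsilon) \to 0$ is extracted from the strong $L^q$-continuity $T^{(i)}_\varepsilon \to \mathrm{id}$ (Proposition \ref{Prop_Lp-cont-semigrp}) applied on the finite-dimensional span of $\{\psi^{(i)}_n\psi^{(i)}_{n'}: n,n' \leq R\}$. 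The ``bad'' positions, at most $2(\eta+\gamma)k$ in number, are controlled by $L^p$-contractivity together with \eqref{eq_Green1}, while the high-frequency sum over $\mathcal{N}^{(i)}|_{D_i^c \setminus E_i}$ is bounded via the eigenfunction growth estimate \eqref{eq_growth_of_eigen}.

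Finally, integrating the time variables against the indicators $1_{\{r_j \leq \delta\}}$ or $1_{\{r_j > \delta\}}$ and inserting $1 \leq e^{t} e^{-r_j}$ converts the bare heat-kernel integrals into $1$-order resolvent integrals, as in the derivation of \eqref{eq_uni.int}, producing the $e^{tp}$ prefactor. Since $\mathcal{N}^{(i)}|_{E_i} \in \mathcal{R}^{|E_i|}$ is a finite enumeration contributing at most $R^k$, the overall bound takes the form $(k!)^p e^{tp} C_{\delta, R}(\varepsilon)^k$ with $C_{\delta,R}(\varepsilon)$ of the shape $R \cdot \omega_R(\varepsilon)^{1-2(\eta+\gamma)} \cdot (\mathrm{const})_{\delta,R}$, which tends to $0$ as $\varepsilon \downarrow 0$ for fixed $\delta$ and $R$ provided $\eta + \gamma < 1/2$ (which is built into the definitions of \eqref{eq_Ia}). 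The main obstacle is Step 2: in contrast to the stable/Brownian setting of \cite{MR2999298}, $T^{(i)}_\varepsilon$ is not a Fourier multiplier and does not diagonalise on products of eigenfunctions, so one must pass through the explicit eigenbasis expansion and rely on strong continuity on finite-dimensional subspaces to extract the $\varepsilon$-small factor at each good position.
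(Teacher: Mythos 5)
Your Step 1 contains a structural error that the remainder of the argument cannot recover from, and it is precisely the obstacle that forces the long combinatorial detour in the paper's Sections 5.4--5.5.

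The claimed identity
\begin{equation*}
\sum_{m=0}^k (-1)^m \binom{k}{m} \sum_{\sigma_i \in \mathfrak{S}_k} [\mathrm{id}^{\otimes m}\underset{\sigma_i}{\otimes}{T_\varepsilon^{(i)}}^{\otimes(k-m)}] \;=\; k!\bigotimes_{j=1}^k (T_\varepsilon^{(i)} - \mathrm{id})_j
\end{equation*}
is correct for $p=1$ (after testing against the symmetric $f^{\otimes k}$ and relabelling $y_j = x_{\sigma^{-1}(j)}$), but it cannot be ``applied across the $p$ independent $\sigma_i$-sums.'' In Proposition \ref{Prop_moment} there is a \emph{single} sum over $m$ outside the product over $i$: the expansion of $(\langle f, \ell_t^{\mathrm{IS}} \rangle - \langle f, \ell_{\varepsilon,t}^{\mathrm{IS}} \rangle)^k$ forces all $p$ processes to use $\mathrm{id}$ at the same $m$ positions and $T_\varepsilon^{(i)}$ at the same $k-m$ positions, reflecting the fact that the integrand is $\prod_i \delta - \prod_i p_\varepsilon$ at each of the $k$ points, a \emph{difference of products}, not a product of differences $\prod_i(\delta - p_\varepsilon)$. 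Your rewrite would instead introduce $p$ independent subsets $A_1, \dots, A_p$ with a sign $(-1)^{\sum_i |A_i|}$ and no constraint $|A_1| = \cdots = |A_p|$; these are not the same sum. A one-line check with $k=1$, $p=2$: the true quantity is $\int f\,[T_\varepsilon^{(1)}H^{(1)}\cdot T_\varepsilon^{(2)}H^{(2)} - H^{(1)}H^{(2)}]\,dm$, whereas $\int f\,[(T_\varepsilon^{(1)}-\mathrm{id})H^{(1)}][(T_\varepsilon^{(2)}-\mathrm{id})H^{(2)}]\,dm$ has two extra cross terms. A second, independent difficulty is that for $p\geq 2$ you can only relabel $x\mapsto x_{\sigma^{-1}(\cdot)}$ using \emph{one} of the $\sigma_i$; the remaining factors $H^{(i)}_t$ are then evaluated along the compositions $\sigma_1\sigma_i^{-1}$, so the integrand genuinely depends on the full tuple $(\sigma_1,\dots,\sigma_p)$ and not just on the sets $A_i$.

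The paper's resolution of these two obstructions is exactly what you bypassed: it introduces the set $S(\sigma) = \bigcap_{i=1}^p \sigma_i^{-1}(F_i)$ of positions that are simultaneously ``good'' for all $p$ processes, shows via Lemma \ref{Lem_55} and the counting Lemma \ref{Lem_meas5} that the sum over $(m,\sigma_1,\dots,\sigma_p)$ organises into a sum over measures $(A,r)$, and only then obtains the cancellation through the multinomial identity $\sum_{r\leq A}\prod_l \binom{A(l)}{r(l)}(-a(l))^{r(l)}a_\varepsilon(l)^{A(l)-r(l)} = \prod_l [a_\varepsilon(l)-a(l)]^{A(l)}$ in \eqref{eq_55a-3}. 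Note that the small factor extracted is $a_\varepsilon(l)-a(l) = \langle f, \prod_i T_\varepsilon^{(i)}[\psi\psi] - \prod_i [\psi\psi]\rangle$, a difference of $p$-fold products, with exponent $k-m_2-m_4 \geq (1-2p(\eta+\gamma))k$, not $pk$. Your exponent $1-2(\eta+\gamma)$ and condition $\eta+\gamma<1/2$ in the final bound are missing the factor $p$, which is another symptom of the lost coupling; the correct constraint, used at the start of Section \ref{Sec_exp_app}, is $1-2p(\eta+\gamma)>0$. Your Steps 2--3 (expanding $\psi_n^{(i)}\psi_{n'}^{(i)}$ in the eigenbasis, using Proposition \ref{Prop_Lp-cont-semigrp} for the $\varepsilon$-smallness, controlling the bad positions via \eqref{eq_Green1}) are sound ideas and do appear in the paper's estimate of $|a(l)-a_\varepsilon(l)|$ and in Lemma \ref{Lem_56-2}, but they cannot be reached without first performing the $(A,r)$-combinatorics that replaces your Step 1.
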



Once 
Lemma \ref{Lem_A}--\ref{Lem_C} are proved, 
we can obtain \eqref{eq_sufficient} as follows.
For each positive integer $n$, 
choose $\delta_n$, $R_n$ and $\varepsilon_n$ such that
$
 C(\delta_n) + C_\delta(R_n) + C_{\delta_n, R_n}(\varepsilon)
\leq
 n^{-1}
$
for all
$
 \varepsilon \leq \varepsilon_n 
$.
We may assume that 
the sequence $\{\varepsilon_n\}$ is decreasing 
and converges to zero as $n\rightarrow \infty$.
Set
$
 C'(\varepsilon)
:=
 3
 \{
  C(\delta_n) + C_\delta(R_n) + C_{\delta_n, R_n}(\varepsilon)
 \}
$
for
$
 \varepsilon_{n+1}
 <
 \varepsilon
 \leq
 \varepsilon_n
$.
For $\varepsilon > \varepsilon_1$, we also set
$
 C'(\varepsilon)
:=
  2
  \|f\|_\infty
  \sum_{i=1} ^p
  \bigl[
   \sup_{y\in E}
   \int_E
    \Gr{i}(x, y)^p
   dx
  \bigr]
$
because of the estimate
\begin{align*}
&
 \mathbb{E}_{x_0} 
 \bigl[
  \bigl(
   \langle f, \ell_{t} ^{\mathrm{IS}} \rangle
  -
   \langle f, \ell_{\varepsilon, t} ^{\mathrm{IS}} \rangle
  \bigr)^k
 ;
  t < \zeta^{(1)} \wedge \cdots \wedge \zeta^{(p)}
 \bigl]
\\
=&
 \sum_{m=0} ^k
 (-1)^m
 \binom{k}{m}
 \int_{E^k}
  f^{\otimes k}
  \cdot
  \prod_{i=1} ^p
  \Biggl[
   \sum_{\sigma\in \mathfrak{S}_k}
   [
    \mathrm{id} ^{\otimes m}
   \underset{\sigma}{\otimes}
    {T_\varepsilon ^{(i)}} ^{\otimes (k-m)}
   ]
   H_t ^{(i)}
  \Biggr]
 dm^{\otimes k}
\\
\leq&
 (k!)^p
 e^{pt}
 \biggl\{
  2
  \|f\|_\infty
  \sum_{i=1} ^p
  \biggl[
   \sup_{y\in E}
   \int_E
    \Gr{i}(x, y)^p
   dx
  \biggr]
 \biggr\}^k
,
&
\end{align*}
which is obtained by the same computation 
as \eqref{eq_uni.int}.
Then we obtain
$
 \lim_{\varepsilon \rightarrow 0}
 C'(\varepsilon)
=
 0
$
and \eqref{eq_sufficient} 
with the constant $C'(\varepsilon)$. 


\vspace{1eM}

First, we prove Lemma \ref{Lem_A} and \ref{Lem_B}.

\begin{proof}[Proof of Lemma \ref{Lem_A}]
Write
\begin{align*}
 C
=
 \sum_{i=1} ^p
 \biggl[
  \sup_{y\in E}
  \int_E
   \Gr{i} (x,y)^p
  dx
 \biggr]
+
 1
,
\quad
 C(\delta)
=
 \sum_{i=1} ^p
 \biggl[
  \sup_{y\in E}
  \int_E
   \Grdel{i} (x,y)^p
  dx
 \biggr]
\end{align*}
and choose small $\delta$ such that $C(\delta)<1$.
For small $\delta > 0$, 
by applying the inequalities
\begin{equation*}
 \prod_{j\in D_i}
 1_
 {
  \{
   r_j\leq \delta
  \}
 }
 \prod_{j\in D_i ^c}
 1_
 {
  \{
   r_j> \delta
  \}
 }
\leq
 \prod_{j\in D_i}
 1_
 {
  \{
   r_j\leq \delta
  \}
 }
\leq
 1
\end{equation*}
to $H_t ^{(i)}$, we have

\noindent
\begin{align*}
 \|
  H_t ^{(i)} (D_i; \cdot)
 \|_{L^p(E^k)}
\leq&
 e^t
 \biggl[
  \sup_{y\in E}
  \int_E
   \Gr{i} (x, y)^p
  dx
 \biggr]^{\#D_i ^c /p}
 \biggl[
  \sup_{y\in E}
  \int_E
   \Grdel{i} (x,y)^p
  dx
 \biggl]^{\#D_i    /p}
\\
\leq&
 e^t
 C^{k/p}
 (1\wedge C(\delta))^{\#D_i    /p}
.
\end{align*}

Then we have for small $\delta > 0$,
\begin{align*}
&
 (\textbf{II})_{t,k} (\eta, \delta, \varepsilon)
\\
=&
 \sumDisome
 \sum_{m=0} ^k
 (-1)^m
 \binom{k}{m}  
 \int_{E^k}
  f^{\otimes k}
  \cdot
  \prod_{i=1} ^p
  \Biggl[
   \sum_{\sigma\in \mathfrak{S}_k}
   [
    \mathrm{id} ^{\otimes m}
   \underset{\sigma}{\otimes}
    {T_\varepsilon ^{(i)}} ^{\otimes (k-m)}
   ]
   H_t ^{(i)} (D_i; \cdot)
  \Biggr]
 dm^{\otimes k}
\\
\leq&
 \|
  f
 \|_{\infty} ^k
 2^k
 (k!)^p
 \sumDisome
 \prod_{i=1} ^p
 \|
  H_t ^{(i)} (D_i; \cdot)
 \|_{L^p(E^k)}
\\
\leq&
 \|
  f
 \|_{\infty} ^k
 2^k
 (k!)^p
 e^{pt}
 \sumDisome
 C^{k}
 C(\delta)^{\eta k}
\\
\leq&
 \|
  f
 \|_{\infty} ^k
 2^k
 (k!)^p
 e^{pt}
 2^{kp}
 C^{k}
 C(\delta)^{\eta k}
\\
=&
 (k!)^p
 e^{pt}
 C(\delta)^{k}
,
\end{align*}
where in the third line, 
we use the equality $\sum_{m=0} ^k \binom{k}{m} = 2^k$ 
and the same estimate as we compute \eqref{eq_uni.int}.
In 
the last line, we take another constant 
$
 C(\delta)
=
 C(p, p^{(i)}, f, \eta, \delta)
$
satisfying the same properties.
\end{proof}


\begin{proof}[Proof of Lemma \ref{Lem_B}]
By \eqref{eq_growth_of_eigen} in Lemma \ref{Lem_pre}, 
there exist 
$C_1, C_2 > 0$ and $R_0 \in \mathbb{Z}_{>0}$,
such that
$
 \|\psi_{n} ^{(i)} \|_\infty ^2
\leq 
 C_1 n
$
and
$
 C_2 n^{1/\rho^{(i)}}
\leq
 \lambda^{(i)} _{n}
$
for all $n > R_0$.
Then for $R \geq R_0$, we have
\begin{equation*}
 \sum_{n=1} ^R
 \|\psi_{n} ^{(i)} \|_\infty ^2
\leq
 \sum_{n=1} ^{R_0}
 \|\psi_{n} ^{(i)} \|_\infty ^2
+
 C_1
 R
\end{equation*}
and have
\begin{align*}
&
 \sum_{n=R+1} ^\infty
 (1 + \lambda^{(i)} _{n})^{-1}
 \exp
 \left\{
  -\delta 
  (1 + \lambda^{(i)} _{n})
 \right\}
 \|\psi_{n} ^{(i)} \|_\infty ^2
\leq
 C_1
 \sum_{n=R+1} ^\infty
 (1 + C_2 n^{1/\rho^{(i)}})^{-1}
 \exp
 \left\{
  -\delta 
  (1 + C_2 n^{1/\rho^{(i)}})
 \right\}
 n
.
\end{align*}

Hence, by setting 
\begin{align*}
 C(R)
=&
 \sum_{i=1} ^p 
 \sum_{n=1} ^R 
 \|\psi_{n} ^{(i)} \|_\infty ^2
+
 1
, \quad
 C_\delta (R)
=
 \sum_{i=1} ^p
 \sum_{n=R+1} ^\infty
 (1 + \lambda^{(i)} _{n})^{-1}
 \exp
 \left\{
  -\delta 
  (1 + \lambda^{(i)} _{n})
 \right\}
 \|\psi_{n} ^{(i)} \|_\infty ^2
,
\end{align*}
we have
\begin{equation}
\label{eq_Lem_B_zero}
 \lim_{R \rightarrow \infty}
 C_\delta(R)
=
 0
,
\quad 
 \lim_{R \rightarrow \infty}
 C_\delta(R)^\gamma
 C(R)^p
=
 0
.
\end{equation}

Since
\begin{align*}
&
 H_t ^{(i)}
 (
  D_i, \mathcal{N}^{(i)}
 ;
  x_1, \cdots, x_k
 )
\\
\begin{split}
\leq&
 \int_{[0, t]^k}
  1_
  {
   \left\{
    \sum_{j=1} ^k r_j \leq t
   \right\}
  }
\\
&
\hspace{5mm}
  \prod_{j\in D_i}
  p^{(i)} _{r_j}(x_{j-1}, x_j)
 \cdot
  \prod_{j\in D_i ^c\setminus E_i}
  1_{r_j > \delta}
  \exp
  \biggl\{
   -r_j \lambda_{n_j ^{(i)}} ^{(i)}
  \biggr\}
  \|\psi_{n_j ^{(i)}} ^{(i)} \|_\infty ^2
 \cdot
  \prod_{j\in E_i}
  \|\psi_{n_j ^{(i)}} ^{(i)} \|_\infty ^2
 \hspace{2mm}
 dr_1\cdots dr_k
,
\end{split}
\end{align*}
by writing $E':= \text{supp}[f]$, 
we have
\begin{align*}
&
 \|
  H_t ^{(i)}
  (
   D_i, \mathcal{N}^{(i)}
  ;
   \cdot
  )
 \|_{L^p({E'}^k)}
\\
\begin{split}
\leq&
 e^{t} 
 \biggl[
  \sup_{y\in E}
  \int_E
   \Gr{i} (x,y)^p
  dx
 \biggr]^{\# D_i /p}
\\
&
\hspace{5mm}
 \prod_{j\in D_i ^c \setminus E_i}
 m(E')^{1/p}
 (1 + \lambda^{(i)} _{n^{(i)} _j})^{-1}
 \exp
 \biggl\{
  -\delta 
  (1 + \lambda^{(i)} _{n^{(i)} _j})
 \biggr\}
 \|\psi_{n_j ^{(i)}} ^{(i)} \|_\infty ^2
\cdot
 \prod_{j\in E_i}
 m(E')^{1/p}
 \|\psi_{n_j ^{(i)}} ^{(i)} \|_\infty ^2
.
\end{split}
\end{align*}

Hence
\begin{align*}
&
 \sum_
 {
  \mathcal{N}^{(i)} |_{D_i ^c\setminus E_i}
 \in
  \left(\mathcal{R}^c\right)^{D_i ^c \setminus E_i} 
 }
 \sum_
 {
  \mathcal{N}^{(i)} |_{E_i}
 \in  
  \mathcal{R}^{E_i}
 }
 \|
  H_t ^{(i)}
  (
   D_i, \mathcal{N}^{(i)}
  ;
   \cdot
  )
 \|_{L^p({E'}^k)}
\\
\begin{split}
\leq&
 e^{t} 
 \biggl[
  \sup_{y\in E}
  \int_E
   \Gr{i} (x,y)^p
  dx
 \biggr]^{\# D_i /p}
 m(E')^{\# D_i ^c /p}
\\
&
\hspace{5mm}
 \biggl(
  \sum_{n=R+1} ^\infty
  (1 + \lambda^{(i)} _{n})^{-1}
  \exp
  \Bigl\{
   -\delta 
   (1 + \lambda^{(i)} _{n})
  \Bigr\}
  \|\psi_{n} ^{(i)} \|_\infty ^2
 \biggr)^{\# D_i ^c \setminus E_i}
\cdot
 \biggl(
  \sum_{n=1} ^R
  \|\psi_{n} ^{(i)} \|_\infty ^2
 \biggr)^{\# E_i}
\end{split}
\\
\leq&
 e^t
 C^k
 C_{\delta}(R) ^{\# D_i ^c \setminus E_i}
 C(R)^k
.
\end{align*}

We may assume that $C_\delta (R) <1$.
Then
$
 (\textbf{Ib})_{t,k} (\eta, \gamma, \delta, \varepsilon, R)
\leq
 e^{pt}
 (k!)^p
 C^k
 C_{\delta}(R) ^{\gamma k}
 C(R)^{p k}
$.
By setting
$
 C'_{\delta}(R)
=
 C
 C_\delta(R)^{\gamma}
 C(R)^p
$, 
we have
$\lim_{R\rightarrow\infty}C'_\delta(R) = 0$ 
by \eqref{eq_Lem_B_zero},
and therefore we have the desired result.
\end{proof}

\subsection{Proof of Lemma \ref{Lem_C}  (computation of \textbf{(Ia)}, part 1)}

From 
this subsection to the end of Section \ref{Sec_exp_app}, 
we prove Lemma \ref{Lem_C}.

In 
this subsection, we rewrite each term of \textbf{(Ia)} 
into the products of ``good'' and  ``not good'' factors.
Here, the notation ``good'' means that 
the factor does not depend on $t$
and is able to estimate with respect to $\varepsilon$,
uniformly in $t$.


Let 
$D_i \subset \{1, \cdots, k\}$
and
$E_i \subset D_i ^c := \{1, \cdots, k\}\setminus D_i$
satisfying
\begin{equation*}
 \# D_i \leq \eta k
 \text{ and }
 \# (D_i ^c\setminus E_i) \leq \gamma k
\quad
 \text{for all }
 i=1, \cdots, p
.
\end{equation*}
Write
$
 E_i - 1
:=
 \{j-1 : j\in E_i\} \cap \{1, \cdots, k\}
$
and
$F_i := E_i \cap (E_i -1)$.
Note that obviously $k\not\in F_i$.
Then 
we can decompose as
\begin{align*}
 \prod_{j\in D_i ^c}
 \psi_{n_j ^{(i)}} ^{(i)} (x_{j-1})
 \psi_{n_j ^{(i)}} ^{(i)} (x_{j  })
=&
 \prod_{j\in D_i ^c -1}
 \psi_{n_{j+1} ^{(i)}} ^{(i)} (x_{j})
 \prod_{j\in D_i ^c}
 \psi_{n_{j  } ^{(i)}} ^{(i)} (x_{j})
\\
=&
 \prod_{j\in (D_i ^c -1)\setminus F_i}
 \psi_{n_{j+1} ^{(i)}} ^{(i)} (x_{j})
 \prod_{j\in D_i ^c \setminus F_i}
 \psi_{n_{j  } ^{(i)}} ^{(i)} (x_{j})
 \prod_{j\in F_i}
 \biggl[
  \psi_{n_{j+1} ^{(i)}} ^{(i)}
  \psi_{n_{j  } ^{(i)}} ^{(i)}
 \biggr]
 (x_{j})
\end{align*}

\noindent
and then
\begin{align*}
&
 \prod_{j\in D_i}
 p_{r_j}(x_{j-1}, x_j)
 \prod_{j\in D_i ^c}
 \exp
 {
  \biggl\{
   -r_j \lambda_{n_j ^{(i)}} ^{(i)} 
  \biggr\}
 }
 \psi_{n_j ^{(i)}} ^{(i)} (x_{j-1})
 \psi_{n_j ^{(i)}} ^{(i)} (x_{j  })
\cdot
 p^{(i)} _{r_{k+1}}(x_k, x_{k+1})
\\
\begin{split}
=&
 \prod_{j\in F_i}
 \biggl[
  \psi_{n_{j  } ^{(i)}} ^{(i)}
  \psi_{n_{j+1} ^{(i)}} ^{(i)}
 \biggr]
 (x_{j})
\\
&
\hspace{5mm}
 \prod_{j\in D_i}
 p_{r_j}(x_{j-1}, x_j)
 \prod_{j\in D_i ^c}
 \exp
 {
  \biggl\{
   -r_j \lambda_{n_j ^{(i)}} ^{(i)} 
  \biggr\}
 }
 \prod_{j\in (D_i ^c -1)\setminus F_i}
 \psi_{n_{j+1} ^{(i)}} ^{(i)} (x_{j})
 \prod_{j\in D_i ^c \setminus F_i}
 \psi_{n_{j  } ^{(i)}} ^{(i)} (x_{j})
\cdot
 p^{(i)} _{r_{k+1}}(x_k, x_{k+1})
.
\end{split}
\end{align*}

\noindent
Since $k\not\in F_i$, we can decompose
\begin{align}
\label{eq_def_G_t}
&
 H^{(i)} _t 
 (D_i, \mathcal{N}^{(i)} ;x_1, \cdots, x_k)
=
 \prod_{j\in F_i}
 \biggl[
  \psi_{n_{j  } ^{(i)}} ^{(i)} 
  \psi_{n_{j+1} ^{(i)}} ^{(i)}
 \biggr]
 (x_{j})
\cdot
 G^{(i)} _t
 (
  D_i, \mathcal{N}^{(i)};
  \{x_j\}_{j\not\in F_i}
 )
,
\end{align}
where we define
a function on $E^{\# F_i ^c}$ by
\begin{align*}
\begin{split}
&
 G^{(i)} _t
 (
  D_i, \mathcal{N}^{(i)};
  \{x_j\}_{j\not\in F_i}
 )
\\
:=&
 \int_{[0, t]^k}
 \int_E
  1_
  {
   \left\{
    \sum_{j=1} ^k r_j \leq t
   \right\}
  }
 \prod_{j\in D_i}
 p_{r_j}(x_{j-1}, x_j)
 \prod_{j\in D_i ^c}
 \exp
 {
  \left\{
   -r_j \lambda_{n_j ^{(i)}} ^{(i)} 
  \right\}
 }
\\
&
\hspace{10mm}
 \prod_{j\in (D_i ^c -1)\setminus F_i}
 \psi_{n_{j+1} ^{(i)}} ^{(i)} (x_{j})
 \prod_{j\in D_i ^c \setminus F_i}
 \psi_{n_{j  } ^{(i)}} ^{(i)} (x_{j})
\cdot
 p^{(i)} _{r_{k+1}}(x_k, x_{k+1})
 dx_{k+1}
 dr_1 \cdots dr_k
.
\end{split}
\end{align*}

\noindent
Hence 
for some operators $U_1 ^{(i)}, \cdots, U_k ^{(i)}$,
we have
\begin{align*}
&
 U^{(i)}_1\otimes\cdots\otimes U^{(i)}_k
 \Biggl[
  \prod_{j\in F_i}
  \biggl[
   \psi_{n_{j  } ^{(i)}} ^{(i)} 
   \psi_{n_{j+1} ^{(i)}} ^{(i)}
  \biggr]
  (x_{j})
 \cdot
  G^{(i)} _t
  (
   D_i, \mathcal{N}^{(i)};
   \{x_j\}_{j\not\in F_i}
  )
 \Biggr]
\\
=&
 \prod_{j\in F_i}
 U^{(i)}_j
 \biggl[
  \psi_{n_{j  } ^{(i)}} ^{(i)} 
  \psi_{n_{j+1} ^{(i)}} ^{(i)}
 \biggr]
 (x_{j})
\cdot
 \bigotimes_{j\not\in F_i}
 U^{(i)}_j
 G_t ^{(i)}
 (
  D_i, \mathcal{N}^{(i)};
  \{x_j\}_{j\not\in F_i}
 )
\end{align*}

\noindent
and have
\begin{align*}
&
 U^{(i)}_1
\otimes
 \underset{\sigma_i}{\cdots}
\otimes
 U^{(i)}_k
 \Biggl[
  \prod_{j\in F_i}
  \biggl[
   \psi_{n_{j  } ^{(i)}} ^{(i)} 
   \psi_{n_{j+1} ^{(i)}} ^{(i)}
  \biggr]
  (x_{j})
 \cdot
  G^{(i)} _t
  (
   D_i, \mathcal{N}^{(i)};
   \{x_j\}_{j\not\in F_i}
  )
 \Biggr]
\\
\begin{split}
=&
 \prod_{l\in S(\sigma)}
 U^{(i)}_{l}
 \biggl[
  \psi_{n_{\sigma_i(l)    } ^{(i)}} ^{(i)} 
  \psi_{n_{\sigma_i(l) + 1} ^{(i)}} ^{(i)}
 \biggr]
 (x_{l})
\\
&
\hspace{10mm}
 \bigotimes_{l\not\in S(\sigma)}
 U^{(i)}_{l}
 \Biggl[
  \prod_{l\in \sigma_i ^{-1}(F_i)\setminus S(\sigma)}
  \biggl[
   \psi_{n_{\sigma_i(l)    } ^{(i)}} ^{(i)} 
   \psi_{n_{\sigma_i(l) + 1} ^{(i)}} ^{(i)}
  \biggr]
  (x_{l})
 \cdot
  G_t ^{(i)}
  (
   D_i, \mathcal{N}^{(i)};
   \{x_{l}\}_{l\not\in\sigma_i ^{-1}(F_i)}
  )
 \Biggr]
,
\end{split}
\end{align*}
where we write 
$
 S(\sigma)
:=
 \bigcap_{i=1} ^p
 \sigma_i ^{-1} (F_i)
$.

In particular, by applying  
$
 U_1 ^{(i)} 
\otimes 
 \cdots 
\otimes
 U_k ^{(i)}
=
 \mathrm{id} ^{\otimes m}
\otimes
 {T_\varepsilon^{(i)}} ^{\otimes (k-m)}
$,
we have
\begin{align*}
&
 U^{(i)}_1
\otimes
 \underset{\sigma_i}{\cdots}
\otimes
 U^{(i)}_k
 \Bigl[
  H_t ^{(i)}
  (
   D_i, \mathcal{N}^{(i)};
   x_1, \cdots, x_k
  )
 \Bigr]
\\
\begin{split}
=&
 \prod_
 {
  l\in S(\sigma)_{\leq m}
 }
 \biggl[
  \psi_{n_{\sigma_i(l)    } ^{(i)}} ^{(i)} 
  \psi_{n_{\sigma_i(l) + 1} ^{(i)}} ^{(i)}
 \biggr]
 (x_{l})
\cdot
 \prod_
 {
  l\in S(\sigma)_{> m}
 }
 T^{(i)}_{\varepsilon}
 \biggl[
  \psi_{n_{\sigma_i(l)    } ^{(i)}} ^{(i)} 
  \psi_{n_{\sigma_i(l) + 1} ^{(i)}} ^{(i)}
 \biggr]
 (x_{l})
\\
&
\hspace{10mm}
 \bigotimes_{l\not\in S(\sigma)}
 U^{(i)}_{l}
 \Biggl[
  \prod_{l\in \sigma_i ^{-1}(F_i)\setminus S(\sigma)}
  \biggl[
   \psi_{n_{\sigma_i(l)    } ^{(i)}} ^{(i)} 
   \psi_{n_{\sigma_i(l) + 1} ^{(i)}} ^{(i)}
  \biggr]
  (x_{l})
 \cdot
  G_t ^{(i)}
  (
   D_i, \mathcal{N}^{(i)};
   \{x_{l}\}_{l\not\in\sigma_i ^{-1}(F_i)}
  )
 \Biggr]
,
\end{split}
\end{align*}

\noindent
where
$
 S(\sigma)_{\leq m}
:=
 S(\sigma)
\cap
 \{1, \cdots, m\}
$
and
$
 S(\sigma)_{> m}
:=
 S(\sigma)
\cap
 \{m+1, \cdots, k\}
$.
We set
\begin{align*}
 a
 (
  \mathcal{N}_{\sigma(j) }
 , 
  \mathcal{N}_{\sigma(j)+1}
 )
:=&
 \biggl\langle
  f
 ,
  \prod_{i=1} ^p
  \biggl[
   \psi_{n_{\sigma_i(l)    } ^{(i)}} ^{(i)} 
   \psi_{n_{\sigma_i(l) + 1} ^{(i)}} ^{(i)}
  \biggr]
 \biggr\rangle
,
\\
 a_{\varepsilon}
 (
  \mathcal{N}_{\sigma(j) }
 , 
  \mathcal{N}_{\sigma(j)+1}
 )
:=&
 \biggl\langle
  f
 ,
  \prod_{i=1} ^p
  T^{(i)}_{\varepsilon}
  \biggl[
   \psi_{n_{\sigma_i(l)    } ^{(i)}} ^{(i)} 
   \psi_{n_{\sigma_i(l) + 1} ^{(i)}} ^{(i)}
  \biggr]
 \biggr\rangle
\end{align*}

\noindent
and set
\begin{align}
\label{eq_Def_Gt}
\begin{split}
&
 G_t(m, D, E, \sigma, \mathcal{N})
\\
:=
&
 \int_{E^{S(\sigma)^c} }
  \prod_{l\not\in S(\sigma)}
  f(y_l)
 \cdot
  \prod_{i=1} ^p
  \bigotimes_{l\not\in S(\sigma)}
  U^{(i)}_{l}
  \Biggl[
   \prod_{l\in \sigma_i ^{-1}(F_i)\setminus S(\sigma)}
   \biggl[
    \psi_{n_{\sigma_i(l)    } ^{(i)}} ^{(i)} 
    \psi_{n_{\sigma_i(l) + 1} ^{(i)}} ^{(i)}
   \biggr]
   (y_{l})
  \cdot
   G_t ^{(i)}
   (
    D_i, \mathcal{N}^{(i)};
    \{y_{l}\}_{l\not\in\sigma_i ^{-1}(F_i)}
   )
  \Biggr]
 dy
,
\end{split}
\end{align}
where
$
 U_l ^{(i)} = \mathrm{id} 
$
for $l \leq m$ and
$
 U_l ^{(i)} = T_\varepsilon ^{(i)}
$
for $l \geq m+1$. 
In the definition of $G_t$,
we shorten the symbol 
$\{dy_i\}_{j\in S(\sigma)^c}$ as $dy$.
Then we have
\begin{align}
\notag
&
 \int_{E^k}
  f^{\otimes k}
 \cdot
  \prod_{i=1} ^p
  \biggl\{
   \biggl[
    \mathrm{id} ^{\otimes m}
   \underset{\sigma_{i}}{\otimes}
    {T_\varepsilon^{(i)}} ^{\otimes (k-m)}
   \biggr]
   H_t ^{(i)} (D_i, \mathcal{N}^{(i)} ; \cdot)
  \biggr\}
 dm^{\otimes k}
\\
\label{eq_step1}
=&
 \prod_
 {
  j\in S(\sigma)_{\leq m}
 }
 a
 (
  \mathcal{N}_{\sigma(j) }
 , 
  \mathcal{N}_{\sigma(j)+1}
 )
 \prod_
 {
  j\in S(\sigma)_{> m}
 }
 a_{\varepsilon}
 (
  \mathcal{N}_{\sigma(j) }
 , 
  \mathcal{N}_{\sigma(j)+1}
 )
\cdot
 G_t(m, D, E, \sigma, \mathcal{N})
.
\end{align}

\begin{Rmk}
\label{Rmk_54-1}
There is a difference between 
our $G_t$ and the $G_t$ 
defined in (3.27) of \cite{MR2999298}.
Our 
$G_t$ is obtained by replacing 
$\sigma_i ^{-1}(F_i)$ by $S(\sigma)$
in the definition of their $G_t$,
yet
we have the same type of decomposition \eqref{eq_step1}
as in the last three lines of the equation (3.26) in
\cite{MR2999298}.
\end{Rmk}

\subsection{Proof of Lemma \ref{Lem_C}  (computation of \textbf{(Ia)}, part 2)}

Following 
the last subsection, we also deform 
$(\textbf{Ia})$ in this subsection.
We regard $G_t$ as a function of 
$m_i$, $\sigma_i$ and $\mathcal{N}^{(i)}$,
and decompose $(\textbf{Ia})$ into 
twelve sums.
In 
the end of this subsection, we rearrange the order
of twelve sums appropriately,
and in the following subsection, 
we will estimate $(\textbf{Ia})$ 
according to the rearranged order.

Fix 
$D_i\subset \{1, \cdots, k\}$, 
$E_i\subset D_i ^c$ and 
$\mathcal{N}^{(i)}\in (\mathbb{Z}_{>0})^{D_i ^c}$
with
$
 \mathcal{N}^{(i)}|_{D_i ^c \setminus E_i}
\in 
 (\mathcal{R}^c)^{D_i ^c \setminus E_i}
$,
$
 \mathcal{N}^{(i)}|_{E_i}
\in 
 (\mathcal{R})^{E_i}
$.
Decompose as
\begin{align*}
&
 \sum_{\sigma_1,\cdots, \sigma_p\in \mathfrak{S}_k}
 \sum_{m=0} ^k
 (-1)^m
 \binom{k}{m}
\\
=&
 \sum_{\sigma_1,\cdots, \sigma_p\in \mathfrak{S}_k}
 \sum_{m=0} ^k
 \sum_
 {
  \substack
  {
   \hspace{-3mm}
   0 \leq m_1 \leq m
  ,
  \\
   0 \leq m_3 \leq k-m
  }
 }
 \sum_
 {
  \substack
  {
   A\subset \{1, \cdots, m\}
  ,
  \\
   \# A= m_1
  }
 }
 \sum_
 {
  \substack
  {
   B\subset \{m+1, \cdots, k\}
  ,
  \\
   \# B= m_3
  }
 }
 (-1)^m
 \binom{k}{m}
 1_
  {
   \left\{
   \substack
    {
     A=S(\sigma)_{\leq m}
    ,
    \\
     B=S(\sigma)_{> m}
    }
   \right\}
  }
\\
=&
 \sum_{m=0} ^k
 \sum_
 {
  \substack
  {
   \hspace{-3mm}
   0 \leq m_1 \leq m
  ,
  \\
   0 \leq m_3 \leq k-m
  }
 }
 \sum_
 {
  \substack
  {
   A\subset \{1, \cdots, m\}
  ,
  \\
   \# A= m_1
  }
 }
 \sum_
 {
  \substack
  {
   B\subset \{m+1, \cdots, k\}
  ,
  \\
   \# B= m_3
  }
 }
 (-1)^m
 \binom{k}{m}
 \sum_{\sigma_1,\cdots, \sigma_p\in \mathfrak{S}_k}
 1_
  {
   \left\{
   \substack
    {
     A=S(\sigma)_{\leq m}
    ,
    \\
     B=S(\sigma)_{> m}
    }
   \right\}
  }
.
\end{align*}

We will see that:
\begin{Lem}
\label{Lem_55}
Fix 
two nonnegative integers 
$m_1 \leq m$ and $m_3 \leq k-m$.
Then
\begin{align}
\label{eq_Lem_55}
 \sum_{\sigma\in \mathfrak{S}_k ^p}
 1_
  {
   \left\{
   \substack
    {
     A=S(\sigma)_{\leq m}
    ,
    \\
     B=S(\sigma)_{> m}
    }
   \right\}
  }
 \prod_{j\in S(\sigma)_{\leq m}}
 a
 (
  \mathcal{N}_{\sigma_{(j)  }}
 ,
  \mathcal{N}_{\sigma_{(j)+1}}
 )
 \prod_{j\in S(\sigma)_{> m}}
 a_\varepsilon
 (
  \mathcal{N}_{\sigma_{(j)  }}
 ,
  \mathcal{N}_{\sigma_{(j)+1}}
 )
 \cdot
 G_t(m, D, E, \sigma, \mathcal{N})
\end{align}
depends only on $m_1$ and $m_3$,
and does not depend on the choice of 
$A, B \subset \{1, \cdots, k\}$
with $\#A = m_1$ and $\#B = m_3$.

Namely, for all $\tau \in \mathfrak{S}_k$ with 
$
 \tau(\{1,\cdots,m\})
=
 \{1,\cdots,m\}
$
and
$
 \tau(\{m+1,\cdots,k\})
=
 \{m+1,\cdots,k\}
$,
the summation
\begin{align*}
 \sum_{\sigma\in \mathfrak{S}_k ^p}
 1_
  {
   \left\{
   \substack
    {
     A=\tau^{-1}( S(\sigma)_{\leq m})
    ,
    \\
     B=\tau^{-1}( S(\sigma)_{> m})
    }
   \right\}
  }
 \prod_{j\in S(\sigma)_{\leq m}}
 a
 (
  \mathcal{N}_{\sigma_{(j)  }}
 ,
  \mathcal{N}_{\sigma_{(j)+1}}
 )
 \prod_{j\in S(\sigma)_{> m}}
 a_\varepsilon
 (
  \mathcal{N}_{\sigma_{(j)  }}
 ,
  \mathcal{N}_{\sigma_{(j)+1}}
 )
 \cdot
 G_t(m, D, E, \sigma, \mathcal{N})
\end{align*}
is equal to \eqref{eq_Lem_55}.
\end{Lem}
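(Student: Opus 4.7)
The plan is to prove the invariance by exhibiting a bijection on $\mathfrak{S}_k^p$ that converts the ``Namely'' sum into \eqref{eq_Lem_55} term by term. Since the indicator $1_{\{A = \tau^{-1}(S(\sigma)_{\leq m}),\, B = \tau^{-1}(S(\sigma)_{> m})\}}$ coincides with $1_{\{S(\sigma)_{\leq m} = \tau(A),\, S(\sigma)_{> m} = \tau(B)\}}$, the ``Namely'' assertion is equivalent to saying that replacing $(A,B)$ by $(\tau(A),\tau(B))$ in \eqref{eq_Lem_55} leaves the value unchanged for every $\tau$ preserving both $\{1,\ldots,m\}$ and $\{m+1,\ldots,k\}$; this in turn gives the first form of the lemma because any two pairs of the prescribed cardinalities can be related by such a $\tau$. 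The bijection to use is the componentwise right multiplication $(\sigma_1,\ldots,\sigma_p) \mapsto (\sigma_1\tau^{-1},\ldots,\sigma_p\tau^{-1})$, which is clearly a bijection on $\mathfrak{S}_k^p$.

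The algebraic backbone is the identity $S(\sigma\tau^{-1}) = \tau(S(\sigma))$, immediate from $(\sigma_i\tau^{-1})^{-1}(F_i) = \tau(\sigma_i^{-1}(F_i))$. Because $\tau$ preserves the two halves, one further has $S(\sigma\tau^{-1})_{\leq m} = \tau(S(\sigma)_{\leq m})$ and $S(\sigma\tau^{-1})_{> m} = \tau(S(\sigma)_{> m})$. After the substitution $\sigma \mapsto \sigma\tau^{-1}$, the indicator in the ``Namely'' sum becomes $1_{\{S(\sigma)_{\leq m} = A,\, S(\sigma)_{> m} = B\}}$, matching \eqref{eq_Lem_55}. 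What remains is to verify that the integrand $(\prod a)(\prod a_\varepsilon)\cdot G_t$ is invariant under this substitution.

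The invariance of the products $\prod_{j \in S(\sigma)_{\leq m}} a(\mathcal{N}_{\sigma(j)},\mathcal{N}_{\sigma(j)+1})$ and the analogous product with $a_\varepsilon$ is routine: after substitution the product ranges over $j \in \tau(S(\sigma)_{\leq m})$ of $a(\mathcal{N}_{\sigma(\tau^{-1}(j))},\mathcal{N}_{\sigma(\tau^{-1}(j))+1})$, and the change of summation variable $j = \tau(j')$ recovers the original product verbatim. The main obstacle is the analogous invariance for $G_t(m,D,E,\sigma,\mathcal{N})$ from \eqref{eq_Def_Gt}; this calls for a careful change of variables $y_l = z_{\tau^{-1}(l)}$ in the integral over $E^{S(\sigma\tau^{-1})^c} = E^{\tau(S(\sigma)^c)}$. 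Under this change: the integration domain becomes $E^{S(\sigma)^c}$; the operator assignment $U_l^{(i)} \in \{\mathrm{id}, T_\varepsilon^{(i)}\}$ is preserved because $l \leq m$ if and only if $\tau^{-1}(l) \leq m$; the eigenfunction factors over $l \in (\sigma\tau^{-1})_i^{-1}(F_i) \setminus S(\sigma\tau^{-1})$ reindex correctly via $(\sigma\tau^{-1})_i(l) = \sigma_i(\tau^{-1}(l))$; and the values fed to $G_t^{(i)}(D_i,\mathcal{N}^{(i)};\cdot)$ at the time positions $j = (\sigma\tau^{-1})_i(l) \in F_i^c$ become, after setting $l = \tau(l')$, the values $z_{l'}$ at positions $j = \sigma_i(l')$, which is precisely the input scheme of $G_t^{(i)}(D_i,\mathcal{N}^{(i)};\{z_{l'}\}_{l' \not\in \sigma_i^{-1}(F_i)})$. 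Collecting these observations yields $G_t(m,D,E,\sigma\tau^{-1},\mathcal{N}) = G_t(m,D,E,\sigma,\mathcal{N})$, and the proof is complete.
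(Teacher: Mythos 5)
Your proof is correct and follows essentially the same route as the paper's: you use the componentwise right-translation $\sigma\mapsto\sigma\tau^{-1}$, the identity $S(\sigma\tau^{-1})=\tau(S(\sigma))$, the half-preservation of $\tau$ to keep the $\mathrm{id}$/$T_\varepsilon$ assignment intact, reindexing for the $a$ and $a_\varepsilon$ products, and a change of variables $y_l\mapsto z_{\tau^{-1}(l)}$ in the integral defining $G_t$. The paper's proof verifies the same three invariances ($S$, the products, and $G_t$) in the same way.
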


\begin{proof}
First, note that
$
 \tau S(\sigma)
:=
 \bigcap_{i=1} ^p
 \tau \sigma_i ^{-1} (F_i)
=
 S(\sigma\circ\tau^{-1})
$,
where
$
 \sigma\circ\tau^{-1}
:=
 (
  \sigma_1\circ \tau^{-1}, \cdots, \sigma_p\circ \tau^{-1}
 )
\in
 {\mathfrak{S}_k }^p
$.
Then we have
$
 \tau (S(\sigma)_{\leq m})
=
 (\tau S(\sigma) )_{\leq m}
=
 S(\sigma\circ\tau^{-1})_{\leq m}
$
and similarly, we also have
$
 \tau (S(\sigma)_{> m})
=
 S(\sigma\circ\tau^{-1})_{> m}
$.
Hence, we have
\begin{align*}
 \prod_{j\in S(\sigma\circ\tau^{-1})_{\leq m}}
 a
 (
  \mathcal{N}_{\sigma\circ\tau^{-1}(j)}
 ,
  \mathcal{N}_{\sigma\circ\tau^{-1}(j)+1}
 )
=&
 \prod_{j\in S(\sigma\circ\tau^{-1})_{\leq m}}
 \biggl\langle
  f
  ,
  \prod_{i=1} ^p
  \biggl[
   \psi_{n_{\sigma_{i}\circ\tau^{-1} (j)+1} ^{(i)}} ^{(i)} 
   \psi_{n_{\sigma_{i}\circ\tau^{-1} (j)  } ^{(i)}} ^{(i)}
  \biggr]
 \biggr\rangle
\\
=&
 \prod_{\tau^{-1}(j)\in S(\sigma)_{\leq m}}
 \biggl\langle
  f
  ,
  \prod_{i=1} ^p
  \biggl[
   \psi_{n_{\sigma_{i}\circ\tau^{-1} (j)+1} ^{(i)}} ^{(i)} 
   \psi_{n_{\sigma_{i}\circ\tau^{-1} (j)  } ^{(i)}} ^{(i)}
  \biggr]
 \biggr\rangle
\\
=&
 \prod_{j\in S(\sigma)_{\leq m}}
 a
 (
  \mathcal{N}_{\sigma(j)}
 ,
  \mathcal{N}_{\sigma(j)+1}
 )
\end{align*}

\noindent
and similarly,
\begin{align*}
 \prod_{j\in S(\sigma\circ\tau^{-1})_{> m}}
 a_\varepsilon
 (
  \mathcal{N}_{\sigma\circ\tau^{-1}(j)}
 ,
  \mathcal{N}_{\sigma\circ\tau^{-1}(j)+1}
 )
=&
 \prod_{j\in S(\sigma)_{> m}}
 a_\varepsilon
 (
  \mathcal{N}_{\sigma(j)}
 ,
  \mathcal{N}_{\sigma(j)+1}
 )
.
\end{align*}

Moreover, we have
\begin{align*} 
&
 G_t (m, D, E, \sigma\circ\tau^{-1}, \mathcal{N})
\\
\begin{split}
=&
 \int_{E^{S(\sigma\circ\tau^{-1})^c} }
  \prod_{l\not\in S(\sigma\circ\tau^{-1})}
  f(y_l)
\\
&
\hspace{5mm}
  \prod_{i=1} ^p
  \bigotimes_{l\not\in S(\sigma\circ\tau^{-1})}
  T^{(i)}_{l}
  \left[
   \prod_
   {
    l
   \in
    \tau\circ\sigma_i ^{-1}(F_i)
   \setminus 
    S(\sigma\circ\tau^{-1})
   }
   \biggl[
    \psi_{n_{\sigma_i\circ\tau^{-1}(l)    } ^{(i)}} ^{(i)} 
    \psi_{n_{\sigma_i\circ\tau^{-1}(l) + 1} ^{(i)}} ^{(i)}
   \biggr]
   (y_{l})
  \cdot
   G_t ^{(i)}
   (
    D_i, \mathcal{N}^{(i)};
    \{y_{l}\}_{l\not\in\tau\circ\sigma_i ^{-1}(F_i)}
   )
  \right]
 dy
\end{split}
\\
\begin{split}
=&
 \int_{E^{\tau(S(\sigma)^c)} }
  \prod_{l\not\in \tau(S(\sigma))}
  f(y_l)
\\
&
\hspace{5mm}
  \prod_{i=1} ^p
  \bigotimes_{l\not\in \tau(S(\sigma))}
  T^{(i)}_{l}
  \left[
   \prod_
   {
    l
   \in 
    \tau
    (
    \sigma_i ^{-1}(F_i)
   \setminus 
    S(\sigma)
    )
   }
   \biggl[
    \psi_{n_{\sigma_i\circ\tau^{-1}(l)    } ^{(i)}} ^{(i)} 
    \psi_{n_{\sigma_i\circ\tau^{-1}(l) + 1} ^{(i)}} ^{(i)}
   \biggr]
   (y_{l})
  \cdot
   G_t ^{(i)}
   (
    D_i, \mathcal{N}^{(i)};
    \{y_{l}\}_{l\not\in\tau\circ\sigma_i ^{-1}(F_i)}
   )
  \right]
 dy
\end{split}
\\
\begin{split}
=&
 \int_{E^{\tau(S(\sigma)^c)} }
  \prod_{j\not\in S(\sigma)}
  f(y_{\tau(j)})
\\
&
\hspace{5mm}
  \prod_{i=1} ^p
  \bigotimes_{j\not\in S(\sigma)}
  T^{(i)}_{\tau(j)}
  \left[
   \prod_
   {
    j
   \in 
    \sigma_i ^{-1}(F_i)
   \setminus 
    S(\sigma)
   }
   \biggl[
    \psi_{n_{\sigma_i(j)    } ^{(i)}} ^{(i)} 
    \psi_{n_{\sigma_i(j) + 1} ^{(i)}} ^{(i)}
   \biggr]
   (y_{\tau(j)})
  \cdot
   G_t ^{(i)}
   (
    D_i, \mathcal{N}^{(i)};
    \{y_{\tau(j)}\}_{j\not\in\sigma_i ^{-1}(F_i)}
   )
  \right]
 dy
\end{split}
\\
=&
 G_t (m, D, E, \sigma, \mathcal{N})
,
\end{align*}
where the symbol $dy$ is defined 
as the same as in \eqref{eq_Def_Gt}.
Hence the desired equality holds.
\end{proof}


\vspace{1eM}

By Lemma \ref{Lem_55},
we may rewrite \eqref{eq_Lem_55} 
by choosing
$A = S^* _{\leq m}$ and $B = S^* _{>m}$,
where we define
$
 S^* _{\leq m}
:=
 \{ 1,\cdots, m_1 \}
$
and
$
 S^* _{> m}
:=
 \{ m + 1,\cdots, m + m_3 \}
$.
Note that
$
 \#
 S(\sigma)_{\leq m}
=
 \#
 S^* _{\leq m}
=
 m_1
$.

Combining \eqref{eq_step1} 
with Lemma \ref{Lem_55}, we have
\begin{align}
&
\notag 
 \sum_{m=0} ^k
 (-1)^m
 \binom{k}{m}
 \sum_
 {
  \substack
  {
   \sigma_i \in \mathfrak{S}_k
  ;\\
   i=1, \cdots p
  }
 }
 \int_{E^k}
  f^{\otimes k}
 \cdot
  \prod_{i=1} ^p
  \biggl\{
   \biggl[
    \mathrm{id} ^{\otimes m}
   \underset{\sigma_{i}}{\otimes}
    {T_\varepsilon^{(i)}} ^{\otimes (k-m)}
   \biggr]
   H_t ^{(i)} (D_i, \mathcal{N}^{(i)} ; \cdot)
  \biggr\}
 dm^{\otimes k}
\\
\notag 
\begin{split}
=&
 \sum_{m=0} ^k
 \sum_
 {
  \substack
  {
   \hspace{-3mm}
   0 \leq m_1 \leq m
  ,
  \\
   0 \leq m_3 \leq k-m
  }
 }
 \sum_
 {
  \substack
  {
   A\subset \{1, \cdots, m\}
  ,
  \\
   \# A= m_1
  }
 }
 \sum_
 {
  \substack
  {
   B\subset \{m+1, \cdots, k\}
  ,
  \\
   \# B= m_3
  }
 }
 (-1)^{m-m_1}
 \binom{k}{m}
 \sum_{\sigma\in \mathfrak{S}_k ^p}
 1_
  {
   \left\{
   \substack
    {
     A=S(\sigma)_{\leq m}
    ,
    \\
     B=S(\sigma)_{> m}
    }
   \right\}
  }
\\
&
\hspace{5mm}
 \prod_{j\in S(\sigma)_{\leq m}}
 \hspace{-3mm}
 \Bigl(
 -a
 (
  \mathcal{N}_{\sigma_{(j)  }}
 ,
  \mathcal{N}_{\sigma_{(j)+1}}
 )
 \Bigr)
 \prod_{j\in S(\sigma)_{> m}}
 \hspace{-3mm}
 a_\varepsilon
 (
  \mathcal{N}_{\sigma_{(j)  }}
 ,
  \mathcal{N}_{\sigma_{(j)+1}}
 )
 \cdot
 G_t(m, D, E, \sigma, \mathcal{N})
\end{split}
\\
\notag 
\begin{split}
=&
 \sum_{m=0} ^k
 \sum_
 {
  \substack
  {
   \hspace{-3mm}
   0 \leq m_1 \leq m
  ,
  \\
   0 \leq m_3 \leq k-m
  }
 }
 \binom{m  }{m_1}
 \binom{k-m}{m_3}
 (-1)^{m-m_1}
 \binom{k}{m}
 \sum_{\sigma\in \mathfrak{S}_k ^p}
 1_
  {
   \left\{
   \substack
    {
     S_{\leq m} ^*=S(\sigma)_{\leq m}
    ,
    \\
     S_{> m} ^*=S(\sigma)_{\leq m}
    }
   \right\}
  }
\\
&
\hspace{5mm}
 \prod_{j\in S(\sigma)_{\leq m}}
 \hspace{-3mm}
 \Bigl(
 -a
 (
  \mathcal{N}_{\sigma_{(j)  }}
 ,
  \mathcal{N}_{\sigma_{(j)+1}}
 )
 \Bigr)
 \prod_{j\in S(\sigma)_{> m}}
 \hspace{-3mm}
 a_\varepsilon
 (
  \mathcal{N}_{\sigma_{(j)  }}
 ,
  \mathcal{N}_{\sigma_{(j)+1}}
 )
 \cdot
 G_t(m, D, E, \sigma, \mathcal{N})
\end{split}
\\
\label{eq_Ia-3}
\begin{split}
=&
 \sum_
 {
  \substack
  {
   0\leq m_i \leq k
  ,
  \\
   m_1 + m_2 + m_3 + m_4 =k
  }
 }
 (-1)^{m_2}
 \frac{k!}{m_1 ! m_2 ! m_3 ! m_4 !}
 \sum_{\sigma\in \mathfrak{S}_k ^p}
 1_
  {
   \left\{
   \substack
    {
     S_{\leq m} ^*=S(\sigma)_{\leq m}
    ,
    \\
     S_{> m} ^*=S(\sigma)_{\leq m}
    }
   \right\}
  }
\\
&
\hspace{5mm}
 \prod_{j\in S(\sigma)_{\leq m}}
 \hspace{-3mm}
 \Bigl(
 -a
 (
  \mathcal{N}_{\sigma_{(j)  }}
 ,
  \mathcal{N}_{\sigma_{(j)+1}}
 )
 \Bigr)
 \prod_{j\in S(\sigma)_{> m}}
 \hspace{-3mm}
 a_\varepsilon
 (
  \mathcal{N}_{\sigma_{(j)  }}
 ,
  \mathcal{N}_{\sigma_{(j)+1}}
 )
 \cdot
 G_t(m, D, E, \sigma, \mathcal{N})
.
\end{split}
\end{align}


\vspace{1eM}

From now on, 
we fix $m_1$, $m_2$, $m_3$ and $m_4$
with $m_1 + m_2 = m$ and $m_1 + m_2 + m_3 + m_4 = k$.
For 
each subset $F_i '\subset F_i$ with 
$\#F_i ' = \#S^* = m_1 + m_3$,
we split $\sigma_i \in \mathfrak{S}_k$ as
$
 \sigma_i
=
 (
  {\sigma_i |}_{S^*}
 ,
  {\sigma_i |}_{{S^*} ^c}
 )
$
and 
regard them as a tuple of bijections
\begin{equation}
\label{eq_bij}
 {\sigma_i |}_{S^*}
\in
 \mathrm{Bij}(S^*, F_i ')
,\quad
 {\sigma_i |}_{{S^*} ^c}
\in
 \mathrm{Bij}({S^*} ^c, {F_i '} ^c)
.
\end{equation}
Then 
we have
\begin{equation}
\label{eq_F_i'}
 \sum_{\sigma\in \mathfrak{S}_k ^p}
 1_
 {
  \{
   \sigma 
  \in
   \mathfrak{S}_k ^p
  |
   S(\sigma) = S^*
  \}
 }
=
 \sum_
 {
  \substack
  {
   F_i ' \subset F_i
  ,
  \\
   \# F_i ' = \# S^*
   ;\text{ for all }i
  }
 }
 \sumsigmaS
 \sumsigmaSc
.
\end{equation}

Set
$
 F_i ' + 1
:=
 \{j + 1 : j\in F_i '\}
\cap
 \{1, \cdots, k\}
$
and
$
 J_i
:=
 F_i '
\cap
 (F_i ' +1)
\hspace{1mm}
 (
  \subset F_i
  \subset E_i
 )
$.
We decompose as
\begin{equation}
\label{eq_J_i}
 \sumNiE
=
 \sumNiEJ
 \sumNiJ
.
\end{equation}

\begin{Rmk}
\label{Rmk_54-2}
Note that
there are differences here from the proof of 
\cite[Proposition 2.3]{MR2999298}.
Firstly, 
we decompose $\sigma_i$ according to $S^*$,
so we define a new symbol $F'_i$. 
Secondly,
we decompose $\mathcal{N}^{(i)}$ 
according to another new symbol $J_i$, 
in order to make sure that
$G_t$ is independent of the values of
$\mathcal{N}^{(i)}|_{J_i}\in \mathcal{R}^{J_i}$
(see Proposition \ref{Prop_55a-1}).
\end{Rmk}

We conclude
\begin{align}
\notag
&
 (\textbf{Ia})_{t, k}
 (\delta, \varepsilon, R)
\\
\notag
=&
 \sumDiall
 \sumEiall
 \sumNiDE
 \sumNiE
 (
  \text{\ref{eq_Ia-3}}
 )
\\
\label{eq_Ia-4}
\begin{split}
=&
 \sumDiall
 \sumEiall
 \sum_
 {
  \substack
  {
   0\leq m_i \leq k
  ,
  \\
   m_1 + m_2 + m_3 + m_4 =k
  }
 }
 \sumFipr
 (-1)^{m_2}
 \frac{k!}{m_1 ! m_2 ! m_3 ! m_4 !}
\\
&
\hspace{10mm}
 \sumNiDE
 \sumNiEJ
 \sumNiJ
 \sumsigmaS
 \sumsigmaSc
\\
&
\hspace{35mm}
 \prod_{j\in S^* _{\leq m}}
 \hspace{-2mm}
 \Bigl(
 -a
 (
  \mathcal{N}_{\sigma_{(j)  }}
 ,
  \mathcal{N}_{\sigma_{(j)+1}}
 )
 \Bigr)
 \prod_{j\in S^* _{> m}}
 \hspace{-3mm}
 a_\varepsilon
 (
  \mathcal{N}_{\sigma_{(j)  }}
 ,
  \mathcal{N}_{\sigma_{(j)+1}}
 )
 \cdot
 G_t(m, D, E, \sigma, \mathcal{N})
.
\end{split}
\end{align}

Now, define
\begin{align*}
 M_{m_1, m_3}
:=&
 \left\{
  (A, r)
 ;\hspace{1mm}
  \begin{array}{l}
   A \text{ and } r
   \text{ are $\mathbb{Z}_{\geq 0}$-valued measures on $(\mathcal{R}^p)^2$ with}
  \\
   \text{the total mass $m_1+m_3$ and $m_1$, respectively, and}
  \\
   r(l)\leq A(l) \text{ for all }l\in (\mathcal{R}^p)^2
  .
  \end{array}
 \right\}
,
\\
\begin{split}
 \Psi(A, r)
:=&
 \left\{
  (
   \mathcal{N}^{(i)}|_{J_i}
  , 
   \sigma_i|_{S^*}
  )_{1\leq i \leq p}
 \in
  \bigotimes_{i=1}^p
  (\mathcal{R}^{J_i} 
 \times
  \mathrm{Bij}(S^*, F_i'))
 ;
 \right.
\\
&
\hspace{50mm}
 \left.
  A
 =
  \sum_{j\in S^*}
  \delta_
  {
   (
    \mathcal{N}_{\sigma(j)}, \mathcal{N}_{\sigma(j)+1}
   )
  }
 ,
  r
 =
  \sum_{j\in S_{\leq m} ^*}
  \delta_
  {
   (
    \mathcal{N}_{\sigma(j)}, \mathcal{N}_{\sigma(j)+1}
   )
  }
 \right\}
.
\end{split}
\end{align*}

For a given 
$
 (
  \mathcal{N}^{(i)}|_{J_i}
 , 
  \sigma_i|_{S^*}
 )_{1\leq i \leq p}
\in
 \bigotimes_{i=1}^p
 (\mathcal{R}^{J_i} \times \mathrm{Bij}(S^*, F_i'))
$,
there is only one $(A, r)\in M_{m_1, m_3}$ such that
$
 (
  \mathcal{N}^{(i)}|_{J_i}
 , 
  \sigma_i|_{S^*}
 )_{1\leq i \leq p}
\in
 \Psi(A, r)
$.
Then we have
\begin{align*}
&
 \prod_{j\in S^*_{\leq m}}
 \hspace{-2mm}
 \Bigl(
 -a
 (
  \mathcal{N}_{\sigma_{(j)  }}
 ,
  \mathcal{N}_{\sigma_{(j)+1}}
 )
 \Bigr)
 \prod_{j\in S^*_{> m}}
 \hspace{-3mm}
 a_\varepsilon
 (
  \mathcal{N}_{\sigma_{(j)  }}
 ,
  \mathcal{N}_{\sigma_{(j)+1}}
 )
\\
=&
 \prod_{l\in (\mathcal{R}^2)^p}
 \biggl[
  (-a(l))^
  {
   \#
   \left\{
    j\in S^*_{\leq m}
   ;
    (
     \mathcal{N}_{\sigma_{(j)  }}
    ,
     \mathcal{N}_{\sigma_{(j)+1}}
    )
   =
    l
   \right\}
  }
  a_\varepsilon (l)^
  {
   \#
   \left\{
    j\in S^*_{> m}
   ;
    (
     \mathcal{N}_{\sigma_{(j)  }}
    ,
     \mathcal{N}_{\sigma_{(j)+1}}
    )
   =
    l
   \right\}
  }
 \biggr]
\\
=&
 \sum_{(A, r)\in M_{m_1, m_3}}
 1_
  {
   \left\{
    (\mathcal{N}, \sigma)
   \in
    \Psi(A, r)
   \right\}
  }
 \prod_{l\in (\mathcal{R}^2)^p}
 \bigl[
  (-a(l))^{r(l)}
  a_\varepsilon (l)^{A(l) - r(l)}
 \bigr]
.
\end{align*}

\noindent
Hence 
we furthermore can rewrite the last two lines of 
\eqref{eq_Ia-4} as follows:
\begin{align}
\notag
\begin{split}
&
 \sumNiDE
 \sumNiEJ
 \sumNiJ
 \sumsigmaS
 \sumsigmaSc
\\
&
\hspace{15mm}
 \prod_{j\in S^* _{\leq m}}
 \hspace{-3mm}
 \Bigl(
 -a
 (
  \mathcal{N}_{\sigma_{(j)  }}
 ,
  \mathcal{N}_{\sigma_{(j)+1}}
 )
 \Bigr)
 \prod_{j\in S^* _{> m}}
 \hspace{-3mm}
 a_\varepsilon
 (
  \mathcal{N}_{\sigma_{(j)  }}
 ,
  \mathcal{N}_{\sigma_{(j)+1}}
 )
 \cdot
 G_t(m, D, E, \sigma, \mathcal{N})
\end{split}
\\
\label{eq_Ia-7}
\begin{split}
=&
 \sumNiDE
 \sumNiEJ
 \sumNiJ
 \sumsigmaS
 \sumsigmaSc
\\
&
\hspace{15mm}
 \sum_{(A, r)\in M_{m_1, m_3}}
 1_
  {
   \left\{
    (\mathcal{N}, \sigma)
   \in 
     \Psi(A, r)
   \right\}
  }
 \prod_{l\in (\mathcal{R}^2)^p}
 \left[
  (-a(l))^{r(l)}
  a_\varepsilon (l)^{A(l) - r(l)}
 \right]
\cdot
 G_t(m, D, E, \sigma, \mathcal{N})
.
\end{split}
\end{align}

For two measures $A$ and $r$, 
we write $r\leq A$ 
if
$r(l)\leq A(l)$ for all $l\in (\mathcal{R}^2)^p$.
Then we have
$
 \sum_{(A, r)\in M_{m_1, m_3}}
=
 \sum_{A\in M_{k - m_2 - m_4}}
 \sum_
 {
  r\in M_{m_1}
 ;
  r \leq A 
 }
$,
where we set 
\begin{equation*}
 M_{m_1}
:=
 \left\{
  \text
  {%
   $\mathbb{Z}_{\geq 0}$-valued measures 
   on $(\mathcal{R}^p)^2$
   with total mass $m_1$%
  }
 \right\}
.
\end{equation*}

Summarizing our argument up to here, 
we have the following decomposition 
of $(\textbf{Ia})$ in \eqref{eq_Ia}: 
\begin{align}
&
\notag
 (\textbf{Ia})_{t, k}
 (\delta, \varepsilon, R)
\\
\notag
=&
 \eqref{eq_Ia-4}
\\
\label{eq_Ia-5}
\begin{split}
=&
 \sumDiall
 \sumEiall
 \sum_
 {
  0\leq m_2 + m_4 \leq k
 }
 \sum_{m_1 = 0} ^{k - m_2 - m_4 }
 \sumFipr
\\
&
\hspace{10mm}
 \sumNiDE
 \sumNiEJ
 \sumNiJ
 \sumsigmaS
 \sumsigmaSc
 \sum_{A\in M_{k - m_2 - m_4}}
 \sum_
 {
  \substack
  {
   r\in M_{m_1}
  ;\\
   r \leq A 
  }
 }
\\
&
\hspace{20mm}
 (-1)^{m_2}
 \frac{k!}{m_1 ! m_2 ! (k - m_1 - m_2 - m_4)! m_4 !}
 1_
  {
   \left\{
    (\mathcal{N}, \sigma)
   \in 
    \Psi(A, r)
   \right\}
  }
\\
&
\hspace{30mm}
 \prod_{l\in (\mathcal{R}^2)^p}
 \left[
  (-a(l))^{r(l)}
  a_\varepsilon (l)^{A(l) - r(l)}
 \right]
\cdot
 G_t(m_1 + m_2, D, E, \sigma, \mathcal{N})
.
\end{split}
\end{align}

We rearrange the order of twelve summations 
in \eqref{eq_Ia-5} as follows:
\begin{equation}
\label{eq_sums}
 \sum_{4}
 \sum_{3}
 \sum_{2}
 \sum_{1}
,
\end{equation}
where each sum $\sum_1, \cdots, \sum_4$ means that

\begin{align*}
&
 \sum_1
=
 \sumNiJ
 \sumsigmaS
\\
&
 \sum_2
=
 \sumFipr
 \sumNiDE
 \sumNiEJ
 \sumsigmaSc
\\
&
 \sum_3
=
 \sum_{m_1 = 0} ^{k - m_2 - m_4 }
 \sum_
 {
  \substack
  {
   r\in M_{m_1}
  ;\\
   r \leq A 
  }
 }
\\
&
 \sum_{4-1}
=
 \sum_
 {
  0\leq m_2 + m_4 \leq k
 }
 \sum_{A\in M_{k - m_2 - m_4}}
\\
&
 \sum_{4-2}
=
 \sumDiall
 \sumEiall
\end{align*}
and 
$
 \displaystyle
 \sum_{4} 
=
 \sum_{4-2} 
 \sum_{4-1}
$.
We 
will keep the order of the summations as above.
In 
the following subsections, we estimate these sums
one by one.

\subsection{Proof of Lemma \ref{Lem_C} (estimate of the summations $\sum_1$-$\sum_4$)}
\label{Sec_55a}


We 
first consider the summation $\sum_1$.
Fix 
$D_i$, 
$E_i$, 
$m_1, m_2, m_4$,
$A$,
$r$
and
$F_i'$.
We also fix 
$
 \mathcal{N}^{(i)}
\in
 \left(\mathcal{R}^c\right)^{D_i ^c \setminus E_i} 
$,
$
 \mathcal{N}^{(i)}
\in
 \left(\mathcal{R}^c\right)^{E_i \setminus J_i} 
$
and
$
 \sigma_i|_{{S^*}^c} 
\in
 \mathrm{Bij}({S^*}^c, {F_i'}^c)
$.
Recall the definitions
$
 S^*
:=
 \{1, \cdots, m_1\}
\cup
 \{m_1 + m_2 + 1, \cdots, m_1 + m_2+ m_3\}
$,
$
 S(\sigma)
:=
 \bigcap_{i=1}^p
 \sigma_i ^{-1}(F_i)
(
 =
  \sigma_i ^{-1}(F_i ')
 =
  S^*
 ,
  \text{for all }i
  \text{ and }\sigma_i|_{S^*} 
)
$
and
$
 J_i
:=
 F_i ' \cap (F_i ' +1)
 (\subset F_i \subset E_i)
$.

We also recall the definition of $G_t$ in \eqref{eq_Def_Gt}; 
for
$
 \mathcal{N}^{(i)} 
\in
 \mathcal{R}^{J_i}
$
and
$
 {\sigma_i |}_{{S^*}}
\in
 \mathrm{Bij}({S^*}, {F_i'})
$,
\begin{align*}
&
 G_t(m_1 + m_2, D, E, \sigma, \mathcal{N})
\\
=&
 \int_{E^{{S^*}^c} }
  \prod_{l\not\in S^*}
  f(y_l)
 \cdot
  \prod_{i=1} ^p
  \bigotimes_{l\not\in S^*}
  U^{(i)}_{l}
  \left[
   \prod_{l\in \sigma_i ^{-1}(F_i)\setminus S^*}
   \biggl[
    \psi_{n_{\sigma_i(l)    } ^{(i)}} ^{(i)} 
    \psi_{n_{\sigma_i(l) + 1} ^{(i)}} ^{(i)}
   \biggr]
   (y_{l})
  \cdot
   G_t ^{(i)}
   (
    D_i, \mathcal{N}^{(i)};
    \{y_{l}\}_{l\not\in\sigma_i ^{-1}(F_i)}
   )
  \right]
 dy
,
\end{align*}
where
$
 U_l ^{(i)} = \mathrm{id} 
$
for $l \leq m$
and
$
 U_l ^{(i)} = T_\varepsilon ^{(i)}
$
for $l \geq m+1$. 

The 
following Proposition \ref{Prop_55a-1} says that 
$G_t$ 
is independent of the choice of 
$
 \mathcal{N}^{(i)} 
\in
 \mathcal{R}^{J_i}
$
and
$
 {\sigma_i |}_{{S^*}}
\in
 \mathrm{Bij}({S^*}, {F_i'})
$,
under the condition
$
 (\mathcal{N}^{(i)}, \sigma_i|_{S^*})
\in
 \Psi(A, r)
$: 

\begin{Prop}
\label{Prop_55a-1}
Let
$
 \sigma|_{S^*}, \sigma'|_{S^*}
\in
 \mathrm{Bij}(S^*, F_i ')
$
and
$
 \mathcal{N}^{(i)}, {\mathcal{N}'}^{(i)}
\in
 \mathcal{R}^{J_i}
$.
If
\begin{equation*}
 \sum_{j\in S^*}
 \delta_
 {
  (
   \mathcal{N}_{\sigma(j)}
  ,
   \mathcal{N}_{\sigma(j)+1}
  )
 }
=
 \sum_{j\in S^*}
 \delta_
 {
  (
   \mathcal{N}'_{\sigma'(j)}
  ,
   \mathcal{N}'_{\sigma'(j)+1}
  )
 }
,
\end{equation*}
then 
$
 G_t(m_1 + m_2, D, E, \sigma, \mathcal{N}) 
=
 G_t(m_1 + m_2, D, E, \sigma', \mathcal{N}') 
$.

Hence, for each
$
 (
  \{\sigma_i |_{S^*}\}
 , 
  \{\mathcal{N}^{(i)}\}
 )
\in
 \Psi(A, r)
$,
we may define
\begin{equation*}
 G_t
 (
  m_1 + m_2
 ,
  D
 ,
  E
 ,
  \{\sigma_i |_{{S^*} ^c}\}
 , 
  \{\mathcal{N}^{(i)}|_{E_i \setminus J_i}\}
 ,
  A
 )
:=
 G_t(m_1 + m_2, D, E, \sigma, \mathcal{N}) 
.
\end{equation*}
\end{Prop}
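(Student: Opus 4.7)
The plan is to show that $G_t$, as defined in \eqref{eq_Def_Gt}, depends on the free data $(\mathcal{N}^{(i)}|_{J_i}, \sigma_i|_{S^*})_{1 \leq i \leq p}$ only through the multiset of pairs $\{(n^{(i)}_{\sigma_i(j)}, n^{(i)}_{\sigma_i(j)+1})_{i=1}^{p} : j \in S^*\}$, which is exactly the measure $A$ recorded in the hypothesis. The strategy is to locate every occurrence of this data in the integrand and then exploit the symmetry of the integrals under a suitable joint relabeling of $S^*$ and of $\mathcal{N}^{(i)}|_{J_i}$.

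First, I would trace precisely where the pair $(\mathcal{N}^{(i)}|_{J_i}, \sigma_i|_{S^*})$ enters. Inside each $G_t^{(i)}(D_i, \mathcal{N}^{(i)}; \cdot)$, the values $n^{(i)}_j$ for $j \in J_i \subset F_i$ appear only in the eigenvalue exponentials $\exp\{-r_j \lambda^{(i)}_{n^{(i)}_j}\}$, because every eigenfunction product $\psi^{(i)}_{n^{(i)}_j}$ inside $G_t^{(i)}$ is indexed over $j \notin F_i$, hence not over $j \in J_i$. Outside $G_t^{(i)}$, in the outer integral of \eqref{eq_Def_Gt}, the auxiliary products $\prod_{l \in \sigma_i^{-1}(F_i) \setminus S^*}[\psi^{(i)}_{n^{(i)}_{\sigma_i(l)}} \psi^{(i)}_{n^{(i)}_{\sigma_i(l)+1}}](y_l)$ are indexed over $l \notin S^*$, so that $\sigma_i(l) \in F_i \setminus F_i'$, a set disjoint from $J_i \subset F_i'$; these factors therefore involve only the fixed data $\sigma_i|_{{S^*}^c}$ and $\mathcal{N}^{(i)}|_{E_i \setminus J_i}$.

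Next, I would execute a change of variables in the time-integration $\int_{[0,t]^k} dr_1 \cdots dr_k$ inside each $G_t^{(i)}$, precomposing with a permutation of the coordinates $\{r_j\}_{j \in F_i'}$ that, combined with a matching permutation of $\mathcal{N}^{(i)}|_{J_i}$, sends one allowed tuple $(\sigma_i|_{S^*}, \mathcal{N}^{(i)}|_{J_i})$ to another realizing the same $A$. Since the constraint $\sum_j r_j \leq t$ and the Lebesgue measure are both permutation-invariant, and the remaining integrand factorizes as a product over the single-index positions, the integral is unchanged under this joint relabeling. Because the integrations in distinct $G_t^{(i)}$'s are independent, the argument is carried out one $i$ at a time and then assembled across $i$.

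The main obstacle will be the case analysis for the successor $\sigma_i(j)+1$ as $j$ ranges over $S^*$: it may lie in $J_i$ (in which case $n^{(i)}_{\sigma_i(j)+1}$ is part of the free data) or in $E_i \setminus J_i$ (in which case it is fixed). Verifying that in every case the relabeling permutation can be constructed coherently to respect the prescribed pushforward $A$, and that the change of variables leaves the boundary factor $p^{(i)}_{r_{k+1}}(x_k, x_{k+1})$ in $G_t^{(i)}$ and the $\psi$-products over $l \notin S^*$ untouched, is where the bulk of the combinatorial work lies.
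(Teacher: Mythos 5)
Your proposal is correct and follows the paper's own route: both arguments trace the free data to the exponentials $\exp\{-r_j\lambda^{(i)}_{n^{(i)}_j}\}$ for $j \in J_i$ inside $G_t^{(i)}$, verify that all eigenfunction factors carry only indices outside $J_i$, and then exploit the permutation-invariance of the $r$-integral over $\{r_j\}_{j\in F_i'}$. Two small remarks: the claim that ``every eigenfunction product $\psi^{(i)}_{n^{(i)}_j}$ inside $G_t^{(i)}$ is indexed over $j\notin F_i$'' overlooks the family $\psi^{(i)}_{n^{(i)}_{j+1}}$ over $j\in(D_i^c-1)\setminus F_i$, where the needed inclusion is $J_i\subset F_i+1$ rather than $J_i\subset F_i$ (and similarly for the second index $\sigma_i(l)+1$ in the auxiliary products, where you need $(F_i+1)\setminus(F_i'+1)$ disjoint from $J_i\subset F_i'+1$); and the case analysis you flag for $\sigma_i(j)+1$ largely dissolves once you notice that the $r$-integral over $F_i'$ involves only the first coordinates $n^{(i)}_{\sigma_i(j)}$ of each pair, so its value is determined by the first marginal $\overline{A}_i$ of $A_i$ alone together with the fixed data on $F_i'\setminus J_i$ --- which is exactly what the paper's eigenvalue-level grouping makes explicit.
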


\begin{proof}
When $l\in \sigma_i ^{-1}(F_i)\setminus S^*$,
we have
$
 \sigma_i (l) 
\in
 F_i \setminus \sigma_i(S^*)
=
 F_i \setminus F_i '
\subset
 E_i \setminus J_i
$
and
$
 \sigma_i (l) + 1
\in
 (F_i + 1) \setminus (F_i ' + 1)
\subset
 E_i \setminus J_i
$.
Recall that $G_t$ is defined via $G_t ^{(i)}$.
By the assumption, for each $i$,
we have
\begin{align*}
  \prod_{l\in \sigma_i ^{-1}(F_i)\setminus S^*}
  \biggl[
   \psi_{n_{\sigma_i(l)    } ^{(i)}} ^{(i)} 
   \psi_{n_{\sigma_i(l) + 1} ^{(i)}} ^{(i)}
  \biggr]
  (x_{l})
=
  \prod_{l\in {\sigma'_i} ^{-1}(F_i)\setminus S^*}
  \biggl[
   \psi_{n_{\sigma_i'(l)    } ^{(i)}} ^{(i)} 
   \psi_{n_{\sigma_i'(l) + 1} ^{(i)}} ^{(i)}
  \biggr]
  (x_{l})
.
\end{align*}

Since
$
 \sigma_i ^{-1} (D_i ^c)
\supset
 \sigma_i ^{-1} (E_i)
\supset
 \sigma_i ^{-1} (F_i)
\supset
 \bigcap_i
 \sigma_i ^{-1} (F_i)
=
 S^*
$,
we may decompose
\begin{align*}
 \prod_{j\in \sigma_i ^{-1}(D_i ^c)}
 \exp
 {
  \biggl\{
   -r_{\sigma_i(j)} 
   \lambda_{n_{\sigma_i(j)} ^{(i)}} ^{(i)} 
  \biggr\}
 }
=&
 \prod_{j\in \sigma_i ^{-1}(D_i ^c) \setminus S^*}
 \exp
 {
  \biggl\{
   -r_{\sigma_i(j)} 
   \lambda_{n_{\sigma_i(j)} ^{(i)}} ^{(i)} 
  \biggr\}
 }
 \prod_{j\in S^*}
 \exp
 {
  \biggl\{
   -r_{\sigma_i(j)} 
   \lambda_{n_{\sigma_i(j)} ^{(i)}} ^{(i)} 
  \biggl\}
 }
.
\end{align*}

Fix 
$
 \{r_{\sigma_i(j)}\}_{j\in (S^*)^c}
$ 
and write
$
 r' 
:=
 \sum_{j\in (S^*)^c} 
 r_{\sigma_i(j)}
$.
Note that
\begin{align*}
&
 \int_{(\delta, t]^{S^*} }
  1_
  {
   \left\{
    \sum_{j\in S^*} r_{\sigma_i(j)} \leq t - r'
   \right\}
  }
  \prod_{j\in S^*}
  \exp
  {
   \biggl\{
    -r_{\sigma_i(j)} 
    \lambda_{n_{\sigma_i(j)} ^{(i)}} ^{(i)} 
   \biggr\}
  }
 dr
\\
=&
 \int_{(\delta, t]^{S^*} }
  1_
  {
   \bigl\{
    \sum_{l\in \mathcal{R}} 
    \sum_
    {
     j \in S^*
    ; 
     n^{(i)} _{\sigma_i(j)} = l
    } 
    r_{\sigma_i(j)}
   \leq
    t - r'
   \bigr\}
  }
  \prod_{l\in \mathcal{R}} 
  \exp
  {
   \Biggl\{
    -
    \Biggl(
     \sum_
     {
      j\in S^*
     ;
      n^{(i)} _{\sigma_i(j)} = l
     } 
     r_{\sigma_i(j)}
    \Biggr)
    \lambda^{(i)} _l
   \Biggr\}
  }
 dr
\end{align*}
depends only on $A$.
Here we abbreviate
the symbol $\{dr_j\}_{j\in S^*}$ as $dr$.

Now we have
\begin{align*}
\begin{split}
&
 \prod_{j\in D_i}
 p^{(i)} _{r_j}
 (
  x_{\sigma_i ^{-1} (j-1)}
 ,
  x_{\sigma_i ^{-1} (j  )}
 )
 \prod_{j\in D_i ^c}
 \exp
 {
  \biggl\{
   -r_j \lambda_{n_j ^{(i)}} ^{(i)} 
  \biggr\}
 }
\\
&
\hspace{10mm}
 \prod_{j\in (D_i ^c -1)\setminus F_i}
 \psi_{n_{j+1} ^{(i)}} ^{(i)} 
 (x_{\sigma_i^{-1}(j)})
 \prod_{j\in D_i ^c \setminus F_i}
 \psi_{n_{j  } ^{(i)}} ^{(i)} 
 (x_{\sigma_i^{-1}(j)})
\cdot
 p^{(i)} _{r_{k+1}}
 (x_{\sigma_i^{-1}(k)}, x_{k+1})
\end{split}
\\
\begin{split}
=&
 \prod_{l\in \sigma_i ^{-1}(D_i)}
 p_{r_{\sigma_i(l)}}
 (
  x_{\sigma_i ^{-1} (\sigma_i(l) -1)}
 ,
  x_{l}
 )
 \prod_{l\in \sigma_i ^{-1}(D_i ^c)}
 \exp
 {
  \biggl\{
   -r_{\sigma_i(l)} 
   \lambda_{n_{\sigma_i(l)} ^{(i)}} ^{(i)} 
  \biggr\}
 }
\\
&
\hspace{10mm}
 \prod_
 {
  l
 \in
  \sigma_i^{-1}
  ((D_i ^c -1)\setminus F_i)
 }
 \psi_{n_{\sigma_i(l)+1} ^{(i)}} ^{(i)} 
 (x_{l})
 \prod_
 {
  l
 \in
  \sigma_i ^{-1}
  (D_i ^c \setminus F_i)
 }
 \psi_{n_{\sigma_i(l)} ^{(i)}} ^{(i)} 
 (x_{l})
\cdot
 p^{(i)} _{r_{k+1}}
 (x_{\sigma_i^{-1}(k)}, x_{k+1})
.
\end{split}
\end{align*}

It remains to prove that, the factor
\begin{align*}
\begin{split}
&
 \prod_{l\in \sigma_i ^{-1}(D_i)}
 p_{r_{\sigma_i(l)}}
 (
  x_{\sigma_i ^{-1} (\sigma_i(l) -1)}
 ,
  x_{l}
 )
 \prod_
 {
  l
 \in
  \sigma_i ^{-1}(D_i ^c)
 \setminus
  S^*
 }
 \exp
 {
  \biggl\{
   -r_{\sigma_i(l)} 
   \lambda_{n_{\sigma_i(l)} ^{(i)}} ^{(i)} 
  \biggr\}
 }
\\
&
\hspace{10mm}
 \prod_
 {
  l
 \in
  \sigma_i^{-1}
  ((D_i ^c -1)\setminus F_i)
 }
 \psi_{n_{\sigma_i(l)+1} ^{(i)}} ^{(i)} 
 (x_{l})
 \prod_
 {
  l
 \in
  \sigma_i ^{-1}
  (D_i ^c \setminus F_i)
 }
 \psi_{n_{\sigma_i(l)} ^{(i)}} ^{(i)} 
 (x_{l})
\cdot
 p^{(i)} _{r_{k+1}}
 (x_{\sigma_i^{-1}(k)}, x_{k+1})
\end{split}
\end{align*}

\noindent
depends only on
$\mathcal{N}^{(i)}|_{D_i ^c\setminus J_i}$,
$\sigma_i|_{{S^*}^c}$,
and is independent of
$\mathcal{N}^{(i)}|_{J_i}$,
${\mathcal{N}^{(i)}}'|_{J_i}$,
$\sigma_i|_{S^*}$
and
$\sigma_i '|_{S^*}$.

We can see that 
$
 E_i ^c -1
\subset
 F_i ^c
$.
We can also see that
$
 l
\in
 \sigma_i ^{-1}
 (
  (D_i^c -1) \setminus F_i
 )
$
implies
$
 \sigma_i(l) + 1
\in
 D_i ^c
\setminus
 J_i
$.
Furthermore,
$
 l
\in 
 \sigma_i ^{-1}
 (
  D_i ^c \setminus F_i
 )
$
implies 
$
 \sigma_i (l) 
\in
 D_i ^c \setminus J_i
$.
Therefore we obtain the desired result.
%
%
%
%
%
%
%
%
\end{proof}


The following proposition says that
the summation of $G_t$ with respect to 
$\sigma_i|_{{S^*}^c}$ is
independent of the value of $m_1$:

\begin{Prop}
\label{Prop_55a-2}
For all
$
 0 \leq m_1, m'_1\leq k - m_2 - m_4
$,
it holds that
\begin{align*}
&
 \sumsigmaSc
 G_t
 (
  m_1 + m_2
 , 
  D
 ,
  E
 , 
  \{\sigma_i|_{{S^*}^c}\}
 ,
  \{\mathcal{N}^{(i)}|_{E_i\setminus J_i}\}
 ,
  A
 )
\\
=&
 \sumsigmaSc
 G_t
 (
  m'_1 + m_2
 , 
  D
 ,
  E
 , 
  \{\sigma_i|_{{S^*}^c}\}
 ,
  \{\mathcal{N}^{(i)}|_{E_i\setminus J_i}\}
 ,
  A
 )
.
\end{align*}
Hence we can define
\begin{equation}
\label{eq_Gtilde}
 \widetilde{G}_t
 (
  m_2
 ,
  m_4
 ,
  D
 ,
  E
 ,
  \{\mathcal{N}^{(i)}|_{E_i \setminus J_i}\}
 ,
  A
 )
:=
 \sumsigmaSc
 G_t
 (
  m_1 + m_2
 , 
  D
 ,
  E
 , 
  \{\sigma_i|_{{S^*}^c}\}
 ,
  \{\mathcal{N}^{(i)}|_{E_i\setminus J_i}\}
 ,
  A
 )
.
\end{equation}
\end{Prop}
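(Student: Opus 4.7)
The strategy is a change-of-variables argument that reparametrizes the integration variables $\{y_l\}_{l\in{S^*}^c}$ and the bijections $\sigma_i|_{{S^*}^c}$ onto an $m_1$-independent reference set. Since $k,m_2,m_4$ are fixed and $m_3=k-m_1-m_2-m_4$, the cardinalities $\#F_i'=k-m_2-m_4$ and $\#{S^*}^c=m_2+m_4$ do not depend on $m_1$. Writing
\[
 {S^*}^c
=
 \{m_1+1,\dots,m_1+m_2\}
\cup
 \{k-m_4+1,\dots,k\},
\]
the $m_1$-dependence of $G_t(m_1+m_2,D,E,\sigma,\mathcal{N})$ enters only through the labels of the variables $y_l$ and through the position-based operators $U_l^{(i)}$, which equal $\mathrm{id}$ on the first block of ${S^*}^c$ and $T_\varepsilon^{(i)}$ on the second block.

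First, I would introduce the fixed reference set $\hat{\mathcal{S}}:=\{1,\dots,m_2\}\cup\{m_2+1,\dots,m_2+m_4\}$ together with the order-preserving bijection $\iota_{m_1}\colon\hat{\mathcal{S}}\to{S^*}^c$ sending $\{1,\dots,m_2\}$ onto the first block of ${S^*}^c$ and $\{m_2+1,\dots,m_2+m_4\}$ onto the second block. By construction $\iota_{m_1}$ preserves the identity-vs-$T_\varepsilon$ classification, so the pulled-back operators $\hat{U}_{\hat l}^{(i)}:=U_{\iota_{m_1}(\hat l)}^{(i)}$ do not depend on $m_1$, and the rule $\hat\sigma_i:=\sigma_i|_{{S^*}^c}\circ\iota_{m_1}\in\mathrm{Bij}(\hat{\mathcal{S}},{F_i'}^c)$ yields an $m_1$-independent parametrization of the bijections being summed.

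Next, I would change integration variables via $\hat y_{\hat l}:=y_{\iota_{m_1}(\hat l)}$. Under this substitution, the weight $\prod_{l\in{S^*}^c}f(y_l)$ and the tensor product $\bigotimes_{l\in{S^*}^c}U_l^{(i)}$ transform into their $\hat{\phantom{l}}$-analogues with no $m_1$-dependence. Since $\sigma_i(l)=\hat\sigma_i(\hat l)$, the eigenfunction factor $\prod_{l\in\sigma_i^{-1}(F_i)\setminus S^*}[\psi_{n_{\sigma_i(l)}^{(i)}}^{(i)}\psi_{n_{\sigma_i(l)+1}^{(i)}}^{(i)}](y_l)$ rewrites identically in terms of $\hat\sigma_i$ and $\hat y$; moreover, since $\sigma_i(l)\in F_i\setminus F_i'$ in this case and $J_i=F_i'\cap(F_i'+1)$, both indices $\sigma_i(l)$ and $\sigma_i(l)+1$ lie in $E_i\setminus J_i$, so the corresponding eigenvalue labels are drawn from the fixed data $\mathcal{N}^{(i)}|_{E_i\setminus J_i}$. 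Summing over $\sigma_i|_{{S^*}^c}$ therefore equals summing over $\hat\sigma_i\in\mathrm{Bij}(\hat{\mathcal{S}},{F_i'}^c)$ of an integrand with no surviving $m_1$-dependence, which is the desired conclusion and justifies the definition \eqref{eq_Gtilde} of $\widetilde{G}_t$.

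The chief technical point will be the analogous invariance of the $G_t^{(i)}$ factor. Although $G_t^{(i)}(D_i,\mathcal{N}^{(i)};\cdot)$ is built from sequential transition densities with a definite temporal order on $\{1,\dots,k\}$, in our expression each $y_l$ (with $l\in{S^*}^c\setminus\sigma_i^{-1}(F_i\setminus F_i')$) enters only as the $\sigma_i(l)$-th argument of $G_t^{(i)}$, and this slot assignment $l\mapsto\sigma_i(l)$ is preserved by $\iota_{m_1}$ in the sense that $\iota_{m_1}(\hat l)\mapsto\hat\sigma_i(\hat l)$. Once this bookkeeping is carried out carefully, each summand on the left of the claimed identity equals its $\hat\sigma_i$-counterpart term-by-term, yielding the proposition.
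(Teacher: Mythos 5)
Your proposal is correct and is essentially the same approach as the paper's: the paper constructs a concrete block permutation $\tau\in\mathfrak{S}_k$ exchanging $\{1,\dots,m_1\}$ and $\{m_1+1,\dots,m_1+m_2\}$ and invokes the relabeling argument already used for Lemma~\ref{Lem_55}, while you describe the same relabeling via a fixed reference set $\hat{\mathcal{S}}$ and the bijection $\iota_{m_1}$. The two decisive observations — that the relabeling preserves the $\mathrm{id}$-vs-$T_\varepsilon^{(i)}$ classification of the operators $U_l^{(i)}$, and that the sum over $\sigma_i|_{{S^*}^c}\in\mathrm{Bij}({S^*}^c,{F_i'}^c)$ absorbs the reparametrization — appear in both.
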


\begin{Rmk}
\label{Rmk_55-1}
In 
page 292 of \cite{MR2999298}, they do not 
make the explicit definition of $\widetilde{G}_t$.
In 
our Proposition \ref{Prop_55a-2}, we define 
a new $\widetilde{G}_t$ explicitly, 
with the use of $J_i$ and the summation over 
$\mathrm{Bij}({S^*} ^c, {F_i '}^c)$
in order 
our $\widetilde{G}_t$ to work similarly
to their $\widetilde{G}_t$ in the proof below.
\end{Rmk}

\begin{proof}
Define $\tau\in \mathfrak{S}_k$ by 
\begin{equation*}
 \tau(j)
=
 \begin{cases}
  j + m_2 
 & 
  \text{when }
  1\leq j \leq m_1 
 ,
 \\
  j - m_1 
 & 
  \text{when }
  m_1 + 1\leq j \leq m_1 + m_2
 ,
 \\
  j 
 & 
  \text{when }
  m_1 + m_2 + 1\leq j \leq k
 .
 \end{cases}
\end{equation*}

\noindent
Then we can see that
$
 \tau(S^*)
=
 \{1, \cdots, m_2\}
\cup
 \{k - m_4 + 1, \cdots, k\}
$,
$
 \tau(({S^*}^c)_{\leq m_1 + m_2})
=
 \{1, \cdots, m_2\}
$
and
$
 \tau(({S^*}^c)_{> m_1 + m_2})
=
 \{k - m_4 + 1, \cdots, k\}
$.
Since
\begin{align*}
 \bigotimes_{l\not\in S^*}
 U^{(i)}_{l}
=
 \bigotimes_
 {
  \substack
  {
   l\not\in S^*
  ;\\
   l\leq m
  }
 }
 \mathrm{id}
\otimes
 \bigotimes_
 {
  \substack
  {
   l\not\in S^*
  ;\\
   l> m
  }
 }
 T^{(i)}_{\varepsilon}
\end{align*}
in the definition of 
$G_t(m_1 + m_2, D, E, \sigma, \mathcal{N})$,
we have by 
the same argument as in the proof of Lemma \ref{Lem_55},
\begin{align*}
&
 G_t(m_1 + m_2, D, E, \sigma, \mathcal{N})
\\
\begin{split}
=&
 \int_{E^{\tau({S^*}^c)}}
  \prod_{l\not\in \tau(S^*)}
  f(y_l)
\\
&
\hspace{10mm}
  \prod_{i=1} ^p
  \bigotimes_{l\not\in \tau(S^*)}
  U^{(i)}_{l}
  \left[
   \prod_{l\in \tau(\sigma_i ^{-1}(F_i)\setminus S^*)}
   \biggl[
    \psi_{n_{\sigma_i\circ\tau^{-1}(l)    } ^{(i)}} ^{(i)} 
    \psi_{n_{\sigma_i\circ\tau^{-1}(l) + 1} ^{(i)}} ^{(i)}
   \biggr]
   (y_{l})
  \cdot
   G_t ^{(i)}
   (
    D_i, \mathcal{N}^{(i)};
    \{y_{l}\}_{l\not\in\sigma_i ^{-1}(F_i)}
   )
  \right]
 dy
\end{split}
\\
\begin{split}
=&
 \int_{E^{m_2 + m_4} }
  \prod_
  {
   \substack
   {
    l\leq m_2
   ,
   \\
    k-m_4 < l
   }
  }
  f(y_l)
  \prod_{i=1} ^p
  \left[
   \bigotimes_
   {
    l\leq m_2
   }
   \mathrm{id}
  \otimes
   \bigotimes_
   {
    l > k - m_4
   }
   T^{(i)}_{\varepsilon}
  \right]
\\
&
\hspace{30mm}
  \left[
   \prod_{l\in \tau(\sigma_i ^{-1}(F_i)\setminus S(\sigma))}
   \biggl[
    \psi_{n_{\sigma_i\circ\tau^{-1}(l)    } ^{(i)}} ^{(i)} 
    \psi_{n_{\sigma_i\circ\tau^{-1}(l) + 1} ^{(i)}} ^{(i)}
   \biggr]
   (y_{l})
  \cdot
   G_t ^{(i)}
   (
    D_i, \mathcal{N}^{(i)};
    \{y_{l}\}_{l\not\in\tau\circ\sigma_i ^{-1}(F_i)}
   )
  \right]
 dy
\end{split}
\\
\begin{split}
=&
 \int_{E^{m_2 + m_4} }
  \prod_
  {
   \substack
   {
    l\leq m_2
   ,
   \\
    k-m_4 < l
   }
  }
  f(y_l)
 \cdot
  \prod_{i=1} ^p
  \left[
   \bigotimes_
   {
    l\leq m_2
   }
   \mathrm{id}
  \otimes
   \bigotimes_
   {
    l > k - m_4
   }
   T^{(i)}_{\varepsilon}
  \right]
\\
&
\hspace{30mm}
  \left[
   \prod_
   {
    \substack
    {
     l\in \tau\circ\sigma_i ^{-1}(F_i)
    ,
    \\ 
     l\leq m_2, k-m_4 <l
    }
   }
   \biggl[
    \psi_{n_{\sigma_i\circ\tau^{-1}(l)    } ^{(i)}} ^{(i)} 
    \psi_{n_{\sigma_i\circ\tau^{-1}(l) + 1} ^{(i)}} ^{(i)}
   \biggr]
   (y_{l})
  \cdot
   G_t ^{(i)}
   (
    D_i, \mathcal{N}^{(i)};
    \{y_{l}\}_{l\not\in\tau\circ\sigma_i ^{-1}(F_i)}
   )
  \right]
 dy
.
\end{split}
\end{align*}
Since the last integral does not depend on $m_1$,
the conclusion follows by taking summation with respect to
$\sigma_i|_{{S^*}^c}$.
\end{proof}


\vspace{1eM}

Now, consider the summation $\sum_1$
which appears in \eqref{eq_sums}.
Recall 
\begin{align*}
\begin{split}
 \Psi(A, r)
:=&
 \left\{
  (
   \mathcal{N}^{(i)}|_{J_i}
  , 
   \sigma_i|_{S^*}
  )_{1\leq i \leq p}
 \in
  \bigotimes_{i=1}^p
  (\mathcal{R}^{J_i} 
 \times
  \mathrm{Bij}(S^*, F_i'))
 ;
 \right.
\\
&
\hspace{50mm}
 \left.
  A
 =
  \sum_{j\in S^*}
  \delta_
  {
   (
    \mathcal{N}_{\sigma(j)}, \mathcal{N}_{\sigma(j)+1}
   )
  }
 ,
  r
 =
  \sum_{j\in S_{\leq m} ^*}
  \delta_
  {
   (
    \mathcal{N}_{\sigma(j)}, \mathcal{N}_{\sigma(j)+1}
   )
  }
 \right\}
\end{split}
\end{align*}
and let $A_i$ be the $i$-th marginal of $A$, 
which is a measure on $\mathcal{R}^2$.
We define
\begin{align}
\label{Def_PhiA}
 \Phi(A)
:=&
 \left\{
  \{\mathcal{N}^{(i)}|_{J_i} \}
 \in
  \bigotimes_{i=1} ^p
  \mathcal{R}^{J_i}
 \left|
  \begin{array}{l}
   \text{There exists } 
   \sigma_i ^0 \in \mathrm{Bij}(S^*, F_i'),
   \text{ such that}
  \\
   A_i 
  =
   \displaystyle
   \sum_{j\in S^*}
   \delta_
   {
    (
     \mathcal{N}^{(i)}_{\sigma_i ^0(j)}, \mathcal{N}^{(i)}_{\sigma_i ^0 (j)+1}
    )
   }
   \text{ for all }1\leq i\leq p
  .
  \end{array}
 \right.
 \right\}
,
\end{align}
\begin{align}
 \Psi(A, r, \{\mathcal{N}^{(i)}|_{J_i}\})
:=&
 \left\{
  \{\sigma_i|_{S^*}\}
 \in
  \bigotimes_{i=1} ^p
  \mathrm{Bij}(S^*, F_i')
  \left|
   A
  =
   \sum_{j\in S^*}
   \delta_
   {
    (
     \mathcal{N}_{\sigma(j)}, \mathcal{N}_{\sigma(j)+1}
    )
   }
  \text{ and }
   r
  =
   \sum_{j\in S_{\leq m} ^*}
   \delta_
   {
    (
     \mathcal{N}_{\sigma(j)}, \mathcal{N}_{\sigma(j)+1}
    )
   }
  \right.
 \right\}
.
\end{align}
By 
Lemma \ref{Lem_meas5}, we have
\begin{align*}
 \#
 \Psi(A, r)
=&
 \sum_{\mathcal{N}^{(i)}\in \Phi(A)}
 \#
 \Psi(A, r, \{\mathcal{N}^{(i)}|_{J_i}\})
\\
=&
 \#
 \Phi(A)
 m_1 ! 
 (k - m_1 - m_2 - m_4)!
 \frac
 {
  \displaystyle
  \prod_{i=1}^p 
  \prod_{l^{(i)} \in \mathcal{R}^2}
  A_i (l^{(i)} ) !
 }
 {
  \displaystyle
  \prod_{l \in (\mathcal{R}^2)^p }
  A (l) !
 }
 \prod_{l \in (\mathcal{R}^2)^p }
 \binom{A(l)}{r(l)}
.
\end{align*}

Combining this with Proposition \ref{Prop_55a-1}, 
we have
\begin{align}
\notag
&
 \sum_
 {
  \substack
  {
   \mathcal{N}^{(i)} \in \mathcal{R}^{J_i};\\
   i=1,\cdots, p
  }
 }
 \sum_
 {
  \substack
  {
   {\sigma_i |}_{S^*}
  \in
   \mathrm{Bij}(S^*, F_i')
  ;
  \\
   \text{ for all }i
  }
 }
 1_
  {
   \left\{
    (\mathcal{N}, \sigma)
   \in
    \Psi(A, r)
   \right\}
  }
 \prod_{l\in (\mathcal{R}^2)^p}
 \left[
  (-a(l))^{r(l)}
  a_\varepsilon (l)^{A(l) - r(l)}
 \right]
\cdot
 G_t(m_1 + m_2, D, E, \sigma, \mathcal{N}) 
\\
\notag
=&
 \#
 \Psi(A, r)
 \prod_{l\in (\mathcal{R}^2)^p}
 \left[
  (-a(l))^{r(l)}
  a_\varepsilon (l)^{A(l) - r(l)}
 \right]
\cdot
 G_t
 (
  m_1 + m_2
 ,
  D
 ,
  E
 ,
  \{\sigma_i |_{{S^*} ^c}\}
 , 
  \{\mathcal{N}^{(i)}|_{E_i \setminus J_i}\}
 ,
  A
 )
\\
\label{eq_55a-1}
\begin{split}
=&
 \#
 \Phi(A)
 m_1 !
 (k - m_1 - m_2 - m_4)!
 \frac
 {
  \displaystyle
  \prod_{i=1}^p 
  \prod_{l^{(i)} \in \mathcal{R}^2}
  A_i (l^{(i)} ) !
 }
 {
  \displaystyle
  \prod_{l \in (\mathcal{R}^2)^p }
  A (l) !
 }
 \prod_{l \in (\mathcal{R}^2)^p }
 \binom{A(l)}{r(l)}
 \left[
  (-a(l))^{r(l)}
  a_\varepsilon (l)^{A(l) - r(l)}
 \right]
\\
&
\hspace{70mm}
 G_t
 (
  m_1 + m_2
 ,
  D
 ,
  E
 ,
  \{\sigma_i |_{{S^*} ^c}\}
 , 
  \{\mathcal{N}^{(i)}|_{E_i \setminus J_i}\}
 ,
  A
 )
.
\end{split}
\end{align}

Next, consider the summations $\sum_2$
which appear in \eqref{eq_Ia-5}.
By 
using $\widetilde{G}_t$ 
defined in Proposition \ref{Prop_55a-2},
we have
\begin{align}
\notag
&
 \sumFipr
 \sumNiDE
 \sumNiEJ
 \sumsigmaSc
 \eqref{eq_55a-1}
\\
\label{eq_55a-2}
\begin{split}
=&
 m_1 ! 
 (k - m_1 - m_2 - m_4)!
 \frac
 {
  \displaystyle
  \prod_{i=1}^p 
  \prod_{l^{(i)} \in \mathcal{R}^2}
  A_i (l^{(i)} ) !
 }
 {
  \displaystyle
  \prod_{l \in (\mathcal{R}^2)^p }
  A (l) !
 }
\cdot
 \prod_{l \in (\mathcal{R}^2)^p }
 \binom{A(l)}{r(l)}
 \left[
  (-a(l))^{r(l)}
  a_\varepsilon (l)^{A(l) - r(l)}
 \right]
\\
&
 \sumFipr
 \sumNiDE
 \sumNiEJ
 \#
 \Phi(A)
\cdot
 \widetilde{G}_t
 (
  m_2
 ,
  m_4 
 ,
  D
 ,
  E
 , 
  \{\mathcal{N}^{(i)}|_{E_i \setminus J_i}\}
 ,
  A
 )
.
\end{split}
\end{align}

\noindent
By definition, 
$\#\Phi(A)$ depends only on $m_2 + m_4$
and is independent of $m_1$.
By 
Proposition \ref{Prop_55a-2}, 
the last line of \eqref{eq_55a-2} depends
only on $A$, $m_2$, $m_4$ 
and is independent of $m_1$ and $r$.

Note that
$
 \sum_{m_1 = 0} ^{k - m_2 - m_4 }
 \sum_
 {
  r\in M_{m_1}
 ;
  r \leq A 
 }
=
 \sum_
 {
  r\leq A
 }
$.


Now, consider 
the summation $\sum_4$ 
which appears in \eqref{eq_sums}.
We have
\begin{align}
\notag
&
 \sum_{m_1 = 0} ^{k - m_2 - m_4 }
 \sum_
 {
  \substack
  {
   r\in M_{m_1}
  ;\\
   r \leq A 
  }
 }
 (-1)^{m_2} 
 \frac{k!}{m_1! m_2! (k - m_1 - m_2 - m_4)! m_4!}
 \eqref{eq_55a-2}
\\
\notag
\begin{split}
=&
 (-1)^{m_2}
 \frac{k!}{m_2! m_4!}
 \frac
 {
  \displaystyle
  \prod_{i=1}^p 
  \prod_{l^{(i)} \in \mathcal{R}^2}
  A_i (l^{(i)} ) !
 }
 {
  \displaystyle
  \prod_{l \in (\mathcal{R}^2)^p }
  A (l) !
 }
 \sum_
 {
  r\leq A
 }
 \prod_{l \in (\mathcal{R}^2)^p }
 \binom{A(l)}{r(l)}
 \left[
  (-a(l))^{r(l)}
  a_\varepsilon (l)^{A(l) - r(l)}
 \right]
\\
\notag
&
 \sumFipr
 \sumNiDE
 \sumNiEJ
 \#
 \Phi(A)
\cdot
 \widetilde{G}_t
 (
  m_2
 ,
  m_4 
 ,
  D
 ,
  E
 , 
  \{\mathcal{N}^{(i)}|_{E_i \setminus J_i}\}
 ,
  A
 )
\end{split}
\\
\label{eq_55a-3}
\begin{split}
=&
 (-1)^{m_2}
 \frac{k!}{m_2! m_4!}
 \frac
 {
  \displaystyle
  \prod_{i=1}^p 
  \prod_{l^{(i)} \in \mathcal{R}^2}
  A_i (l^{(i)} ) !
 }
 {
  \displaystyle
  \prod_{l \in (\mathcal{R}^2)^p }
  A (l) !
 }
 \prod_{l \in (\mathcal{R}^2)^p }
 \left[
  a_\varepsilon (l) -a(l))
 \right]^{A(l)}
\\
&
 \sumFipr
 \sumNiDE
 \sumNiEJ
 \#
 \Phi(A)
\cdot
 \widetilde{G}_t
 (
  m_2
 ,
  m_4 
 ,
  D
 ,
  E
 , 
  \{\mathcal{N}^{(i)}|_{E_i \setminus J_i}\}
 ,
  A
 )
.
\end{split}
\end{align}

\noindent
where we used the equality
$
 \sum_{m_1 = 0} ^{k - m_2 - m_4 }
 \sum_
 {
  r\in M_{m_1}
 ;
  r \leq A 
 }
=
 \sum_
 {
  r\leq A
 }
$
and the multinomial formula
\begin{equation*}
 \sum_
 {
  r\leq A
 }
 \prod_{l \in (\mathcal{R}^2)^p }
 \binom{A(l)}{r(l)}
 \left[
  (-a(l))^{r(l)}
  a_\varepsilon (l)^{A(l) - r(l)}
 \right]
=
 \prod_{l \in (\mathcal{R}^2)^p }
 \left[
  a_\varepsilon (l) -a(l))
 \right]^{A(l)}
.
\end{equation*}


\vspace{1eM}

Before 
considering the summation $\sum_4$
which appears in \eqref{eq_sums}, 
we
give some estimates
in order to find an upper bound 
of the last two lines of \eqref{eq_55a-3}.

\begin{Lem}
\label{Lem_56-2}
There exists a constant
$
 C
=
 C(p, p^{(i)}, f, \delta)
>
 0
$
such that the following inequality holds
for all $D$, $E$, $m_2$, $m_4$, $A$ and $t$:
\begin{align*}
&
 \sumFipr
 \sumNiDE
 \sumNiEJ
\\
&
\hspace{40mm}
 \#
 \Phi(A)
\cdot
 \widetilde{G}_t
 (
  m_2
 ,
  m_4 
 ,
  D
 ,
  E
 , 
  \{\mathcal{N}^{(i)}|_{E_i \setminus J_i}\}
 ,
  A
 )
\\
\begin{split}
=&
 \sumFipr
 \sumNiDE
 \sumNiEJ
 \sumsigmaSc
\\
&
\hspace{40mm}
 \#
 \Phi(A)
\cdot
 G_t
 (
  m_2 , m_4
 ,
  D
 ,
  E
 ,
  \{\sigma_i |_{{S^*} ^c}\}
 , 
  \{\mathcal{N}^{(i)}|_{E_i \setminus J_i}\}
 ,
  A
 )
\end{split}
\\
\leq&
 C^k
 (k!)^p
 e^{pt}
 k^p
 \Biggl(
  \prod_{i=1} ^p
  \prod_{l^{(i)} \in \mathcal{R}^2 }
  A_i (l^{(i)}) !
 \Biggr)^{-1}
.
\end{align*}
\end{Lem}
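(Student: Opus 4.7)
The strategy is to unfold the definitions \eqref{eq_Def_Gt} and \eqref{eq_Gtilde}, exploit the fact that $G_t$ factorises into a product over $i = 1, \ldots, p$ of independent time-space integrals, apply $L^p$-contractivity and eigenfunction estimates to each factor, and then control the remaining combinatorial sums using Lemma \ref{Lem_meas4}. Using Proposition \ref{Prop_55a-2}, one may take $m_1 = 0$ in the representation of $\widetilde{G}_t$, which simplifies the tensor product in \eqref{eq_Def_Gt} to $\bigotimes_{l \leq m_2} \mathrm{id} \otimes \bigotimes_{l > k - m_4} T_\varepsilon^{(i)}$ and leaves only $m_2 + m_4$ spatial variables in the exterior integral.

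Next I would estimate each factor $G_t^{(i)}$ by inserting $e^{t}$ and writing $p_{r}^{(i)}(x,y) \leq e^{r}\cdot e^{-r} p_{r}^{(i)}(x,y)$. For $j \in D_i$ this converts the time integral into the truncated 1-order resolvent $R_{1,\delta}^{(i)}$, which is $L^p$-bounded by Assumption \textbf{(A5)}; for $j \in D_i^c$ it yields $\int_0^t e^{-r(1+\lambda_n^{(i)})} dr \leq (1+\lambda_n^{(i)})^{-1}$. The overall volume of $[0,t]^k$ together with the constraint $\sum r_j \leq t$ contributes a factor $t^k / k! \leq e^t / k!$, absorbed into the global $e^{pt}$. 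The remaining spatial integrations are handled by $\|\psi_n^{(i)}\|_2 = 1$, $\|\psi_n^{(i)}\|_\infty \leq C \lambda_n^{\rho^{(i)}/2}$ (Lemma \ref{Lem_pre}), the $L^p$-contractivity of $T_\varepsilon^{(i)}$ (Proposition \ref{Prop_Lp-cont-semigrp}), and the crude estimate $|\langle f, \prod_i h_i\rangle| \leq \|f\|_\infty \prod_i \|h_i\|_{q_i}$ for appropriate H\"older exponents.

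For the outer combinatorial sums, the summation $\sum_{F_i'}$ contributes at most $2^k$ per index $i$; the sum $\sum_{\mathcal{N}^{(i)}|_{D_i^c \setminus E_i} \in (\mathcal{R}^c)^{\cdots}}$ is absolutely convergent thanks to the decay of $(1+\lambda_n^{(i)})^{-1} e^{-\delta(1+\lambda_n^{(i)})} \|\psi_n^{(i)}\|_\infty^2$ (used in the proof of Lemma \ref{Lem_B}); and $\sum_{\mathcal{N}^{(i)}|_{E_i \setminus J_i} \in \mathcal{R}^{\cdots}}$ is bounded by $C(R)^k$. The decisive factor is $\#\Phi(A)$: by the definition \eqref{Def_PhiA}, each $\mathcal{N}^{(i)}|_{J_i}$ prescribes a sequence whose adjacent-pair measure has the given marginal $A_i$, and Lemma \ref{Lem_meas4} then gives $\#\Phi(A) \leq \prod_{i=1}^p \bigl( k \cdot \prod_{l_1 \in \mathcal{R}^2} \overline{A_i}(l_1)! \big/ \prod_{l^{(i)} \in \mathcal{R}^2} A_i(l^{(i)})! \bigr)$. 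This produces the $k^p$ and the $1/\prod_i \prod_{l^{(i)}} A_i(l^{(i)})!$ in the claimed bound.

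The main obstacle is the careful bookkeeping in the last step: verifying that the factor $(k!)^p$ on the right arises by combining $\prod_i (m_2+m_4)!$ from $\sum_{\sigma_i|_{{S^*}^c}} \in \mathrm{Bij}({S^*}^c, {F_i'}^c)$, the extra $\prod_i \prod_{l_1} \overline{A_i}(l_1)!$ from Lemma \ref{Lem_meas4} (dominated by $\prod_i (m_1 + m_3)!$ via $\sum_{l_1} \overline{A_i}(l_1) = |S^*|$), and the $1/k!$ from the time ordering in $[0,t]_<^k$; and that the $e^{pt}$ arises from the $p$ independent $e^t$ insertions. All other contributions collapse into a single constant $C = C(p, p^{(i)}, f, \delta)$ that is independent of $t$, $k$, $D$, $E$, $m_2$, $m_4$, and $A$. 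No new ideas are required beyond this patience with exponents and factorials.
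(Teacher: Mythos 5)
Your proposal follows essentially the same route as the paper: bound $\#\Phi(A)$ via Lemma~\ref{Lem_meas4}, bound each $G_t^{(i)}$ factor via the $L^p$ norm of $R_1^{(i)}$ (Assumption (A5)), the $L^p$-contractivity of $T_\varepsilon^{(i)}$, and $\|\psi_n^{(i)}\|_\infty$-estimates, then dominate the $\mathcal{N}^{(i)}$ sums by the absolutely convergent series $\sum_n e^{-\delta(1+\lambda_n^{(i)})}(\|\psi_n^{(i)}\|_\infty+1)^2$, and finally use $\binom{\#F_i}{\#S^*}\le 2^k$ for the $F_i'$ sum. The key ingredients you identify are the right ones.

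However, your final accounting of where the $(k!)^p$ comes from contains a spurious factor. You write that it arises from combining $\prod_i(m_2+m_4)!$ (from the bijection sum), $\prod_i\prod_{l_1}\overline{A}_i(l_1)!\le\prod_i(m_1+m_3)!$ (from $\#\Phi(A)$), \emph{and} ``the $1/k!$ from the time ordering in $[0,t]_<^k$.'' No such $1/k!$ appears in the proof of this particular lemma, nor is one needed. After inserting $e^t\prod_j e^{-r_j}$ the time integrals defining $G_t^{(i)}$ are bounded uniformly in $t$ and $k$ without any $t^k/k!$ or $1/k!$ appearing; the simplex volume does not enter. The correct accounting is the elementary inequality
\begin{align*}
\prod_{i=1}^p (m_2+m_4)! \cdot \prod_{i=1}^p \prod_{l_1^{(i)}}\overline{A}_i(l_1^{(i)})!
\leq
\prod_{i=1}^p (m_2+m_4)!\,(k-m_2-m_4)!
\leq
(k!)^p ,
\end{align*}
using $\sum_{l_1}\overline{A}_i(l_1)=k-m_2-m_4=\#S^*$ and the multinomial bound $\prod \overline{A}_i(l_1)!\le(\sum\overline{A}_i(l_1))!$. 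If you had actually carried the putative $1/k!$ along, you would find yourself claiming a strictly stronger bound than stated, which should have been a warning flag. This is a bookkeeping slip rather than a structural failure, but it shows that the exponent $(k!)^p$ has not been fully verified by your argument; fixing it just requires removing the phantom $1/k!$ and invoking the multinomial inequality above.
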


\begin{proof}
The 
first equality is obtained by the definition of 
$\widetilde{G}_t$ in Proposition \ref{Prop_55a-2}.
So we will prove the inequality in the third line.

First, we estimate $\Phi(A)$. 
Recall
\begin{align*}
 \Phi(A)
:=&
 \left\{
  \{\mathcal{N}^{(i)}|_{J_i} \}
  \in \otimes\mathcal{R}^{J_i}
 \left|
  \begin{array}{l}
   \text{There exists }
   \sigma_i ^0 \in \mathrm{Bij}(S^*, F_i'),
   \text{ such that}
  \\
   A_i 
  =
   \displaystyle
   \sum_{j\in S^*}
   \delta_
   {
    (
     \mathcal{N}^{(i)}_{\sigma_i ^0(j)}
    ,
     \mathcal{N}^{(i)}_{\sigma_i ^0 (j)+1}
    )
   }
  ,
   \text{ for all }1\leq i\leq p
  .
  \end{array}
 \right.
 \right\}
\\
=&
 \left\{
  \{\mathcal{N}^{(i)}|_{J_i} \}
  \in \otimes\mathcal{R}^{J_i}
 \left|
   A_i 
  =
   \sum_{j\in F_i'}
   \delta_
   {
    (
     \mathcal{N}^{(i)}_{j}
    ,
     \mathcal{N}^{(i)}_{j+1}
    )
   }
  ,
   \text{ for all }1\leq i\leq p
 \right.
 \right\}
\end{align*}
and then, by Lemma \ref{Lem_meas4},
\begin{align*}
 \# \Phi(A)
\leq&
 \prod_{i=1} ^p
 \#
 \left\{
  \{\mathcal{N}^{(i)}|_{F_i'} \}
 \in 
  \mathcal{R}^{F_i'}
 \left|
   A_i 
  =
   \sum_{j\in F_i'}
   \delta_
   {
    (
     \mathcal{N}^{(i)}_{j}
    ,
     \mathcal{N}^{(i)}_{j+1}
    )
   }
 \right.
 \right\}
\leq
 \prod_{i=1} ^p
 \Biggl(
  k
  \cdot
  \frac
  {
   \prod_{l_1 ^{(i)} \in \mathcal{R} }
   \overline{A}_i (l_1 ^{(i)}) !
  }
  {
   \prod_{l^{(i)} \in \mathcal{R}^2 }
   A_i (l^{(i)}) !
  }
 \Biggr)
,
\end{align*}
where the second inequality follows since 
$
 A\in M_{k-m_2 - m_4}
$
and
$
 \sum_{l_1 ^{(i)} \in \mathcal{R}} 
 \overline{A}_i (l_1 ^{(i)}) 
=
 k - m_2 - m_4
$.

Next we estimate $G_t$.
Write $E' := \text{supp}[f]$. 
For each $i$, we have
\begin{align*}
 \Biggl\|
  \prod_{j\in F_i \setminus F_i '}
  \biggl[
   \psi_{n_{j    } ^{(i)}} ^{(i)} 
   \psi_{n_{j + 1} ^{(i)}} ^{(i)}
  \biggr]
  (y_{l})
 \Biggr\|_
 {
  L^p
  (
   {E'}^{F_i \setminus F_i '}
  )
 }
\leq
 \Biggl(
 \prod_{j\in F_i \setminus F_i '}
 \|
  \psi_{n_{j    } ^{(i)}} ^{(i)} 
 \|_\infty
 \|
  \psi_{n_{j + 1} ^{(i)}} ^{(i)}
 \|_\infty
 \Biggr)
\cdot
 m(E')^{\# (F_i\setminus F_i ')/p}
\end{align*}

\noindent
and have
\begin{align*}
&
 \Bigl\|
  G_t ^{(i)}
  (
   D_i, \mathcal{N}^{(i)};
   \{x_{l}\}_{l\not\in F_i}
  )
 \Bigr\|_
 {
  L^p
  (
   {E'}^{{F_i}^c}
  )
 }
\\
\begin{split}
\leq&
 e^{t} 
 \biggl[
  \sup_{y\in E}
  \int_E
   \Gr{i} (x, y)^p
  dx
 \biggr]^{\# D_i/p}
\cdot
 \prod_{j\in D_i ^c\setminus J_i}
 \exp
 \biggl\{
  -\delta(1 + \lambda^{(i)} _{n^{(i)} _j})
 \biggr\}
\\
&
\hspace{50mm}
 \Biggl(
  \prod_{j\in (D_i ^c - 1) \setminus F_i}
  \|
   \psi^{(i)} _{n_{j+1} ^{(i)}}
  \|_\infty
  \prod_{j\in D_i ^c \setminus F_i}
  \|
   \psi^{(i)} _{n_{j} ^{(i)}}
  \|_\infty 
 \Biggr)
\cdot
 m(E')^{\# (D_i ^c \setminus F_i)/p}
.
\end{split}
\end{align*}

\noindent
Since
\begin{align*}
&
 \Biggl(
  \prod_{j\in F_i \setminus F_i '}
  \|
   \psi_{n_{j    } ^{(i)}} ^{(i)} 
  \|_\infty
  \|
   \psi_{n_{j + 1} ^{(i)}} ^{(i)}
  \|_\infty
 \Biggr)
 \Biggl(
  \prod_{j\in (D_i ^c - 1) \setminus F_i}
  \|
   \psi^{(i)} _{n_{j+1} ^{(i)}}
  \|_\infty
  \prod_{j\in D_i ^c \setminus F_i}
  \|
   \psi^{(i)} _{n_{j} ^{(i)}}
  \|_\infty 
 \Biggr)
\\
\leq&
 \Biggl(
  \prod_{j\in F_i \setminus F_i '}
  \Bigl(
   \|
    \psi_{n_{j    } ^{(i)}} ^{(i)} 
   \|_\infty
  +
   1
  \Bigr)
  \Bigl(
   \|
    \psi_{n_{j + 1} ^{(i)}} ^{(i)} 
   \|_\infty
  +
   1
  \Bigr)
 \Biggr)
 \Biggl(
  \prod_{j\in (D_i ^c - 1) \setminus F_i}
  \Bigl(
   \|
    \psi_{n_{j + 1} ^{(i)}} ^{(i)} 
   \|_\infty
  +
   1
  \Bigr)
  \prod_{j\in D_i ^c \setminus F_i}
  \Bigl(
   \|
    \psi_{n_{j    } ^{(i)}} ^{(i)} 
   \|_\infty
  +
   1
  \Bigr)
 \Biggr)
\\
=&
 \Biggl(
  \prod_{j\in F_i '}
  \Bigl(
   \|
    \psi_{n_{j    } ^{(i)}} ^{(i)} 
   \|_\infty
  +
   1
  \Bigr)^{-1}
  \Bigl(
   \|
    \psi_{n_{j + 1} ^{(i)}} ^{(i)} 
   \|_\infty
  +
   1
  \Bigr)^{-1}
 \Biggr)
 \Biggl(
  \prod_{j\in (D_i ^c - 1) }
  \Bigl(
   \|
    \psi_{n_{j + 1} ^{(i)}} ^{(i)} 
   \|_\infty
  +
   1
  \Bigr)
  \prod_{j\in D_i ^c }
  \Bigl(
   \|
    \psi_{n_{j    } ^{(i)}} ^{(i)} 
   \|_\infty
  +
   1
  \Bigr)
 \Biggr)
\\
\leq&
 \prod_{j\in D_i ^c\setminus J_i}
 \Bigl(
  \|
   \psi^{(i)} _{n_{j} ^{(i)}}
  \|_\infty
 +
  1
 \Bigr)^2
,
\end{align*}

\noindent
we have
\begin{align*}
&
 G_t
 (
  m_2 , m_4
 ,
  D
 ,
  E
 ,
  \{\sigma_i |_{{S^*} ^c}\}
 , 
  \{\mathcal{N}^{(i)}|_{E_i \setminus J_i}\}
 ,
  A
 )
\\
=&
 \int_{E^{{S^*}^c} }
  \prod_{l\not\in S^*}
  f(y_l)
 \cdot
  \prod_{i=1} ^p
  \bigotimes_{l\not\in S^*}
  T^{(i)}_{l}
  \left[
   \prod_{l\in \sigma_i ^{-1}(F_i)\setminus S^*}
   \biggl[
    \psi_{n_{\sigma_i(l)    } ^{(i)}} ^{(i)} 
    \psi_{n_{\sigma_i(l) + 1} ^{(i)}} ^{(i)}
   \biggr]
   (y_{l})
  \cdot
   G_t ^{(i)}
   (
    D_i, \mathcal{N}^{(i)};
    \{y_{l}\}_{l\not\in\sigma_i ^{-1}(F_i)}
   )
  \right]
 dy
\\
\leq&
 \| f \|_\infty ^{m_2 + m_4}
 \prod_{i=1} ^p
 \left\|
  \bigotimes_{l\not\in S^*}
  T^{(i)}_{l}
  \left[
   \prod_{l\in \sigma_i ^{-1}(F_i)\setminus S^*}
   \biggl[
    \psi_{n_{\sigma_i(l)    } ^{(i)}} ^{(i)} 
    \psi_{n_{\sigma_i(l) + 1} ^{(i)}} ^{(i)}
   \biggr]
   (y_{l})
  \cdot
   G_t ^{(i)}
   (
    D_i, \mathcal{N}^{(i)};
    \{y_{l}\}_{l\not\in\sigma_i ^{-1}(F_i)}
   )
  \right]
 \right\|_{L^p({E'}^{{S^*}^c})}
\\
\leq&
 \| f \|_\infty ^{m_2 + m_4}
 \prod_{i=1} ^p
 \left\|
  \prod_{l\in \sigma_i ^{-1}(F_i)\setminus S^*}
  \biggl[
   \psi_{n_{\sigma_i(l)    } ^{(i)}} ^{(i)} 
   \psi_{n_{\sigma_i(l) + 1} ^{(i)}} ^{(i)}
  \biggr]
  (y_{l})
 \cdot
  G_t ^{(i)}
  (
   D_i, \mathcal{N}^{(i)};
   \{y_{l}\}_{l\not\in\sigma_i ^{-1}(F_i)}
  )
 \right\|_{L^p({E'}^{{S^*}^c})}
\\
\begin{split}
\leq&
 \| f \|_\infty ^{m_2 + m_4}
 \prod_{i=1} ^p
 \Biggl\{
  e^t
  \biggl[
   \sup_{y\in E}
   \int_E
    \Gr{i} (x, y)^p
   dx
  \biggr]^{\# D_i/p}
 \cdot
  \prod_{j\in D_i ^c\setminus J_i}
  \exp
  \biggl\{
   -\delta(1 + \lambda^{(i)} _{n^{(i)} _j})
  \biggr\}
\\
&
\hspace{80mm}
 \prod_{j\in D_i ^c\setminus J_i}
 \Bigl(
  \|
   \psi^{(i)} _{n_{j} ^{(i)}}
  \|_\infty
 +
  1
 \Bigr)^2
  m(E')^{\# (D_i ^c\setminus F_i ')/p}
 \Biggr\}
\end{split}
\\
\leq&
 C(p, p^{(i)}, f)^k
 e^{pt}
 \prod_{i=1} ^p
 \prod_{j\in D_i ^c\setminus J_i}
 \exp
 \biggl\{
  -\delta(1 + \lambda^{(i)} _{n^{(i)} _j})
 \biggr\}
 \Bigl(
  \|
   \psi^{(i)} _{n_{j} ^{(i)}}
  \|_\infty
 +
  1
 \Bigr)^2
.
\end{align*}

\noindent
We also have
\begin{align*}
&
 \sumNiDE
 \sumNiEJ
 \prod_{i=1} ^p
 \prod_{j\in D_i ^c\setminus J_i}
 \exp
 \biggl\{
  -\delta(1 + \lambda^{(i)} _{n^{(i)} _j})
 \biggr\}
 \Bigl(
  \|
   \psi^{(i)} _{n_{j} ^{(i)}}
  \|_\infty
 +
  1
 \Bigr)^2
\\
\leq&
 \sum_
 {
  \substack
  {
   \mathcal{N}^{(i)} 
  \in 
   (\mathbb{Z}_{>0})^{D_i ^c\setminus J_i};
  \\
   i=1,\cdots, p
  }
 }
 \prod_{i=1} ^p
 \prod_{j\in D_i ^c\setminus J_i}
 \exp
 \biggl\{
  -\delta(1 + \lambda^{(i)} _{n^{(i)} _j})
 \biggl\}
 \Bigl(
  \|
   \psi^{(i)} _{n_{j} ^{(i)}}
  \|_\infty
 +
  1
 \Bigr)^2
\\
=&
 \prod_{i=1} ^p
 \Biggl[
  \sum_{n=1} ^\infty
  \exp
  \Bigl\{
   -\delta(1 + \lambda^{(i)} _{n})
  \Bigr\}
  \Bigl(
   \|
    \psi^{(i)} _{n}
   \|_\infty
  +
   1
  \Bigr)^2
 \Biggr]^{\# (D_i^c \setminus J_i)}
\\
\leq&
 C(p, p^{(i)}, \delta)^k
.
\end{align*}

\noindent
Inequalities
$
 \binom{\# F_i}{\# S^*}
\leq
 \sum_{n=0} ^{\# F_i}
 \binom{\# F_i}{\# S^*}
=
 2^{\# F_i}
\leq
 2^k
$
and
$
 (k - m_2 - m_4)!
 (m_2 + m_4)!
\leq
 k!
$
yield the desired result.
\end{proof}


\vspace{1eM}

Now 
we will consider the summations $\sum_4$ 
which appear in \eqref{eq_sums}.
Since
$
 \# D_i \leq \eta k
$
and
$
 \# (D_i ^c \setminus E_i) \leq \gamma k
$,
we have
$
 \# D_i ^c \geq (1-\eta)k
$
and
$
 \# (D_i ^c \setminus (E_i - 1))
\leq 
 \gamma k
$.
Therefore
\begin{align*}
 \# (E_i \cap (E_i -1))
=&
 \# D_i ^c
-
 \#(D_i ^c \setminus (E_i \cap (E_i -1)) )
\\
\geq&
 \# D_i ^c
-
 \#(D_i ^c \setminus  E_i    )
-
 \#(D_i ^c \setminus (E_i -1))
\\
\geq&
 (1 - 2(\eta + \gamma)) k
.
\end{align*}

\noindent
We also have
\begin{align*}
 m_2 + m_4
=&
 \# (S^*)^c
=
 \#  
 \Bigl(
  \bigcap_i \sigma_i^{-1}(F_i)
 \Bigr)^c
\\
\leq&
 \sum_i \# \sigma_i ^{-1}(F_i)^c
=
 \sum_i \# (F_i)^c
=
 \sum_i \# (E_i\cap (E_i - 1))^c
\\
\leq&
 2p(\eta + \gamma) k
\end{align*}
and hence
$
 k - m_2 - m_4
\geq
 (1 - 2p(\eta + \gamma))k
$.

For 
$
 l
=
 (l_i ^{(1)}, l_i ^{(2)})_{1\leq i\leq p}
\in
 (\mathcal{R}^2)^p
$,
we have
$
 \psi ^{(i)} _{l_i ^{(1)}}
,
 \psi ^{(i)} _{l_i ^{(2)}}
\in
 L^{2p}(E)
$
and the similar estimate 
as \eqref{eq_Prop4-1-3} yields 
\begin{align*}
 |a(l) - a_\varepsilon(l)|
=&
 \Biggl|
  \biggl\langle
   f
  ,
   \prod_{i=1} ^p
   \biggl[
    \psi ^{(i)} _{l_i ^{(1)}}
    \psi ^{(i)} _{l_i ^{(2)}}
   \biggr]
  \biggr\rangle
 -
  \biggl\langle
   f
  ,
   \prod_{i=1} ^p
   T^{(i)}_\varepsilon
   \biggl[
    \psi ^{(i)} _{l_i ^{(1)}}
    \psi ^{(i)} _{l_i ^{(2)}}
   \biggr]
  \biggr\rangle
 \Biggr|
\\
\leq&
 \|f\|_\infty
 \biggl\|
  \prod_{i=1} ^p
  [
   \psi ^{(i)} _{l_i ^{(1)}}
   \psi ^{(i)} _{l_i ^{(2)}}
  ]
 -
  \prod_{i=1} ^p
  T^{(i)}_\varepsilon
  [
   \psi ^{(i)} _{l_i ^{(1)}}
   \psi ^{(i)} _{l_i ^{(2)}}
  ]
 \biggr\|_{1}
\\
\leq&
 \|f\|_\infty
 \sum_{i=1} ^p
 \Biggl(
  \biggl\|
   \biggl[
    \psi ^{(i)} _{l_i ^{(1)}}
    \psi ^{(i)} _{l_i ^{(2)}}
   \biggr]
  -
   T^{(i)}_\varepsilon
   \biggl[
    \psi ^{(i)} _{l_i ^{(1)}}
    \psi ^{(i)} _{l_i ^{(2)}}
   \biggr]
  \biggr\|_{p}
 \cdot
  \prod_{j\not=i}
  \Bigl\|
   \psi ^{(i)} _{l_j ^{(1)}}
  \Bigr\|_{2p} ^2
  \Bigl\|
   \psi ^{(i)} _{l_j ^{(2)}}
  \Bigr\|_{2p} ^2
 \Biggr)
\\
\rightarrow&
 0
\quad
 \text{as } \varepsilon \rightarrow 0
.
\end{align*}

Write
$
 C_{R}(\varepsilon)
=
 \sum_{l \in (\mathcal{R}^2)^p }
 \left[
  a_\varepsilon (l) -a(l))
 \right]
$
and choose small $\varepsilon_R >0$ such that
$
 C_{R}(\varepsilon)
<
 1/2
,
 \text{ for all }
 \varepsilon < \varepsilon_R
$.
Then
\begin{equation*}
 C_{R}(\varepsilon)^
 {
  (2^{-1}) (1 - 2p(\eta + \gamma))k
 }
 k^p
\leq
 \sup_{k\geq 0}
 \left[
  (1/2)^
  {
   (2^{-1}) (1 - 2p(\eta + \gamma))k
  }
  k^p
 \right]
<
 \infty
.
\end{equation*}

\vspace{1eM}

Consider 
the summation $\sum_{4-1}$
which appears in \eqref{eq_sums}.
By 
Lemma \ref{Lem_56-2}, we have
\begin{align}
\notag
&
 \sum_
 {
  0\leq m_2 + m_4 \leq k
 }
 \sum_{A\in M_{k - m_2 - m_4}}
 \eqref{eq_55a-3}
\\
\notag
\begin{split}
\leq&
 \sum_
 {
  0 \leq  m_2  + m_4 \leq k
 }
 \sum_{A\in M_{k - m_2-m_4}}
 \frac{k!}{m_2! m_4!}
 \frac
 {
  \displaystyle
  \prod_{i=1}^p 
  \prod_{l^{(i)} \in \mathcal{R}^2}
  A_i (l^{(i)} ) !
 }
 {
  \displaystyle
  \prod_{l \in (\mathcal{R}^2)^p }
  A (l) !
 }
\cdot
 \prod_{l \in (\mathcal{R}^2)^p }
 \left[
  a_\varepsilon (l) -a(l))
 \right]^{A(l)}
\\
\notag
&
\hspace{20mm}
\cdot
 C(p, p^{(i)}, f, \delta)^k
 (k!)^p
 e^{pt}
 k^p
 \Biggl(
  \prod_{i=1} ^p
  \prod_{l^{(i)} \in \mathcal{R}^2 }
  A_i (l^{(i)}) !
 \Biggr)^{-1}
\end{split}
\\
\notag
\leq&
 (k!)^p
 k^p
 C(p, p^{(i)}, f, \delta)^k
 e^{pt}
 \sum_
 {
  0 \leq  m_2  + m_4 \leq k
 }
 \frac{k!}{m_2! m_4!(k-m_2 - m_4)!}
\cdot
 \Biggl(
  \sum_{l \in (\mathcal{R}^2)^p }
  [
   a_\varepsilon (l) -a(l))
  ]
 \Biggr)^{k-m_2 - m_4}
\\
\notag
\leq&
 (k!)^p
 k^p
 C(p, p^{(i)}, f, \delta)^k
 e^{pt}
 \sum_
 {
  0 \leq  m_2  + m_4 \leq k
 }
 \frac{k!}{m_2! m_4! (k-m_2 - m_4)!}
\cdot
 C_R (\varepsilon) ^{(1 - 2p(\eta + \gamma))k}
\\
\notag
\leq&
 (k!)^p
 k^p
 C(p, p^{(i)}, f, \delta)^k
 e^{pt}
 C_R (\varepsilon) ^{ (1 - 2p(\eta + \gamma))k}
\\
\leq&
\label{eq_56a-1}
 (k!)^p
 C(p, p^{(i)}, f, \delta)^k
 e^{pt}
 C_R (\varepsilon) ^{(1/2)(1 - 2p(\eta + \gamma))k}
\quad
 \text{for small } 
 \varepsilon < \varepsilon_R
.
\end{align}

\vspace{1eM}

Finally, 
consider the summation $\sum_{4-2}$ 
which appears in \eqref{eq_sums}. 
Then
\begin{align*}
&
 \sumDiall
 \sumEiall
 \eqref{eq_56a-1}
\\
\leq&
 2^{kp}
\cdot
 2^{kp}
\cdot 
 (k!)^p
 C(p, p^{(i)}, f, \delta)^k
 e^{pt}
 C_R (\varepsilon) ^{(1/2)(1 - 2p(\eta + \gamma))k}
\quad
 \text{for small } 
 \varepsilon < \varepsilon_R
.
\end{align*}
Therefore 
we complete the proof of Lemma \ref{Lem_C}.

\begin{flushright}
 \vspace{-2eM}
 $\square$ 
\end{flushright}


\section{Proof of Theorem \ref{Prop_mgf}}
\label{Sec_proof_mgf}

\subsection{Applying Varadhan's integral lemma}

First,
recall that
$t^{-1}\ell_t ^{(i)}\in \mathcal{M}_1(E)$ satisfies 
the LDP
as $t\rightarrow \infty$,
with probability $\widetilde{\mathbb{P}}_t$,
scale $t$ and the good rate function $J^{(i)}$,
and that
$t^{-1}\ell_{t,\varepsilon} ^{(i)}\in \mathcal{M}_{\leq 1}(E)$ satisfies 
the LDP
as $t\rightarrow \infty$,
with probability $\widetilde{\mathbb{P}}_t$,
scale $t$ and the good rate function $J_\varepsilon ^{(i)}$,
where
\begin{align*}
 J_\varepsilon ^{(i)}  (\nu)
:=&
 \inf
 \{
  J ^{(i)} (\mu)
 ;
  \mu \in \mathcal{M}_1(E),
  p_\varepsilon ^{(i)}  [\mu] = \nu
 \}
\\
=&
 \inf
 \{
  \mathcal{E}^{(i)} (\psi, \psi)- \lambda_1^{(i)} 
 ;
  \psi \in \mathcal{F}^{(i)} , \|\psi\|_2=1,
  p_\varepsilon^{(i)}  [\psi^2] = \nu
 \}
\end{align*}
for $\nu \in \mathcal{M}_{\leq 1}(E)$.

Fix 
$
 f\in C_b (E)
$,
$\theta>0$ 
and define 
$
 \Phi
:
 \mathcal{M}_{\leq 1}(E) 
\rightarrow
 \mathbb{R}
$
by
$
 \Phi(\nu)
:=
 \theta
 \langle f, \nu \rangle
$
for
$
 \nu\in \mathcal{M}_{\leq 1}(E)
$.
Then $\Phi$ is continuous 
with respect to the weak topology,
hence we obtain
\begin{equation*}
 \limsup_{t\rightarrow \infty}
 \frac{1}{t}
 \log
 \widetilde{\mathbb{E}}_{t}
 [
  e^{\gamma t \Phi (t^{-1}\ell_{t, \varepsilon} ^{(i)}) }
 ]
\leq
 \gamma \theta \|f\|_\infty
<
 \infty
\end{equation*}
for any $\gamma > 1$.

Now, 
applying Varadhan's integral lemma
(Lemma \ref{Lem_DZ_Varadhan})
with $t^{-1}\ell^{(i)} _{t, \varepsilon}$
and we obtain 
\begin{equation*}
 \lim_{t\rightarrow \infty}
 \frac{1}{t}
 \log
 \widetilde{\mathbb{E}}_{t}
 [
  e^
  {
   \theta 
   \langle 
    f, \ell_{t, \varepsilon} ^{(i)}
   \rangle
  }
 ]
=
 \lim_{t\rightarrow \infty}
 \frac{1}{t}
 \log
 \widetilde{\mathbb{E}}_{t}
 [
  e^{t \Phi (t^{-1} \ell_{t, \varepsilon} ^{(i)}) }
 ]
=
 \sup_{\nu \in \mathcal{M}_{\leq 1}(E)}
 \left\{
  \Phi(\nu)
 -
  J_\varepsilon ^{(i)}(\nu)
 \right\}
.
\end{equation*}
By 
Lemma \ref{Lem_6-1} stated below, 
we can rewrite the equality as
\begin{equation}
\label{eq_Varadhan}
 \lim_{t\rightarrow \infty}
 \frac{1}{t}
 \log
 \widetilde{\mathbb{E}}_{t}
 [
  e^
  {
   \theta 
   \langle 
    f, \ell_{t, \varepsilon} ^{(i)} 
   \rangle
  }
 ]
=
 \sup_{\psi \in \mathcal{F} ^{(i)}, \|\psi\|_2 = 1}
 \biggl\{
  \theta
  \int_E
   p_\varepsilon ^{(i)} [\psi^2] f
  dm
 -
  \mathcal{E} ^{(i)}(\psi, \psi)
 +
  \lambda_1 ^{(i)}
 \biggr\}
.
\end{equation}

\begin{Lem}
\label{Lem_6-1}
For each $i = 1, \cdots, p$ and $\theta > 0$,
\begin{equation*}
 \sup_{\nu \in \mathcal{M}_{\leq 1}(E)}
 \bigl\{
  \Phi(\nu)
 -
  J^{(i)} _\varepsilon(\nu) 
 \bigr\}
=
 \sup_{\psi \in \mathcal{F} ^{(i)}, \|\psi\|_2 = 1}
 \biggl\{
  \theta
  \int_E
   p_\varepsilon ^{(i)} [\psi^2] f
  dm
 -
  \mathcal{E} ^{(i)}(\psi, \psi)
 +
  \lambda_1 ^{(i)}
 \biggl\}
.
\end{equation*}
\end{Lem}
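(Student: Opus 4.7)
The proof plan is essentially an interchange-of-suprema argument, unwinding the definition of $J^{(i)}_\varepsilon$.

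First, I would observe that for $\nu \in \mathcal{M}_{\leq 1}(E)$ not lying in the image of the map $\psi \mapsto p_\varepsilon^{(i)}[\psi^2]\,dm$ (for $\psi\in \mathcal{F}^{(i)}$ with $\|\psi\|_2=1$), we have $J^{(i)}_\varepsilon(\nu) = +\infty$ by convention, so such $\nu$ contribute $-\infty$ to the supremum on the LHS and may be discarded. Thus the LHS equals the supremum over $\nu = p_\varepsilon^{(i)}[\psi^2]\,dm$ with $\psi$ ranging over the admissible set.

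Next, for $\nu$ in this image, I would use the second line of the definition of $J^{(i)}_\varepsilon$ to write
\begin{equation*}
 \Phi(\nu) - J^{(i)}_\varepsilon(\nu)
 =
 \sup_{\substack{\psi \in \mathcal{F}^{(i)},\,\|\psi\|_2=1 \\ p_\varepsilon^{(i)}[\psi^2]\,dm = \nu}}
 \bigl\{ \theta \langle f, \nu\rangle - \mathcal{E}^{(i)}(\psi,\psi) + \lambda_1^{(i)} \bigr\},
\end{equation*}
where the constraint $p_\varepsilon^{(i)}[\psi^2]\,dm = \nu$ allows us to replace $\theta\langle f,\nu\rangle$ by $\theta \int_E p_\varepsilon^{(i)}[\psi^2] f\,dm$. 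Taking the outer supremum over $\nu$ and interchanging the two suprema (which is automatic, since $\sup_\nu \sup_{\psi:\,(\psi,\nu)\in R} = \sup_\psi \sup_{\nu:\,(\psi,\nu)\in R}$), I obtain
\begin{equation*}
 \sup_{\nu}\{\Phi(\nu) - J^{(i)}_\varepsilon(\nu)\}
 =
 \sup_{\psi \in \mathcal{F}^{(i)},\,\|\psi\|_2=1}
 \biggl\{ \theta \int_E p_\varepsilon^{(i)}[\psi^2] f\,dm - \mathcal{E}^{(i)}(\psi,\psi) + \lambda_1^{(i)} \biggr\},
\end{equation*}
which is the RHS.

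The only point requiring verification is that every $\psi$ in the admissible set produces an element of $\mathcal{M}_{\leq 1}(E)$, so that the outer supremum range is not artificially restricted. By symmetry and the Markov property of $T_\varepsilon^{(i)}$,
\begin{equation*}
 \int_E p_\varepsilon^{(i)}[\psi^2](x)\,dm(x)
 =
 \int_E \bigl(T_\varepsilon^{(i)} 1\bigr)(y)\,\psi(y)^2\,dm(y)
 \leq
 \int_E \psi^2\,dm = 1,
\end{equation*}
so $p_\varepsilon^{(i)}[\psi^2]\,dm \in \mathcal{M}_{\leq 1}(E)$. Since the manipulation is pure definition-chasing via a sup-sup interchange, there is no real obstacle; the only mild care needed is the bookkeeping around the value $+\infty$ of $J_\varepsilon^{(i)}$ off the image of the map, handled in the first step above.
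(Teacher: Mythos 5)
Your proof is correct and takes essentially the same approach as the paper: both unwind the definition of $J_\varepsilon^{(i)}$ as an infimum and reduce the claim to the trivial identity $\sup_\nu\{\Phi(\nu)-\inf_\psi\{\cdots\}\}=\sup_\nu\sup_\psi\{\cdots\}=\sup_\psi\{\cdots\}$. The paper merely phrases this as two one-sided inequality checks (fix $\nu$ with $J_\varepsilon^{(i)}(\nu)<\infty$ and take nearly-optimal $\psi_n$ for the upper bound; fix $\psi$ and specialize $\nu=p_\varepsilon^{(i)}[\psi^2]\,dm$ for the lower bound), while you phrase it as a single sup-sup interchange; the content, including the observation that $J_\varepsilon^{(i)}=+\infty$ off the image and your closing verification that $p_\varepsilon^{(i)}[\psi^2]\,dm\in\mathcal{M}_{\leq 1}(E)$ via symmetry and the Markov property of $T_\varepsilon^{(i)}$, is the same.
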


\begin{proof}
First, fix $\nu\in \mathcal{M}_{\leq 1}(E)$ and
suppose $J_\varepsilon ^{(i)}(\nu)<\infty$.
For each $n$,
choose $\psi_n\in \mathcal{F} ^{(i)}$ with $\|\psi_n\|_2 = 1$,
$
 p_\varepsilon ^{(i)}[\psi_n^2] 
=
 \frac{d\nu}{dm}
$
and
$
 \mathcal{E} ^{(i)}(\psi_n, \psi_n) - \lambda_1
\leq
 J_\varepsilon ^{(i)}(\nu) + n^{-1}
$.
We have
\begin{align*}
 \Phi(\nu) - J_\varepsilon ^{(i)}(\nu)
\leq&
 \theta
 \langle
  f, p_\varepsilon ^{(i)} [\psi_n ^2]
 \rangle
-
 \mathcal{E} ^{(i)}(\psi_n, \psi_n) 
+
 \lambda_1 ^{(i)}
+
 n^{-1}
\\
\leq&
 \sup_{\psi \in \mathcal{F} ^{(i)}, \|\psi\|_2 = 1}
 \biggl\{
  \theta
  \int_E
   p_\varepsilon ^{(i)} [\psi^2] f
  dm
 -
  \mathcal{E} ^{(i)}(\psi, \psi)
 +
  \lambda_1 ^{(i)}
 \biggr\}
+
 n^{-1}
\end{align*}
and the upper bound follows.

Conversely, fix $\psi \in \mathcal{F} ^{(i)}$ 
with $\|\psi\|_2 =1$.
We have
\begin{align*}
 \theta
 \int_E
  (p_\varepsilon ^{(i)} [f]) \psi^2
 dm
-
 \mathcal{E} ^{(i)}(\psi, \psi)
+
 \lambda_1 ^{(i)}
\leq&
 \Phi (p_\varepsilon ^{(i)}[\psi^2])
-
 J_\varepsilon ^{(i)}(p_\varepsilon ^{(i)}[\psi^2])
\\
\leq&
 \sup_{\nu \in \mathcal{M}_{\leq 1}}
 \bigl\{
  \Phi(\nu)
 -
  J_\varepsilon ^{(i)}(\nu)
 \bigr\}
\end{align*}
and the lower bound follows.
\end{proof}


\vspace{1eM}

Next, we will extend \cite[Theorem 7]{MR2165257}
under our setting.
Throughout this section, 
with some abuse of notations,
we denote
\begin{equation*}
 \frac{1}{t}
 \ell_{t, \varepsilon} ^{(i)}
=
 L_{t, \varepsilon} ^{(i)}
:=
 \frac{1}{t}
 \frac
 {d \ell_{t, \varepsilon} ^{(i)}}
 {dm}
,\quad 
 \frac{1}{t^p}
 \ell_{t, \varepsilon} ^{\mathrm{IS}}
=
 L_{t, \varepsilon} ^{\mathrm{IS}} 
:=
 \frac{1}{t^p}
 \frac
 {d \ell_{t, \varepsilon} ^{\mathrm{IS}} } 
 {dm}
.
\end{equation*}

\begin{Prop}
\label{Prop_6-2}
Let $h\in \mathcal{B}_b(E)$ 
be nonnegative and compactly supported.
Write 
$m_h(dx) := h(x) m(dx)$.
Suppose 
$E' := \text{supp}[m_h]$ is compact and
each $p_t ^{(i)} (\cdot, \cdot)$ 
is uniformly continuous in $E\times E$. 
Then,
\begin{align}
\notag
&
 \lim_{t \rightarrow \infty}
 \frac{1}{t}
 \log
 \widetilde{\mathbb{E}}_{t}
 \exp
 \left\{
  \theta
   \|
    \ell_{t, \varepsilon} ^{(i)}
   \|_{L^p(m_h)}
 \right\}
\\
\label{eq_prop62_1}
&
\hspace{30mm}
=
 \sup_{\psi\in \mathcal{F} ^{(i)}, \|\psi\|_{L^2(m)}=1}
 \left\{
  \theta
  \|
   p_\varepsilon ^{(i)}[\psi^2]
  \|_{L^p(m_h)}
 -
  \mathcal{E} ^{(i)}(\psi, \psi)
 +
  \lambda_1 ^{(i)}
 \right\}
,
\\
\notag
&
 \lim_{t\rightarrow \infty}
 \frac{1}{t}
 \log
 \widetilde{\mathbb{E}}_{t}
 \exp
 \Biggl\{
  \theta 
  \left(
   \int_E
    \prod_{i=1} ^p
    \ell_{t, \varepsilon} ^{(i)} 
   dm_h
  \right)^{1/p}
 \Biggr\}
\\
\label{eq_prop62_2}
&
\hspace{30mm}
=
 \frac{1}{p}
 \sum_{i=1} ^p
 \sup_{\psi\in \mathcal{F} ^{(i)}, \|\psi\|_{L^2(m)}=1}
 \left\{
  \theta
  \|
   p_\varepsilon ^{(i)} [\psi^2]
  \|_{L^{p}(m_h)}
 -
  p\mathcal{E}^{(i)} (\psi,\psi) 
 +
  p\lambda_1 ^{(i)}
 \right\}
.
\end{align}
\end{Prop}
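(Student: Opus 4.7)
The plan is to derive both (6.1) and (6.2) as direct applications of Varadhan's integral lemma to the (joint) large deviation principle for the occupation measures, following the template already laid out in the preceding passage for the linear functional case.

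For (6.1), I would start from the LDP for $t^{-1}\ell_t^{(i)}$ under $\widetilde{\mathbb{P}}_t$ (with good rate function $J^{(i)}$) and use the rewriting $\|\ell_{t,\varepsilon}^{(i)}\|_{L^p(m_h)}=t\cdot\Psi_1(t^{-1}\ell_t^{(i)})$, where $\Psi_1(\mu):=\|p_\varepsilon^{(i)}[\mu]\|_{L^p(m_h)}$. Two verifications are needed: continuity of $\Psi_1:\mathcal{M}_1(E)\to\mathbb{R}$ in the weak topology, and the Varadhan moment condition. For continuity, uniform continuity of $p_\varepsilon^{(i)}$ on $E\times E$ (assumption (A2$'$)) together with compactness of $E':=\mathrm{supp}[m_h]$ shows that weak convergence $\mu_n\to\mu$ gives uniform convergence of $p_\varepsilon^{(i)}[\mu_n]$ to $p_\varepsilon^{(i)}[\mu]$ on $E'$, and hence $L^p(m_h)$-convergence of the integrated norms. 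The moment condition is trivially satisfied because $\|p_\varepsilon^{(i)}[\mu]\|_{L^p(m_h)}\leq\|p_\varepsilon^{(i)}\|_\infty\,m_h(E)^{1/p}$ is uniformly bounded. Varadhan then produces the limit as $\sup_\mu\{\theta\Psi_1(\mu)-J^{(i)}(\mu)\}$, which reduces to the stated formula by the change of variable $\psi=\sqrt{d\mu/dm}$ (arguing exactly as in Lemma \ref{Lem_6-1}).

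For (6.2), I would apply the same scheme to the joint LDP of $(t^{-1}\ell_t^{(i)})_{i=1}^p$ under $\widetilde{\mathbb{P}}_t$, whose good rate function is $\sum_i J^{(i)}(\mu_i)$ by independence of the processes (the conditioning event $\{t<\zeta^{(1)}\wedge\cdots\wedge\zeta^{(p)}\}$ factorizes over $i$). The relevant functional is
\[
 \Psi_2(\mu_1,\ldots,\mu_p):=\Bigl(\int_E\prod_{i=1}^p p_\varepsilon^{(i)}[\mu_i]\,dm_h\Bigr)^{1/p},
\]
which is continuous on the product weak topology and uniformly bounded by $\|p_\varepsilon^{(i)}\|_\infty\,m_h(E)^{1/p}$ arguments identical to those for $\Psi_1$, so Varadhan yields the limit as
\[
 G_\varepsilon:=\sup_{\{\psi_i\}:\,\|\psi_i\|_2=1}\Bigl\{\theta\Bigl(\int\prod p_\varepsilon^{(i)}[\psi_i^2]\,dm_h\Bigr)^{1/p}-\sum_{i=1}^p\mathcal{E}^{(i)}(\psi_i,\psi_i)+\sum_{i=1}^p\lambda_1^{(i)}\Bigr\}.
\]

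The main obstacle is then the variational identity $G_\varepsilon=\frac{1}{p}\sum_i L_i$, where $L_i:=\sup_{\psi}\{\theta\|p_\varepsilon^{(i)}[\psi^2]\|_{L^p(m_h)}-p\mathcal{E}^{(i)}(\psi,\psi)+p\lambda_1^{(i)}\}$. The upper bound $G_\varepsilon\leq\frac{1}{p}\sum_i L_i$ is the easy half: applied to $f_i:=p_\varepsilon^{(i)}[\psi_i^2]$, Hölder gives $\int\prod f_i\,dm_h\leq\prod\|f_i\|_{L^p(m_h)}$, and AM--GM gives $\prod a_i^{1/p}\leq\frac{1}{p}\sum a_i$; together they decouple the joint supremum into a sum of single-process suprema. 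The matching lower bound is the delicate step, because generic optimizers $\psi_i^\ast$ of the individual $L_i$ need not produce equality in Hölder (the functions $p_\varepsilon^{(i)}[(\psi_i^\ast)^2]$ are not proportional across $i$ when the processes differ). To close this gap one needs a matching/scaling construction for suitable near-optimal test tuples $(\psi_1,\ldots,\psi_p)$; this is the role of a separate Lemma 6.3 (mirroring Theorem 6 of \cite{MR2165257}), which I would invoke at this point. Combining the two bounds then yields the claimed identity and completes the proof.
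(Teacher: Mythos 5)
Your treatment of \eqref{eq_prop62_1} takes a genuinely different route from the paper's and it works. The paper's proof of \eqref{eq_prop62_1} never applies Varadhan to a nonlinear functional: it establishes the lower bound by $L^p$--$L^q$ duality ($\|g\|_{L^p(m_h)} = \sup_{\|f\|_{L^q(m_h)}=1}\langle f, g\rangle_{m_h}$), invoking the \emph{linear} Varadhan formula \eqref{eq_Varadhan} for each test function $f$, and establishes the upper bound via Arzel\`a--Ascoli compactness of $K_\varepsilon^{(i)}$ in $L^p(E';m_h)$ plus a Hahn--Banach finite cover reducing again to \eqref{eq_Varadhan}. You instead verify directly that $\mu\mapsto\|p_\varepsilon^{(i)}[\mu]\|_{L^p(m_h)}$ is weakly continuous and bounded, and apply Varadhan once. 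The continuity check hinges on uniform continuity of $p_\varepsilon^{(i)}$ together with compactness of $E'$ (giving equicontinuity of $\{p_\varepsilon^{(i)}(x,\cdot):x\in E'\}$ and thus uniform convergence of $p_\varepsilon^{(i)}[\mu_n]$ on $E'$), and the moment condition is trivial. This is more direct than the paper's approach and buys you the full nonlinear Varadhan statement in one step; the paper's route avoids nonlinear continuity arguments at the cost of the compactness/covering machinery, but both conclusions coincide.

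Your treatment of \eqref{eq_prop62_2}, however, has a genuine gap exactly where you flag the delicacy, and the fix you propose does not exist. Applying Varadhan to the joint (product) LDP reduces the claim to the variational identity $G_\varepsilon=\tfrac1p\sum_i L_i$; the direction $G_\varepsilon\le\tfrac1p\sum_i L_i$ is H\"older plus AM--GM as you say, but the reverse inequality is not addressed by Lemma~\ref{Lem_6-3}, which you invoke. Lemma~\ref{Lem_6-3} gives an exponential moment bound for $|\langle\ell_{t,\varepsilon}^{\mathrm{IS}}-\ell_t^{\mathrm{IS}},h\rangle|^{1/p}$ (it mirrors Theorem~6 of~\cite{MR2165257} as an \emph{approximation} estimate); it says nothing about aligning near-optimizers $\psi_i^*$ across different processes, and no ``matching/scaling construction'' of the sort you gesture at can force $p_\varepsilon^{(1)}[(\psi_1^*)^2],\ldots,p_\varepsilon^{(p)}[(\psi_p^*)^2]$ to be proportional when the $X^{(i)}$ differ. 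The paper in fact does \emph{not} prove the variational identity; its lower bound of \eqref{eq_prop62_2} is entirely probabilistic. It constructs compact sets $K_\varepsilon^{(i)}\subset L^p(E';m_h)$ via Arzel\`a--Ascoli, takes a \emph{single} finite cover $\{B_{b_l}(g_l)\}$ that works for all $i$, uses that the AM--GM/H\"older gap \eqref{eq_6-1} is uniformly $<\delta$ whenever all $f_i$ lie in the same ball, restricts to the event $\{L_{t,\varepsilon}^{(i)}\in B_{b_l}(g_l)\ \forall i\}$ to factor the expectation by independence, and only then applies \eqref{eq_prop62_1}. That localization-on-a-common-ball step is precisely the ingredient that replaces your missing ``matching lemma,'' and without some version of it your argument does not close. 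You would need to either prove the variational identity $G_\varepsilon=\tfrac1p\sum_i L_i$ directly (which is not obviously true and is certainly not Lemma~\ref{Lem_6-3}), or abandon the pure Varadhan route for \eqref{eq_prop62_2} and reproduce the paper's covering argument.
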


\begin{proof}
[Proof of Proposition \ref{Prop_6-2}]
Our proof is based on the proof of \cite[Theorem 7]{MR2165257}.

First 
we prove the lower bound of \eqref{eq_prop62_1}.
Take 
$q>0$ such that $p^{-1} + q^{-1} =1$.
For 
any function $f\in C_b(E)$ with $\|f\|_{L^q(m_h)} =1$,
we have
$
 \bigl(
  \int_E
   {\ell_{t, \varepsilon} ^{(i)} }^p
  dm_h
 \bigr)^{1/p}
\geq
 \int_E
  f
  \ell_{t, \varepsilon} ^{(i)}
 dm_h
=
 \int_E
  f
 \cdot
  \ell_{t, \varepsilon} ^{(i)}
 \cdot
  h
 dm
$
and 
by the equality \eqref{eq_Varadhan}, 
\begin{align*}
 \liminf_{t\rightarrow \infty}
 \frac{1}{t}
 \log
 \widetilde{\mathbb{E}}_{t}
 \exp
 \Biggl\{
  \theta 
  \biggl(
   \int_E
    {\ell_{t, \varepsilon} ^{(i)} }^p
   dm_h
  \biggr)^{1/p}
 \Biggr\}
\geq
 \sup_{\psi\in \mathcal{F}^{(i)}, \|\psi\|_2=1}
 \biggl\{
  \theta
  \int_E
   p_\varepsilon^{(i)}[\psi^2] f
  dm_h
 -
  \mathcal{E}^{(i)}(\psi, \psi)
 +
  \lambda_1^{(i)}
 \biggr\}
.
\end{align*}
Since 
$
 p_\varepsilon ^{(i)} [\psi^2]
\in
 L^p(E; m)
\subset
 L^p(E; m_h)
$
, 
by letting 
$
 f
\rightarrow 
 \frac
 {
  p_\varepsilon ^{(i)}[\psi^2] ^{p-1} 
 }
 {
  \|
   p_\varepsilon ^{(i)}[\psi^2] 
  \|_{L^p(m_h)} ^{p-1} 
 }
$
and then
\begin{equation*}
 \liminf_{t\rightarrow \infty}
 \frac{1}{t}
 \log
 \widetilde{\mathbb{E}}_{t}
 \exp
 \Biggl\{
  \theta
  \biggl(
   \int_E
    {\ell_{t, \varepsilon} ^{(i)} }^p
   dm_h
  \biggr)^{1/p}
 \Biggr\}
\geq
 \sup_{\psi\in \mathcal{F} ^{(i)}, \|\psi\|_2=1}
 \Biggl\{
  \theta
  \biggl(
   \int_E
    {p_\varepsilon ^{(i)} [\psi^2]}^p
   dm_h
  \biggr)^{1/p}
 -
  \mathcal{E} ^{(i)} (\psi, \psi)
 +
  \lambda_1 ^{(i)} 
 \Biggr\}
.
\end{equation*}

\vspace{1eM}

We next 
prove the upper bound of \eqref{eq_prop62_1}.
Let $E' := \text{supp}[h]$.
Fix $\delta >0$ and
take 
$q>0$ with $p^{-1} + q^{-1} =1$.
For each 
$f \in C_K(E') (\subset L^q(E'; m_h))$ with 
$\|f\|_{L^q(m_h)} = 1$, set
$
 U_f
:=
 \left\{
  g\in L^p(E'; m_h)
 :
  \|g\|_{L^p(m_h)}
 -
  \int_E
   f g
  dm_h
 < 
  \delta
 \right\}
$.
We note that 
$K_\varepsilon ^{(i)}$
is $L^p(E'; m_h)$-bounded
and $C_K(E')$ is dense in $L^q(E'; m_h)$. 
By the Hahn-Banach theorem, the family of such $U_f$'s 
is an open cover of $K_\varepsilon ^{(i)}$.

Since $K^{(i)} _\varepsilon$ is compact 
in $L^p(E'; m_h)$,
there are finitely many $f_1, \cdots, f_N \in C_K(E')$ 
such that, for every $i$,
$
 \|g\|_{L^p(m_h)}
\leq
 \max_{1\leq i\leq N}
 \int_E
  f_i g
 dm_h
+
 \delta
$
for all
$
 g\in K^{(i)} _\varepsilon
$.
In particular,
\begin{equation*}
 \widetilde{\mathbb{E}}_{t}
 \Biggl[
  \exp
  \Biggl\{
   \theta t
   \biggl(
    \int_E
     {L_{t, \varepsilon}^{(i)}} ^p
    dm_h 
   \biggr)^{1/p}
  \Biggr\}
 \Biggr]
\leq
 e^{\delta t}
 \sum_{i=1} ^N
 \widetilde{\mathbb{E}}_{t}
 \exp
 \Bigl\{
  \theta t
  \langle
   f_i
  \cdot
   h
  ,
   L_{t, \varepsilon} ^{(i)}
  \rangle
 \Bigr\}
\end{equation*}
and by the equality \eqref{eq_Varadhan}, 
\begin{align*}
&
 \limsup_{t\rightarrow \infty}
 \frac{1}{t}
 \log
 \widetilde{\mathbb{E}}_{t}
 \Biggl[
  \exp
  \Biggl\{
   \theta t
   \biggl(
    \int_E
     {L_{t, \varepsilon}^{(i)}} ^p
    dm_h
   \biggr)^{1/p}
  \Biggr\}
 \Biggr]
\\
\leq&
 \delta
+
 \max_{1\leq i\leq N}
 \sup_{\psi\in \mathcal{F}^{(i)}, \|\psi\|_2=1}
 \biggl\{
  \theta
  \int_E
    f_i  p^{(i)} _\varepsilon[\psi^2] 
  dm_h
 -
  \mathcal{E}^{(i)}(\psi, \psi)
 +
  \lambda^{(i)}_1
 \biggr\}
\\
\leq&
 \delta
+
 \sup_{\psi\in \mathcal{F}^{(i)}, \|\psi\|_2=1}
 \biggl\{
  \theta
  \biggl(
   \int_E
     p^{(i)}_\varepsilon[\psi^2] ^p
   dm_h
  \biggr)^{1/p}
 -
  \mathcal{E}^{(i)}(\psi, \psi)
 +
  \lambda^{(i)}_1
 \biggr\}
.
\end{align*}
By taking $\delta\rightarrow 0$, 
we thus obtain \eqref{eq_prop62_1}.

\vspace{1eM}

We next 
prove the lower bound of \eqref{eq_prop62_2}.
Fix $\delta >0$ and 
let $d$ be a metric of $E$. 
We define the sets
$
 A_\varepsilon ^{(i)}
\subset 
 C(E')
\subset 
 L^p(E'; m_h) 
$
by
\begin{equation}
 A_\varepsilon ^{(i)}
:=
 \biggl\{
  \frac{1}{t}
  \int_0 ^t
   p^{(i)}_\varepsilon (\cdot, X^{(i)}_s(\omega))
  ds
 :
  t\in (0, \infty),
  \omega
 \in  
  \{
   t < \zeta^{(1)}\wedge \cdots \wedge \zeta^{(p)}
  \}
 \biggr\}
.
\end{equation}

\noindent
We can see that $A_\varepsilon ^{(i)}$ is 
uniformly bounded and equi-continuous.
By the Arzel\`a-Ascoli theorem, each $A_\varepsilon ^{(i)}$ is 
a relatively compact set of $C(E')$, and hence, 
a relatively compact set of $L^p(E'; m_h)$. 
Write $K_\varepsilon ^{(i)}$ 
as the $L^p(E'; m_h)$-closure of $A_\varepsilon ^{(i)}$.

Consider the continuous, non-negative function
\begin{equation}
\label{eq_6-1}
 (L^p(E'; m_h))^p
\ni
 (f_1, \cdots, f_p)
\longmapsto
 \frac{1}{p}
 \sum_{i=1} ^p
 \left(
  \int_{E'}
   |f_i|^p
  dm_h
 \right)^{1/p}
-
 \biggl(
  \int_{E'}
   \prod_{i=1} ^p
   |f_i|
  dm_h
 \biggr)^{1/p}
\in
 [0, \infty)
.
\end{equation}

\noindent
Since 
this function is equal to $0$ on the diagonal set
$\{f_1 = \cdots = f_p\} \subset (L^p(E'; m_h))^p$, 
for each $g\in L^p(E'; m_h)$
there exists a constant $b= b(g, \delta)>0$ such that
$
 \frac{1}{p}
 \sum_{i=1} ^p
 \left(
  \int_E
   |f_i|^p
  dm
 \right)^{1/p}
-
 \bigl(
  \int_E
   \prod_{i=1} ^p
   |f_i|
  dm
 \bigr)^{1/p}
<
 \delta
$
if $f_i \in B_b(g)$ for all $i$,
where 
$B_b(g)$ is the open ball in 
$L^p(E'; m_h)$ of radius $b$ centered at $g$.
Since 
$K_\varepsilon ^{(i)}$ is compact in $L^p(E'; m_h)$, 
take finite $g_1, \cdots g_N \in L^p(E'; m_h)$ and
$b_1, \cdots b_N >0$ such that
$
 K_\varepsilon ^{(i)}
\subset
 \bigcup_{l=1} ^N
 B_{b_l} (g_l)
$
for all $i$.

Now,
$
 t^{-1}
 \ell_{t, \varepsilon} ^{(i)}
=
 L_{t, \varepsilon} ^{(i)}
$ 
belongs to some $B_{b_l} (g_l)$
and hence
\begin{align*}
 \sum_{l=1} ^N
 \widetilde{\mathbb{E}}_{t}
 \Biggl[
  \exp
  \Biggl\{
   \frac{\theta}{p} t
   \biggl(
    \int_{E'}
     {L^{(i)} _{t, \varepsilon} }^p
    dm_h
   \biggr)^{1/p}
  \Biggr\}
 ;
  {L^{(i)} _{t, \varepsilon} }^p
 \in
  B_{b_l} (g_l)
 \Biggr]
\geq
 \widetilde{\mathbb{E}}_{t}
 \exp
 \Biggl\{
  \frac{\theta}{p} t
  \biggl(
   \int_{E'}
    {L^{(i)} _{t, \varepsilon} }^p
   dm_h
  \biggr)^{1/p}
 \Biggr\}
.
\end{align*}
For each $l$, 
the continuity of the function 
\eqref{eq_6-1} implies that
\begin{align*}
&
 \widetilde{\mathbb{E}}_{t}
 \exp
 \Biggl\{
  \theta t
  \biggl(
   \int_{E'}
    \prod_{i=1} ^p
    L_{t, \varepsilon} ^{(i)}
   dm_h
  \biggr)^{1/p}
 \Biggr\}
\\
\geq&
 e^{-\delta t}
 \widetilde{\mathbb{E}}_{t}
 \Biggl[
  \exp
  \biggl\{
   \frac{\theta}{p} t
   \sum_{i=1} ^p
   \biggl(
    \int_{E'}
     {L_{t, \varepsilon} ^{(i)} }^p
    dm_h
   \biggr)^{1/p}
  \biggr\}
 ;
  L_{t, \varepsilon} ^{(i)}
 \in
  B_{b_l} (g_l)
 \text{ for all }i
 \Biggr]
\\
=&
 e^{-\delta t}
 \prod_{i=1} ^p
  \widetilde{\mathbb{E}}_{t}
  \Biggl[
   \exp
   \Biggl\{
    \frac{\theta}{p} t
    \biggl(
     \int_{E'}
      {L_{t, \varepsilon} ^{(i)} }^p
     dm_h
    \biggr)^{1/p}
   \Biggr\}
  ;
   L_{t, \varepsilon} ^{(i)}
  \in
   B_{b_l}(g_l)
  \Biggr]
.
\end{align*}

\noindent
Combining the above two inequalities with 
\eqref{eq_prop62_1},
we conclude
\begin{align*}
&
 \liminf_{t\rightarrow \infty}
 \frac{1}{t}
 \log
 \widetilde{\mathbb{E}}_{t}
 \exp
 \Biggl\{
  \theta t
  \biggl(
   \int_{E}
    \prod_{i=1} ^p
    L_{t, \varepsilon} ^{(i)} 
   dm_h
  \biggr)^{1/p}
 \Biggr\}
\\
\geq&
 -\delta
+
 \sum_{i=1} ^p
 \liminf_{t\rightarrow\infty}
 \frac{1}{t}
 \log
 \widetilde{\mathbb{E}}_{x_0}
 \exp
 \Biggl\{
  \frac{\theta}{p} t
  \biggl(
   \int_E
    {L_{t, \varepsilon}^{(i)} }^p
   dm_h
  \biggr)^{1/p}
 \Biggr\}
\\
=&
 -\delta
+
 \frac{1}{p}
 \sum_{i=1} ^p
 \sup_{\psi\in \mathcal{F}^{(i)}, \|\psi\|_{L^2(m)} = 1}
 \Biggl\{
  \theta
  \biggl(
   \int_E
    |p^{(i)}_\varepsilon[\psi^2] |^p
   dm_h
  \biggl)^{1/p}
 -
  p \mathcal{E}^{(i)}(\psi, \psi)
 +
  p \lambda^{(i)}_1
 \Biggr\}
.
\end{align*}

\vspace{1eM}

Finally,
we prove the upper bound of \eqref{eq_prop62_2}.
Since
\begin{equation*}
 \Biggl\{
  \int_E
   \prod_{i=1} ^p
   L_{t, \varepsilon} ^{(i)}
  dm_h
 \Biggr\}^{1/p}
\leq
 \Biggl\{
 \prod_{i=1} ^p
 \biggl(
  \int_E
   {L_{t, \varepsilon} ^{(i)} }^p
  dm_h
 \biggr) ^{1/p}
 \Biggr\}^{1/p}
\leq
 \sum_{i=1} ^p
 \frac{1}{p}
 \biggl(
  \int_E
   {L_{t, \varepsilon} ^{(i)} }^p
  dm_h
 \biggl)^{1/p}
,
\end{equation*}
we have from the upper bound of 1st equality,
\begin{align*}
&
 \limsup_{t\rightarrow \infty}
 \frac{1}{t}
 \log
 \widetilde{\mathbb{E}}_{t}
 \Biggl[
  \exp
  \Biggl\{
   \theta t
   \biggl(
    \int_E
     \prod_{i=1} ^p
     L_{t, \varepsilon} ^{(i)} 
    dm_h
   \biggl)^{1/p}
  \Biggr\}
 \Biggr]
\\
\leq&
 \sum_{i=1} ^p
 \limsup_{t\rightarrow \infty}
 \frac{1}{t}
 \log
 \widetilde{\mathbb{E}}_{t}
 \Biggl[
  \exp
  \Biggl\{
   \frac{\theta}{p} t
   \biggl(
    \int_E
     {L_{t, \varepsilon} ^{(i)} }^p
    dm_h
   \biggr)^{1/p}
  \Biggr\}
 \Biggr]
\\
\leq&
 \frac{1}{p}
 \sum_{i=1} ^p
 \sup_{\psi\in \mathcal{F}^{(i)}, \|\psi\|_2=1}
 \Biggl\{
  \theta
  \biggl(
   \int_E
     p^{(i)}_\varepsilon[\psi^2] ^p
   dm_h
  \biggr)^{1/p}
 -
  p \mathcal{E}^{(i)}(\psi, \psi)
 +
  p \lambda^{(i)}_1
 \Biggr\}
.
\end{align*}
\end{proof}

\subsection{Exponential approximation}

Recall that
$
 t^p
 \int_E
  \prod_{i=1} ^p
  L_{t, \varepsilon} ^{(i)}
 dm_h
=
 \langle
  \ell_{t, \varepsilon} ^{\mathrm{IS}}
 ,
  h
 \rangle
$
and
$
 \lambda_1 ^{(i)}
:=
 \inf
 \left\{
  \mathcal{E}^{(i)} (\psi,\psi)
 :
  \psi \in \mathcal{F}^{(i)}
  ,
  \int_E
   \psi^2
  dm
 =
  1
 \right\}
$.

Next, we prove
a similar result with \cite[Theorem 6]{MR2165257}.
As 
we have already noted in the end of 
Section \ref{Sec_Main_results},
Chen and Rosen
consider the stable processes on $\mathbb{R}^N$,
so the Fourier transformation method is valid.
Instead, 
we use Proposition \ref{Prop_exp_app}.

%

\begin{Lem}
\label{Lem_6-3}
For any $\theta>0$, it holds that
\begin{align*}
 \limsup_{t\rightarrow \infty}
 \frac{1}{t}
 \log
 \widetilde{\mathbb{E}}_{x_0}
 \bigl[
  \exp
  \{
   \theta
   |
   \langle
    \ell_{t, \varepsilon} ^{\mathrm{IS}}
   ,
    h
   \rangle
  -
   \langle
    \ell_{t} ^{\mathrm{IS}}
   ,
    h
   \rangle
   |^{1/p}
  \}
 \bigr]
\leq
 1
+
 \frac{\sum_{i=1} ^p \lambda_1 ^{(i)}}{p}
.
\end{align*}
\end{Lem}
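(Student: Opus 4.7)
The plan is to expand the exponential in its Taylor series and to bound each moment $\widetilde{\mathbb{E}}_{x_0}[X^{m/p}]$ via Proposition~\ref{Prop_exp_app} combined with H\"older's inequality, where $X := |\langle \ell_{\varepsilon, t}^{\mathrm{IS}} - \ell_t^{\mathrm{IS}}, h\rangle|$. Writing $P_t := \mathbb{P}(t < \zeta^{(1)}\wedge\cdots\wedge\zeta^{(p)})$, H\"older's inequality with exponents $p$ and $p/(p-1)$ applied on the event $\{t<\zeta^{(1)}\wedge\cdots\wedge\zeta^{(p)}\}$ gives
\[
 \mathbb{E}\bigl[X^{m/p};\, t<\zeta^{(1)}\wedge\cdots\wedge\zeta^{(p)}\bigr]
 \leq
 \mathbb{E}\bigl[X^m;\, t<\zeta^{(1)}\wedge\cdots\wedge\zeta^{(p)}\bigr]^{1/p}
 \cdot P_t^{(p-1)/p},
\]
and Proposition~\ref{Prop_exp_app} (applied with $f = h$ and $k = m$) bounds the inner moment by $e^{pt}(m!)^p C(\varepsilon)^m$. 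Consequently $\mathbb{E}[X^{m/p};\,\cdots] \leq e^{t}\, m!\, C(\varepsilon)^{m/p}\, P_t^{(p-1)/p}$, and the factorial cancels against the $1/m!$ coming from the Taylor expansion of $\exp(\theta X^{1/p})$.

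Substituting into the series and summing yields
\[
 \widetilde{\mathbb{E}}_{x_0}\bigl[\exp(\theta X^{1/p})\bigr]
 =
 \frac{1}{P_t}\sum_{m=0}^\infty \frac{\theta^m}{m!}\,\mathbb{E}\bigl[X^{m/p};\, t<\zeta^{(1)}\wedge\cdots\wedge\zeta^{(p)}\bigr]
 \leq
 \frac{e^{t}\, P_t^{-1/p}}{1 - \theta C(\varepsilon)^{1/p}},
\]
valid whenever $\theta C(\varepsilon)^{1/p} < 1$. Taking $t^{-1}\log$ and sending $t\to\infty$, the $e^{t}$ factor contributes exactly $1$; the factor $P_t^{-1/p}$ contributes $(\sum_{i=1}^p \lambda_1^{(i)})/p$, by virtue of the asymptotic $t^{-1}\log P_t \to -\sum_{i=1}^p \lambda_1^{(i)}$ which is already invoked in the paper from \cite[Corollary 6.4.2]{MR2778606}; and the $t$-independent factor $(1-\theta C(\varepsilon)^{1/p})^{-1}$ contributes $0$ in the limit. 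This yields precisely the claimed upper bound.

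The main technical point is the requirement $\theta C(\varepsilon)^{1/p} < 1$, needed so that the geometric series converges. Since $C(\varepsilon)\downarrow 0$ as $\varepsilon\downarrow 0$ by \eqref{eq_exp_app_modif}, the condition is satisfied for all sufficiently small $\varepsilon$ depending on $\theta$. This mild restriction is harmless for the intended use of Lemma~\ref{Lem_6-3} in the proof of Proposition~\ref{Prop_mgf}, where $\varepsilon$ is eventually sent to zero after the limsup in $t$ is taken.
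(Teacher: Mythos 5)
Your proof follows exactly the same route as the paper's: expand the exponential, apply H\"older with exponents $p$ and $p/(p-1)$ to split the $k/p$-th moment into the $k$-th moment raised to power $1/p$ times $P_t^{(p-1)/p}$, invoke Proposition~\ref{Prop_exp_app} to produce the factorial that cancels, sum the geometric series for $\varepsilon$ small depending on $\theta$, and identify $\lim_t t^{-1}\log P_t = -\sum_i \lambda_1^{(i)}$. The observations you add about the restriction $\theta C(\varepsilon)^{1/p}<1$ and why it is harmless are also present (more tersely) in the paper.
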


\begin{proof}
Define $q$ by $1/p + 1/q = 1$.
By H\"older's inequality and Proposition \ref{Prop_exp_app}, we get
\begin{align*}
&
 \mathbb{E}_{x_0}
 \bigl[
  |
   \langle
    \ell_{t, \varepsilon} ^{\mathrm{IS}}
   ,
    h
   \rangle
  -
   \langle
    \ell_{t} ^{\mathrm{IS}}
   ,
    h
   \rangle
  |^{k/p}
 ;
  t< \zeta^{(1)}\wedge \cdots \wedge \zeta^{(p)}
 \biggr]
\\
\leq&
 \mathbb{E}_{x_0}
 \bigl[
  |
   \langle
    \ell_{t, \varepsilon} ^{\mathrm{IS}}
   ,
    h
   \rangle
  -
   \langle
    \ell_{t} ^{\mathrm{IS}}
   ,
    h
   \rangle
  |^{k}
 ;
  t< \zeta^{(1)}\wedge \cdots \wedge \zeta^{(p)}
 \bigr]^{1/p}
 \mathbb{P}_{x_0}
 (
  t< \zeta^{(1)}\wedge \cdots \wedge \zeta^{(p)}
 )^{1/q}
\\
\leq&
 k! 
 e^{t} 
 C(\varepsilon)^{k/p}
 \mathbb{P}_{x_0}
 (
  t< \zeta^{(1)}\wedge \cdots \wedge \zeta^{(p)}
 )^{1/q}
\end{align*}

\noindent
and hence, for small $\varepsilon > 0$ 
(which depends on $\theta$), 
we have
\begin{align*}
&
 \mathbb{E}_{x_0}
 \bigl[
  \exp
  \{
   \theta
   |
   \langle
    \ell_{t, \varepsilon} ^{\mathrm{IS}}
   ,
    h
   \rangle
  -
   \langle
    \ell_{t} ^{\mathrm{IS}}
   ,
    h
   \rangle
   |^{1/p}
  \}
 ;
  t< \zeta^{(1)}\wedge \cdots \wedge \zeta^{(p)}
 \bigr]
\\
=&
 \sum_{k=0} ^\infty
 \frac{\theta^{k}}{k!}
 \mathbb{E}_{x_0}
 \bigl[
  |
   \langle
    \ell_{t, \varepsilon} ^{\mathrm{IS}}
   ,
    h
   \rangle
  -
   \langle
    \ell_{t} ^{\mathrm{IS}}
   ,
    h
   \rangle
  |^{k/p}
 ;
  t< \zeta^{(1)}\wedge \cdots \wedge \zeta^{(p)}
 \bigr]
\\
\leq&
 e^{t} 
 \sum_{k=0} ^\infty
 \theta^k
 C(\varepsilon)^{k/p}
 \mathbb{P}_{x_0}
 (
  t< \zeta^{(1)}\wedge \cdots \wedge \zeta^{(p)}
 )^{1/q}
\\
=&
 e^{t} 
 \bigl\{
  1
 -
  \theta
  C(\varepsilon)^{1/p}
 \bigr\}^{-1}
 \mathbb{P}_{x_0}
 (
  t< \zeta^{(1)}\wedge \cdots \wedge \zeta^{(p)}
 )^{1/q}
.
\end{align*}

\noindent
By normalization, we rewrite the above inequality as
\begin{align*}
&
 \widetilde{\mathbb{E}}_{t}
 \bigl[
  \exp
  \{
   \theta
   |
   \langle
    \ell_{t, \varepsilon} ^{\mathrm{IS}}
   ,
    h
   \rangle
  -
   \langle
    \ell_{t} ^{\mathrm{IS}}
   ,
    h
   \rangle
   |^{1/p}
  \}
 \bigr]
\leq
 e^{t} 
 \bigl\{
  1
 -
  \theta
  C(\varepsilon)^{1/p}
 \bigr\}^{-1}
 \mathbb{P}_{x_0}
 (
  t< \zeta^{(1)}\wedge \cdots \wedge \zeta^{(p)}
 )^{- 1/p}
.
\end{align*}

\noindent
By \cite[Corollary 6.4.2]{MR2778606}, 
we have the equality
$
 \lim_{t\rightarrow\infty}
 t^{-1}
 \log
 \mathbb{P}_{x_0}
 (
  t< \zeta^{(i)}
 )
=
 -\lambda_1 ^{(i)}
$.
Since $\zeta^{(1)}, \cdots, \zeta^{(p)}$ are independent,
we reach the conclusion.
\end{proof}


\vspace{1eM}

Fix 
a nonnegative and compactly supported function
$h\in \mathcal{B}_b(E)$.
For $\theta \geq0$ and $\varepsilon>0$, 
define 
\begin{align*}
 N^{(i)}(\theta, \varepsilon, h) 
:=&
 \sup_{\psi\in\mathcal{F}^{(i)}, \|\psi\|_2 =1}
 \Biggl\{
  \theta 
  \biggl(
   \int_E
    p_\varepsilon^{(i)} 
    [
     \psi^{2}
    ]^p
   dm_h
  \biggr)^{1/p}
 -
  p\mathcal{E}^{(i)}(\psi, \psi)
 +
  p\lambda^{(i)}_1
 \Biggr\}
,
\\
 N^{(i)}(\theta, 0, h) 
:=&
 \sup_{\psi\in\mathcal{F}^{(i)}, \|\psi\|_2 =1}
 \Biggl\{
  \theta 
  \biggl(
   \int_E
    \psi^{2p}
   dm_h
  \biggr)^{1/p}
 -
  p\mathcal{E}^{(i)}(\psi, \psi)
 +
  p\lambda^{(i)}_1
 \Biggr\}
.
\end{align*}

The
next Lemma is used in the proof of the
lower bound of Proposition \ref{Prop_mgf} below.
\begin{Lem}
\label{Lem_66}
For fixed $\theta\geq 0$, it holds that
\begin{align}
 \limsup_{\varepsilon \rightarrow 0}
 N^{(i)}(\theta, \varepsilon, h)
\leq&
 N^{(i)}(\theta, 0, h)
,
\\
 \limsup_{\theta' \rightarrow \theta}
 N^{(i)}(\theta', 0, h)
\leq&
 N^{(i)}(\theta, 0, h)
.
\end{align}
\end{Lem}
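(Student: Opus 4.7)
The two inequalities are essentially independent. The first rests on a variational compactness argument built on the compact embedding \eqref{eq_compact2p} of Lemma~\ref{Lem_pre} together with the lower semicontinuity of $\mathcal{E}^{(i)}$, while the second exploits the convexity of $\theta \mapsto N^{(i)}(\theta,0,h)$.

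For the first assertion I fix a sequence $\varepsilon_n \downarrow 0$ realising $\limsup_{\varepsilon \to 0} N^{(i)}(\theta,\varepsilon,h)$ and choose $\psi_n \in \mathcal{F}^{(i)}$ with $\|\psi_n\|_2 = 1$ such that
\[
 \theta \Bigl(\int_E p_{\varepsilon_n}^{(i)}[\psi_n^2]^p\, dm_h\Bigr)^{1/p} - p\mathcal{E}^{(i)}(\psi_n,\psi_n) + p\lambda_1^{(i)} \geq N^{(i)}(\theta,\varepsilon_n,h) - \tfrac{1}{n}.
\]
Since $h$ is bounded, $L^p$-contractivity of $p_{\varepsilon_n}^{(i)}$ (Proposition~\ref{Prop_Lp-cont-op}) controls the first term by $\theta \|h\|_\infty^{1/p} \|\psi_n\|_{2p}^2$, and the Sobolev-type inequality \eqref{eq_Sobolev} applied with small $\delta$ then forces $\{\mathcal{E}^{(i)}(\psi_n,\psi_n)\}$ to be uniformly bounded. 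Hence $\{\psi_n\}$ is bounded in $(\mathcal{F}^{(i)}, \|\cdot\|_{\mathcal{E}_1^{(i)}})$, and \eqref{eq_compact2p} produces $\psi \in \mathcal{F}^{(i)}$ with, along a subsequence, $\psi_n \to \psi$ in both $L^2(E;m)$ and $L^{2p}(E;m)$, $\|\psi\|_2 = 1$, and $\liminf_n \mathcal{E}^{(i)}(\psi_n,\psi_n) \geq \mathcal{E}^{(i)}(\psi,\psi)$ by closedness of the Dirichlet form. The decisive step is then to show $p_{\varepsilon_n}^{(i)}[\psi_n^2] \to \psi^2$ in $L^p(E;m)$, which I handle by the triangle-inequality split
\[
 \|p_{\varepsilon_n}^{(i)}[\psi_n^2] - \psi^2\|_p \leq \|p_{\varepsilon_n}^{(i)}[\psi_n^2 - \psi^2]\|_p + \|p_{\varepsilon_n}^{(i)}[\psi^2] - \psi^2\|_p.
\]
The first term is bounded by $\|\psi_n^2 - \psi^2\|_p$ via $L^p$-contractivity and vanishes because $\psi_n \to \psi$ in $L^{2p}$; the second vanishes by the $L^p$-strong continuity of $\{T_\varepsilon^{(i)}\}$ in $\varepsilon$ (Proposition~\ref{Prop_Lp-cont-semigrp}). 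Converting $L^p$-convergence of the functions into convergence of $\int(\cdot)^p\, dm_h$ uses $|a^p - b^p| \leq p \max(a,b)^{p-1} |a - b|$ together with Hölder and the boundedness of $h$. Passing to the limit yields
\[
 \limsup_{\varepsilon \to 0} N^{(i)}(\theta,\varepsilon,h) \leq \theta\Bigl(\int_E \psi^{2p}\,dm_h\Bigr)^{1/p} - p\mathcal{E}^{(i)}(\psi,\psi) + p\lambda_1^{(i)} \leq N^{(i)}(\theta,0,h).
\]

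For the second assertion, note that for every admissible $\psi$ the map $\theta \mapsto \theta(\int \psi^{2p}\,dm_h)^{1/p} - p\mathcal{E}^{(i)}(\psi,\psi) + p\lambda_1^{(i)}$ is affine in $\theta$, so $N^{(i)}(\cdot,0,h)$ is convex on $[0,\infty)$ as a pointwise supremum of affine functions. Applying \eqref{eq_Sobolev} with $\delta$ chosen so that $\theta \|h\|_\infty^{1/p}\delta < p$ shows $N^{(i)}(\theta,0,h) < \infty$ for every $\theta \geq 0$. A convex function finite on $[0,\infty)$ is continuous on $(0,\infty)$, giving the desired upper semicontinuity there; at $\theta = 0$ the convexity inequality $N^{(i)}(\theta',0,h) \leq (1-\theta')N^{(i)}(0,0,h) + \theta'N^{(i)}(1,0,h)$ on $[0,1]$ supplies right-continuity, which is the only direction needed since $\theta' \geq 0$. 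The main obstacle will be orchestrating the simultaneous limit $p_{\varepsilon_n}^{(i)}[\psi_n^2] \to \psi^2$: neither strong semigroup continuity in $\varepsilon$ at a \emph{fixed} argument nor continuity of $T_\varepsilon^{(i)}$ in its argument suffices individually, and it is the triangle-inequality split above—together with the uniform Dirichlet-form bound that makes the subsequence extraction possible—that reduces the question to two separately known facts.
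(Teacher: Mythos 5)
Your argument for the first inequality follows the paper's own route: extract near-maximizers $\psi_n$, bound $\mathcal{E}^{(i)}(\psi_n,\psi_n)$ via \eqref{eq_Sobolev} (absorbing the $\theta\delta$ coefficient into $p$), pass to an $L^2\cap L^{2p}$-convergent subsequence by \eqref{eq_compact2p}, and then use the triangle-inequality split together with $L^p$-contractivity of $T_\varepsilon^{(i)}$ and strong continuity (Propositions~\ref{Prop_Lp-cont-op}, \ref{Prop_Lp-cont-semigrp}) to conclude $p_{\varepsilon_n}^{(i)}[\psi_n^2]\to\psi^2$ in $L^p(m)$. That matches the paper step for step (the paper writes this with near-maximizers $\psi^{(\delta,\varepsilon)}$ rather than a diagonal sequence, but the mechanism is identical). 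One small point worth making explicit is the order in which you pass to the limit: you must use that $\limsup_n(a_n - pb_n) \leq \lim_n a_n - p\liminf_n b_n$, so that the $L^p$-convergence of $p_{\varepsilon_n}^{(i)}[\psi_n^2]$ and the lower semicontinuity of $\mathcal{E}^{(i)}$ combine correctly; you tacitly do this, and it is fine.

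For the second inequality your route is genuinely different from the paper's. The paper picks a near-maximizer $\psi^{(\delta,\theta')}$ for $N^{(i)}(\theta',0,h)$, plugs it into the variational expression at $\theta$, and directly bounds $N^{(i)}(\theta',0,h)-N^{(i)}(\theta,0,h)\le(\theta'-\theta)\|\psi^{(\delta,\theta')}\|_{L^{2p}(m_h)}^2+\delta$, where the $L^{2p}$-norm is controlled uniformly by \eqref{eq_Sobolev}. You instead observe that $\theta\mapsto N^{(i)}(\theta,0,h)$ is a pointwise supremum of affine functions, hence convex, that it is finite on $[0,\infty)$ by the same Sobolev bound, and then invoke continuity of finite convex functions on the interior plus the one-sided convexity inequality at $\theta=0$. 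This is correct and arguably cleaner: it isolates the structural reason (convexity) for the upper semicontinuity and avoids tracking the dependence of the near-maximizer on $\theta'$. What the paper's estimate buys, by contrast, is a quantitative modulus $|N^{(i)}(\theta',0,h)-N^{(i)}(\theta,0,h)|\lesssim|\theta'-\theta|$ locally uniformly, which is stronger than mere upper semicontinuity but is not needed for the lemma. Both proofs also implicitly use that $N^{(i)}(\cdot,0,h)$ is nondecreasing (so the case $\theta'<\theta$ is trivial); it would be slightly cleaner to state this.
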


\begin{proof}
For convenience, 
we fix $i$ and omit the index $^{(i)}$
only in this proof.

Fix $\theta>0$ and $\delta >0$.
Take $L^2(E; m)$-normalized 
$\psi^{(\delta, \varepsilon)} \in \mathcal{F}$ 
such that
\begin{equation*}
 N(\theta, \varepsilon, h) 
-
 \delta
<
  \theta 
  \Bigl(
   \int_E
    p_\varepsilon
    [
     {\psi^{(\delta, \varepsilon)}}^{2}
    ]^p
   dm_h
  \Bigr)^{1/p}
 -
  p
  \mathcal{E}
  (
   \psi^{(\delta, \varepsilon)}
  ,
   \psi^{(\delta, \varepsilon)}
  )
 +
  p\lambda_1
.
\end{equation*}

\noindent
We have
$
 0
=
 N(0, \varepsilon, h) 
\leq
 N(\theta, \varepsilon, h)
$,
and by \eqref{eq_Sobolev}, we also have
\begin{equation*}
 \biggl(
  \int_E
   p_\varepsilon
   \bigl[
    {\psi^{(\delta, \varepsilon)}}^{2}
   \bigr]^p
  dm_h
 \biggr)^{1/p}
\leq
 \|
  \psi^{(\varepsilon, \delta)}
 \|_{L^{2p}(m)} 
\leq
 C_\delta
+
 \delta
 \mathcal{E}
 (
  \psi^{(\delta, \varepsilon)}
 ,
  \psi^{(\delta, \varepsilon)}
 )
\end{equation*}
and hence
$
 (
  p
 -
  \theta \delta
 )
 \mathcal{E}
 (
  \psi^{(\delta, \varepsilon)}
 ,
  \psi^{(\delta, \varepsilon)}
 )
<
 \theta 
 C_\delta
+
 p\lambda_1
+
 \delta
$.
By choosing a small $\delta >0$, 
$\{\psi^{(\delta, \varepsilon)}\}$ is
bounded in $(\mathcal{F}, \mathcal{E}_1)$.
By \eqref{eq_compact2p}, we can take  
a subsequence such that
$
  \psi^{(\delta, \varepsilon)}
$
converges to some
$
 \psi^{(\delta)}
$
as $\varepsilon \rightarrow 0$,
$
 \text{ in } L^2(E; m)
 \text{ and in } L^{2p}(E; m)
$.
Then we have
\begin{align*}
&
 N(\theta, \varepsilon, h)
-
 N(\theta, 0, h) 
\\
\leq&
 \theta
 \|
  p_\varepsilon
  [
   {\psi^{(\delta, \varepsilon)}}^2
  ]
 \|_{L^p(m_h)}
-
 \theta
 \|
  {\psi^{(\delta, \varepsilon)}}^2
 \|_{L^p(m_h)}
+
 \delta
\\
\leq&
 \theta
 \|
  p_\varepsilon
  [
   {\psi^{(\delta, \varepsilon)}}^2
  -
   {\psi^{(\delta)}}^2
  ]
 \|_{L^p(m_h)}
+
 \theta
 \|
  p_\varepsilon
  [
   {\psi^{(\delta)}}^2
  ]
 \|_{L^p(m_h)}
-
 \theta
 \|
  {\psi^{(\delta, \varepsilon)}}^2
 \|_{L^p(m_h)}
+
 \delta
\\
\leq&
 \theta
 \|
  h
 \|_{\infty}
 \|
  {\psi^{(\delta, \varepsilon)}}^2
 -
  {\psi^{(\delta, \varepsilon)}}^2
 \|_{L^p(m)}
+
 \theta
 \|
  p_\varepsilon
  [
   {\psi^{(\delta)}}^2
  ]
 \|_{L^p(m_h)}
-
 \theta
 \|
  {\psi^{(\delta)}}^2
 \|_{L^p(m_h)}
+
 \delta
.
\end{align*}
Hence
$
 \limsup_{\varepsilon\rightarrow 0}
 N(\theta, \varepsilon, h)
\leq
 N(\theta, 0, h)
$.

To prove the second inequality, 
fix $\theta'\geq 0$.
For each $\delta > 0$, 
take $\psi^{(\delta, \theta')} \in \mathcal{F}$ 
such that $\|\psi^{(\delta, \theta')} \|_2=1$ and
$
 N(\theta', 0, h)
<
 \theta'
 \|
  \psi^{(\delta, \theta')}
 \|_{L^{2p}(m_h)} ^2
-
 p\mathcal{E}(\psi^{(\delta, \theta')}, \psi^{(\delta, \theta')})
+
 p\lambda_1
+
 \delta
$.
By the same manner as the proof of the first inequality,
we can find that 
$
 (
  p
 -
  \theta' \delta
 )
 \mathcal{E}
 (
  \psi^{(\delta, \theta')}
 ,
  \psi^{(\delta, \theta')}
 )
<
 \theta'
 C_\delta
+
 p\lambda_1
+
 \delta
$.
We have
\begin{align*}
&
 N(\theta', 0, h)
-
 N(\theta , 0, h) 
\leq
 (\theta' - \theta) 
 \|
  {\psi^{(\delta, \theta')}}
 \|_{L^{2p}(m_h)} ^2
+
 \delta
\leq
 (\theta' - \theta) 
 (
  C_\delta
 +
  \delta
  \mathcal{E}
  (
   \psi^{(\delta, \theta')}
  ,
   \psi^{(\delta, \theta')}
  )
 )
+
 \delta
.
\end{align*}
Hence
$
 \limsup_{\theta' \rightarrow \theta}
 N(\theta', 0, h)
\leq
 N(\theta, 0, h)
$.
\end{proof}


\vspace{1eM}

Finally we prove Proposition \ref{Prop_mgf}.
Our proof is based on the proof of 
\cite[Theorem 1]{MR2165257}.

\begin{proof}
[Proof of Proposition \ref{Prop_mgf}]
We first prove the upper bound.
Fix $a, b>0$ with $a^{-1} + b^{-1} =1$.
By H\"older's inequality,
\begin{equation*}
 \widetilde{\mathbb{E}}_{t}
 \exp
 \{
  \theta a^{-1}
  \langle
   \ell_{t, \varepsilon} ^{\mathrm{IS}}
  ,
   h
  \rangle^{1/p} 
 \}
\leq
 \Bigl(
  \widetilde{\mathbb{E}}_{t}
  \exp
  \{
   \theta 
   \langle
    \ell_{t} ^{\mathrm{IS}}
   ,
    h
   \rangle^{1/p} 
  \}
 \Bigr)^{1/a}
 \Bigl(
  \widetilde{\mathbb{E}}_{t}
  \exp
  \{
   \theta a^{-1} b
   |
    \langle
     \ell_{t,\varepsilon} ^{\mathrm{IS}}
    ,
     h
    \rangle 
   -
    \langle
     \ell_{t} ^{\mathrm{IS}}
    ,
     h
    \rangle 
   |^{1/p} 
  \}
 \Bigr)^{1/b}
\end{equation*}
and by Proposition \ref{Prop_6-2},
\begin{align*}
&
 \frac{1}{p}
 \sum_{i=1} ^p
 \sup_{\psi\in \mathcal{F}^{(i)}, \|\psi\|_2=1}
 \Bigl\{
  \theta a^{-1}
  \|p^{(i)}_\varepsilon[\psi^2]\|_{L^{p}(m_h)}
 -
  p\mathcal{E}^{(i)}(\psi, \psi)
 +
  p\lambda^{(i)}_1
 \Bigr\}
\\
\leq&
 \liminf_{t\rightarrow\infty}
 \frac{1}{at}
 \log
 \widetilde{\mathbb{E}}_{t}
 \exp
 \{
  \theta 
  \langle
   \ell_{t} ^{\mathrm{IS}}
  ,
   h
  \rangle^{1/p} 
 \}
+
 \limsup_{t\rightarrow\infty}
 \frac{1}{bt}
 \log
 \widetilde{\mathbb{E}}_{t}
 \exp
 \{
  \theta a^{-1} b
  |
   \langle
    \ell_{t} ^{\mathrm{IS}}
   ,
    h
   \rangle
  -
   \langle
    \ell_{t, \varepsilon} ^{\mathrm{IS}}
   ,
    h
   \rangle
  |^{1/p} 
 \}
.
\end{align*}

\noindent
We can see that
$
 p^{(i)} _\varepsilon[\psi^2] 
\rightarrow 
 \psi^2
$ 
in $L^p$ for each $i$ and 
$\psi \in \mathcal{F}^{(i)}$.
By taking $\limsup_{\varepsilon \rightarrow 0}$,
Lemma \ref{Lem_6-3} implies that 
\begin{align*}
&
 \frac{1}{p}
 \sum_{i=1} ^p
 \sup_{\psi\in \mathcal{F}^{(i)}, \|\psi\|_2=1}
 \Bigl\{
  \theta a^{-1}
  \|p^{(i)}_\varepsilon[\psi^2]\|_{L^{p}(m_h)}
 -
  p\mathcal{E}^{(i)}(\psi, \psi)
 +
  p\lambda^{(i)}_1
 \Bigr\}
\\
\leq&
 \liminf_{t\rightarrow\infty}
 \frac{1}{at}
 \log
 \widetilde{\mathbb{E}}_{t}
 \exp
 \{
  \theta 
  \langle
   \ell_{t} ^{\mathrm{IS}}
  ,
   h
  \rangle^{1/p} 
 \}
+
 \frac{1}{bp}
 \cdot
 \sum_{i=1} ^p
 \Bigl(
  1
 +
  \lambda^{(i)} _1
 \Bigr)
.
\end{align*}

\noindent
Letting $a\rightarrow 1$, $b\rightarrow \infty$ 
and hence
\begin{align*}
&
 \frac{1}{p}
 \sum_{i=1} ^p
 \sup_{\psi\in \mathcal{F}^{(i)}, \|\psi\|_2=1}
 \Bigl\{
  \theta 
  \|\psi\|_{L^{2p}(m_h)} ^2
 -
  p\mathcal{E}^{(i)}(\psi, \psi)
 +
  p\lambda^{(i)}_1
 \Bigr\}
\leq
 \liminf_{t\rightarrow\infty}
 \frac{1}{t}
 \log
 \widetilde{\mathbb{E}}_{t}
 \exp
 \{
  \theta 
  \langle
   \ell_{t} ^{\mathrm{IS}}
  ,
   h
  \rangle^{1/p} 
 \}
.
\end{align*}

\vspace{1eM}

We next prove the lower bound.
Fix $a, b>0$ with $a^{-1} + b^{-1} =1$.
By H\"older's inequality,
\begin{equation*}
 \widetilde{\mathbb{E}}_{t}
 \exp
 \{
  \theta
  \langle
   \ell_{t} ^{\mathrm{IS}}
  ,
   h
  \rangle^{1/p} 
 \}
\leq
 \Bigl(
  \widetilde{\mathbb{E}}_{t}
  \exp
  \{
   \theta a 
   \langle
    \ell_{t, \varepsilon} ^{\mathrm{IS}}
   ,
    h
   \rangle^{1/p} 
  \}
 \Bigr)^{1/a}
 \Bigl(
  \widetilde{\mathbb{E}}_{t}
  \exp
  \{
   \theta b
   |
    \langle
     \ell_{t, \varepsilon} ^{\mathrm{IS}}
    ,
     h
    \rangle 
   -
    \langle
     \ell_{t} ^{\mathrm{IS}}
    ,
     h
    \rangle 
   |^{1/p} 
  \}
 \Bigr)^{1/b}
\end{equation*}
and hence
\begin{align*}
&
 \limsup_{t\rightarrow\infty}
 \frac{1}{t}
 \log
 \widetilde{\mathbb{E}}_{t}
 \exp
 \{
  \theta 
  \langle
   \ell_{t} ^{\mathrm{IS}}
  ,
   h
  \rangle^{1/p} 
 \}
\\
\begin{split}
\leq&
 \frac{1}{ap}
 \sum_{i=1} ^p
 \sup_{\psi\in \mathcal{F}^{(i)}, \|\psi\|_2=1}
 \Bigl\{
  \theta a
  \|p^{(i)} _\varepsilon[\psi^2]\|_{L^p(m_h)}
 -
  p\mathcal{E}^{(i)} (\psi, \psi)
 +
  p\lambda^{(i)} _1
 \Bigl\}
\\
&
\hspace{40mm}
+
 \limsup_{t\rightarrow\infty}
 \frac{1}{bt}
 \log
 \widetilde{\mathbb{E}}_{t}
 \exp
 \{
  \theta  b
  |
   \langle
    \ell_{t, \varepsilon} ^{\mathrm{IS}}
   ,
    h
   \rangle 
  -
   \langle
    \ell_{t} ^{\mathrm{IS}}
   ,
    h
   \rangle 
  |^{1/p} 
 \}
\end{split}
.
\end{align*}

\noindent
Take 
$\limsup_{\varepsilon \rightarrow 0}$ and
$\limsup_{a\rightarrow 1}$, successively.
Then 
Lemma \ref{Lem_6-3} and Lemma \ref{Lem_66} imply
\begin{align*}
&
 \limsup_{t\rightarrow\infty}
 \frac{1}{t}
 \log
 \widetilde{\mathbb{E}}_{t}
 \exp
 \{
  \theta 
  \langle
   \ell_{t} ^{\mathrm{IS}}
  ,
   h
  \rangle^{1/p} 
 \}
\\
\leq&
 \limsup_{a\rightarrow 1}
 \frac{1}{ap}
 \sum_{i=1} ^p
 \sup_{\psi\in \mathcal{F}^{(i)}, \|\psi\|_2=1}
 \Bigl\{
  \theta a^{-1}
  \|\psi\|_{L^{2p}(m_h)} ^2
 -
  p\mathcal{E}^{(i)}(\psi, \psi)
 +
  p\lambda^{(i)}_1
 \Bigr\}
+
 \limsup_{a\rightarrow 1}
 \frac{1}{bp}
 \sum_{i=1} ^p
 \Bigl(
  1
 +
  \lambda^{(i)} _1
 \Bigr)
\\
&
 \leq
 \frac{1}{p}
 \sum_{i=1} ^p
 \sup_{\psi\in \mathcal{F}^{(i)}, \|\psi\|_2=1}
 \Bigl\{
  \theta
  \|\psi\|_{L^{2p}(m_h)} ^2
 -
  p\mathcal{E}^{(i)}(\psi, \psi)
 +
  p\lambda^{(i)}_1
 \Bigr\}
.
\end{align*}
\end{proof}


\section{Examples}
\label{Sec_7}

In this section, we give sufficient conditions for 
Assumption \textbf{(A)}.

\subsection{Diffusion processes with (sub-)Gaussian type estimates} 

Let
$(E, d, m)$ be a locally compact, separable 
metric measure space
with $\text{diam}(E) =1$ and $m(E)<\infty$.
Let
$X$ be a Hunt process on $E$ 
with the transition density $p_t(\cdot, \cdot)$.
Suppose there exist positive constants
$c_1, \cdots, c_5$, $d_{\rm f}\geq 1$ and $d_{\rm w}\geq 2$
such that
\begin{equation}
\label{eq_7-1}
 c_1
 t^{-d_{\rm f} / d_{\rm w}}
\leq
 p_t(x, x)
,
\end{equation}
\begin{equation}
\label{eq_7-2}
 \quad
 p_t (x, y)
\leq
 c_2
 t^{-d_{\rm f} / d_{\rm w}}
 \exp
 \left\{
  -c_3
  \left(
   \frac{d(x, y)^{d_{\rm w}}}{t}
  \right)^{1/(d_{\rm w} -1)}
 \right\}
\end{equation}
and
\begin{equation}
\label{eq_7-3}
 c_4 
 r^{d_{\rm f}}
\leq
 m( B(x, r) )
\leq
 c_5
 r^{d_{\rm f}}
\end{equation}
for all $x, y\in E$, $r\in (0, 1]$
and $t\in (0, 1]$.

\begin{Prop}
\label{Prop_Gaussian}
Under the above conditions,
if an integer $p$ with $p\geq 2$ satisfies
\begin{equation*}
 d_{\rm f} - p (d_{\rm f} - d_{\rm w}) > 0
,
\end{equation*}
then Assumption \textbf{(A)} holds.
\end{Prop}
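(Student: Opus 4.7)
The strategy is to verify (A1)--(A5) in turn, using the on-diagonal lower bound \eqref{eq_7-1}, the sub-Gaussian upper bound \eqref{eq_7-2}, and the Ahlfors regularity \eqref{eq_7-3}. First, (A2) follows immediately: the diagonal specialisation of \eqref{eq_7-2} gives the uniform pointwise bound $p_t(x,y) \leq c_2 t^{-d_{\rm f}/d_{\rm w}}$, while continuity of $p_t$ is a standard consequence of sub-Gaussian heat kernel theory on Ahlfors-regular spaces. For (A3), integrating $c_1 t^{-d_{\rm f}/d_{\rm w}} \leq p_t(x,x) \leq c_2 t^{-d_{\rm f}/d_{\rm w}}$ against $m$ over $E$ gives both bounds with $\rho = 2d_{\rm f}/d_{\rm w}$, using $m(E) < \infty$. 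For (A4), the same uniform bound yields $\|T_t\|_{1\to\infty} \leq c_2 t^{-d_{\rm f}/d_{\rm w}}$; the hypothesis $d_{\rm f} - p(d_{\rm f}-d_{\rm w}) > 0$ is equivalent to $2d_{\rm f}/d_{\rm w} < 2p/(p-1)$, so the interval $(\max(2, 2d_{\rm f}/d_{\rm w}), 2p/(p-1))$ is nonempty, and any $\mu$ in this interval works after absorbing the discrepancy between $t^{-d_{\rm f}/d_{\rm w}}$ and $t^{-\mu/2}$ into the constant on the bounded range $t \in (0, t_0]$. Once (A4) is established, (A1) follows automatically from $m(E) < \infty$ by Remark ii) of Section \ref{Sec_assumptions}.

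The main work is (A5), which I will derive from a pointwise estimate on $R_1(x,y)$. Splitting $R_1(x,y) = \int_0^1 e^{-t} p_t(x,y)\, dt + \int_1^\infty e^{-t} p_t(x,y)\, dt$, the tail is uniformly bounded by a constant since $p_t \leq c_2 t^{-d_{\rm f}/d_{\rm w}}$ is bounded for $t \geq 1$ and $\int_1^\infty e^{-t}\, dt < \infty$. For the short-time part, substituting $s = t/d(x,y)^{d_{\rm w}}$ into \eqref{eq_7-2} gives, with $r = d(x,y)$,
\[
 \int_0^1 e^{-t} p_t(x,y)\, dt \leq C \cdot
 \begin{cases}
  r^{d_{\rm w} - d_{\rm f}} & \text{if } d_{\rm f} > d_{\rm w}, \\
  1 + |\log r|              & \text{if } d_{\rm f} = d_{\rm w}, \\
  1                         & \text{if } d_{\rm f} < d_{\rm w},
 \end{cases}
\]
where in the first case the auxiliary integral $\int_0^\infty s^{-d_{\rm f}/d_{\rm w}} e^{-c_3 s^{-1/(d_{\rm w}-1)}}\, ds$ is finite thanks to super-exponential decay at $s \to 0$ and $d_{\rm f}/d_{\rm w} > 1$ at $s \to \infty$. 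In the delicate case $d_{\rm f} > d_{\rm w}$, decomposing $E$ into dyadic annuli about $x$ and applying \eqref{eq_7-3} converts $\int_E R_1(x,y)^p m(dy)$ into a geometric sum dominated (up to the bounded tail contribution) by $\sum_{k\geq 0} 2^{-k(d_{\rm f} + p(d_{\rm w} - d_{\rm f}))}$, which converges precisely under the assumed condition $d_{\rm f} - p(d_{\rm f}-d_{\rm w}) > 0$. This yields \eqref{eq_Green1} uniformly in $x$; the truncated version \eqref{eq_Green2} follows by restricting the $t$-integral to $(0, \delta)$ and invoking dominated convergence as $\delta \downarrow 0$.

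The principal technical obstacle is the pointwise resolvent estimate in (A5): one must trade the near-diagonal singularity $p_t \sim t^{-d_{\rm f}/d_{\rm w}}$ against the off-diagonal sub-Gaussian decay to extract the sharp power $r^{d_{\rm w} - d_{\rm f}}$, and then recognise that the resulting integrability threshold at $r \to 0$ coincides precisely with the spectral-dimension condition $d_{\rm f} - p(d_{\rm f}-d_{\rm w}) > 0$ already needed for (A4). By contrast, the remaining verifications of (A1)--(A4) are essentially immediate from \eqref{eq_7-1}--\eqref{eq_7-3} combined with $m(E) < \infty$.
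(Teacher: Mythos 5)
Your proposal is correct and follows essentially the same route as the paper: verify (A3) by integrating the two-sided on-diagonal bound against $m$ (finite), verify (A4) from the uniform upper bound $\|T_t\|_{1\to\infty}\leq c_2 t^{-d_{\rm s}/2}$ after observing that the hypothesis $d_{\rm f}-p(d_{\rm f}-d_{\rm w})>0$ is equivalent to $d_{\rm s}<2p/(p-1)$, and verify (A5) by splitting $R_1$ into a bounded tail and a short-time piece, then summing over dyadic annuli using the Ahlfors regularity \eqref{eq_7-3}.

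The only notable presentational difference is in (A5): the paper keeps the $t$-integral inside the dyadic sum and invokes ``standard calculations'' for \eqref{eq_7-4}, whereas you first extract the explicit pointwise bound $R_1(x,y)\lesssim 1+d(x,y)^{d_{\rm w}-d_{\rm f}}$ (respectively $1+|\log d(x,y)|$, respectively $1$) via the substitution $s=t/d(x,y)^{d_{\rm w}}$ and then integrate over $E$. Your version amounts to carrying out the computation the paper leaves implicit, and it makes visible exactly where the integrability threshold $d_{\rm f}+p(d_{\rm w}-d_{\rm f})>0$ enters. You also make explicit the verification of (A1) via Remark ii) and address (A2), both of which the paper passes over silently; neither is a gap, but your treatment is the more complete.
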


\begin{proof}
Write
$
 d_{\rm s}
:=
 2 d_{\rm f} / d_{\rm w}
$.
We have by \eqref{eq_7-1} and \eqref{eq_7-2},
$
 c_1 t^{-d_{\rm s}/ 2}
\leq
 p_t(x, x)
\leq
 c_2 t^{-d_{\rm s}/ 2}
$
for all $x\in E$ and $t\in (0, 1]$,
hence (A3) follows because $m(E)<\infty$.

Next, we have by \eqref{eq_7-2},
$
 p_t(x, y)
\leq
 c_2 t^{-d_{\rm s}/ 2}
$
for all $x, y\in E$ and $t\in (0, 1]$.
When 
$d_{\rm s} >2$, we have $p< d_{\rm s}/ (d_{\rm s} - 2)$
and (A4) holds for $\mu = d_{\rm s}$.
When
$d_{\rm s} \leq 2$, we have
$
 p 
<
 (2 + \varepsilon) / \{(2 + \varepsilon) - 2\}
=
 (2 +\varepsilon) / \varepsilon
$
for sufficiently small $\varepsilon>0$
and
$
 p_t(x, y)
\leq
 c_2
 t^{-d_{\rm s} / 2}
\leq
 c_2
 t^{-(2+\varepsilon)/2}
$
for all $x, y\in E$ and $t\in (0, 1]$.
Hence (A4) holds for $\mu = 2+\varepsilon$.

To
prove \eqref{eq_Green1} in (A5), fix $x, y\in E$.
We have
\begin{align*}
 R_1 (x, y)
=&
 \int_0 ^\infty
  e^{-t} p_t(x, y)
 dt
\\
\leq&
 \int_0 ^1
  e^{-t} 
  \Biggl[
   c_2
   t^{-d_{\rm f} / d_{\rm w}}
   \exp
   \Biggl\{
    -c_3
    \biggl(
     \frac{d(x, y)^{d_{\rm w}}}{t}
    \biggr)^{1/(d_{\rm w} -1)}
   \Biggr\}
  \Biggr]
 dt
+
 \int_1 ^\infty
  e^{-t}
  [
   c_2
   t^{-d_{\rm s}/2}
  ]
 dt
\\
\leq&
 \int_0 ^1
  e^{-t} 
  \Biggl[
   c_2
   t^{-d_{\rm f} / d_{\rm w}}
   \exp
   \Biggl\{
    -c_3
    \biggl(
     \frac{d(x, y)^{d_{\rm w}}}{t}
    \biggr)^{1/(d_{\rm w} -1)}
   \Biggr\}
  \Biggr]
 dt
+
 c_2
.
\end{align*}

\noindent
Since
$
 m( B(y; 2^{-n+1}) )
-
 m( B(y; 2^{-n}) )
\leq
 c_5 2^{-(n-1)d_{\rm f}}
-
 c_4 2^{-n d_{\rm f}} 
\leq
 C
 2^{-n d_{\rm f}} 
$
for each $n$, we also have
\begin{align*}
&
 \int_E
  \Biggl(
   \int_0 ^1
    e^{-t} 
    \Biggl[
     c_2
     t^{-d_{\rm f} / d_{\rm w}}
     \exp
     \Biggl\{
      -c_3
      \biggl(
       \frac{d(x, y)^{d_{\rm w}}}{t}
      \biggr)^{1/(d_{\rm w} -1)}
     \Biggr\}
    \Biggr]
   dt
  \Biggr)^p
 m(dx)
\\
=&
 \sum_{n=1} ^\infty
 \int_{2^{-n} < d(x, y) \leq 2^{-n+1}}
  \Biggl(
   \int_0 ^1
    e^{-t} 
    \Biggl[
     c_2
     t^{-d_{\rm f} / d_{\rm w}}
     \exp
     \Biggl\{
      -c_3
      \biggl(
       \frac{d(x, y)^{d_{\rm w}}}{t}
      \biggr)^{1/(d_{\rm w} -1)}
     \Biggr\}
    \Biggr]
   dt
  \Biggr)^p
 m(dx)
\\
\leq&
 \sum_{n=1} ^\infty
 \{ 
  m( B(y; 2^{-n+1}) )
 -
  m( B(y; 2^{-n}) )
 \} 
 \Biggl(
  \int_0 ^1
   e^{-t} 
   \Biggl[
    c_2
    t^{-d_{\rm f} / d_{\rm w}}
    \exp
    \Biggl\{
     -c_3
     \biggl(
      \frac{2^{-n d_{\rm w}}}{t}
     \biggr)^{1/(d_{\rm w} -1)}
    \Biggr\}
   \Biggr]
  dt
 \Biggr)^p
\\
\leq&
 C
 \sum_{n=1} ^\infty
 2^{-n d_{\rm f}} 
 \biggl(
  \int_0 ^1
   t^{-\frac{d_{\rm f}}{d_{\rm w}}}
   \exp
   \Bigl\{
    -c_3
    2^{-n \frac{d_{\rm w}}{d_{\rm w} -1}}
    t^{-\frac{1}{d_{\rm w} - 1}}
   \Bigr\}
  dt
 \biggr)^p
.
\end{align*}

\noindent
Now, since
$
 d_{\rm f} - p (d_{\rm f} - d_{\rm w}) > 0
$,
standard calculations imply 
\begin{equation}
\label{eq_7-4}
 \sum_{n=1} ^\infty
 2^{-n d_{\rm f}} 
 \biggl(
  \int_0 ^1
   t^{-\frac{d_{\rm f}}{d_{\rm w}}}
   \exp
   \Bigl\{
    -c_3
    2^{-n \frac{d_{\rm w}}{d_{\rm w} -1}}
    t^{-\frac{1}{d_{\rm w} - 1}}
   \Bigr\}
  dt
 \biggl)^p
<
 \infty
.
\end{equation}
Consequently, \eqref{eq_Green1} in (A5) holds. 

To 
see \eqref{eq_Green2}, 
we can similarly find that for $\delta<1$,
\begin{align*}
&
 \int_E
  \Biggl(
   \int_0 ^\delta
    e^{-t} 
    \Biggl[
     c_2
     t^{-d_{\rm f} / d_{\rm w}}
     \exp
     \Biggl\{
      -c_3
      \biggl(
       \frac{d(x, y)^{d_{\rm w}}}{t}
      \biggr)^{1/(d_{\rm w} -1)}
     \Biggr\}
    \Biggr]
   dt
  \Biggr)^p
 m(dx)
\\
\leq&
 C
 \sum_{n=1} ^\infty
 2^{-n d_{\rm f}} 
 \biggl(
  \int_0 ^\delta
   t^{-\frac{d_{\rm f}}{d_{\rm w}}}
   \exp
   \Bigl\{
    -c_3
    2^{-n \frac{d_{\rm w}}{d_{\rm w} -1}}
    t^{-\frac{1}{d_{\rm w} - 1}}
   \Bigr\}
  dt
 \biggr)^p
.
\end{align*}
By
regarding $\sum_{n}$ as the integral
with respect to the counting measure on $\mathbb{Z}_{>0}$,
\eqref{eq_Green2} follows by \eqref{eq_7-4} and 
the dominated convergence theorem.
\end{proof}

\subsection{Jump-type processes}

Let
$(E, d, m)$ be a locally compact, separable 
metric measure space
with $\text{diam}(E) =1$ and $m(E)<\infty$.
Let
$X$ be a Hunt process on $E$ 
with the transition density $p_t(\cdot, \cdot)$.
Suppose there exist positive constants
$c_1, \cdots, c_4$, $d_{\rm f}\geq 1$ and $d_{\rm w}\geq 2$
such that
\begin{equation}
 c_1
 t^{-d_{\rm f} / d_{\rm w}}
\leq
 p_t(x, x)
,
\end{equation}
\begin{equation}
 p_t (x, y)
\leq
 c_2
 \left\{
  t^{-d_{\rm f} / d_{\rm w}}
 \wedge
  \frac{t}{d(x, y)^{d_{\rm f} + d_{\rm w}}}
 \right\}
\end{equation}
and
\begin{equation}
 c_3 
 r^{d_{\rm f}}
\leq
 m( B(x, r) )
\leq
 c_4
 r^{d_{\rm f}}
\end{equation}
for all $x, y\in E$, $r\in (0, 1]$
and $t\in (0, 1]$.

By similar calculations as in the proof of Proposition
\ref{Prop_Gaussian}, we can prove the following.
\begin{Prop}
Under the above conditions,
if an integer $p$ with $p\geq 2$ satisfies
\begin{equation*}
 d_{\rm f} - p (d_{\rm f} - d_{\rm w}) > 0
,
\end{equation*}
then Assumption \textbf{(A)} holds.
\end{Prop}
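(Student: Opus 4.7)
The plan is to verify Assumption \textbf{(A)} by the same scheme as the proof of Proposition \ref{Prop_Gaussian}: properties (A3) and (A4) follow immediately from the on-diagonal bounds (the first term of the minimum supplies a uniform bound $p_t(x,y)\leq c_2 t^{-d_{\rm f}/d_{\rm w}}$), while (A5) will be reduced, after a pointwise estimate on $R_1(x,y)$, to a dyadic sum that converges exactly under the hypothesis $d_{\rm f} - p(d_{\rm f} - d_{\rm w}) > 0$. As before, (A1) is a consequence of (A4) together with $m(E)<\infty$ via Remark ii) of Section \ref{Sec_assumptions}, and (A2) is part of the standing setup.

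For (A3), the on-diagonal lower bound and the uniform upper bound $p_t(x,x)\leq c_2 t^{-d_{\rm f}/d_{\rm w}}$ integrate against $m$ to give the trace estimate with $\rho = d_{\rm s} := 2d_{\rm f}/d_{\rm w}$. The same uniform bound furnishes ultracontractivity, so (A4) holds with $\mu = d_{\rm s}$ when $d_{\rm s} > 2$ and with $\mu = 2+\varepsilon$ for sufficiently small $\varepsilon > 0$ when $d_{\rm s}\leq 2$. The hypothesis $d_{\rm f} - p(d_{\rm f} - d_{\rm w}) > 0$ rewrites as $d_{\rm s} < 2p/(p-1)$, which is precisely what is needed for $\mu$ to lie in $(2, 2p/(p-1))$.

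The heart of the argument is (A5). Write $r := d(x,y)$ and split $R_1(x,y) = \int_0^1 + \int_1^\infty$. The tail is bounded by a constant, uniformly in $x,y$, by the first term of the minimum. For the head, I would split at the crossover time $t^* = r^{d_{\rm w}}$ where the two terms in the minimum coincide: on $(0,t^*)$ use $p_t(x,y)\leq c_2 t/r^{d_{\rm f}+d_{\rm w}}$, contributing order $r^{-(d_{\rm f}-d_{\rm w})}$; on $(t^*,1)$ use $p_t(x,y)\leq c_2 t^{-d_{\rm f}/d_{\rm w}}$, which gives order $r^{-(d_{\rm f}-d_{\rm w})}$ when $d_{\rm f}>d_{\rm w}$, order $\log(1/r)$ when $d_{\rm f}=d_{\rm w}$, and is bounded when $d_{\rm f}<d_{\rm w}$. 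Hence $R_1(x,y)\leq C(1 + r^{-(d_{\rm f}-d_{\rm w})_+})$, modulo a logarithmic factor in the equality case. The estimate $\int_E R_1(x,y)^p m(dy)$ is then handled by the dyadic decomposition of $E\setminus\{x\}$ into the annuli $A_n = \{y : 2^{-n} < d(x,y)\leq 2^{-n+1}\}$, whose $m$-measure is at most $C 2^{-n d_{\rm f}}$; the critical case $d_{\rm f} > d_{\rm w}$ reduces to the geometric series $\sum_{n\geq 1} 2^{-n[d_{\rm f} - p(d_{\rm f} - d_{\rm w})]}$, convergent precisely under the hypothesis. The remaining two cases are easier and unconditionally finite. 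For the second half of (A5), the same split applied to $R_{1,\delta}$ gives a pointwise bound that tends to $0$ as $\delta\downarrow 0$ uniformly in $x,y$ on each annulus, and is dominated by the summable dyadic bound, so a dominated-convergence argument (applied to the series as an integral against counting measure on $\mathbb{Z}_{>0}$, exactly as in the Gaussian case) delivers $\sup_x\int_E R_{1,\delta}(x,y)^p m(dy)\to 0$.

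The main obstacle is the pointwise control of $R_1(x,y)$ across the three regimes $d_{\rm f}\gtreqless d_{\rm w}$, since the cleanest statement $R_1\lesssim r^{-(d_{\rm f}-d_{\rm w})_+}$ masks a mild logarithmic correction when $d_{\rm f}=d_{\rm w}$; once that bound is in place the dyadic summation is routine and the condition $d_{\rm f} - p(d_{\rm f}-d_{\rm w})>0$ enters only to secure summability, with no other constraint introduced.
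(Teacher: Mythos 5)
Your proposal is correct and carries out exactly the ``similar calculations'' the paper alludes to: (A3), (A4) follow verbatim from the on-diagonal bounds, and (A5) is reduced to a convergent dyadic sum under $d_{\rm f} - p(d_{\rm f}-d_{\rm w})>0$, with the same dominated-convergence argument over the counting measure on $\mathbb{Z}_{>0}$ for \eqref{eq_Green2}. The one stylistic departure from the Gaussian proof is that you first extract a pointwise bound $R_1(x,y)\lesssim 1 + r^{-(d_{\rm f}-d_{\rm w})_+}$ (up to a log when $d_{\rm f}=d_{\rm w}$) by splitting the time integral at the crossover $t^*=d(x,y)^{d_{\rm w}}$, rather than carrying the time integral through the dyadic decomposition; this is the natural simplification here since the minimum in \eqref{eq_ex32} makes the crossover explicit, and it does not change the substance of the argument.
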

\section*{Acknowledgements}
\addcontentsline{toc}{section}{Acknowledgements}
We 
would like to thank Professor Takashi Kumagai, 
my master's thesis supervisor, and 
Professor Ryoki Fukushima,
for helpful discussions.
We
also wish to thank Professor Wolfgang K\"onig and 
Professor Chiranjib Mukherjee for their comments.

\part*{}
\addcontentsline{toc}{section}{References}
\bibliographystyle{alpha}

\end{document}